\documentclass{article}
\usepackage{graphicx} 
\usepackage{amsmath}
\usepackage{amssymb}
\usepackage{amsthm}
\usepackage{mathrsfs}
\usepackage{dsfont}
\usepackage{bm}
\usepackage[left=2.00cm, right=2.00cm, top=2.00cm, bottom=2.00cm]{geometry} 
\usepackage{todonotes}
\usepackage{parskip}
\usepackage{enumitem}
\usepackage{authblk}
\usepackage{aliascnt}
\usepackage[
  colorlinks=true,
  linkcolor=red,
  citecolor=blue,
  urlcolor=blue
]{hyperref}
\usepackage{cleveref}

\usepackage{subcaption}
\usepackage{tikz}
\usetikzlibrary{decorations.markings,arrows.meta}

\numberwithin{equation}{section}

\newcommand{\newaliastheorem}[3]{%
  \newaliascnt{#1}{thm}%
  \newtheorem{#1}[#1]{#2}%
  \aliascntresetthe{#1}%
  \crefname{#1}{#2}{#3}%
}
\newtheorem{thm}{Theorem}[section]
\crefname{thm}{Theorem}{Theorems}
\newaliastheorem{theorem}{Theorem}{Theorems}
\newaliastheorem{prop}{Proposition}{Propositions}
\newaliastheorem{proposition}{Proposition}{Propositions}
\newaliastheorem{lem}{Lemma}{Lemmas}
\newaliastheorem{lemma}{Lemma}{Lemmas}
\newaliastheorem{cor}{Corollary}{Corollaries}
\newaliastheorem{coro}{Corollary}{Corollaries}
\theoremstyle{definition}
\newaliastheorem{defn}{Definition}{Definitions}
\newaliastheorem{assumption}{Assumption}{Assumptions}
\newaliastheorem{remark}{Remark}{Remarks}

\crefname{section}{Section}{Sections}
\crefname{figure}{Fig.}{Figs.}

\def\Ab{{\mathbb{A}}}

\def\Cb{{\mathbb{C}}}
\def\Db{{\mathbb{D}}}
\def\Eb{{\mathbb{E}}}

\def\Hb{{\mathbb{H}}}

\def\Nb{{\mathbb{N}}}

\def\Qb{{\mathbb{Q}}}
\def\Rb{{\mathbb{R}}}

\def\Tb{{\mathbb{T}}}

\def\Zb{{\mathbb{Z}}}
\def\Ac{{\mathcal{A}}}

\def\Dc{{\mathcal{D}}}
\def\Ec{{\mathcal{E}}}

\def\Hc{{\mathcal{H}}}

\def\Lc{{\mathcal{L}}}

\def\Qc{{\mathcal{Q}}}

\def\Sc{{\mathcal{S}}}

\def\Abf{{\mathbf{A}}}

\def\Dbf{{\mathbf{D}}}

\def\Hbf{{\mathbf{H}}}

\def\Lbf{{\mathbf{L}}}

\def\Tbf{{\mathbf{T}}}

\def\bfr{{\mathfrak{b}}}

\def\sfr{{\mathfrak{s}}}
\def\tfr{{\mathfrak{t}}}

\def\cbf{{\mathbf{c}}}

\def\kbf{{\mathbf{k}}}

\def\mbf{{\mathbf{m}}}
\def\nbf{{\mathbf{n}}}

\def\vbf{{\mathbf{v}}}

\def\xbf{{\mathbf{x}}}

\def\zbf{{\mathbf{z}}}
\def\alphabf{{\boldsymbol{\alpha}}}

\def\zetabf{{\boldsymbol{\zeta}}}
\def\etabf{{\boldsymbol{\eta}}}
\def\thetabf{{\boldsymbol{\theta}}}

\def\mubf{{\boldsymbol{\mu}}}

\def\xibf{{\boldsymbol{\xi}}}

\def\varphibf{{\boldsymbol{\varphi}}}

\def\Pibf{{\boldsymbol{\Pi}}}

\def\d{{\mathrm{d}}} 

\def\i{{\mathrm{i}}}
\def\id{{\mathrm{id}}}
\def\Re{\operatorname{Re}}
\def\Im{\operatorname{Im}}

\def\dist{\operatorname{dist}}

\def\XXint#1#2#3{{\setbox0=\hbox{$#1{#2#3}{\int}$ }
	\vcenter{\hbox{$#2#3$ }}\kern-.6\wd0}}

\def\div{\operatorname{div}}

\def\la{\langle} 
\def\ra{\rangle}
\def\fr{\forall\,}

\def\zb{{\bar{z}}}
\def\pa{\partial}

\newcommand{\wt}[1]{\widetilde{#1}}

\newcommand{\ol}[1]{\overline{#1}}

\def\N{{\mathrm{N}}}
\def\D{{\mathrm{D}}}

\def\MD{{\mathrm{M_D}}}
\def\paN{{\wt{\pa}_\N}}
\def\paD{{\wt{\pa}_\D}}

\def\b{\bfr}

\def\s{\sfr}
\def\t{\tfr}
\def\h{{\mathrm{h}}}
\def\L{{\mathrm{L}}}

\def\reg{{\mathrm{reg}}}

\def\bt{{\wt{\varphibf}}}

\def\TR{\Tb_R}

\def\DT{\Dc'(\Tb,\Tb_R)}
\def\DTh{\Dc'(\Tb^+,\Tb_R)}
\def\Ssg{\Sc}
\def\v{{\rm v}}

\def\emptyset{\varnothing}
\def\epsilon{\varepsilon}

\title{Segal's semigroup, Virasoro structure, and Ward identities in Compactified Imaginary Liouville Theory}
\author[1]{Yang Xiao}
\author[2]{Yuxiao Xie}
\affil[1]{Aix Marseille Université, CNRS, Institut de Mathématiques de Marseille, Marseille, France}
\affil[2]{Université Paris-Saclay, CNRS, Laboratoire de mathématiques d’Orsay, Orsay, France}
\date{}

\begin{document}

\maketitle

\begin{abstract}
Compactified Imaginary Liouville Theory (CILT for short) is a probabilistically constructed Conformal Field Theory (CFT), expected to have connections to minimal models, Coulomb gas integrals, and loop models. In this paper, we study the quantization of Segal's semigroup given by CILT. Each element $\mathbf T_f$ in the quantized semigroup corresponds to a conformal annulus: the unit disk $\mathbb D$ with $f(\mathbb D^\circ)$ removed, where $f:\mathbb D\to f(\mathbb D)\subset \mathbb D^\circ$ is a contracting biholomorphism. This semigroup encodes the local conformal structure of the CFT, in particular the Virasoro symmetry with central charge $<1$ for CILT. We first relate the quantized Segal's semigroup to Segal's amplitudes of annuli and recover the Virasoro generators $\mathbf L_n,\widetilde{\mathbf L}_n$ ($n\geq0$) of CILT as generators of this semigroup. Compared to the real Liouville CFT, the main difficulty here is the absence of positivity, which we address through estimates for the imaginary Gaussian Multiplicative Chaos and non-self-adjoint operators. Using Segal's gluing axiom as in the real case, we differentiate the amplitude and use conformal covariance to derive the Ward identities in terms of all Virasoro operators $\mathbf L_n,\widetilde{\mathbf L}_n$ ($n\in\mathbb Z$). We also outline the parallel theory for Boundary CILT (BCILT), concerning Segal's semigroup of semiannuli. These results provide essential tools for the conformal bootstrap program of (B)CILT, boundary BPZ equations, boundary structure constants, and the global construction of conformal blocks.
\end{abstract}

\tableofcontents

\section{Introduction}
The Compactified Imaginary Liouville Theory (CILT for short) is a 2-dimensional Conformal Field Theory (CFT) defined on a Riemann surface $\Sigma$ with a conformal metric $g$, whose correlation functions can be formally written as a path integral of the form
\[\la F(\phi)\ra_{\Sigma,g}:=\int F(\phi)\,e^{-S_\L(\phi)}\,\D\phi,\]
where $F(\phi)$ is a suitable test functional and $\D\phi$ is the formal ``Lebesgue'' measure on the space of maps $\phi$ from $\Sigma$ to the circle $\mathbb R/2\pi R\mathbb Z$ of radius $R>0$. If $\Sigma$ has boundary, we require that $\phi$ satisfies the Neumann boundary condition at $\pa\Sigma$. Here the \textit{imaginary Liouville action} $S_{\rm L}$ is formally given by (without the last two terms if $\partial\Sigma=\emptyset$)
\begin{equation*}
    S_\L(\phi)=
\frac{1}{4\pi}\int_{\Sigma}|d\phi|_g^2\,\d v_g
+\frac{\i Q}{4\pi}\int_\Sigma K_g\phi\,\d v_g
+\mu\int_\Sigma e^{\i \beta\phi}\,\d v_g+\frac{\i Q}{2\pi}\int_{\pa\Sigma}k_g\phi\,\d\ell_g+\int_{\pa\Sigma}\mu_\pa\,e^{\i\frac{\beta}{2}\phi}\,\d\ell_g,
\end{equation*}
where $Q,\beta\in\mathbb R$ with $\beta^2\in(0,2)$, $\mu\in\mathbb C$, $\mu_\partial:\pa\Sigma\to\Cb$ is a piecewise constant function, and $K_g$, $v_g$, $k_g$, $\ell_g$ denote respectively the scalar curvature, volume, geodesic curvature, and length with respect to $g$.
When $\mu=0$, $\mu_\partial=0$, this becomes the Coulomb gas formalism (see \cite{dotsenko1984conformal,dotsenko1985four,Francesco1987,Nienhuis1987}). Throughout this paper, we impose the relation
\[Q=\frac\beta2-\frac2\beta.\]
The well-definedness of the Liouville action requires that $\beta R, QR\in\Zb$ if $\partial\Sigma=\emptyset$ and $\beta R, QR\in2\Zb$ if $\partial\Sigma\not=\emptyset$. These conditions force $\beta^2\in\Qb$, and given such $\beta$, one can always choose $R$ accordingly. Then this path integral defines a CFT with central charge $c_{\mathrm L}=1-6Q^2$.
The mathematical construction of this CFT was performed in \cite{CILT} for the case without boundary, and later the boundary case, called Boundary CILT (BCILT for short), was studied in \cite{BCILT}. In physics, there have been many related research directions as in \cite{Schomerus2003,Zamolodchikov2005,Kostov2006,Harlow2011,Ribault2015,Usciati2025}, aiming either to capture the ``imaginary'' version of the real Liouville theory (proposed in \cite{Polyakov1981}), or to formulate a CFT candidate for the 2-dimensional loop models, see for instance \cite{Francesco1987,Nienhuis1987,Kondev1996,Ikhlef2016,Nivesvivat2024}. On the mathematical side, there are also related works such as \cite{Ang21,Baverez2024,Chatterjee2026}.  

In BCILT, we consider correlation functions of vertex operators of the form
\begin{equation*}
   \prod_{j=1}^\s V_{\alpha_j}(z_j)\prod_{j=1}^\t V_{\eta_j}(x_j),\ \ \ z_j\in\Sigma^\circ,\ x_j\in\partial\Sigma,\ \ \ \s,\t\in\mathbb N
\end{equation*}
with $V_{\alpha_j}(z_j):=e^{\i\alpha_j\phi(z_j)}$ and $V_{\eta_j}(x_j):=e^{\i\frac{\eta_j}2\phi(x_j)}$, called \textit{electric operators}, in the interior (or \textit{bulk}) $\Sigma^\circ$ and on the boundary $\partial \Sigma$, respectively. 
In addition to these, there is another type of operators known as \textit{magnetic operators}, defined only in the bulk. Specifically, for ${\bf z}=(z_1,\ldots,z_\s)\in(\Sigma^\circ)^\s$ and \textit{magnetic charges} ${\bf m}=(m_1,\ldots,m_\s)\in\Zb^\s$, we define formally
\begin{equation*}
    \left\langle F(\phi)\prod_{j=1}^\s V_{(0,m_j)}(z_j)\right\rangle_{\Sigma,g}:=\int F(\phi+2\pi R\,I_{\zbf,\mbf})\,e^{-S_\L(\phi+2\pi R\,I_{\zbf,\mbf})}\,\D\phi,
\end{equation*}
where $I_{\zbf,\mbf}(z):=\int_{x_0}^z\nu_{\zbf,\mbf}$ (with fixed basepoint $x_0\in\Sigma$) and $\nu_{\zbf,\mbf}$ is a chosen $1$-form that equals $\frac{m_j}{2\pi} \, d\theta$ in a local coordinate $z=z_j+r e^{\i\theta}$ near $z_j$ for $j=1,\cdots,\s$. If $\pa\Sigma=\emptyset$, such a $1$-form exists if and only if $\sum_{j=1}^{\s} m_j=0$.
We remark that here both the field $\phi$ and $2\pi R\,I_{\zbf,\mbf}$ take values in the circle $\Tb_R:=\Rb/2\pi R\Zb$, so they are multi-valued as functions to $\Rb$. In order to make sense of the curvature terms $\frac{\i Q}{4\pi}\int_\Sigma K_g\phi\,\d v_g+\frac{\i Q}{2\pi}\int_{\pa\Sigma}k_g\phi\,\d\ell_g$ in the Liouville functional, a special regularization procedure is needed. This is done by fixing a ``branch cut'' of the surface, called \textit{separating family} in \cite{BCILT} (the notion of \textit{defect graph} in \cite{CILT} is a special case thereof), and adding appropriate regularization terms at the cuts. It can be shown that the result does not depend on the choice of the separating family, up to an integral multiple of $2\pi R$.
Finally, one can combine the electric and magnetic operators to form the \textit{electromagnetic operators}, by taking a limit $z_j'\to z_j$ in the direction specified by a nonzero tangent vector $v_j$ at $z_j$ and defining $V_{(\alpha_j,m_j)}(v_j):=\lim_{z_j'\to z_j}V_{\alpha_j}(z_j')V_{(0,m_j)}(z_j)$.
The operators $V_{(\alpha_j,m_j)}(v_j)$ and $V_{\eta_j}(x_j)$ give the primary states of BCILT with conformal weights $\Delta_{\alpha_j,m_j}$ and $\Delta_{\eta_j}$, where
\[\Delta_{\alpha,m}:=\frac\alpha2\left(\frac\alpha2-Q\right)+\frac{m^2R^2}4,\qquad\Delta_\eta:=\frac{\eta}{2}\left(\frac\eta2-Q\right).\]
For more background on BCILT, including its rigorous construction, we refer to \cite{CILT,BCILT}.

Given a CFT, a central mathematical problem is to establish the conformal bootstrap rigorously. The BPZ proposal \cite{Belavin1984}, together with Segal's gluing axioms \cite{Segal1988}, provides a guideline for recovering correlation functions from amplitudes and conformal blocks. In the case of the real Liouville CFT, ever since its probabilistic construction on general surfaces in \cite{Guillarmou2019}, there has been significant progress in this direction. In particular, the bootstrap factorization formula was proved for the $4$-point function on the sphere in \cite{Guillarmou2024} and for general surfaces in \cite{Guillarmou2021}. For the boundary Liouville CFT, the conformal bootstrap was also solved in \cite{boundary_bootstrap}. A natural next step is to carry out the corresponding bootstrap program for BCILT. Let us illustrate this with the simplest example of the $4$-point function on the sphere. First recall that the conformal covariance reduces the $3$-point function on the Riemann sphere $(\hat{\mathbb C},g_0)$ with $g_0=\frac{|\d z|^2}{\max(|z|,1)^4}$ to a constant, known as the \textit{structure constant}. In CILT, it is given by (assuming the neutrality condition $2Q-\sum_j\alpha_j\in\beta\Nb$)
\begin{equation*}
\begin{split}
    &\left\langle V_{(\alpha_1,m_1)}(0,e_1)V_{(\alpha_2,m_2)}(1,e_1)V_{(\alpha_3,m_3)}(\infty,e_1) \right\rangle_{\hat{\mathbb C},g_0}^2\\
    &= C_{g_0}^2 4\pi^2 R^2 \mu^{\frac2\beta(2Q-\sum_{j=1}^3 \alpha_j)} C_{\beta}^{\mathrm{DOZZ}}(\alpha_1+m_1R,\alpha_2+m_2R,\alpha_3+m_3R)\,C_{\beta}^{\mathrm{DOZZ}}(\alpha_1-m_1R,\alpha_2-m_2R,\alpha_3-m_3R),
\end{split}
\end{equation*}
where $C_{g_0}$ is a universal constant, $C_{\beta}^{\mathrm{DOZZ}}(\cdot,\cdot,\cdot)$ is the \textit{imaginary DOZZ constant}, and $e_1=\partial_x$ in the coordinate $z=x+\i y$ in $\mathbb C$. This formula appeared in \cite{Zamolodchikov2005,Kostov2006}, and it was proved for CILT based on \cite{Neretin2024}, see \cite[Section 7]{CILT}. Now for the $4$-point function on the sphere, we expect a factorization formula of the form (again assuming the neutrality condition $2Q-\sum_j\alpha_j\in\beta\Nb$)
\begin{equation}\label{eq_bootstrap}
\left\langle V_{(\alpha_1,m_1)}(0,e_1)V_{(\alpha_2,m_2)}(z,e_1) V_{(\alpha_3,m_3)}(1,e_1)V_{(\alpha_4,m_4)}(\infty,e_1) \right\rangle_{\hat{\mathbb C},g_0}
    =\sum_{n=0}^{\frac{2Q-\sum_j\alpha_j}\beta}\rho(\alphabf,\mbf,n)\mathcal F_{\alphabf,\mbf,n}(z)\widetilde{\mathcal F}_{\alphabf,\mbf,n}(\bar z),
\end{equation}
where $z\in\hat\Cb\setminus\{0,1,\infty\}$,
\begin{equation*}
\begin{split}
     \rho(\alphabf,\mbf,n):={}&\left\langle V_{(\alpha_1,m_1)}(0,e_1)V_{(\alpha_2,m_2)}(1,e_1)V_{(2Q-\alpha_1-\alpha_2-n\beta,-m_1-m_2)}(\infty,e_1) \right\rangle_{\hat{\mathbb C},g_0}\\
     &\times\left\langle V_{(\alpha_1+\alpha_2+n\beta,m_1+m_2)}(0,e_1) V_{(\alpha_3,m_3)}(1,e_1)V_{(\alpha_4,m_4)}(\infty,e_1)\right\rangle_{\hat{\mathbb C},g_0},
\end{split}
\end{equation*}
and $\mathcal F_{\alphabf,\mbf,n}(z),\widetilde{\mathcal F}_{\alphabf,\mbf,n}(\bar z)$ are the holomorphic and anti-holomorphic Virasoro conformal blocks. The existence of spin means that CILT is \textit{nondiagonal}, and thus the two conformal blocks $\mathcal F_{\alphabf,\mbf,n},\widetilde{\mathcal F}_{\alphabf,\mbf,n}$ are distinct; this is also seen in the Virasoro actions in Section \ref{sec_Virasoro}. The formula \eqref{eq_bootstrap} is only formally written since both the structure constants and the conformal blocks can have poles, and the existence of these poles is a consequence of the complicated algebraic structure, in particular the logarithmic Jordan partners and the degenerate Shapovalov forms. Understanding these poles would be the last step towards the solution of the $4$-point function on the sphere. In a companion work, the first author, together with C. Guillarmou and R. Rhodes, will handle the $4$-point function assuming $\beta^2\notin\mathbb Q$ or $(\alpha_1+\alpha_2-Q)\beta\not\in \mathbb Q$, which would save us from the algebraic problems encountered in the proof of the bootstrap formula.

In this paper, we provide the prerequisite theory for the conformal bootstrap, namely the quantization of Segal's semigroup of annuli given by (B)CILT. We prove differentiability of the quantized semigroup, identify its infinitesimal generators and use this to derive the Ward identities for (B)CILT amplitudes. Analogously to the real counterpart \cite{Segal_semigroup}, these results provide the main tools for the construction of the conformal block and the proof of its Ward identities in the case of (B)CILT. Besides, this paper is also a part of the proof of the BPZ equations in (B)CILT, which will be done in a future study of higher equations of motion as in the real case \cite{Baverez2023} (see also \cite{Cercle2025}). The boundary BPZ equations will play an important role in the study of the boundary structure constants proposed in BCILT \cite[Section 8]{BCILT}. There has been a very recent paper \cite{Cercle2026} on the BPZ equations and Ward identities (for correlation functions only) of the non-compactified imaginary Liouville theory, which also covers those of CILT, at least if there are only electric operators.

We now give an overview of our main results. First we review the definition of Segal amplitudes in (B)CILT. For simplicity, consider the case of CILT, i.e., assume that there is no Neumann boundary. According to Segal's axiomatization of CFT \cite{Segal1988}, we associate to each Riemannian surface with parametrized boundary an amplitude operator between tensor products of a Hilbert space $\mathcal H$, called the \textit{state space} of the CFT. In CILT, the Hilbert space $\mathcal H$ is the space of $L^2$ functionals on the space of $\Rb/2\pi R\Zb$-valued boundary fields $\wt{\varphi}^k$ on a boundary circle, having the form
\begin{equation*}
    \widetilde{\varphi}^k(\theta) = c + \varphi(\theta) + kR\theta,
\end{equation*}
where $c\in\mathbb R/2\pi R\mathbb Z,k\in\mathbb Z$, and $\varphi$ is the mean-zero Gaussian Free Field (GFF)
\begin{equation*}
    \varphi(\theta)=\sum_{n\not=0}\varphi_n e^{\i n\theta},
\end{equation*}
where $\varphi_n=\frac{x_n+\i y_n}{2\sqrt{n}}$ and $\varphi_{-n}=\overline{\varphi}_n$ for $n>0$, and $x_n,y_n$ are i.i.d.\ standard Gaussians. We equip the space of $(c,k,\varphi)$ with the product of the Lebesgue measure on $\mathbb R/2\pi R\mathbb Z$, the counting measure on $\mathbb Z$, and the Gaussian probability measure on the component $\varphi$, i.e., 
\begin{equation}\label{eq_defn_phik}
\d\widetilde{\varphi}^k:=\d c\otimes\left(\sum_{k\in\mathbb Z}\delta_k\right)\otimes\bigotimes_{n\geq1}\frac{1}{2\pi}e^{-\frac{x^2_n+y_n^2}{2}}\,\d x_n\d y_n.
\end{equation}
See \cref{Sec_background} for details. Then an $L^2$ functional of $\bt^\kbf=(\wt{\varphi}_1^{k_1},\ldots,\wt{\varphi}_\b^{k_\b})$ lives in the tensor product $\mathcal H^{\otimes \b}$. 
Now let $\Sigma$ be a Riemann surface with conformal metric $g$ and boundary parametrizations $\zetabf=(\zeta_1,\cdots,\zeta_\b)$, where each $\zeta_j:\Tb=\{e^{\i\theta}:\theta\in\Rb\}\to\pa_j\Sigma$ is a real-analytic parametrization of a boundary circle $\pa_j\Sigma$. 
Here the orientation of each parametrization in $\zetabf$ is either incoming or outgoing, and we denote by $\b^+$ (resp.\ $\b^-$) the number of outgoing (resp.\ incoming) boundary circles, with $\b=\b^++\b^-$. 
In this paper, the metric $g$ will be assumed to be  \textit{admissible} in the sense that $\zeta_j^*g=\frac{|\mathrm{d}z|^2}{|z|^2}$ near each boundary circle. 
Let $\zbf=(z_1,\cdots,z_\s)$ be distinct marked points in $\Sigma^\circ$, $\vbf=(v_1,\cdots,v_\s)$ where each $v_j$ is a nonzero tangent vector at $z_j$, $\alphabf=(\alpha_1,\ldots,\alpha_\s)$ electric charges, and $\mbf=(m_1,\ldots,m_\s)$ magnetic charges. Then the CILT \textit{amplitude} of $(\Sigma,g,\vbf,\alphabf,\mbf,\zetabf)$ is given by the formal path integral
\[\Ac_{\Sigma,g,\vbf,\alphabf,\mbf,\zetabf}(F,\bt^\kbf):=
\int F(\phi)\prod_{j=1}^\s V_{(\alpha_j,m_j)}(v_j)\,e^{-S_\L(\phi)}\,\D\phi,\]
where the integral is over maps $\phi:\Sigma\setminus\{z_1,\ldots,z_\s\}\to\Rb/2\pi R\Zb$ satisfying $\zetabf^*\phi|_{\pa\Sigma}=\bt^\kbf=(\wt{\varphi}_1^{k_1},\ldots,\wt{\varphi}_\b^{k_\b})$ and having winding number $m_j\in\Zb$ around each $z_j$. Here $\kbf=(k_1,\cdots,k_{\b})$ are thus the winding numbers of $\phi$ on the Dirichlet boundary $\pa\Sigma$. For the rigorous definition of this path integral, we refer to \cite[Section 8]{CILT}. One can show that, under suitable conditions, the functional $\Ac_{\Sigma,g,\vbf,\alphabf,\mbf,\zetabf}(F,\cdot)$ is $L^2$ and can thus be viewed as the kernel of a Hilbert--Schmidt operator
\begin{equation*}
    \Ac_{\Sigma,g,\vbf,\alphabf,\mbf,\zetabf}(F):\mathcal H^{\otimes\b^-}\to\mathcal H^{\otimes\b^+}.
\end{equation*}
In the case $F\equiv1$, we shall simply write $\Ac_{\Sigma,g,\vbf,\alphabf,\mbf,\zetabf}$, or even $\mathcal A_{\Sigma,g,\zetabf}$ if there are no marked points. 
These amplitudes are diffeomorphism-invariant and satisfy conformal covariance properties under a conformal change of the metric $g$. Moreover, they satisfy the following gluing property. If $(\Sigma^\ell,g^\ell,\vbf^\ell,\alphabf^\ell,\mbf^\ell,\zetabf^\ell)$ ($\ell=1,2$) are two such surfaces with $\zeta_{j_1}^1$ incoming and $\zeta_{j_2}^2$ outgoing, then we can glue them into a new surface $(\Sigma,g,\vbf,\alphabf,\mbf,\zetabf)$ by setting $\zeta^1_{j_1}(e^{\i\theta})=\zeta^2_{j_2}(e^{\i\theta})$ for $\theta\in\Rb$. Then we have
\begin{equation*}
    \Ac_{\Sigma,g,\vbf,\alphabf,\mbf,\zetabf}=C\Ac_{\Sigma^1,g^1,\vbf^1,\alphabf^1,\mbf^1,\zetabf^1}\circ_{j_1,j_2}\Ac_{\Sigma^2,g^2,\vbf^2,\alphabf^2,\mbf^2,\zetabf^2},
\end{equation*}
where $C$ is a universal constant and $\circ_{j_1,j_2}$ denotes the composition (or trace) of the $j_1$-th tensor component of $\Ac_{\Sigma^1,g^1,\vbf^1,\alphabf^1,\mbf^1,\zetabf^1}$ with the $j_2$-th tensor component of $\Ac_{\Sigma^2,g^2,\vbf^2,\alphabf^2,\mbf^2,\zetabf^2}$. For the proofs of these claims, we refer again to \cite{CILT,BCILT}.

Now we introduce Segal's semigroup of annuli, appearing first in \cite[Section 2]{Segal1988} and independently in \cite{Neretin1987,Neretin1990}. The real analog of the following quantization for the real Liouville CFT was done in \cite{Segal_semigroup} and later \cite[Section 7.3]{boundary_bootstrap} for the boundary case. 
Let $\Db=\{|z|\leq1\}$ be the closed unit disk, and consider the semigroup $\Sc$ of holomorphic embeddings $f:\Db\to\Db^\circ$ with $f(0) = 0$ such that it extends holomorphically to a neighborhood of the boundary $\pa\Db=\mathbb T$. For $f\in\Sc$, consider the annulus $\mathbb A_{f}:=\mathbb D\setminus f(\mathbb D^\circ)$ with boundary parametrizations $\zetabf_f:=(\id_\Tb,f|_\Tb)$. Then for a functional $F(\wt{\varphi}^k)$ in an appropriate space (see \eqref{eq_def_E_space}), we define
\begin{equation*}
(\Tbf_fF)(\wt{\varphi}^k) := |f'(0)|^{\frac{R^2k^2-Q^2}2} \,\Eb_{\wt{\varphi}^k}\left[F\left((P\wt{\varphi}^k+X_\Db)\circ f\big|_\Tb + \i Q\log\tfrac{|f'|}{|f|}\big|_\Tb\right)\,e^{-\mu\int_{\Ab_f}|x|^{\beta Q}M_\beta(P\wt{\varphi}^k+X_\Db,\d x)}\right],
\end{equation*}
where $P\wt{\varphi}^k$ is the harmonic extension of $\widetilde{\varphi}^k$ to $\mathbb D$ (with prescribed singularity at $0$) and $X_{\mathbb D}$ is the Dirichlet GFF on $\mathbb D$. The conditional expectation $\Eb_{\wt{\varphi}^k}$ averages over the Dirichlet GFF $X_{\mathbb D}$. Here $M_{\beta}$ is the imaginary Gaussian Multiplicative Chaos (GMC, see \cite{Lacoin2015}) defined as follows. For a general field $\phi_g$ on a Riemannian surface $(\Sigma,g)$, we define (see \cref{Sec_background} for details)
\begin{equation*}
    M_{\beta}(\phi_g,\d v_g):=\lim_{\epsilon\to 0} \epsilon^{-\frac{\beta^2}{2}}e^{\i\beta \phi_{g,\epsilon}(x)}\d v_g,
\end{equation*}
where $\phi_{g,\epsilon}$ is an $\epsilon$-regularization with respect to $g$. 
We prove that $\Tbf_f$ extends to a bounded operator $\Hc\to\Hc$, and that this coincides (up to a constant) with the amplitude $\Ac_{\Ab_f,g_f,\zetabf_f}$ of the annulus $(\Ab_f,\zetabf_f)$ equipped with an admissible metric $g_f$ (see below). The homomorphism $f\in\Sc\mapsto\Tbf_f$ provides a quantization of Segal's semigroup $\Sc$ into operators acting on $\Hc$.

The operator $\Tbf_f$ can be used to describe the local conformal symmetry of CILT. In particular, differentiating $\Tbf_f$ gives the Virasoro actions of CILT. An important special case is that the generator of the semigroup $t\in\Rb_{>0}\mapsto\mathbf T_{z\mapsto e^{-t}z}$ is the \textit{Hamiltonian} of CILT, given by (using the decomposition in \eqref{eq_defn_phik})
\begin{equation}\label{eq_def_H}
    \mathbf H:=-\frac12\partial_c^2+\frac{R^2k^2-Q^2}2+2\sum_{n\geq1}(\mathbf A^*_n\mathbf A_n+\widetilde{\mathbf A}_n^*\widetilde{\mathbf A}_n)+\mu\, e^{\i\beta c}\int_0^{2\pi} e^{\i\beta Rk\theta}\,e^{\i\beta\varphi(\theta)}\,\d\theta,
\end{equation}
where $\mathbf A_n:=\tfrac\i2\sqrt n(\partial_{x_n}-\i\partial_{y_n})$, $\widetilde{\mathbf A}_n:=\tfrac\i2\sqrt n(\partial_{x_n}+\i\partial_{y_n})$, and $\int_0^{2\pi} e^{\i\beta Rk\theta}\,e^{\i\beta\varphi(\theta)}\,\d\theta$ is a boundary GMC (see \cref{Sec_Virasoro_Liouville}). Note that $\mathbf H$ is \textit{not} self-adjoint, unlike the real case. 
In general, we can consider the flow $t\in\Rb_{\geq0}\mapsto f_t\in\mathcal S$ generated by a Markovian holomorphic vector field $\v=v(z)\partial_z=-\sum_{n\geq0}v_n z^{n+1}\partial_z$ on $\mathbb D$, i.e., we assume that $v(0)=0$ and $\Re(\zb v(z))<0$ for $z\in\Tb$, and we solve the ordinary differential equation $\partial_t f_t(z)=v(f_t(z))$ for $z\in\Db$ with $f_0=\id_\Db$. Then the generator of the semigroup $t\in\Rb_{>0}\mapsto\mathbf T_{f_t}$, denoted $\mathbf H_\v$, is given by
\begin{equation}\label{intro_H_v}
    \mathbf H_\v:=\sum_{n\geq0}v_n\,\Lbf_n+\sum_{n\geq0}\ol{v_n}\,\wt{\Lbf}_n,
\end{equation}
where $\mathbf L_n,\widetilde{\mathbf L}_n$ are generators of the holomorphic and anti-holomorphic Virasoro algebras defined by CILT. They enjoy explicit formulas \eqref{eq_L} in terms of $\mathbf A_n,\widetilde{\mathbf A}_n$ and the boundary GMC. 

We are ready to state our first result on the connection between the annulus amplitude and Virasoro generators:
\begin{thm}\label{prop_intro}
\mbox{}
   \begin{enumerate}[label=\emph{(\arabic*)}]
       \item For $f\in\Sc$, the operator $\Tbf_f$ extends to a bounded operator $\Hc\to\Hc$, and the map $f\in\Ssg\mapsto\Tbf_f$ is a semigroup homomorphism, i.e., for $f_1,f_2\in\Ssg$,
        \begin{equation*}
            \Tbf_{f_1}\circ\Tbf_{f_2}=\Tbf_{f_1\circ f_2}.
        \end{equation*}
       \item  Let $f\in\Ssg$, and let $g_f$ be an admissible metric on the annulus $(\Ab_f=\Db\setminus f(\Db^\circ),\zetabf_f=(\id_\Tb,f|_\Tb))$. Viewing the CILT amplitude $\Ac_{\Ab_f,g_f,\zetabf_f}$ of $(\Ab_f,g_f,\zetabf_f)$ as an operator $\mathcal H\to\mathcal H$, we have
\[\Tbf_f = \frac{e^{-\frac{c_{\mathrm L}}{12} W(f, g_f)}}{\sqrt{2}\pi} \Ac_{\Ab_f,g_f,\zetabf_f},\]
where $W(f,g_f)$ is defined in \eqref{eq_W_f} and depends on $f$ and $g_f$ only.
\item \emph{(\textbf{Feynman--Kac formula})} Let $\v = -\sum_{n=0}^\infty v_n z^{n+1} \pa_z$ be a Markovian holomorphic vector field on $\Db$ such that the coercivity condition \eqref{omega_condition} holds. Then the operator $\Hbf_{\v}$ admits a closed extension that generates a strongly continuous semigroup $t\in\Rb_{\geq0}\mapsto e^{-t\Hbf_\v}$, and we have $\Tbf_{f_t} = e^{-t\Hbf_\v}$ for $t>0$, where $t\mapsto f_t$ is the flow generated by $\v$.
   \end{enumerate}
\end{thm}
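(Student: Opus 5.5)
We follow the strategy of the real Liouville case \cite{Segal_semigroup}, replacing positivity arguments by absolute-value bounds on the imaginary GMC. We prove (2) first and then deduce (1) from it, since boundedness and the homomorphism property follow from Segal's gluing axiom once $\Tbf_f$ is identified with an annulus amplitude.

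\textbf{Step 1 (identification with the amplitude).} Write the amplitude $\Ac_{\Ab_f,g_f,\zetabf_f}$ with the Markov decomposition of the GFF on $\Ab_f$, as in \cite[Section 8]{CILT}. The field is the harmonic extension of the two boundary data plus a Dirichlet GFF on $\Ab_f$. Compare this with $P\wt\varphi^k+X_\Db$ restricted to $f(\Tb)$: by the domain Markov property, $X_\Db$ splits as $X_{\Ab_f}$ plus a harmonic function. Integrating against the boundary field $\wt\varphi_2^{k}$ on $f(\Tb)$ then reproduces the conditional expectation defining $\Tbf_f$. The Jacobian of this change of variables is a ratio of Dirichlet-to-Neumann determinants, which gives the factor $e^{-\frac{c_{\rm L}}{12}W(f,g_f)}$ together with the constant $\sqrt2\pi$. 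Two further ingredients need tracking:
\begin{itemize}[label={}]
\item the curvature and winding contributions give $|f'(0)|^{\frac{R^2k^2-Q^2}{2}}$ and the shift $\i Q\log\frac{|f'|}{|f|}$;
\item the GMC changes from $g_f$ to the flat metric, which produces the weight $|x|^{\beta Q}$.
\end{itemize}
Since the winding part is a pure phase/monomial, the computation is the real one with $\gamma\to\i\beta$, up to the regularisation on separating families. That regularisation is harmless because it changes the action only by $2\pi R\Zb$.

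\textbf{Step 2 (boundedness and semigroup).} For boundedness, we show the amplitude kernel is Hilbert--Schmidt. The bound $|e^{-\mu\int M_\beta}|\le e^{|\mu|\,|\int M_\beta|}$ is useless without positivity. Instead we expand the exponential in series and use moment estimates of imaginary GMC, namely $\Eb|\int_{\Ab_f} M_\beta|^{2p}\le C^p(p!)^{\beta^2/2}$-type bounds, which are available for $\beta^2<2$. Together with the exponential decay in $k$ and the Gaussian factor $|f'(0)|^{R^2k^2/2}$ (note $|f'(0)|<1$), this gives summability, and the zero mode $c$ lives on a compact circle. The homomorphism property then follows from (2) and the gluing axiom applied to $\Ab_{f_1}\cup f_1(\Ab_{f_2})=\Ab_{f_1\circ f_2}$. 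For this we need the cocycle identity $W(f_1\circ f_2,\cdot)=W(f_1,\cdot)+W(f_2,\cdot)$ modulo the conformal anomaly; alternatively we verify it directly from the Markov property of $X_\Db$.

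\textbf{Step 3 (Feynman--Kac).} The main obstacle is (3), because $\Hbf_\v$ is not self-adjoint and quadratic-form methods fail. First we differentiate $t\mapsto\Tbf_{f_t}F$ at $t=0$ for $F$ in the core $\Ec$ of smooth cylinder functionals. Itô/Girsanov-type computations, as in the real case, identify the free part $\sum v_n\Lbf_n^0+\ol{v_n}\wt\Lbf^0_n$. The GMC contributes the derivative of $\mu\int_{\Ab_{f_t}}$, which is $t$ times the boundary GMC weighted by $\Re(\zb v)$, producing the interaction term in \eqref{eq_L}. Strong continuity follows from the bounds of Step 2 uniformly in small $t$. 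The coercivity condition \eqref{omega_condition} is then used to show that the free generator $\Hbf^0_\v$ dominates: we treat the boundary GMC potential $V$ as relatively bounded with respect to $(\Hbf^0_\v)^{1/2}$ via imaginary-GMC Sobolev estimates of the form $\|VF\|\le\epsilon\|\Hbf^0_\v F\|+C_\epsilon\|F\|$ on $\Ec$. A Kato-type perturbation argument for m-sectorial generators then shows that $\Hbf_\v$ is closable with closure generating a $C_0$-semigroup. Finally, since $\Tbf_{f_t}$ is a $C_0$-semigroup (by (1), $f_{t+s}=f_t\circ f_s$) whose generator extends $\Hbf_\v|_\Ec$, and $\Ec$ is shown invariant enough to be a core, uniqueness of generators gives $\Tbf_{f_t}=e^{-t\Hbf_\v}$. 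If $V$ is not relatively bounded, we fall back on a Lumer--Phillips argument using the dissipativity estimate $\Re\la\Hbf_\v F,F\ra\ge -C\|F\|^2$ coming from the contraction bound $\|\Tbf_{f_t}\|\le e^{Ct}$.
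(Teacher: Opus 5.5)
Your Steps 1 and 2 essentially follow the paper's route: free-field gluing of $\Ab_f$ with $f(\Db)$, a Girsanov transform of the curvature term, the determinant ratio via the BFK and Polyakov formulas, and then gluing plus additivity of $W$ for the homomorphism property. One correction there: the bound $|e^{-\mu\int M_\beta}|\le e^{|\mu|\,|\int M_\beta|}$ is not useless. The variable $|\int M_\beta|$ has finite exponential moments uniformly in the real boundary data, and this is exactly what gives the pointwise domination of the kernel by the free one.

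The genuine gap is in Step 3, where you identify $\Tbf_{f_t}$ with $e^{-t\Hbf_{\v}}$ by arguing that ``the generator of $\Tbf_{f_t}$ extends $\Hbf_{\v}|_{\Ec}$ and $\Ec$ is a core''. Take $1\le\beta^2<2$, $\mu\neq0$ and $F=\mathbf 1_{k=0}\in\Ec$. Then $\Hbf^0_{\v}F\in L^2$, but the form $G\mapsto\lim_{\delta\to0}\la V^{(\delta)}_{\v}F,G\ra$ is not represented by any $L^2$ function: its Wiener-chaos components have total squared norm $\iint|e^{\i\theta}-e^{\i\theta'}|^{-\beta^2}\,\d\varrho_{\v}(\theta)\,\d\varrho_{\v}(\theta')=\infty$. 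Hence $\Hbf_{\v}F\in\Dc'(\wt{\Qc}^0)\setminus L^2$, so $\Ec\not\subset\Dc(\Hbf_{\v})$, and $t^{-1}(\Tbf_{f_t}F-F)$ converges only weakly, tested against $G\in\Ec$. That weak limit is all the paper's lemma asserts.

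Consequently the strong generator of $\Tbf_{f_t}$ does not contain $\Ec$ in its domain, and the core/uniqueness-of-generators argument cannot be run. The estimate $\|VF\|\le\epsilon\|\Hbf_{\v}^0F\|+C_\epsilon\|F\|$ you propose is false for the same reason. Even for $\beta^2<1$ you give no reason why $\Ec$, which $\Tbf_{f_t}$ does not preserve, should be a core. The Lumer--Phillips fallback does not help: dissipativity without a range condition does not give generation, and deriving it from $\|\Tbf_{f_t}\|\le e^{Ct}$ presupposes the identification you are proving. That bound also does not follow from your Hilbert--Schmidt estimates, which blow up as $t\to0$, so your strong-continuity claim is unsupported as well.

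What is missing is the mechanism the paper uses to get around this. Non-self-adjointness is not an obstacle to form methods: $\Hbf_{\v}$ is built from the sectorial form $\Qc_{\v}+N$ (\cref{HilleYosida_lemma}). The key analytic input is the $\delta$-uniform form bound of \cref{V_v_estimate}:
\[|\la V^{(\delta)}_{\v}F,G\ra|\le C(N+Q^2/2)^{\frac{\beta^2}{2}-1}\sup_\theta|\Re(e^{-\i\theta}v(e^{\i\theta}))|\,\|F\|_N\|G\|_N,\qquad \|F\|_N^2:=N\|F\|_{L^2}^2+\wt{\Qc}^0(F,F).\]
Since $\frac{\beta^2}{2}-1<0$, this gives an arbitrarily small relative bound. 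It is proved with the Cotlar--Stein lemma from $\|e^{-s\Hbf^0}V^{(\delta)}_{\v}e^{-t\Hbf^0}\|\lesssim(1-e^{-s})^{-\beta^2/4}(1-e^{-t})^{-\beta^2/4}$, via Green's function estimates on annuli. This is the correct, form version of your ``relative boundedness with respect to $(\Hbf^0_{\v})^{1/2}$'', and it must actually be proved.

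The identification then goes through a regularization rather than a core:
\begin{itemize}
\item The arc-averaged $V^{(\delta)}_{\v}$ is a bounded multiplication operator on $L^2(\DT)$, so Trotter's formula gives the Feynman--Kac representation of $e^{-t\Hbf^{(\delta)}_{\v}}$.
\item A $\delta$-uniform exponential-moment bound for the time-integrated regularized chaos along the flow (white-noise martingale, \cref{lemma_gmc_delta_exp_bound}) yields $L^2$ convergence of $\mathbf R^{(\delta)}_{\v,\lambda}F$ to $\mathbf R_{\v,\lambda}F=\int_0^\infty e^{-\lambda t}\Tbf_{f_t}F\,\d t$.
\item The uniform form bound gives $\sup_\delta\wt{\Qc}^0(\mathbf R^{(\delta)}_{\v,\lambda}F,\mathbf R^{(\delta)}_{\v,\lambda}F)<\infty$.
\item \cref{lemma_resolvent_technical} then lets one pass to the limit in $\lambda\la\mathbf R^{(\delta)}_{\v,\lambda}F,G\ra+\Qc^{(\delta)}_{\v}(\mathbf R^{(\delta)}_{\v,\lambda}F,G)=\la F,G\ra$, so $\mathbf R_{\v,\lambda}=(\Hbf_{\v}+\lambda)^{-1}$ by uniqueness of the resolvent.
\end{itemize}
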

When proving the Feynman--Kac formula, we have fewer tools compared to the real case where the GMC potential is positive. We comment on the technical difficulty in \cref{rmk_tech_GMC}. In short, we need uniform exponential-moment estimates for a sequence of GMC renormalizations compatible with the Hilbert space $\mathcal H$.

\cref{prop_intro} is proved in \cref{prop_semigroup_as_amplitude,T_f_semigroup,prop_FK_formula}. In the course of the proof, we will consider the positive quadratic form $\widetilde{\mathcal Q}^0(F,G):=\la(\mathbf H^0+Q^2)F,G\ra_{L^2}$, where $\mathbf H^0$ is the free Hamiltonian obtained by setting $\mu=0$ in the formula of $\mathbf H$ in \eqref{eq_def_H}. We refer to \cite[Section 4.2]{Guillarmou2024} which uses an analogous quadratic form. Let $\mathcal D(\widetilde{\mathcal Q}^0 )\subset\mathcal H$ be the domain of $\widetilde{\mathcal Q}^0$, and $\Dc'(\wt{\Qc}^0)\supset\Hc$ its dual. We now state our second result on the differentiability of $\Tbf_f$ in $f$:

\begin{thm}\label{thm_rep_segalsemigroup}
The map $f\in\Ssg\mapsto \Tbf_f$ is differentiable with differential
\[D_\v\Tbf_f = -\Tbf_f\circ\Hbf_{\rm w},\qquad {\rm w} = \tfrac{v(z)}{f'(z)}\,\pa_z,\]
where $\v=v(z)\pa_z$ is any holomorphic vector field on $\Db$ with $v(0)=0$. More precisely, given a differentiable family $t\mapsto f_t\in\Ssg$ with $f_0=f$ and $\pa_t|_{t=0}f_t=\v$ (in the sense that $\pa_t|_{t=0}f_t(z)=v(z)$ for $z\in\Db$), we have
\[\lim_{t\to0}\left\|\frac{\Tbf_{f_t}F-\Tbf_fF}{t} +\Tbf_f\Hbf_{\rm w}F\right\|_{\Dc(\wt{\Qc}^0)}=0,\qquad\fr F\in\Dc(\wt{\Qc}^0).\]
\end{thm}
The proof is done by adapting the differentiability framework of the real theory \cite[Theorem 4.8]{Segal_semigroup} to the non-self-adjoint generators. We propose \cref{assmp_diff} as a modification of \cite[Assumption 4.7]{Segal_semigroup} and prove \cref{prop_general_T_f_diff}.  The main difficulty of the proof is to show that $f\mapsto\mathbf T_f$ is continuous in $f$ (\cref{prop_Tf_continuity}), which again involves estimates of the imaginary GMC.

Note that the operators so far involve only $\mathbf L_n,\widetilde{\mathbf L}_n$ for $n\geq0$, and this is because we are only considering annuli $\Ab_f$ where $f$ extends holomorphically to $\Db$. In the case of a general annulus, we can use the same conformal welding argument as in the real case (\cite[Section 4.3]{Segal_semigroup}) to differentiate its amplitude, which will involve $\mathbf L_n,\widetilde{\mathbf L}_n$ for all $n\in\Zb$; see \cref{annulus_differential} for the precise statement. By the gluing axiom, we then know how to differentiate the amplitude of a general surface under a deformation of its boundary parametrizations; see \cref{fig_cutannuli}. As an application, we derive the Ward identities for the CILT amplitudes:


\begin{figure}[htbp]
\centering
\begin{tikzpicture}[scale=0.55,line join=round, line cap=round, line width=1.1pt]
\def\aI{0.82}   \def\bI{0.28}   \def\dI{0.80}   
\def\aII{0.75}  \def\bII{0.26}  \def\dII{0.80}  
\colorlet{collar}{black!12}

\begin{scope}[shift={(-4.3,1.75)}, rotate=-20]
  \coordinate (T1)  at (0,\aI);        \coordinate (B1)  at (0,-\aI);
  \coordinate (T1m) at (0.45,\aI);     \coordinate (B1m) at (0.45,-\aI);
  \coordinate (T1g) at (1.05,\aI);     \coordinate (B1g) at (1.05,-\aI);
  \fill[collar] (0,\aI) arc[start angle=90, end angle=-90, x radius=\bI, y radius=\aI]
             -- (\dI,-\aI) arc[start angle=-90, end angle=90, x radius=\bI, y radius=\aI] -- cycle;
  \draw (\dI,\aI) arc[start angle=90, end angle=-90, x radius=\bI, y radius=\aI];
  \draw (0,0) ellipse [x radius=\bI, y radius=\aI];
\end{scope}

\begin{scope}[shift={(-4.6,-1.3)}, rotate=3]
  \coordinate (T2)  at (0,\aII);       \coordinate (B2)  at (0,-\aII);
  \coordinate (T2m) at (0.45,\aII);    \coordinate (B2m) at (0.45,-\aII);
  \coordinate (T2g) at (1.05,\aII);    \coordinate (B2g) at (1.05,-\aII);
  \fill[collar] (0,\aII) arc[start angle=90, end angle=-90, x radius=\bII, y radius=\aII]
             -- (\dII,-\aII) arc[start angle=-90, end angle=90, x radius=\bII, y radius=\aII] -- cycle;
  \draw (\dII,\aII) arc[start angle=90, end angle=-90, x radius=\bII, y radius=\aII];
  \draw (0,0) ellipse [x radius=\bII, y radius=\aII];
\end{scope}

\draw plot[smooth, tension=0.7] coordinates {
  (T1) (T1m) (T1g) (-2.0,1.80) (-0.6,1.60) (0.8,1.75) (2.2,2.20) (3.6,2.45)
  (4.7,2.10) (5.45,1.20) (5.65,0.10) (5.35,-1.00) (4.5,-1.95) (3.2,-2.45)
  (1.6,-2.55) (0.0,-2.35) (-1.6,-2.15) (-2.8,-2.05) (B2g) (B2m) (B2) };

\draw plot[smooth, tension=0.6] coordinates {
  (B1) (B1m) (B1g) (-3.1,0.35) (-2.85,0.02) (-3.05,-0.28) (T2g) (T2m) (T2) };

\draw (1.30, 0.10) .. controls (2.10,-0.55) and (3.30,-0.55) .. (4.10, 0.10);
\draw (1.70,-0.08) .. controls (2.35, 0.35) and (3.05, 0.35) .. (3.70,-0.08);
\end{tikzpicture}
\caption{Annular collars cut from the boundary; gluing reduces amplitude variations to variations of these collars.
}
\label{fig_cutannuli}
\end{figure}

\begin{thm}[\textbf{Ward identity}]\label{thm_intro_WARD_CILT}
Let $\Sigma$ be a compact Riemann surface with real-analytic boundary parametrizations $\zetabf=(\zeta_1,\ldots,\zeta_\b)$. Suppose $\b=\b^++\b^-$ where $\zeta_j$ is outgoing for $j=1,\ldots,\b^+$ and incoming for $j=\b^++1,\ldots,\b$. Let $\zbf=(z_1,\ldots,z_\s)$ be distinct marked points on $\Sigma^\circ$, $\vbf=(v_1,\ldots,v_\s)$ with $v_j\in T_{z_j}\Sigma\setminus\{0\}$ for each $j$, $\alphabf=(\alpha_1,\ldots,\alpha_\s)\in\Rb^\s$ electric charges such that $\alpha_j>Q$, $\alpha_jR\in\Zb$ for all $j$, and $\mbf=(m_1,\ldots,m_\s)\in\Zb^\s$ magnetic charges. Let $g$ be an admissible metric on $\Sigma$. We have the amplitude operator $\Ac_{\Sigma,g,\vbf,\alphabf,\mbf,\zetabf}:\Hc^{\otimes\b^-}\to\Hc^{\otimes\b^+}$. Finally, let ${\rm w}$ be a holomorphic vector field on $\Sigma$ (analytic up to the boundary). For each $j=1,\ldots,\b$, let ${\rm w}_j:=\zeta_j^*{\rm w}=-\sum_{n\in\Zb}w_{jn}z^{n+1}\pa_z$, where we extend $\zeta_j$ analytically so that ${\rm w}_j$ is holomorphic on an open neighborhood of $\Tb$. Then the following identity holds:
\begin{align*}
0 &= \sum_{j=1}^{\b^+}\Hbf_{{\rm w}_j}^{(j)}\Ac_{\Sigma,g,\vbf,\alphabf,\mbf,\zetabf} - \sum_{j=1}^{\b^-}\Ac_{\Sigma,g,\vbf,\alphabf,\mbf,\zetabf}\Hbf_{{\rm w}_{\b^++j}}^{(j)}
+ \sum_{j=1}^{\s}\left(\Delta_{\alpha_j,m_j}(\div_gW)(z_j)+W_\vbf^{(j)}\right)\Ac_{\Sigma,g,\vbf,\alphabf,\mbf,\zetabf} \\
&\qquad+ c_\L\left(-\sum_{j=1}^{\b^+}\frac{\Re w_{j0}}{12}+\sum_{j=\b^++1}^{\b}\frac{\Re w_{j0}}{12}-\frac{1}{96\pi}\int_\Sigma 2K_g\div_gW\,\d v_g\right)\Ac_{\Sigma,g,\vbf,\alphabf,\mbf,\zetabf},
\end{align*}
where $\Hbf_{{\rm w}}^{(j)}:=\id_\Hc \otimes \cdots \otimes \underset{j}{\underbrace{\Hbf_{\rm w}}} \otimes \cdots \otimes \id_\Hc$ acts on the $j$-th $\Hc$, $W={\rm w}+\bar{{\rm w}}$, and $W_\vbf^{(j)}$ denotes differentiation with respect to $v_j$ in the direction of $W$ at $z_j$. Here $\Hbf_{\rm w}$ is defined as in \eqref{intro_H_v}, summing over $n\in\Zb$.
\end{thm}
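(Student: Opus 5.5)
The plan is to differentiate the trivial family of surfaces obtained by flowing $\Sigma$ along the real vector field $W={\rm w}+\bar{\rm w}$, and to compute this zero derivative in two ways, following the real Liouville argument of \cite[Section 5]{Segal_semigroup}. Let $\psi_t$ be the local flow of $W$. Since ${\rm w}$ is holomorphic, $\psi_t$ is a biholomorphism from a slightly shrunken or enlarged copy of $\Sigma$ onto its image. Consider the surface $(\Sigma,\psi_t^*g,\psi_t^*\vbf,\alphabf,\mbf,\psi_t^{-1}\circ\zetabf)$ with points $\psi_t^{-1}(z_j)$. This is the pullback of $(\psi_t(\Sigma),g,\dots)$ under $\psi_t$, after extending $\Sigma$ slightly past its analytic boundary. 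Diffeomorphism invariance therefore identifies its amplitude with that of the original data, but on the moved surface. We rewrite that moved surface in a second way: keep the points and the metric and move only the boundary parametrizations, namely $\zeta_j^t:=\psi_t\circ\zeta_j$. Conformal covariance then compensates for the change of metric and of tangent vectors. Equating the $t$-derivatives at $t=0$ of the two descriptions yields the identity.

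The first step is the boundary variation. Around each boundary circle we cut off a thin annular collar $\zeta_j(\{r\le|z|\le1\})$, as in \cref{fig_cutannuli}. By the gluing axiom, $\Ac_{\Sigma,g,\ldots,\zetabf^t}$ is the composition of the amplitude of the fixed core surface with the collar annuli whose outer parametrization is deformed by $\zeta_j^*\psi_t$. The collar amplitudes are differentiated using \cref{annulus_differential}, i.e.\ the conformal welding argument combined with \cref{thm_rep_segalsemigroup} and \cref{prop_intro}(2). This produces $\Hbf^{(j)}_{{\rm w}_j}$ with all modes $n\in\Zb$. It acts on the left for outgoing circles and on the right, with the opposite sign, for incoming ones, because the orientation of $\zeta_j$ flips the sign of the generator. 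The anomaly $e^{-\frac{c_\L}{12}W(f,g_f)}$ of \cref{prop_intro}(2) contributes the terms $\mp\frac{c_\L}{12}\Re w_{j0}$ once we differentiate $W(f_t,g_{f_t})$ at $t=0$.

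The second step is the interior contribution. The pulled-back metric $\psi_t^*g=e^{\omega_t}g$ has $\dot\omega=\div_gW$ at $t=0$. The Weyl anomaly for a conformal change of metric (\cite{CILT,BCILT}), at first order, produces
\[\frac{c_\L}{96\pi}\int_\Sigma\big(|d\dot\omega|^2\text{-type terms}+2K_g\dot\omega\big)\d v_g,\]
where only the linear term $-\frac{c_\L}{96\pi}\int 2K_g\div_gW\,\d v_g$ survives. It also produces the factor $-\sum_j\Delta_{\alpha_j,m_j}\dot\omega(z_j)$ from the vertex insertions. Moving the marked points is absorbed by the change of the tangent vectors $v_j$. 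For the magnetic insertions, the electromagnetic operators depend on $v_j$ through the spin, and differentiating the covariance law in $v_j$ gives $W^{(j)}_\vbf$. We must check that the separating-family regularization produces no extra term; it changes only by multiples of $2\pi R$, and $\alpha_jR\in\Zb$ makes these phases trivial. Collecting everything with the correct signs gives the stated identity.

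The main obstacle is making the first step rigorous. The collar variation is a derivative in the weak topology of $\Dc(\wt\Qc^0)$ to $\Dc'(\wt\Qc^0)$, so composing with the core amplitude requires that the core amplitude map $\Dc'(\wt\Qc^0)$-valued quantities into $L^2$. Equivalently, amplitudes of surfaces with a collar must have range in $\Dc(\wt\Qc^0)$. I would first prove this smoothing property using the semigroup $e^{-t\Hbf}$ from \cref{prop_intro}(3), by inserting a small flat collar $\Tbf_{e^{-\epsilon}z}$. The remaining argument then reduces to the differentiability statement of \cref{thm_rep_segalsemigroup} and the chain rule.
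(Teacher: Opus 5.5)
Your proposal is essentially the paper's proof: constancy of the transported amplitude by diffeomorphism invariance, a Weyl change of metric, gluing thin collars onto a fixed core $\Sigma_0$, \cref{annulus_differential} on each collar, the derivative $W^{(j)}_\vbf$ on the core (which includes moving the base points), and collection of the $S^0_\L$ terms. The one step to repair is ``keep the metric $g$'': with parametrizations $\psi_t\circ\zeta_j$ the metric $g$ is no longer admissible, so, as the paper does, use $g_t$ equal to $g$ on $\Sigma_0$ and to admissible metrics on the moving collars; then $(\psi_t)_*g=e^{\rho_t}g_t$ with $\rho_t$ vanishing near the boundary and $\pa_t|_{t=0}\rho_t=-\div_gW$ on $\Sigma_0$ (which also fixes your inconsistent signs for $\dot\omega$), and the collar anomalies recombine with $S^0_\L(\psi_t(\Sigma),g_t,(\psi_t)_*g)$ into $S^0_\L(\psi_t(\Sigma),g,(\psi_t)_*g)$, whose derivative is $-\frac{1}{96\pi}\int_\Sigma 2K_g\div_gW\,\d v_g$.
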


This is proved as \cref{ward_cilt}.

\begin{remark}
The dependence of $\Ac_{\Sigma,g,\vbf,\alphabf,\mbf,\zetabf}$ on $\vbf$ records the spin $R(\alpha_j-Q)m_j$ at each $z_j$. More precisely, if for $\thetabf=(\theta_1,\ldots,\theta_\s)\in\Rb^\s$ we write $e^{\i\thetabf}\vbf=(e^{\i\theta_1}v_1,\ldots,e^{\i\theta_\s}v_\s)$, then we have (\cite[Theorem 1.2(3)]{BCILT})
\begin{equation*}
    \Ac_{\Sigma,g,e^{\i\thetabf}\boldsymbol{\v},\alphabf,\mbf,\zetabf}=e^{\i R\sum_{j=1}^\s(\alpha_j-Q)m_j\theta_j}\Ac_{\Sigma,g,\vbf,\alphabf,\mbf,\zetabf}.
\end{equation*}
In the Ward identity above, the tangent vectors $\vbf$ are also deformed by the flow $f_t$ generated by $\mathrm{w}$. If each $(f_t)_*v_j=e^{\lambda_j(t)}P_{g,j,t}v_j$ where $P_{g,j,t}:T_{z_j}\Sigma\to T_{f_t(z_j)}\Sigma$ is the $g$-parallel transport along $t\mapsto f_t(z_j)$ and $\lambda_j(t)\in\Cb$ depends continuously on $t$, then writing $\theta_j:=\Im\lambda_j$, we have $\partial_t|_{t=0} \theta_j(t)=-\frac12\div_g(JW)(z_j)$, where $JW:=\i \mathrm{w}-\i\bar{\mathrm{w}}$. Thus
\begin{equation*}
    W_\vbf^{(j)}\Ac_{\Sigma,g,\vbf,\alphabf,\mbf,\zetabf}=-\frac\i2 R(\alpha_j-Q)m_j (\div_g(J W))(z_j)\Ac_{\Sigma,g,\vbf,\alphabf,\mbf,\zetabf}+W_\zbf^{(j)}\Ac_{\Sigma,g,\vbf,\alphabf,\mbf,\zetabf},
\end{equation*}
where $W_\zbf^{(j)}$ denotes differentiation with respect to the marked point $z_j$ while parallel transporting $v_j$.
\end{remark}

Finally, we also develop the analogous theory for BCILT, i.e., the case with (Neumann) boundary. In particular, we study the quantization of Segal's semigroup of semiannuli and use this to prove the Ward identities for general BCILT amplitudes. The results are summarized in \cref{sec_BCILT}; see \cref{thm_BCILT_semigroup,ward_bcilt} for the statements.

\subsection*{Organization of the paper}
In \cref{Sec_background}, we introduce the background notations and recall some definitions and facts that are used in the paper. \cref{Sec_segal_quant} defines the quantization of Segal's semigroup and establishes its relation with the amplitudes of annuli. \cref{sec_Virasoro} then studies the generators of this quantization and proves the Feynman--Kac formula; this is done first for the free field, then for the Liouville case. \cref{sec_con_diff} is devoted to proving the differentiability of amplitudes, which is applied in \cref{sec:ward} to prove the Ward identities. Finally, we give a rapid treatment of the analogous theory for BCILT in \cref{sec_BCILT}.

\subsection*{Acknowledgments}
We would like to thank C. Guillarmou and R. Rhodes for introducing us to the topic, and C. Guillarmou, Yulai Huang and R. Rhodes for many helpful discussions. Y. Xiao acknowledges that this project has received funding from the European Union’s Horizon 2020 research and innovation programme under the Marie Skłodowska-Curie grant agreement No 101126554, as well as the support of ANR-21-CE40-0003. Y. Xie acknowledges support from the Jean-Pierre Aguilar scholarship from the Fondation CFM pour la Recherche. 
This work was funded by a Simons Foundation International grant administered by the Simons Foundation [SFI-MPS-PP-00012621-04, R.R.] and  [SFI-MPS-PP-00012621-04, C.G.].

The authors acknowledge the use of AI tools, mainly ChatGPT and Claude, in the proof of elementary Green's function estimates in \cref{lemma_lowerB_DGreen,lemma_upper_Green_DOZZ,lem_AI3}, a small functional analysis deduction in \cref{lemma_resolvent_technical}, and the continuity of the compactified boson partition function (item 1 in \cref{lem_continuity_tech}). AI has also provided many useful suggestions when verifying and polishing the paper.
\section{Background and notations}\label{Sec_background}

For a Hilbert space $\Hc$, we denote by $\Lc(\Hc)$ the space of bounded linear operators on $\Hc$.

We denote by $\Db:=\{z\in\Cb:|z|\leq1\}$ the closed unit disk and by $\Tb:=\pa\Db=\{z\in\Cb:|z|=1\}$ the unit circle. With $\Hb:=\{z\in\Cb:\Im z \geq0\}$ denoting the closed upper half plane, we denote by $\Db^+:=\Db\cap\Hb$ the closed unit half disk and by $\Tb^+:=\Tb\cap\Hb$ the unit half circle. A map defined on a closed set in a Riemann surface is said to be holomorphic if it extends to a holomorphic map on an open neighborhood. In particular, we shall speak of holomorphic maps on $\Db$ and $\Tb$. These will extend to holomorphic maps on $\{|z|<1+\delta\}$ and $\{(1+\delta)^{-1}<|z|<1+\delta\}$, respectively, for some $\delta>0$.

Throughout the paper, we fix a compactification radius $R>0$ and write $\TR:=\Rb/2\pi R\Zb$. Let us recall the measure spaces of $\TR$-valued boundary fields defined in \cite{BCILT}. For the unit circle $\Tb$, we consider the space
\[\DT = \TR \times \Dc_0'(\Tb,\Rb) \times \Zb,\]
with the corresponding decomposition of an element therein
\[\widetilde{\varphi}^k = c + \varphi + kR\arg,\]
understood intuitively as a $\Tb_R$-valued random distribution on $\Tb$ with winding number $k$. (In the notation of \cite{CILT}, the space is $H^s_{\mathbb Z}(\mathbb R)/\langle\tau_{R}\rangle\cong \TR \times H^s_0(\Tb,\Rb) \times \Zb$, defined in terms of the Sobolev space $H^s$ of fixed negative order $s<0$. This differs from our space only by a set of measure zero.) Here the mean-zero component $\varphi$ is sampled from the circle GFF (viewing functions on $\Tb$ as $2\pi$-periodic functions on $\Rb$)
\[\varphi(\theta)=\sum_{n>0}\frac{1}{\sqrt{n}}\big(x_n\cos(n\theta)-y_n\sin(n\theta)\big)=\sum_{n\neq0}\varphi_ne^{\i n\theta},\]
where the $x_n,y_n$ are i.i.d.\ standard Gaussians, $\varphi_n=\frac{1}{2\sqrt{n}}(x_n+\i y_n)$, $\varphi_{-n}=\ol{\varphi_n}$ for $n>0$. We put the induced Gaussian probability measure
\[\d\varphi=\bigotimes_{n>0}\frac{1}{2\pi}\,e^{-\frac{1}{2}(x_n^2+y_n^2)}\,\d x_n\,\d y_n\]
on $\Dc_0'(\Tb,\Rb)$, the Lebesgue measure on $\TR$, and the counting measure on $\Zb$, so as to obtain a product measure $\d\wt{\varphi}^k=\d c\,\d\varphi\,\d k$ on $\DT$. Similarly, for the unit half circle $\Tb^+$, we consider
\[\DTh=\TR\times\Dc_0'(\Tb^+,\Rb),\]
with the corresponding decomposition
\[\wt{\varphi}^\h=c+\varphi^\h,\]
where $\h$ stands for ``half''. The mean-zero component $\varphi^\h$ is sampled from the GFF
\[\varphi^\h(\theta)=\sum_{n>0}\sqrt{\frac{2}{n}}\,x_n^\h\cos(n\theta)=\sum_{n\neq0}\varphi_n^\h e^{\i n\theta},\]
where the $x_n^\h$ are i.i.d.\ standard Gaussians, $\varphi_n^\h=\frac{1}{\sqrt{2n}}x_n^\h$, $\varphi_{-n}^\h=\varphi_n^\h$ for $n>0$. This defines a measure $\d\wt{\varphi}^\h=\d c\,\d\varphi^\h$ on $\DTh$ with
\[\d\varphi^\h=\bigotimes_{n>0}\frac{1}{\sqrt{2\pi}}\,e^{-\frac{1}{2}(x_n^\h)^2}\,\d x_n^\h.\]
We shall be concerned with the Hilbert spaces $\Hc:=L^2(\DT)$ and $\Hc^+:=L^2(\DTh)$, which are the state spaces of BCILT. Note that one can view $\DTh$ naturally as the subspace of \textit{even} elements $u\in\DT$, i.e., they satisfy $u\circ\tau=u$ for $\tau:\theta\mapsto-\theta$.

For $\widetilde{\varphi}^k\in\DT$, we denote $P\widetilde{\varphi}^k = c + P\varphi + kR\arg$ as a $\TR$-valued field on $\Db$, where the latter $P=P_\Db$ is the Poisson operator on $\Db$. Similarly, for $\widetilde{\varphi}^\h\in\DTh$, $P\widetilde{\varphi}^\h = c + P\varphi^\h$ is an even field on $\Db$, so it can be viewed as a field on $\Db^+$ with Neumann boundary condition at $\Db\cap\Rb\subset\pa\Db^+$. We shall also need the Dirichlet-to-Neumann operator $\Dbf$ on $\Tb$ defined by $\Dbf e^{\i n\theta}=|n|e^{\i n\theta}$ for $n\in\Zb$. See \cite[Section 2]{BCILT} for more background material on these operators.

We recall briefly the definition of the Gaussian Free Field (GFF) and the imaginary Gaussian Multiplicative Chaos (GMC) on a general surface $\Sigma$ with possibly both Dirichlet and Neumann boundaries. More precisely, each of its Neumann boundary $\paN\Sigma$ and Dirichlet boundary $\paD\Sigma$ consists of circles and semicircles, and each boundary component of $\Sigma$ is either a Neumann circle, a Dirichlet circle, or a piecewise real-analytic circle consisting of Neumann and Dirichlet semicircles, where the intersection between a Neumann semicircle and a Dirichlet semicircle is a corner of $\Sigma$. The Dirichlet circles and semicircles all come with real-analytic parametrizations using $\Tb$ and $\Tb^+$, respectively. Such a surface (together with marked points) is called an \textit{extended surface}, see \cite[Definition 1.1]{BCILT} for the precise definition. Let $g$ be an admissible conformal metric on $\Sigma$. We denote by $\Delta_g$ the $g$-Laplacian on $\Sigma$, with the appropriate mixed boundary conditions at $\pa\Sigma$. If it has orthonormal eigenbasis $(e_j)_{j\ge1}$ with eigenvalues $(\lambda_j)_{j\geq1}$ (with mean zero if $\Sigma$ has no Dirichlet boundary), then the GFF on $(\Sigma,g)$ is defined by
\begin{equation*}
    X_g:=\sqrt{2\pi}\sum_{j\ge1}a_j\frac{e_j}{\sqrt{\lambda_j}},
\end{equation*}
where $a_j$ ($j\geq1$) are i.i.d.\ standard Gaussian random variables. This sum converges almost surely as a random distribution. Denoting by $G_g(\cdot,\cdot)$ the Green's function on $(\Sigma,g)$ (with the same boundary conditions, and mean zero if $\paD\Sigma=\varnothing$), we have 
\begin{equation*}
    \mathbb E[X_g(x)X_g(x')]=2\pi G_g(x,x').
\end{equation*}
Next we turn to the bulk and boundary imaginary GMCs, used in the construction of (B)CILT and studied in \cite{Lacoin2015}. The bulk GMC on $(\Sigma,g)$ is defined by
\begin{equation*}
    M_{\beta}(X_g,\d v_g):=\lim_{\epsilon\to 0} \epsilon^{-\frac{\beta^2}{2}}e^{\i\beta X_{g,\epsilon}(x)}\d v_g,
\end{equation*}
where $X_{g,\epsilon}$ is an appropriate regularization of the GFF $X_g$ at scale $\epsilon$ with respect to the metric $g$; for example, we can take it to be the average over the $g$-geodesic circle of radius $\epsilon$. We also have the boundary GMC on the Neumann boundary $\paN\Sigma$, defined by
\begin{equation*}
    L_{\beta}(X_g,\d \ell_g):=\lim_{\epsilon\to 0} \epsilon^{-\frac{\beta^2}{4}}e^{\i\frac\beta2 X_{g,\epsilon}(x)}\d \ell_g.
\end{equation*}
For $\beta^2<2$, $M_\beta$ and $L_\beta$ converge in $L^2$ and weakly in the space of distributions to a nontrivial distribution of order $2$ \cite[Theorem 3.1]{Lacoin2015}. Specifically, there exists a random variable $D_{\Sigma}$ with $\mathbb E|D_{\Sigma}|^2<\infty$ such that for all $f\in C^{\infty}(\Sigma)$,
\begin{equation*}
    \left|\int_{\Sigma}f(x)M_\beta(X_g,\d v_g(x))+\int_{\paN\Sigma}f(x)L_\beta(X_g,\d\ell_g(x))\right|\leq D_{\Sigma}(\sup_{\Sigma}|f|+\sup_{\Sigma}|\Delta_gf|+\sup_{\paN\Sigma}|\partial_\nu f|).
\end{equation*}

There are many alternative ways of defining the $\epsilon$-regularization $X_{g,\epsilon}$, and in the proof of \cref{lemma_gmc_delta_exp_bound} below, we will use the white-noise regularization. Assume that $\paD\Sigma\not=\emptyset$. Then we take
\begin{equation*}
    X_{g,\delta}(x):=\sqrt{2\pi}\int_{\delta^2}^{\infty}\int_{\Sigma}p_{t/2}(x,y) W(\d t,\d y),
\end{equation*}
where $p_{t/2}(x,y)$ is the heat kernel of $\Delta_g$, and $W$ is the white noise on $\mathbb R_+\times\Sigma$ with intensity $\d t\otimes \d v_g(x)$. We can also compute the covariance of $X_{g,\delta}$:
\begin{equation*}
    \mathbb E[X_{g,\delta}(x)X_{g,\delta'}(x')]=2\pi\int_{\delta^2\vee\delta'^2}^\infty p_t(x,x')\,\d t.
\end{equation*}


For the definition of the BCILT amplitude of a general extended surface, we refer to \cite[Section 7.2]{BCILT}. Here we recall only the definition for annuli, since this will be used in \cref{sec_con_diff}. Consider a holomorphic embedding $f:\Tb\to\Db^\circ$ such that $f(\Tb)$ winds around $0$. Let $\Ab_f$ be the annulus bounded between $\Tb$ and $f(\Tb)$, with boundary parametrizations $\zetabf_f=(\id_\Tb,f|_\Tb)$. Let $g_f$ be an admissible metric on $\Ab_f$. The amplitude $\Ac_{\Ab_f,g_f,\zetabf_f}$ is a function of $\widetilde{\boldsymbol{\varphi}}^k=(\widetilde{{\varphi}}^k_1,\widetilde{{\varphi}}^k_2)\in\DT^2$ with $k\in\Zb$, since a field on $\Ab_f$ necessarily has the same winding number on $\Tb$ and $f(\Tb)$. Recall the decompositions $\wt\varphi_1^k=c_1+\varphi_1+kR\arg$, $\wt\varphi_2^k=c_2+\varphi_2+kR\arg$ with $c_1,c_2\in\TR$, $\varphi_1,\varphi_2\in\Dc_0'(\Tb,\Rb)$. Let us take representatives of $c_1,c_2$ in $\Rb$, denoted in the same way. For $j\in\Zb$, define $\cbf_j:=(c_1+2\pi Rj,c_2)$ and the $1$-form $\nu_{f,k}$ on $\Ab_f$
\begin{equation*}
    \nu_{f,k}:= kR\, d\arg+\,d P(0,k R(\arg f^{-1}-\arg)|_{f(\mathbb T)}),
\end{equation*}
where $P=P_{\Ab_f}$ is the Poisson operator on $\Ab_f$, i.e., $P(h_1,h_2)$ is the unique harmonic function on $\Ab_f$ that equals $h_1$ on $\mathbb T$ and $h_2$ on $f(\mathbb T)$. To define the curvature terms, we also need to fix a separating family (\cite[Section 2.5]{BCILT}, also called a defect graph in \cite{CILT}) on $\Ab_f$. 
Let $d_f\subset\mathbb A_f$ be a smooth path from $f(1)$ to $1$ such that it is orthogonal to $f(\Tb)$ at $f(1)$ and orthogonal to $\Tb$ at $1$ (i.e., tangent to $\Rb$). Then we set $I(\nu_{f,k})(z):=\int_1^z\nu_{f,k}$ by integrating along any path $\gamma:[0,1]\to\Ab_f$ with $\gamma(0)=1$, $\gamma(1)=z$, intersecting $d_f$ only at $\gamma(0)=1$ and such that $\Im\gamma(t)\geq0$ for $t$ close to $0$. This is well-defined as a single-valued function $I(\nu_{f,k}):\Ab_f\setminus d_f\to\Rb$. Denote by $\Delta_{\mathbb A_f,g_f,\mathrm{D}}$ the Dirichlet Laplacian on $(\Ab_f,g_f)$ and $\mathbf{D}_{\Ab_f}$ the Dirichlet-to-Neumann operator on $\mathbb A_f$ (see for instance \cite[Section 2.4]{BCILT}). The annulus amplitude $\Ac_{\Ab_f,g_f,\zetabf_f}$ is defined by
\begin{equation*}
\begin{split}
    \Ac_{\Ab_f,g_f,\zetabf_f}(\widetilde{\boldsymbol{\varphi}}^k)={}&\det(\Delta_{\mathbb A_f,g_f,\mathrm{D}})^{-\frac12}
    e^{-\frac1{4\pi} \int_{\Ab_f}|\nu_{f,k}|^2_{g_f}\d v_{g_f}}
    \sum_{j\in\mathbb Z} e^{-\frac1{2}\la\cbf_j+\boldsymbol{\varphi},(\mathbf{D}_{\Ab_f}-\mathbf D)(\cbf_j+\boldsymbol{\varphi})\ra-\frac{1}{2\pi}\int_{\partial\mathbb A_f}P(\cbf_j+\boldsymbol{\varphi})\,\nu_{f,k}(\nbf)\,\d\ell_{g_f}}\\
    &\times\mathbb E_{\widetilde{\boldsymbol{\varphi}}^k}\left[e^{-\frac{\i Q}{4\pi}\int_{\Ab_f}^{\mathrm{reg}}K_{g_f}(I(\nu_{f,k})+P(\cbf_j+{\boldsymbol{\varphi}})+X_{g_f})\,\d v_{g_f}-\mu\int_{\Ab_f}M_\beta(I(\nu_{f,k})+P(\cbf_j+{\boldsymbol{\varphi}})+X_{g_f},\d v_{g_f})}\right],
\end{split}
\end{equation*}
where $\nbf$ is the outward unit normal vector along $\pa\mathbb A_f$ and $X_{g_f}$ is the Dirichlet GFF on $(\Ab_f,g_f)$. Here the curvature term $\int_{\Ab_f}^{\mathrm{reg}}K_{g_f}I(\nu_{f,k})\,\d v_{g_f}$ is regularized as in \cite[Section 5]{BCILT}. Explicitly, it is given by
\begin{equation*}
    \int_{\Ab_f}^{\mathrm{reg}}K_{g_f}I(\nu_{f,k})\,\d v_{g_f}:= \int_{\Ab_f\setminus d_f}K_{g_f}I(\nu_{f,k})\,\d v_{g_f}-4\pi kR\int_{d_f}k_{g_f}\d \ell_{g_f},
\end{equation*}
where $k_{g_f}$ is the geodesic curvature and $\d\ell_{g_f}$ is the length measure under metric $g_f$, both along $d_f$.

\section{Quantization of Segal's semigroup}
\label{Sec_segal_quant}
\begin{figure}[ht]
\centering

\begin{subfigure}[b]{0.47\textwidth}
\centering
\begin{tikzpicture}[scale=2.0,
  orient/.style={
    postaction={
      decorate,
      decoration={markings, mark=at position #1 with {\arrow{>}}}
    }
  },
  innerorient/.style={
    postaction={
      decorate,
      decoration={
        markings,
        mark=at position #1 with {
          \begin{scope}[rotate=-15]
            \arrow{>}
          \end{scope}
        }
      }
    }
  }
]
  \fill[gray!25] (0,0) circle (1);

  \fill[white]
    plot[smooth cycle, tension=0.95] coordinates {
      (0.42,0.02) (0.34,0.16) (0.39,0.30) (0.18,0.32)
      (0.03,0.40) (-0.12,0.32) (-0.30,0.35) (-0.34,0.17)
      (-0.42,0.01) (-0.33,-0.15) (-0.35,-0.30) (-0.16,-0.31)
      (0.00,-0.40) (0.15,-0.31) (0.34,-0.28) (0.31,-0.12)
    };

  \draw[thick, orient=.25] (0,0) circle (1);

  \draw[thick, innerorient=.2]
    plot[smooth cycle, tension=0.95] coordinates {
      (0.42,0.02) (0.34,0.16) (0.39,0.30) (0.18,0.32)
      (0.03,0.40) (-0.12,0.32) (-0.30,0.35) (-0.34,0.17)
      (-0.42,0.01) (-0.33,-0.15) (-0.35,-0.30) (-0.16,-0.31)
      (0.00,-0.40) (0.15,-0.31) (0.34,-0.28) (0.31,-0.12)
    };

  \node at (0,0.70) {$\mathbb A_f$};
  \node at (0,0) {$f(\mathbb D)$};
  \node[above right] at (0.68,0.68) {$\mathbb T$};
  \node[below] at (0.20,-0.35) {$f(\mathbb T)$};
\end{tikzpicture}
\vspace{0.2cm}
\caption{The annulus \(\mathbb A_f=\Db\setminus f(\Db^\circ)\) associated to $f\in\Ssg$}
\label{fig:segal-annulus}
\end{subfigure}
\hspace{0.02\textwidth}
\begin{subfigure}[b]{0.47\textwidth}
\centering
\raisebox{0.5cm}{%
\begin{tikzpicture}[scale=2.0,
  orient/.style={
    postaction={
      decorate,
      decoration={markings, mark=at position #1 with {\arrow{>}}}
    }
  },
  innerorient/.style={
    postaction={
      decorate,
      decoration={
        markings,
        mark=at position #1 with {
          \begin{scope}[rotate=-10]
            \arrow{>}
          \end{scope}
        }
      }
    }
  }
]
  \fill[gray!25] (-1,0) arc[start angle=180,end angle=0,radius=1] -- cycle;

  \fill[white]
    (0.42,0)
    .. controls (0.42,0.08) and (0.39,0.15) .. (0.35,0.21)
    .. controls (0.29,0.30) and (0.20,0.30) .. (0.13,0.37)
    .. controls (0.04,0.45) and (-0.05,0.37) .. (-0.13,0.34)
    .. controls (-0.23,0.29) and (-0.30,0.37) .. (-0.36,0.25)
    .. controls (-0.41,0.16) and (-0.42,0.08) .. (-0.42,0)
    -- cycle;

  \draw[thick, orient=.5] (1,0) arc[start angle=0,end angle=180,radius=1];
  \draw[thick] (-1,0) -- (1,0);

  \draw[thick, innerorient=.48]
    (0.42,0)
    .. controls (0.42,0.08) and (0.39,0.15) .. (0.35,0.21)
    .. controls (0.29,0.30) and (0.20,0.30) .. (0.13,0.37)
    .. controls (0.04,0.45) and (-0.05,0.37) .. (-0.13,0.34)
    .. controls (-0.23,0.29) and (-0.30,0.37) .. (-0.36,0.25)
    .. controls (-0.41,0.16) and (-0.42,0.08) .. (-0.42,0);

  \draw[thick] (-0.42,0) -- (0.42,0);

  \node at (0,0.70) {$\mathbb A_f^+$};
  \node at (0,0.15) {$f(\mathbb D^+)$};

  \node[above right] at (0.68,0.68) {$\mathbb T^+$};
  \node[right] at (0.29,0.3) {$f(\mathbb T^+)$};

  \node[below] at (-0.70,0) {$[-1,f(-1)]$};
  \node[below] at (0.70,0) {$[f(1),1]$};
\end{tikzpicture}
}

\caption{The semiannulus \(\mathbb A_f^+=\Ab_f\cap\Hb\) associated to $f\in\Ssg^+$}
\label{fig:segal-semiannulus}
\end{subfigure}

\caption{Annuli and semiannuli associated to Segal's semigroups}
\label{fig:segal-annuli}
\end{figure}

The set of holomorphic embeddings $f:\Db\to\Db^\circ$ with $f(0) = 0$ forms a semigroup under composition. This is known as \textit{Segal's semigroup} and we will denote it by $\Ssg$. To an element $f\in\Ssg$, we associate the annulus $\Ab_f:=\Db\setminus f(\Db^\circ)$ with boundary parametrizations $\zetabf_f:=(\id_\Tb,f|_\Tb)$ (\Cref{fig:segal-annulus}). Then composition on $\Ssg$ corresponds to gluing the annuli. Indeed, for $f_1,f_2\in\Ssg$, the annulus $\Ab_{f_1\circ f_2}=\Db\setminus(f_1\circ f_2)(\Db^\circ)=(\Db\setminus f_1(\Db^\circ))\cup f_1(\Db\setminus f_2(\Db^\circ))=\Ab_{f_1}\cup f_1(\Ab_{f_2})$ is obtained by gluing the incoming boundary $f_1(\Tb)$ of $\Ab_{f_1}$ with the outgoing boundary $\Tb$ of $\Ab_{f_2}$ via $f_1|_\Tb$. Thus Segal's semigroup $\Ssg$ can be viewed as a semigroup of annuli.
Using the construction of amplitude operators, the Segal functor of CILT provides a quantization of this semigroup into bounded operators acting on the Hilbert space $\Hc=L^2(\DT)$, which we shall describe in the following paragraphs.

For $f \in \Ssg$, we define an operator $\Tbf_f$ on $L^2(\DT)$ as follows. Analogously to the definition of the elementary function spaces in \cite{BCILT}, let $\Ec(\DT)$ be the linear space spanned by functions $F:\DT\to\Cb$ of the form
\begin{equation}\label{eq_def_E_space}
F(\wt{\varphi}^k) = e^{\i nc/R}\, P(\la\varphi,h_1\ra,\ldots,\la\varphi,h_m\ra)\, G(k),
\end{equation}
where $n\in\Zb$, $m\in\Nb$, $P$ is a complex polynomial, $h_1,\ldots,h_m\in C^\infty(\Tb)$, $G:\Zb\to\Cb$ has finite support. Here the bracket is $\la u,v\ra:=\frac{1}{2\pi}\int_\Tb u\bar{v}\,\d\theta$. This function space is clearly dense in $L^2(\DT)$. For $F\in\Ec(\DT)$, we define $\Tbf_fF:\DT\to\Cb$ by
\begin{equation}\label{eq_def_T_f}
(\Tbf_fF)(\wt{\varphi}^k) := |f'(0)|^{\frac{R^2k^2-Q^2}2} \,\Eb\left[F\left((P\wt{\varphi}^k+X_\Db)\circ f\big|_\Tb + \i Q\log\tfrac{|f'|}{|f|}\big|_\Tb\right)\,e^{-\mu\int_{\Ab_f}|x|^{\beta Q}M_\beta(P\wt{\varphi}^k+X_\Db,\d x)}\right],
\end{equation}
where $X_\Db$ is the Dirichlet GFF on $\Db$ and the expectation is over $X_\Db$, and $M_\beta$ denotes the imaginary GMC. Here the imaginary translation by $\i Q\log\tfrac{|f'|}{|f|}\big|_\Tb$ makes sense since $F$ is analytic in $c$ and polynomial in $\varphi$.

\begin{prop}\label{prop_semigroup_as_amplitude}
Let $f\in\Ssg$. Let $g_\Ab:=|dz|^2/|z|^2$ and $g_f$ an admissible metric on the annulus $(\Ab_f,\zetabf_f)$. Viewing the CILT amplitude $\Ac_{\Ab_f,g_f,\zetabf_f}$ of $(\Ab_f,g_f,\zetabf_f)$ as an operator $L^2(\DT)\to L^2(\DT)$, we have
\[\Tbf_f = \frac{e^{-\frac{c_{\mathrm L}}{12} W(f, g_f)}}{\sqrt{2}\pi} \Ac_{\Ab_f,g_f,\zetabf_f},\]
where
\begin{equation}\label{eq_W_f}
    W(f,g_f) := -\log|f'(0)| - \left\langle\Dbf\log\tfrac{|f'|}{|f|}\big|_\Tb,\log\tfrac{|f'|}{|f|}\big|_\Tb\right\rangle - 12S_{\mathrm L}^0(\Ab_f,g_f,g_\Ab).
\end{equation}
Here the bracket $\la\cdot,\cdot\ra$ on $L^2(\Tb)$ is defined by $\la u,v\ra:=\frac{1}{2\pi}\int_\Tb u\bar{v}\,\d\theta$.
\end{prop}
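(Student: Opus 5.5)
We will prove the identity as an equality of integral kernels, tested against $F\in\Ec(\DT)$ and a second elementary functional $G$. Since both sides are bounded on $L^2(\DT)$ once the identity holds, density of $\Ec(\DT)$ then finishes the argument. The strategy is to go from $\Tbf_f$ to the amplitude in two stages: first move to the flat metric $g_\Ab=|dz|^2/|z|^2$, where the kernel can be read off from the definition of $\Tbf_f$, and then pass to $g_f$ by Weyl covariance. On the amplitude side, the Weyl anomaly formula of \cite{CILT,BCILT} gives
\[
\Ac_{\Ab_f,g_f,\zetabf_f}=e^{\frac{c_\L}{12}S^0_\L(\Ab_f,g_f,g_\Ab)}\Ac_{\Ab_f,g_\Ab,\zetabf_f},
\]
and this accounts for the term $-12S^0_\L$ in $W(f,g_f)$. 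It therefore remains to prove the identity for $g_\Ab$, with $W$ replaced by $-\log|f'(0)|-\la\Dbf\ell,\ell\ra$, where $\ell:=\log\frac{|f'|}{|f|}\big|_\Tb$.

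For the flat case, we decompose the Dirichlet GFF on $\Db$ via the domain Markov property, $X_\Db=P_{\Ab_f}(0,X_\Db|_{f(\Tb)})+X_{\Ab_f}$ on $\Ab_f$. The trace $X_\Db|_{f(\Tb)}$ is pulled back by $f$ to a field on $\Tb$; together with $P\wt\varphi^k\circ f$, it forms the inner boundary field $\wt\varphi_2$, while $\wt\varphi_1=\wt\varphi^k$. We then express the law of $(P\wt\varphi^k+X_\Db)\circ f|_\Tb$, given $\wt\varphi^k$, as a density with respect to $\d\wt\varphi_2$. This is a Gaussian computation. The density is $e^{-\frac12\la\cdot,(\Dbf_{\Ab_f}-\Dbf)\cdot\ra}$ times a ratio of determinants, which equals $\det(\Delta_{\Ab_f,g_\Ab,\D})^{-1/2}$ up to the universal constant $\frac{1}{\sqrt2\pi}$. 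This step uses the Burghelea--Friedlander--Kappeler/Mayer--Vietoris formula and the zero mode in $c$; the constant comes from the Lebesgue normalization of $c$ on $\TR$ and from the sum over lifts $\cbf_j$.

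The winding part is handled by writing $kR\arg$ on $\Db$ as $I(\nu_{f,k})+P_{\Ab_f}(\ldots)$. This yields the factor $e^{-\frac1{4\pi}\int|\nu_{f,k}|^2}$ and the boundary pairing term in the amplitude. The leftover contribution $\frac{R^2k^2}{2}\log|f'(0)|$ should match the prefactor $|f'(0)|^{R^2k^2/2}$. We will verify this from $\arg f-\arg=\Im\log(f/z)$ and $\log|f'(0)|=\frac1{2\pi}\int_\Tb\log|f/z|$ (harmonic extension).

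The curvature and potential terms are handled next. Since $g_\Ab$ is flat, $K_{g_\Ab}=0$ on $\Ab_f$, but the boundary $f(\Tb)$ has geodesic curvature in $g_\Ab$ relative to the parametrization. The imaginary shift $\i Q\ell$ in the argument of $F$, together with $|x|^{\beta Q}$ in the GMC, should come from the identity $M_\beta(\cdot,\d x)$ in the Euclidean metric versus $\d v_{g_\Ab}=|x|^{-2}\d x$. It should also come from the regularized curvature term supported on the separating family $d_f$ and on $f(\Tb)$. Concretely, we use $\epsilon^{-\beta^2/2}$ scaling to get $|x|^{-\beta^2/2\cdot\ldots}$ factors; combining these gives $|x|^{\beta Q}$ via $Q=\frac\beta2-\frac2\beta$. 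A Girsanov/Cameron--Martin shift then moves $\i Q\ell$ into the expectation. The shift produces a constant $e^{-\frac{Q^2}{2}\la\Dbf\ell,\ell\ra}$ together with $|f'(0)|^{-Q^2/2}$ from the zero mode, and $\frac{Q^2}2=\frac{1-c_\L}{12}$. The $c_\L$-independent part must cancel against the $-\frac1{12}$ contributions from the free-field determinant ratio. This is legitimate because $F$ is analytic in $c$ and polynomial in $\varphi$.

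The main obstacle is this bookkeeping: tracking the exact constants in the Gaussian change of variables, the determinant of the annulus, and the regularized curvature term along $d_f$, so that every term assembles into $e^{-\frac{c_\L}{12}W}$ with exactly $\frac{1}{\sqrt2\pi}$. We will first do the case $\mu=0$, $k=0$, where everything is explicit in Fourier modes (for example, $f(z)=qz$ as a sanity check). The GMC term then transfers verbatim, since both sides contain $\int_{\Ab_f}M_\beta$ of the same field, by the domain Markov decomposition.
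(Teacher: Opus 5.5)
Your domain-Markov rederivation of free-field gluing is fine, and so is the mean-value argument for the winding factor $|f'(0)|^{R^2k^2/2}$. The plan breaks at the first step, passing from $g_f$ to $g_\Ab$ by a Weyl anomaly.

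\textbf{The reduction to $g_\Ab$.} Unless $f$ is linear, $g_\Ab$ is not admissible on $(\Ab_f,\zetabf_f)$. Near $\Tb$ one has $f^*g_\Ab=e^{2u}g_\Ab$ with $u=\log\frac{|zf'(z)|}{|f(z)|}$, and $u|_\Tb=\ell\neq0$. So $\Ac_{\Ab_f,g_\Ab,\zetabf_f}$ is not defined. Weyl covariance of amplitudes holds only between admissible metrics, i.e.\ for conformal factors vanishing near the parametrized boundary. It therefore does not apply to $g_f=e^{-2\chi_f}g_\Ab$. Here $\chi_f$ vanishes near $\Tb$ and equals $h_f$ near $f(\Tb)$, with $h_f$ harmonic on $f(\Db)$ and $h_f\circ f=u$.

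This is not a technicality. Formally, a Weyl change that is nonzero on a Dirichlet circle shifts the field, hence the boundary datum, by $\i Q\chi_f$. That shift is exactly the $\i Q\ell$ in $\Tbf_f$, so the relation cannot be a scalar factor. Also, the anomaly factor is $e^{\pm c_\L S^0_\L}$, not $e^{\frac{c_\L}{12}S^0_\L}$, so your formula does not produce the $-12S^0_\L$ in $W$.

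\textbf{Consequences for the shift and the GMC.} Your claim that the GMC ``transfers verbatim'' fails for the same reason. Any Cameron--Martin shift of $X_\Db$ by $\i Q\chi$ with $\chi\circ f|_\Tb=\ell$ has $\chi\not\equiv0$ on $\Ab_f$. It therefore multiplies the GMC on $\Ab_f$ by $e^{-\beta Q\chi}$. In $\Tbf_f$ the weight is exactly $|x|^{\beta Q}$ on the unshifted field, and with $g_\Ab$ nothing compensates the extra factor.

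A geodesic-curvature coupling $\frac{\i Q}{2\pi}\int_{f(\Tb)}k_{g_\Ab}\phi\,\d\ell_{g_\Ab}$ does not even give the right shift. Its Cameron--Martin profile is the $G_\Db$ single-layer potential of $\Dbf_{f(\Db)}h_f$, which does not restrict to $h_f$ on $f(\Tb)$. For instance, at first order in $a$ for $f(z)=rz/(1-az)$ it gives $\frac{1-r^2}{2}$ times the required shift.

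\textbf{How the paper proceeds.} The paper never leaves $g_f$.
\begin{enumerate}
\item Free-field gluing of $\Ab_f\#f(\Db)=\Db$, with the magnetic insertion at $0$, gives an expectation over $X_\Db$ containing $e^{-\frac{\i Q}{4\pi}\int_{\Ab_f}K_{g_f}(P\wt\varphi^k+X_\Db)\,\d v_{g_f}}$ and $M_\beta(\cdot,\d v_{g_f})$.
\item The $P\wt\varphi^k$ part of the curvature term vanishes modulo $2\pi R\Zb$.
\item The Girsanov transform in $Y=-\frac{\i Q}{4\pi}\int_{\Ab_f}K_{g_f}X_\Db\,\d v_{g_f}$ shifts $X_\Db$ by $\i Q\chi_f$, where $\chi_f$ is extended into $f(\Db)$ by $h_f$, so its pullback to $\Tb$ is $\ell$.
\item The resulting $e^{-\beta Q\chi_f}$ is cancelled by the conformal factor of $g_f$, since $(2-\frac{\beta^2}{2})+\beta Q=0$. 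What remains is $|x|^{\beta Q}M_\beta(\cdot,\d x)$, and $\Eb[Y^2]$ gives the $S^0_\L$ and $\la\Dbf\ell,\ell\ra$ terms.
\end{enumerate}

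\textbf{The determinant identity.} This is the second missing ingredient. Your claim that the Gaussian density carries $\det(\Delta_{\Ab_f,\D})^{-1/2}$ up to a universal constant is false. What is needed is $\det'\Delta_g/\det'\Delta_{g|_{f(\Db)}}=e^{-\frac16W(f,g_f)}$. It follows from three inputs: the BFK gluing formula; the explicit evaluation of the Fredholm determinant $\det\nolimits_{\mathrm{Fr}}(\Dbf_{\Db,f(\Tb)}(2\Dbf_0)^{-1})$ in \cite{Segal_semigroup}, which yields $\frac16\log|f'(0)|+\frac16\la\Dbf\ell,\ell\ra$; and the Polyakov formula.

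This identity is what supplies the ``$1$'' in $c_\L=1-6Q^2$. Combined with the normalization $e^{-\frac{c_\L}{12}W}$, it also produces the factor $|f'(0)|^{-Q^2/2}$. That factor cannot come from a zero mode of the shift, since $\ell$ has mean zero on $\Tb$. Saying that the $c_\L$-independent parts ``must cancel'' leaves exactly this step unproved.
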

\begin{proof}
Let $g_\Db:=|dz|^2$ and $g$ a metric on $\Db$ extending $g_f$. We apply the gluing axiom for the free field (\cite[Theorem 4.1]{BCILT}) to $\Ab_f\# f(\Db)=\Db$ with a magnetic insertion at $z=0$. Informally and in terms of integrals, it states that for $\wt{\varphi}^k\in\DT$ and functionals $F_1,F_2$, we have
\[\int_{\phi:\Db\to\TR}F_1(\phi|_{f(\Db)})F_2(\phi|_{\Ab_f})\,\D\phi = C\int_{\wt{\varphi}_2^k:\Tb\to\TR}\left(\int_{\phi_1:f(\Db)\to\TR}F_1(\phi_1)\,\D\phi_1\right)\left(\int_{\phi_2:\Ab_f\to\TR}F_2(\phi_2)\,\D\phi_2\right)\D\wt{\varphi}_2^k,\]
where the constraints on the integrals are $\phi|_\Tb=\wt{\varphi}^k$ and $\phi_1\circ f|_\Tb=\phi_2\circ f|_\Tb=\wt{\varphi}_2^k$, and $C=\frac{1}{\sqrt{2}\pi}$ in this case. Taking $F_1(\phi_1)=F(\phi_1\circ f|_\Tb)$ and $F_2$ the integrand of the CILT amplitude on $(\Ab_f,g_f,\zetabf_f)$, we obtain
\begin{equation}\label{annulus_amplitude_gluing}
\begin{aligned}
&\frac{1}{\sqrt{2}\pi} \frac{e^{-\frac{1}{4\pi} R^2k^2\int_{f(\Db)}^{\reg}|d\arg f^{-1}|_{g}^2\,\d v_{g}}}{\sqrt{\det'\Delta_{g|_{f(\Db)}}}} \int_{\DT} \Ac_{\Ab_f,g_f,\zetabf_f}(\wt{\varphi}^k,\wt{\varphi}_2^k) \,F(\wt{\varphi}_2^k)\,\d\wt{\varphi}_2^k \\
&\quad = \frac{e^{-\frac{1}{4\pi} R^2k^2\int_{\Db}^{\reg}|d\arg\!|_{g}^2\,\d v_{g}}}{\sqrt{\det'\Delta_g}}\, \Eb\left[F\big((P\wt{\varphi}^k+X_\Db)\circ f\big|_{\Tb}\big)\,e^{-\frac{\i Q}{4\pi}\int_{\Ab_f}K_{g_f}(P\wt{\varphi}^k+X_\Db)\,\d v_{g_f}-\mu\int_{\Ab_f}M_\beta(P\wt{\varphi}^k+X_\Db,\d v_{g_f})}\right],
\end{aligned}
\end{equation}
where the expectation is over the GFF $X_\Db$, the integrals $\int_{f(\Db)}^{\reg}|d\arg f^{-1}|_{g}^2\,\d v_{g},\int_{\Db}^{\reg}|d\arg\!|_{g}^2\,\d v_{g}$ are regularized as in \cite[Section 2.6]{BCILT} by taking the principal part, and the curvature term $\int_{\Ab_f}K_{g_f}P\wt{\varphi}^k\,\d v_{g_f}$ is regularized as in \cite[Section 5]{BCILT}.

We first compute $\int_{f(\Db)}^{\reg}|d\arg f^{-1}|_{g}^2\,\d v_{g}-\int_{\Db}^{\reg}|d\arg\!|_{g}^2\,\d v_{g}$. Since this is conformally invariant, using $g_\Db$ in place of $g$, it becomes
\[\lim_{\varepsilon\to0}\left(\int_{\Db\setminus f^{-1}(\varepsilon\Db)}|d\arg\!|_{g_\Db}^2\,\d v_{g_\Db} - \int_{\Db\setminus\varepsilon\Db}|d\arg\!|_{g_\Db}^2\,\d v_{g_\Db}\right) = -\lim_{\varepsilon\to0}\int_{f^{-1}(\varepsilon\Db)\setminus\varepsilon\Db}\frac{\d r}{r}\,\d\theta.\]
Now $f(z)=f'(0)z+O(z^2)$ as $z\to0$, so it is not hard to see that this converges to $2\pi\log|f'(0)|$. Thus $e^{-\frac{1}{4\pi} R^2k^2\left(\int_{f(\Db)}^{\reg}|d\arg f^{-1}|_{g}^2\,\d v_{g}-\int_{\Db}^{\reg}|d\arg\!|_{g}^2\,\d v_{g}\right)} = |f'(0)|^{-\frac{R^2k^2}{2}}$.

Next we claim that $\int_{\Ab_f}K_{g_f}P\wt{\varphi}^k\,\d v_{g_f}=0$ (modulo $2\pi R\Zb$). 
Indeed,
\[\int_{\Ab_f}K_{g_f}P\wt{\varphi}^k\,\d v_{g_f} = \int_{\Db}K_{g}P\wt{\varphi}^k\,\d v_{g} - \int_{f(\Db)}K_{g|_{f(\Db)}}P\wt{\varphi}^k\,\d v_{g|_{f(\Db)}},\]
where we view both $\Db$ and $f(\Db)$ as surfaces with a magnetic insertion at $0$ and tangent vector $\pa_z$. By the formula for change of conformal metric (\cite[Proposition 5.2.(4)]{BCILT}), writing $g=e^\rho\,|dz|^2$, we have
\[\int_{\Db}K_{g}P\wt{\varphi}^k\,\d v_{g} = 2\int_{\pa\Db}\wt{\varphi}^k\,|dz| + \int_{\Db}\langle d\rho,dP\wt{\varphi}^k\rangle_{g_\Db}\,|dz|^2.\]
We compute the first term by regularizing with respect to the separating family consisting only of the straight line segment from $0$ to $1$. Then
\[\int_{\pa\Db}\wt{\varphi}^k\,|dz| = 2\pi c + \int_{\Tb}\varphi + Rk\int_0^{2\pi}\theta\,\d\theta = 2\pi(P\wt{\varphi})(0) + 2\pi^2Rk,\]
where we used the mean value property of harmonic functions. As for the second term, since $g_f$ is admissible, we have $\rho(z) = -2\log|z|$ near $\pa\Db$, so by integration by parts,
\[\int_{\Db}\langle d\rho,dP\wt{\varphi}^k\rangle_{g_\Db}\,|dz|^2 = \int_{\pa\Db} \rho\,\frac{\pa P\wt{\varphi}^k}{\pa\nu}\,|dz| = 0.\]
By the same token,
\[\int_{f(\Db)}K_{g|_{f(\Db)}}P\wt{\varphi}^k\,\d v_{g|_{f(\Db)}} = \int_\Db K_{f^*g}(P\wt{\varphi}^k\circ f)\,\d v_{f^*g} = 4\pi(P\wt{\varphi}\circ f)(0) + 2Rk\int_{-\arg f'(0)}^{2\pi-\arg f'(0)}\arg f(e^{\i\theta})\,\d\theta,\]
where we regularize on $(\Db,f^*g)$ with respect to the separating family consisting only of the straight line segment from $0$ to $e^{-\i\arg f'(0)}$. 
Now $\arg(f(e^{-\i\arg f'(0)}z)/z)$ is well-defined and harmonic on $\Db$ and its value at $0$ is $0$, so by the mean value property,
\[\int_{-\arg f'(0)}^{2\pi-\arg f'(0)}\arg f(e^{\i\theta})\,\d\theta=\int_0^{2\pi}\arg f(e^{-\i\arg f'(0)}e^{\i\theta})\,\d\theta=\int_0^{2\pi}\theta\,\d\theta=2\pi^2.\]
Thus the claim is proved.

Returning to \eqref{annulus_amplitude_gluing}, we perform the imaginary Girsanov transform (see \cite[Theorem 3.4]{BCILT}) with respect to
\[Y = -\frac{\i Q}{4\pi}\int_{\Ab_f}K_{g_f}X_\Db\,\d v_{g_f}.\]
Consider the harmonic function $h_f$ on $f(\Db)$ defined by $h_f(f(z))=\log\frac{|zf'(z)|}{|f(z)|}$, so that $f^*g_\Ab=e^{2h_f\circ f}g_\Ab$. Note that $h_f\circ f\big|_{\Tb}=\log\frac{|f'|}{|f|}\big|_\Tb$. Since $g_f$ is admissible, we have $g_f=e^{-2\chi_f}g_\Ab$ where $\chi_f\in C^\infty(\Ab_f,\Rb)$ is $0$ near $\Tb$ and equal to $h_f$ near $f(\Tb)$. We extend $\chi_f$ to $\Db$ by defining it to be $h_f$ on $f(\Db)$. Now $K_{g_f} = -2e^{2\chi_f}\Delta_{g_\Ab}\chi_f$, $\d v_{g_f} = e^{-2\chi_f}\,\d v_{g_\Ab}$, so
\[\Eb[YX_\Db] = \i Q\int_{\Ab_f}\Delta_{g_\Ab}\chi_f(x)\,G(x,\cdot)\,\d v_{g_\Ab}(x) = \i Q\int_{\Db}\Delta_{g_\Db}\chi_f(x)\,G(x,\cdot)\,\d v_{g_\Db}(x) = \i Q\chi_f,\]
where $G=G_\Db$ is the Green's function on $\Db$ with Dirichlet boundary condition. Similarly,
\begin{align*}
\Eb[Y^2] &= -\frac{Q^2}{2\pi}\int_{\Ab_f}\int_{\Ab_f}\Delta_{g_\Ab}\chi_f(x)\,G(x,y)\,\Delta_{g_\Ab}\chi_f(y)\,\d v_{g_\Ab}(x)\,\d v_{g_\Ab}(y) = -\frac{Q^2}{2\pi}\int_\Db\chi_f\,\Delta_{g_\Ab}\chi_f\,\d v_{g_\Ab} \\
&= -12Q^2S_{\mathrm L}^0(\Ab_f,g_f,g_\Ab) - \frac{Q^2}{2\pi}\int_{f(\Tb)}\chi_f\,\frac{\pa\chi_f}{\pa\nu}\,\d\ell_{g_\Ab},
\end{align*}
where $\nu$ is the unit normal vector pointing towards $\Ab_f$. Since $\chi_f$ is harmonic in $f(\Db)$, $\frac{\pa\chi_f}{\pa\nu}\circ f=\frac{|f|}{|f'|}\Dbf(\chi_f\circ f|_\Tb)$ by definition, so the last term is precisely $Q^2\left\langle\Dbf\log\tfrac{|f'|}{|f|}\big|_\Tb,\log\tfrac{|f'|}{|f|}\big|_\Tb\right\rangle$.

It remains to show that
\begin{equation}\label{det_formula}
\frac{\det'\Delta_g}{\det'\Delta_{g|_{f(\Db)}}}=e^{-\frac{1}{6}W(f,g_f)}.
\end{equation}
By \cite[Lemma 4.3]{BCILT},
\[\textstyle\det'\Delta_g = 2\pi \det'\Delta_{g|_{f(\Db)}}\det'\Delta_{g_f}\det_{\text{Fr}}(\Dbf_{\Db,f(\Tb)}(2\Dbf_0)^{-1}),\]
where the last term is a certain Fredholm determinant. The calculation at the end of the proof of \cite[Proposition 3.4]{Segal_semigroup} shows that
\[2\pi\,\frac{\det'\Delta_{g_\Ab}}{e^{\frac{1}{4\pi}\int_{f(\Tb)}k_{g_\Ab}\d\ell_{g_\Ab}}} \det\nolimits_{\text{Fr}}(\Dbf_{\Db,f(\Tb)}(2\Dbf_0)^{-1}) = e^{\frac{1}{6}\log|f'(0)|+\frac{1}{6}\left\langle\Dbf\log\frac{|f'|}{|f|}\big|_\Tb,\log\frac{|f'|}{|f|}\big|_\Tb\right\rangle}.\]
By the Polyakov formula (see \cite[Proposition 5.10]{Boundary_Segal} for example),
\[\det\nolimits'\Delta_{g_f} = \frac{\det'\Delta_{g_\Ab}}{e^{\frac{1}{4\pi}\int_{f(\Tb)}k_{g_\Ab}\d\ell_{g_\Ab}}}\,e^{-2S_{\mathrm L}^0(\Ab_f,g_f,g_\Ab)}.\]
Thus \eqref{det_formula} is proved.
\end{proof}


Since the amplitude operator $\Ac_{\Ab_f,g_f,\zetabf_f}$ is bounded (\cite[Section 8.7]{CILT}, \cite[Theorem 7.5]{BCILT}), we obtain immediately:

\begin{coro}
The operator $\Tbf_f$ extends to a bounded operator on $L^2(\DT)$.
\end{coro}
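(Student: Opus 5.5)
The corollary follows directly from \cref{prop_semigroup_as_amplitude} together with the known boundedness of annulus amplitudes. Fix $f\in\Ssg$ and choose any admissible metric $g_f$ on $(\Ab_f,\zetabf_f)$. Such a metric exists: take $g_f=e^{-2\chi_f}g_\Ab$ with $\chi_f$ smooth, vanishing near $\Tb$ and equal to $h_f$ near $f(\Tb)$, as in the proof above. The proposition gives, for every $F\in\Ec(\DT)$,
\[\Tbf_fF=\frac{e^{-\frac{c_\L}{12}W(f,g_f)}}{\sqrt2\pi}\,\Ac_{\Ab_f,g_f,\zetabf_f}F.\]
Here $W(f,g_f)$ is a finite real constant. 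It consists of $\log|f'(0)|$ (finite since $f'(0)\neq0$), a pairing of the smooth function $\log\frac{|f'|}{|f|}\big|_\Tb$ with itself (finite because $f$ is holomorphic near $\Tb$), and a Liouville action of two smooth metrics on a compact annulus. So the prefactor is just a nonzero scalar.

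Next, I would invoke the boundedness of the annulus amplitude as an operator $L^2(\DT)\to L^2(\DT)$, namely \cite[Section 8.7]{CILT} and \cite[Theorem 7.5]{BCILT}. These give $\|\Ac_{\Ab_f,g_f,\zetabf_f}F\|_{L^2}\le C_f\|F\|_{L^2}$ for $F\in\Ec(\DT)$, with the operator defined through its kernel integrated against $F$. This is consistent with the identity \eqref{annulus_amplitude_gluing}, where $\Tbf_fF$ appears exactly as such an integral.

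Finally, $\Ec(\DT)$ is dense in $L^2(\DT)$, so the bounded linear map $\Tbf_f|_{\Ec}$ extends uniquely by continuity to all of $L^2(\DT)$. The extension coincides with the rescaled amplitude operator. The only point needing care is that the identity in the proposition holds pointwise (or a.e. in $\wt\varphi^k$) for elementary $F$. The amplitude operator is bounded on $L^2$ and $\Tbf_fF$ is given by the formula \eqref{eq_def_T_f}, so both sides define the same $L^2$ element. I expect no real obstacle: all the difficulty sits in the cited boundedness of amplitudes, which is where the lack of positivity of the imaginary GMC is handled.
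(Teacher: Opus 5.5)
Your proposal is correct and matches the paper's argument: the corollary follows from \cref{prop_semigroup_as_amplitude}, where the prefactor $e^{-\frac{c_\L}{12}W(f,g_f)}/(\sqrt{2}\pi)$ is a finite scalar, combined with the cited boundedness of annulus amplitudes \cite[Section 8.7]{CILT}, \cite[Theorem 7.5]{BCILT}. The extension from the dense subspace $\Ec(\DT)$ to all of $L^2(\DT)$ is then by continuity, as you say.
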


As an application of the gluing axiom of amplitudes, we can prove:

\begin{prop}\label{T_f_semigroup}
The map $f\in\Ssg\mapsto\Tbf_f\in\Lc(L^2(\DT))$ is a semigroup homomorphism, i.e., $\Tbf_{f_1}\circ\Tbf_{f_2}=\Tbf_{f_1\circ f_2}$ for $f_1,f_2\in\Ssg$.
\end{prop}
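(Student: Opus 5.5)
The plan is to derive the semigroup law from Proposition~\ref{prop_semigroup_as_amplitude} together with the gluing axiom for CILT amplitudes, and then check that the anomaly factors $W$ add up. Fix $f_1,f_2\in\Ssg$ and admissible metrics $g_{f_1}$ on $\Ab_{f_1}$ and $g_{f_2}$ on $\Ab_{f_2}$. Recall that $\Ab_{f_1\circ f_2}=\Ab_{f_1}\cup f_1(\Ab_{f_2})$, obtained by gluing the incoming boundary $f_1(\Tb)$ of $\Ab_{f_1}$ to the outgoing boundary $\Tb$ of $\Ab_{f_2}$ via $f_1|_\Tb$. Admissibility guarantees that $g_{f_1}$ and $(f_1^{-1})^*g_{f_2}$ both agree with the pushforward of $|dz|^2/|z|^2$ near the gluing curve. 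Hence they glue to a smooth admissible metric $g_{12}$ on $\Ab_{f_1\circ f_2}$.

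The gluing axiom, with its universal constant $C=\frac{1}{\sqrt2\pi}$ (as in the proof of Proposition~\ref{prop_semigroup_as_amplitude}), together with diffeomorphism invariance under $f_1$, gives
\[\Ac_{\Ab_{f_1\circ f_2},g_{12},\zetabf_{f_1\circ f_2}}=\tfrac{1}{\sqrt2\pi}\,\Ac_{\Ab_{f_1},g_{f_1},\zetabf_{f_1}}\circ\Ac_{\Ab_{f_2},g_{f_2},\zetabf_{f_2}}.\]
Here the winding numbers match automatically, since the annulus amplitudes are diagonal in $k$. Applying Proposition~\ref{prop_semigroup_as_amplitude} to each of the three annuli, the constant $\frac1{\sqrt2\pi}$ is exactly absorbed. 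The claim then reduces to the identity
\[W(f_1\circ f_2,g_{12})=W(f_1,g_{f_1})+W(f_2,g_{f_2}).\]
Since everything is bounded, it suffices to check $\Tbf_{f_1}\Tbf_{f_2}F=\Tbf_{f_1\circ f_2}F$ on the dense space $\Ec(\DT)$, and the operator identity follows by continuity.

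The main work is this cocycle identity for $W$, which I expect to be the key obstacle. I see two routes.

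\textbf{Route 1: avoid computing $W$.} By \eqref{det_formula}, $e^{-\frac16 W(f,g_f)}=\det'\Delta_g/\det'\Delta_{g|_{f(\Db)}}$ for $g$ a metric on $\Db$ extending $g_f$. Choose a single metric $g$ on $\Db$ that extends $g_{12}$ and whose restriction to $f_1(\Db)$ is the pushforward under $f_1$ of a metric $\tilde g$ on $\Db$ extending $g_{f_2}$. This is possible by admissibility. Then, by diffeomorphism invariance of determinants, $\det'\Delta_{g|_{f_1(\Db)}}=\det'\Delta_{\tilde g}$ and $\det'\Delta_{g|_{f_1f_2(\Db)}}=\det'\Delta_{\tilde g|_{f_2(\Db)}}$. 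The ratios therefore telescope:
\[\frac{\det'\Delta_g}{\det'\Delta_{g|_{f_1f_2(\Db)}}}=\frac{\det'\Delta_g}{\det'\Delta_{g|_{f_1(\Db)}}}\cdot\frac{\det'\Delta_{\tilde g}}{\det'\Delta_{\tilde g|_{f_2(\Db)}}}.\]
This gives additivity of $W$ directly.

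\textbf{Route 2: direct computation.} If for some reason we prefer to work from \eqref{eq_W_f}, compute each term separately. The term $\log|f'(0)|$ is additive by the chain rule. The remaining two terms combine through the Liouville-action cocycle. One writes $\log\frac{|(f_1f_2)'|}{|f_1f_2|}=\log\frac{|f_1'|}{|f_1|}\circ f_2+\log\frac{|f_2'|}{|f_2|}$ and expands the $\Dbf$-pairing. The cross terms are then matched against the additivity of $S^0_{\mathrm L}$ over the glued annulus, measured relative to $g_\Ab$.

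Route 1 is cleaner, so I would use it.

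A remaining subtlety is that the matter part is not affected by the choice of metric on the glued annulus. If $g_{12}$ differs from another admissible choice, then Proposition~\ref{prop_semigroup_as_amplitude} already states that the ratio $e^{-\frac{c_\L}{12}W}\Ac$ is independent of the admissible metric. So any admissible $g_{12}$ is allowed.

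As a sanity check, a more hands-on alternative is to verify the identity directly from \eqref{eq_def_T_f} on $\Ec(\DT)$. This uses the Markov property of the Dirichlet GFF, $X_\Db=P(X_\Db|_{f_1(\Tb)}\circ f_1)\circ f_1^{-1}+X_{f_1(\Db)}$, conformal invariance of the GMC, which produces the weight $|x|^{\beta Q}$ through $|f_1'|$, and the factor $\i Q\log\frac{|f'|}{|f|}$. The gluing route avoids this bookkeeping.
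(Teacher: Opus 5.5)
Your proposal is correct. The reduction step matches the paper: you glue $\Ab_{f_1}$ with $f_1(\Ab_{f_2})$, use diffeomorphism invariance, and let the gluing constant $\frac{1}{\sqrt2\pi}$ be absorbed by \cref{prop_semigroup_as_amplitude}. Everything then hinges on additivity of $W$, and that is where you take a different route.

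\textbf{The paper's route.} It verifies additivity of $W$ by hand. The term $\log|f'(0)|$ is additive by the chain rule. The other two terms are packaged as $E(f,g_f)=\frac{1}{2\pi}\int\chi_f\,\Delta_g\chi_f\,\d v_g$, the quantity produced by the Girsanov computation of $\Eb[Y^2]$. The pullback identities give $\chi_{f_1\circ f_2}\circ f_1=h_{f_1}\circ f_1+\chi_{f_2}$ on $\Db$ and $\chi_{f_1\circ f_2}=\chi_{f_1}$ on $\Ab_{f_1}$. Harmonicity of $h_{f_1}\circ f_1$ then removes the extra contribution. This is close in spirit to your Route 2, but it avoids expanding the $\Dbf$-pairing.

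\textbf{Your Route 1.} You read \eqref{det_formula} as saying that $e^{-W/6}$ is a ratio of Dirichlet determinants. You then choose one metric $g$ on $\Db$ adapted to both decompositions and telescope using isometry invariance of $\det'\Delta$. This is valid because the paper's proof of \eqref{det_formula} works for an arbitrary extension $g$ of $g_f$. By the BFK formula, the ratio depends only on $g_f$ and the Dirichlet-to-Neumann data at $f(\Tb)$, which admissibility fixes. You use exactly this freedom, since the same $g$ must serve as the extension for both $f_1\circ f_2$ and $f_1$, so state it explicitly.

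\textbf{Comparison.} Your argument is shorter and more conceptual: it exhibits $W$ as a coboundary, so the cocycle identity is automatic. It does inherit all the machinery behind \eqref{det_formula} (BFK gluing, the Fredholm determinant computation from \cite{Segal_semigroup}, the Polyakov formula), including its exact normalization. A wrong multiplicative constant there would appear as an additive defect $W(f_1\circ f_2)-W(f_1)-W(f_2)\neq0$. The paper's computation is elementary and independent of determinant theory, so it also serves as a consistency check on the explicit formula \eqref{eq_W_f}.
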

\begin{proof}
By the gluing axiom of CILT amplitudes (\cite[Proposition 8.13]{CILT}, \cite[Theorem 7.8]{BCILT}), it suffices to show that the functional $W$ is additive in the sense that
\begin{equation}\label{W_additivity}
W(f_1\circ f_2,g_{f_1\circ f_2}) = W(f_1,g_{f_1}) + W(f_2,g_{f_2}),
\end{equation}
where $g_{f_1\circ f_2}$ is an admissible metric on $\Ab_{f_1\circ f_2}=\Ab_{f_1}\# f_1(\Ab_{f_2})$ obtained by gluing an admissible metric $g_{f_1}$ on $\Ab_{f_1}$ and an admissible metric $(f_1)_*g_{f_2}$ on $f_1(\Ab_{f_2})$. Additivity of the first term $\log|f'(0)|$ in $W$ is clear. Recall from the previous proof that
\[E(f,g_f) := \left\langle\Dbf\log\tfrac{|f'|}{|f|}\big|_\Tb,\log\tfrac{|f'|}{|f|}\big|_\Tb\right\rangle + 12S_{\mathrm L}^0(\Ab_f,g_f,g_\Ab) = \frac{1}{2\pi}\int_{\Ab_f}\chi_f\,\Delta_g\chi_f\,\d v_g,\]
where $g$ is any conformal metric on $\Db$, and we define $h_f,\chi_f$ as in the previous proof for $f\in\{f_1,f_2,f_1\circ f_2\}$. We have
\[e^{-2\chi_{f_2}}g_\Ab=g_{f_2}=f_1^*g_{f_1\circ f_2}=f_1^*(e^{-2\chi_{f_1\circ f_2}}g_\Ab)=e^{-2\chi_{f_1\circ f_2}\circ f_1}f_1^*g_\Ab=e^{-2\chi_{f_1\circ f_2}\circ f_1}e^{2h_{f_1}\circ f_1}g_\Ab,\]
so
\[\chi_{f_1\circ f_2}\circ f_1 = h_{f_1}\circ f_1 + \chi_{f_2}\quad\text{ on }\Ab_{f_2}.\]
Similarly,
\[e^{2h_{f_1\circ f_2}\circ f_1\circ f_2}g_\Ab=(f_1\circ f_2)^*g_\Ab=f_2^*(e^{2h_{f_1}\circ f_1}g_\Ab)=e^{2h_{f_1}\circ f_1\circ f_2}e^{2h_{f_2}\circ f_2}g_\Ab,\]
so 
\[h_{f_1\circ f_2}\circ f_1 = h_{f_1}\circ f_1 + h_{f_2}\quad\text{ on }f_2(\Db).\]
Thus the identity $\chi_{f_1\circ f_2}\circ f_1 = h_{f_1}\circ f_1 + \chi_{f_2}$ holds on $\Db$. Clearly $\chi_{f_1\circ f_2} = \chi_{f_1}$ on $\Ab_{f_1}$, so
\begin{align*}
E(f_1\circ f_2,g_{f_1\circ f_2}) - E(f_1,g_{f_1}) &= \frac{1}{2\pi}\int_{f_1(\Ab_{f_2})}\chi_{f_1\circ f_2}\,\Delta_g\chi_{f_1\circ f_2}\,\d v_g = \frac{1}{2\pi}\int_{\Db}(\chi_{f_1\circ f_2}\circ f_1)\,\Delta_g(\chi_{f_1\circ f_2}\circ f_1)\,\d v_g \\
&= \frac{1}{2\pi}\int_{\Db}\chi_{f_2}\,\Delta_g\chi_{f_2}\,\d v_g=E(f_2,g_{f_2}),
\end{align*}
where we used that $h_{f_1}\circ f_1$ is harmonic.
\end{proof}

\section{Virasoro generators and the Feynman--Kac formula}\label{sec_Virasoro}

In this section, we prove the Feynman--Kac formula for the operator $\Tbf_f$, showing that the infinitesimal generators of the quantized semigroup $f\mapsto\Tbf_f$ form two commuting Virasoro algebras of central charge $c_\L=1-6Q^2$.

We shall write a holomorphic vector field as $\v=v(z)\pa_z$ where $v$ is the corresponding holomorphic function in the local trivialization. Following \cite[Section 2.4]{Virasoro_structure}, a holomorphic vector field $\v=v(z)\pa_z$ on $\Db$ is called \textit{Markovian} if $v(0)=0$ and $\Re(\zb v(z))<0$ for $z\in\Tb$. These conditions ensure that the flow $t\mapsto f_t:\Db\to\Db$ generated by $\v$, obtained by solving the differential equation
\[\pa_tf_t(z)=v(f_t(z)),\qquad f_0(z)=z,\qquad z\in\Db,\]
exists for all $t\geq0$ and defines a semigroup homomorphism $\Rb_{>0}\to\Ssg$ with $f_0=\id_\Db\not\in\Ssg$. With the convention that $\Tbf_{\id_\Db}:=\id_{L^2(\DT)}$, we thus obtain a one-parameter semigroup of bounded operators $t\mapsto \Tbf_{f_t}$. Its generator is given by the Segal--Sugawara construction as follows.

We define the following differential operators on $\Ec(\DT)$:
\begin{equation}
\partial_0:=\partial_c,
\qquad \partial_n:=\partial_{\varphi_n}=\sqrt n(\partial_{x_n}-\i \partial_{y_n}),
\qquad\partial_{-n}:=\partial_{\varphi_{-n}}=\sqrt n(\partial_{x_n}+\i \partial_{y_n}),
\qquad n>0,
\end{equation}
\begin{equation}\label{eq_def_A}
\begin{aligned}
\Abf_0:=\tfrac{\i}{2}(\pa_c+\i Q+\i Rk),
&\quad&\mathbf A_n:=\tfrac\i2(\partial_n+2n\varphi_{-n}\textbf{1}_{n<0}),\\
\wt{\Abf}_0:=\tfrac{\i}{2}(\pa_c+\i Q-\i Rk),
&\quad&\widetilde{\mathbf A}_n:=\tfrac\i2(\partial_{-n}+2n\varphi_{n}\textbf{1}_{n<0}),
\end{aligned}
\qquad
\begin{aligned}
n\neq0,
\end{aligned}
\end{equation}
where $k$ is the $\Zb$-component (i.e., winding number) of an element $\wt{\varphi}^k\in\DT$.
We have the formal adjoint relations
\begin{equation}
\Abf_n^*=\Abf_{-n},\qquad\wt\Abf_n^*=\wt\Abf_{-n},\qquad n\in\Zb,
\end{equation}
which hold in the sense that $\la\Abf_n F,G\ra_{L^2}=\la F,\Abf_{-n}G\ra_{L^2}$ for $F,G\in\Ec(\DT)$, and similarly for $\wt\Abf_n$. Furthermore, one can verify the Heisenberg commutation relations
\begin{equation*}
    [\mathbf A_n,\mathbf A_m]=\frac{n}{2}\mathbf 1_{n+m=0},\ \ \ [\widetilde{\mathbf A}_n,\widetilde{\mathbf A}_m]=\frac{n}{2}\mathbf 1_{n+m=0},\ \ \ [{\mathbf A}_n,\widetilde{\mathbf A}_m]=0.
\end{equation*}

Fix $\mu\in\Cb$ as a parameter of CILT. For $n\in\Zb$, we define
\begin{equation}\label{eq_L}
\begin{aligned}
\Lbf_n &:= (n+1)Q\Abf_n + \sum_{m\in{\mathbb Z}}:\!\Abf_{n-m}\Abf_m\!: +\; \frac{\mu}{2}\, e^{\i\beta c}\int_0^{2\pi}e^{\i\beta\varphi(\theta)}\,e^{\i\beta Rk\theta}\,e^{\i n\theta}\,\d\theta,\\
\wt{\Lbf}_n &:= (n+1)Q
\wt{\Abf}_n + \sum_{m\in{\mathbb Z}}:\!\wt{\Abf}_{n-m}\wt{\Abf}_m\!: +\; \frac{\mu}{2}\, e^{\i\beta c}\int_0^{2\pi}e^{\i\beta\varphi(\theta)}\,e^{\i\beta Rk\theta}\,e^{-\i n\theta}\,\d\theta,
\end{aligned}
\end{equation}
where $e^{\i\beta\varphi(\theta)}\,\d\theta$ is understood as the imaginary GMC on $\Tb$. Here we have used the normal ordered product $:\!\Abf_m\Abf_n\!:$ defined to be $\Abf_m\Abf_n$ if $n>0$ or $\Abf_n\Abf_m$ otherwise, and similarly for $:\!\wt{\Abf}_{n-m}\wt{\Abf}_m\!:$. In the case $\mu=0$, we denote these operators by $\Lbf_n^0$ and $\wt{\Lbf}_n^0$, which are well-defined differential operators on $\Ec(\DT)$ and satisfy the commutation relations (pointwise on $\Ec(\DT)$)
\begin{equation}\label{Virasoro_commutation_relation}
[\Lbf_m^0,\wt{\Lbf}_n^0]=0,\qquad [\Lbf_m^0,\Lbf_n^0]=(m-n)\Lbf_{m+n}^0+\frac{1-6Q^2}{12}(m^3-m)\,{\bf 1}_{m+n=0},
\end{equation}
and similarly for $[\wt{\Lbf}_m^0,\wt{\Lbf}_n^0]$. We also have the formal adjoint relations
\begin{equation}
(\Lbf_n^{0,\beta})^*=\Lbf_{-n}^{0,-\beta},\qquad(\wt\Lbf_n^{0,\beta})^*=\wt\Lbf_{-n}^{0,-\beta},\qquad n\in\Zb,
\end{equation}
where we have made explicit the dependence on $\beta$ (and on $Q$ through $Q=\beta/2-2/\beta$). 
The general case $\mu\in\Cb$ will be defined rigorously below. 

For a holomorphic vector field $\v = -\sum_{n=0}^\infty v_n z^{n+1} \pa_z$, we define
\begin{equation}\label{H_v}
\Hbf_\v:=\sum_{n=0}^\infty v_n\,\Lbf_n+\sum_{n=0}^\infty\ol{v_n}\,\wt{\Lbf}_n.
\end{equation}
In the case $\v=-z\pa_z$, the operator
\begin{equation}\label{Hamiltonian}
\Hbf:=\Hbf_{-z\pa_z}=\Lbf_0+\wt{\Lbf}_0
\end{equation}
is the \textit{Hamiltonian} of CILT. As above, we have the corresponding operators $\Hbf^0,\Hbf^0_\v$ for the case $\mu=0$. Note that
\begin{equation}\label{Pi}
\Pibf:=\tfrac{1}{\i}\Hbf_{-\i z\pa_z}=\Lbf_0-\wt{\Lbf}_0=-\i Rk\pa_c+\sum_{m\geq1}m(\varphi_m\partial_{m}-\varphi_{-m}\partial_{-m})
\end{equation}
contains a nontrivial term in $k$, exhibiting the existence of spin, i.e., CILT is a \textit{nondiagonal} CFT. The goal of this section is to show that for $\Re\omega>0$ (writing $\omega:=v_0=-v'(0)\in\Cb$) large enough, the operator $\Hbf_\v$ defined on a suitable domain is a closed unbounded operator generating a strongly continuous semigroup $t\mapsto e^{-t\Hbf_\v}$, and that this coincides with the semigroup $t\mapsto\Tbf_{f_t}$ where $f_t$ is the flow generated by $\v$.

The following lemma summarizes the functional-analytic situation encountered in this section. Following the terminology of \cite[Section VIII.6]{ReedSimon}, a \textit{quadratic form} on a Hilbert space is a densely defined sesquilinear form.

\begin{lemma}\label{HilleYosida_lemma}
Let $(\Hc,\la\cdot,\cdot\ra_{\Hc})$ be a complex Hilbert space and $\Qc_0:\Dc(\Qc_0)\times \Dc(\Qc_0)\to\Cb$ a closed positive symmetric quadratic form on $\Hc$ with dense domain $\Dc(\Qc_0)\subset\Hc$. Let $\Qc:\Dc(\Qc_0)\times\Dc(\Qc_0)\to\Cb$ be a (nonsymmetric) quadratic form defined on the domain $\Dc(\Qc_0)$ of $\Qc_0$. Suppose that there exists $C>0$ such that
\begin{equation}\label{sectorial_property}
\Re\Qc(u,u)\geq C^{-1}\Qc_0(u,u),\qquad |\Qc(u,u)|\leq C\Qc_0(u,u),\qquad \fr u\in\Dc(\Qc_0).
\end{equation}
Then the following hold:
\begin{enumerate}[label=\emph{(\arabic*)}]
    \item There exists a unique closed operator $\Hbf:\Dc(\Hbf)\subset\Hc\to\Hc$ with domain $$\Dc(\Hbf)=\{u\in\Dc(\Qc_0):\ol{\Qc(u,\cdot)}\text{ is bounded }\Hc\to\Cb\},$$ such that $\Qc(u,v)=\la \Hbf u,v\ra_{\Hc}$ for $u\in\Dc(\Hbf)$, $v\in\Dc(\Qc_0)$. Moreover, $\Hbf$ extends to a bounded operator $\Dc(\Qc_0)\to\Dc'(\Qc_0)$, where $\Dc'(\Qc_0)\supset\Hc$ is the dual space of $\Dc(\Qc_0)$ equipped with the norm $\|u\|_{\Dc(\Qc_0)}:=\sqrt{\Qc_0(u,u)}$.
    \item The operator $\Hbf$ generates a strongly continuous contraction semigroup $t\mapsto e^{-t\Hbf}$ ($t\geq0$) on $\Hc$. Moreover, for $t>0$, the bounded operator $e^{-t\Hbf}$ on $\Hc$ extends to a bounded operator $\Dc'(\Qc_0)\to\Dc(\Qc_0)$.
\end{enumerate}
\end{lemma}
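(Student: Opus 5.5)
This is the Lax--Milgram / Kato representation theorem for sectorial forms, combined with Lumer--Phillips. The plan is to work throughout in the Hilbert space $V:=\Dc(\Qc_0)$ with norm $\|u\|_V^2:=\Qc_0(u,u)$. We may assume $\Qc_0\geq\epsilon\|\cdot\|_\Hc^2$; otherwise replace $\Qc_0$ by $\Qc_0+\la\cdot,\cdot\ra$, and in the intended application $\Qc_0=\wt\Qc^0$ already dominates the $L^2$ norm. Closedness of $\Qc_0$ means exactly that $V$ is complete. The inclusion $V\hookrightarrow\Hc$ is continuous and dense, so by the Riesz identification we get the Gelfand triple $V\subset\Hc\subset V'$.

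\textbf{Part (1).} First I will show $\Qc$ is bounded on $V\times V$, i.e. $|\Qc(u,v)|\leq C'\|u\|_V\|v\|_V$. The hypothesis controls only the diagonal. Polarization handles this: writing $\Qc=\Qc_1+\i\Qc_2$ with $\Qc_1=\frac12(\Qc+\Qc^*)$ and $\Qc_2=\frac1{2\i}(\Qc-\Qc^*)$, both are symmetric. Moreover $|\Qc_j(u,u)|\leq|\Qc(u,u)|\leq C\Qc_0(u,u)$, and the Cauchy--Schwarz-type bound for symmetric forms dominated by a positive form gives $|\Qc_j(u,v)|\leq C\|u\|_V\|v\|_V$. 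Hence $\Qc$ defines a bounded operator $\Hbf:V\to V'$ by $(\Hbf u)(v)=\Qc(u,v)$.

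Coercivity $\Re\Qc(u,u)\geq C^{-1}\|u\|_V^2$ and Lax--Milgram then make $\Hbf:V\to V'$ an isomorphism. Define the part of $\Hbf$ in $\Hc$ on the stated domain $\Dc(\Hbf)=\Hbf^{-1}(\Hc)$. Uniqueness is immediate, since $V$ is dense in $\Hc$ and so $\Hbf u$ is determined by $\la\Hbf u,v\ra$ for $v\in V$. For closedness, apply the same argument to $\Qc+\lambda$ with $\lambda\geq0$: the map $\Hbf+\lambda:\Dc(\Hbf)\to\Hc$ is bijective with inverse bounded $\Hc\to V\to\Hc$. An operator with a bounded everywhere-defined inverse is closed.

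\textbf{Part (2).} I will verify the Lumer--Phillips hypotheses:
\begin{itemize}
\item $\Dc(\Hbf)$ is dense;
\item $\Hbf$ is accretive, since $\Re\la\Hbf u,u\ra=\Re\Qc(u,u)\geq0$;
\item $\Hbf+\lambda$ is surjective for $\lambda>0$, which was shown in Part (1).
\end{itemize}
For density, if $w\perp\Dc(\Hbf)$, choose $u\in V$ with $\Qc^*$-representation $(\Hbf^*+1)u=w$. This uses the adjoint form, which satisfies the same hypotheses. Then $\|w\|^2=\la(\Hbf+1)x,u\ra$ for suitable $x$, and by surjectivity this forces $u=0$, hence $w=0$. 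This yields a contraction semigroup $e^{-t\Hbf}$.

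It remains to show $e^{-t\Hbf}:V'\to V$ is bounded for $t>0$; this is the main obstacle. The natural tool is the analyticity of the semigroup. The sectorial estimate $|\Im\Qc(u,u)|\leq C\Qc_0\leq C^2\Re\Qc(u,u)$ puts the numerical range in a sector of half-angle $<\pi/2$. Hence $e^{-t\Hbf}$ is holomorphic, with resolvent bounds $\|(\Hbf+\lambda)^{-1}\|\lesssim|\lambda|^{-1}$ in a sector larger than the right half-plane. The extrapolated semigroup on $V'$ is generated by $\Hbf:V\to V'$, which is again sectorial. Analyticity there gives $e^{-t\Hbf}:V'\to\Dc_{V'}(\Hbf)=V$ with norm $\lesssim t^{-1}$.

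As a more hands-on alternative, for $u_0\in\Hc$ and $u(t)=e^{-t\Hbf}u_0$ one has the energy identity
\[
\tfrac{d}{dt}\|u\|^2=-2\Re\Qc(u,u)\leq-2C^{-1}\|u\|_V^2 .
\]
Combining this with monotonicity of $\Re\Qc(u(t),u(t))$, which follows from the sector condition, gives $\Hc\to V$ with bound $t^{-1/2}$. Applying the same argument to the adjoint semigroup gives $V'\to\Hc$ by duality. Composing at times $t/2$ yields the $V'\to V$ bound.
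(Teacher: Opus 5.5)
Your Part~(1) and the contraction part of Part~(2) are fine. Polarization of the Hermitian parts, Lax--Milgram and Lumer--Phillips do by hand what the paper gets by citing Reed--Simon's representation theorem for strictly $m$-accretive forms together with Hille--Yosida.

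For the smoothing property your main route is also correct, but genuinely different from the paper's. You extrapolate to $\Dc'(\Qc_0)$ and use analyticity of the semigroup generated there by $\Hbf:\Dc(\Qc_0)\to\Dc'(\Qc_0)$. You assert rather than prove that this operator is sectorial on $\Dc'(\Qc_0)$. It is: if $(\lambda+\Hbf)u=f$ with $|\arg\lambda|\le\frac\pi2+\epsilon$ for small $\epsilon>0$, the sector condition gives $\|u\|_{\Dc(\Qc_0)}\lesssim\|f\|_{\Dc'(\Qc_0)}$. Then $|\lambda|\,\|u\|_{\Dc'(\Qc_0)}\le\|f\|_{\Dc'(\Qc_0)}+\|\Hbf\|_{\Dc(\Qc_0)\to\Dc'(\Qc_0)}\|u\|_{\Dc(\Qc_0)}$. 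You would also have to check that the extrapolated semigroup agrees with $e^{-t\Hbf}$ on $\Hc$.

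The paper needs none of this. From analyticity on $\Hc$ alone it has $\|\Hbf e^{-t\Hbf}\|_{\Hc\to\Hc}\le C_1/t$. For $w=e^{-t\Hbf}u$ the hypothesis then gives directly
\[\Qc_0(w,w)\le C|\Qc(w,w)|=C|\la\Hbf w,w\ra_\Hc|\le C\|\Hbf w\|_\Hc\|w\|_\Hc\le CC_1t^{-1}\|u\|_\Hc^2,\]
so $e^{-t\Hbf}:\Hc\to\Dc(\Qc_0)$ is bounded. Duality for $\Hbf^*$ and the splitting $e^{-t\Hbf}=e^{-\frac t2\Hbf}e^{-\frac t2\Hbf}$ finish. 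In the other direction, your explicit observation $|\Im\Qc(u,u)|\le C^2\Re\Qc(u,u)$ is exactly what justifies the analyticity the paper invokes. The half-plane bound $\|(\Hbf-\lambda)^{-1}\|\le(-\Re\lambda)^{-1}$ that the paper quotes does not imply analyticity by itself.

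Your ``hands-on alternative'', however, rests on a false claim. The sector condition does not make $t\mapsto\Re\Qc(u(t),u(t))$ nonincreasing when $\Hbf$ is not normal. Take $\Hc=\Cb^2$, $\Qc_0(u,u)=\|u\|^2$ and $\Qc(u,v)=\la Au,v\ra$ with $A=\begin{pmatrix}1&\beta\\0&1\end{pmatrix}$, $\beta=1.9$. The hypotheses hold, since $\Re\la Au,u\ra\ge(1-\frac\beta2)\|u\|^2$ and $|\la Au,u\ra|\le(1+\beta)\|u\|^2$. Yet for real $u=(x,y)$,
\[\tfrac{d}{dt}\Re\la Au,u\ra=-\Re\la A^2u,u\ra-\|Au\|^2=-\big(2x^2+4\beta xy+(2+\beta^2)y^2\big),\]
which equals $1.61>0$ at $u=(-1.9,1)$.

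So the energy identity only controls $\int_0^t\|u(s)\|_{\Dc(\Qc_0)}^2\,\d s$. Passing to a pointwise bound at time $t$ needs an additional input. One option is $\|\Hbf e^{-t\Hbf}\|\lesssim t^{-1}$, which turns your argument into the paper's. The other is uniform boundedness of $e^{-t\Hbf}$ on $\Dc(\Qc_0)$, which again comes from the extrapolated analytic semigroup.
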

\begin{proof}
The quadratic form $\Qc$ is strictly $m$-accretive in the sense defined in \cite[Section VIII.6]{ReedSimon}. Thus we obtain the operator $\Hbf$ which coincides with the adjoint of the operator associated with $\Qc$ as in Theorem VIII.16 (\textit{ibid.}). Using the polarization identity and the second bound in \eqref{sectorial_property}, there exists $C'>0$ such that $|\Qc(u,v)|\leq C'\sqrt{\Qc_0(u,u)\Qc_0(v,v)}$ for $u,v\in\Dc(\Qc_0)$. This shows that $\Hbf$ is bounded as an operator $\Dc(\Qc_0)\to\Dc'(\Qc_0)$.

By the lemma following Theorem VIII.16 (\textit{ibid.}), we have that any $\lambda\in\Cb$ with $\Re\lambda<0$ is in the resolvent set of $\Hbf$ and $\|(\Hbf-\lambda)^{-1}\|_{\Hc\to\Hc}\leq(-\Re\lambda)^{-1}$. By the Hille--Yosida theorem and \cite[Theorem 2.5.2.(d)]{Pazy}, $\Hbf$ generates an analytic contraction semigroup $t\mapsto e^{-t\Hbf}$ and we have $\|\Hbf e^{-t\Hbf}\|_{\Hc\to\Hc}\leq C_1/t$ ($t>0$) for some uniform $C_1>0$. In particular, $\Hbf e^{-t\Hbf}\in\Lc(\Hc)$. Thus for $u\in\Hc$, we have $\|e^{-t\Hbf}u\|_{\Dc(\Qc_0)}^2=\Qc_0(e^{-t\Hbf}u,e^{-t\Hbf}u)\leq C|\Qc(e^{-t\Hbf}u,e^{-t\Hbf}u)|=C|\la \Hbf e^{-t\Hbf}u,e^{-t\Hbf}u\ra_{\Hc}|\leq C\|\Hbf e^{-t\Hbf}u\|_\Hc\|e^{-t\Hbf}u\|_\Hc\leq C_2\|u\|_\Hc^2$ with $C_2:=C\|\Hbf e^{-t\Hbf}\|_{\Hc\to\Hc}\|e^{-t\Hbf}\|_{\Hc\to\Hc}<\infty$. In other words, $e^{-t\Hbf}$ is bounded as an operator $\Hc\to\Dc(\Qc_0)$. By duality (for $\Hbf^*$), it extends to a bounded operator $\Dc'(\Qc_0)\to\Hc$. Thus for $t>0$, $e^{-t\Hbf}=e^{-\frac{t}{2}\Hbf}e^{-\frac{t}{2}\Hbf}:\Dc'(\Qc_0)\to\Hc\to\Dc(\Qc_0)$ is bounded.
\end{proof}

\subsection{The free case}
We first treat the case of the free field with $\mu=0$. 
Consider the positive symmetric quadratic form $\wt{\Qc}^0$ given by
\begin{equation}\label{Qtilde0}
\wt{\Qc}^0(F,F) :=  \sum_{k\in\Zb} \left(\frac{1}{2}\|\pa_cF\|_k^2+\frac{R^2k^2+Q^2}2\|F\|_k^2+2\sum_{n\geq1}\|\Abf_nF\|_k^2+2\sum_{n\geq1}\|\wt{\Abf}_nF\|_k^2\right),
\end{equation}
where $\|\!\cdot\!\|_k$ is the $L^2$ norm on $L^2(\TR\times\Dc_0'(\Tb,\Rb)\times\{k\})$, i.e., $\|F\|_k^2 := \int_{\TR} \Eb_\varphi\big[|F(\wt{\varphi}^k)|^2\big]\,\d c$ where the expectation is over the mean-zero GFF $\varphi$ on $\Tb$. This will play the role of the reference quadratic form $\Qc_0$ in \cref{HilleYosida_lemma}. First let us check that it is closable.

\begin{lemma}
The quadratic form $\wt{\Qc}^0$ is closable. We identify it with its closure and denote by $\Dc(\wt{\Qc}^0)$ its domain.
\end{lemma}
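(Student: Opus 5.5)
The plan is to show that $\wt{\Qc}^0$, defined on $\Ec(\DT)$, is the form of a nonnegative symmetric operator. A standard result then gives closability: for a nonnegative symmetric operator $S$, the form $u\mapsto\la Su,u\ra$ is closable and its closure is the form of the Friedrichs extension (\cite[Theorem X.23]{ReedSimon} or its form-theoretic version).

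First, on $\Ec(\DT)$ I integrate by parts using the formal adjoint relations $\Abf_n^*=\Abf_{-n}$, $\wt\Abf_n^*=\wt\Abf_{-n}$, valid on $\Ec(\DT)$ as recalled above. The same reasoning applies to $\pa_c$, which is skew-adjoint on $\TR$ because the functions $e^{\i nc/R}$ are periodic. This gives
\[\wt{\Qc}^0(F,G)=\la S F,G\ra_{L^2},\qquad S:=-\tfrac12\pa_c^2+\tfrac{R^2k^2+Q^2}{2}+2\sum_{n\geq1}(\Abf_{-n}\Abf_n+\wt\Abf_{-n}\wt\Abf_n),\]
for $F,G\in\Ec(\DT)$. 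I then check that $S$ maps $\Ec(\DT)$ into itself. For a fixed $F$, only finitely many terms of the sum act nontrivially, up to the following point. Each $\Abf_n=\tfrac{\i}{2}\pa_n$ with $n>0$ acts on the polynomial $P(\la\varphi,h_1\ra,\ldots)$ through the Fourier coefficients $\ol{(h_j)_n}$. So the sum over $n$ converges in $L^2$ because the $h_j$ are smooth, and the $n$-th term is bounded by $n\,|\hat h_j(n)|^2$ times polynomial moments.

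The one subtlety is that $\Abf_{-n}\Abf_nF$ need not lie exactly in $\Ec$: it contains terms like $\varphi_{-n}\,\pa_{n}P$, and summing these over $n$ produces $\la\varphi,\Dbf h\ra$-type factors. These are still of the form in \eqref{eq_def_E_space}, since $\Dbf h_j\in C^\infty(\Tb)$. So in fact $S\Ec\subset\Ec$. Because $\la SF,F\ra=\wt{\Qc}^0(F,F)\geq0$, the operator $S$ is symmetric and nonnegative.

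Alternatively, and more concretely, one can diagonalize. In the orthonormal basis given by $e^{\i nc/R}\delta_k$ times products of Hermite polynomials in the $x_n,y_n$, the form $\wt{\Qc}^0$ is diagonal with nonnegative eigenvalues. A diagonal form $\sum_j\lambda_j|u_j|^2$ is lower semicontinuous on $\Hc$, so its closure exists and is given by the natural maximal domain $\{u:\sum_j\lambda_j|u_j|^2<\infty\}$. Closability then amounts to $\Ec$ lying in that domain, together with the identity $\wt{\Qc}^0(F,F)=\sum_j\lambda_j|F_j|^2$ for $F\in\Ec$. This identity follows from expanding $F$ in the basis, which converges in $\wt{\Qc}^0$-norm by the smoothness of the $h_j$.

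The only step needing care is this convergence and domain issue: showing that elements of $\Ec$ have finite $\wt{\Qc}^0$ and that the infinite sum over $n$ is handled correctly. I expect this to be routine given the smoothness of the $h_j$.
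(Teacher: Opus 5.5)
Your proposal is correct but takes a different route from the paper. The paper never assembles the second-order operator. It writes $\wt{\Qc}^0(F,F)=\|\iota F\|^2$ for the first-order map $\iota F=(\pa_cF_k,F_k,(\Abf_nF_k)_{n\geq1},(\wt{\Abf}_nF_k)_{n\geq1})_{k\in\Zb}$ into a weighted direct sum of copies of $L^2$, and then shows $\iota$ is closable. If $F_m\to0$ in $L^2$ with $\iota F_m$ Cauchy, each component of the limit vanishes: pair it with $G\in\Ec(\DT)$ and use $\la\Abf_nF_m,G\ra=\la F_m,\Abf_{-n}G\ra$ mode by mode. That argument needs only the adjoint relations for each fixed $n$, with no infinite series of operators to control.

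Your Friedrichs route costs a little more. You must check that $\sum_{n\geq1}(\Abf_{-n}\Abf_n+\wt{\Abf}_{-n}\wt{\Abf}_n)F$ converges in $L^2$ for $F\in\Ec(\DT)$. You handle this correctly: the first-order pieces resum to factors $\la\varphi,g\ra$ with $g$ a smooth frequency projection of $\Dbf h_j$, so $(\Hbf^0+Q^2)\Ec(\DT)\subset\Ec(\DT)$. In return you get more than closability. The closure is the form of the Friedrichs extension of $\Hbf^0+Q^2$ from $\Ec(\DT)$, which is exactly the self-adjoint operator $\wt{\Hbf}^0$ the paper invokes later.

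Your diagonalization variant is also sound. Each mode contributes $n$ times an Ornstein--Uhlenbeck operator in $x_n$ and in $y_n$, and the Hermite basis vectors lie in $\Ec(\DT)$. It additionally yields the explicit description $\Dc(\wt{\Qc}^0)=\{u:\sum_j\lambda_j|u_j|^2<\infty\}$.

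A few minor slips do not affect the argument:
\begin{itemize}
\item The first-order part of $\Abf_{-n}\Abf_n$ is $\frac n2\varphi_n\pa_n$, not a term in $\varphi_{-n}\pa_n$.
\item Its $L^2$ size is of order $\sqrt n\,|\hat h_j(n)|$, not $n|\hat h_j(n)|^2$.
\item For non-trigonometric $h_j$, infinitely many modes act nontrivially, which you then account for anyway.
\end{itemize}
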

\begin{proof}
Clearly $\wt{\Qc}^0$ is well-defined on $\Ec(\DT)$. Let $(F_n)_{n=1}^\infty\subset\Ec(\DT)$ be a sequence such that $\wt{\Qc}^0(F_m-F_n,F_m-F_n)\to0$ as $m,n\to\infty$. Since $\wt{\Qc}^0\geq\frac{\min(1,Q^2)}2\|\!\cdot\!\|_{L^2}^2$, $(F_n)_n$ is Cauchy in $L^2(\DT)$. Suppose $\|F_n-F_0\|_{L^2}\to0$ for some $F_0\in L^2(\DT)$. To show that $\wt{\Qc}^0$ is closable, it suffices to show that $F_0=0$ implies $\wt{\Qc}^0(F_n,F_n)\to0$. Consider the Hilbert space $\Hc_0:=L^2(\TR\times\Dc_0'(\Tb,\Rb))$ with the norm $\|F\|_{\Hc_0}^2:=\int_{\TR}\Eb[|F(c+\varphi)|^2]\,\d c$ (the expectation being over the mean-zero GFF $\varphi$ on $\Tb$) and the product Hilbert space $\Hc_0^\infty:=\bigoplus_{k\in\Zb}(\Hc_0\oplus\Hc_0\oplus\Hc_0^{\Nb_{\geq1}}\oplus\Hc_0^{\Nb_{\geq1}})$ with the norm
\[\|(F_k^1,F_k^2,(F_{kn}^3)_{n\geq1},(F_{kn}^4)_{n\geq1})_{k\in\Zb}\|_{\Hc_0^\infty}^2:=\sum_{k\in\Zb}\left(\frac{1}{2}\|F_k^1\|_{\Hc_0}^2+\frac{R^2k^2+Q^2}{2}\|F_k^2\|_{\Hc_0}^2+2\sum_{n\geq1}\|F_{kn}^3\|_{\Hc_0}^2+2\sum_{n\geq1}\|F_{kn}^4\|_{\Hc_0}^2\right).\]
Then we have the linear map
\[\iota:F\mapsto(\pa_cF_k,F_k,(\Abf_n F_k)_{n\geq1},(\wt{\Abf}_n F_k)_{n\geq1})_{k\in\Zb},\qquad F_k(c+\varphi):=F(c+\varphi+kR\arg),\]
such that $\wt{\Qc}^0(F,F)=\|\iota F\|_{\Hc_0^\infty}^2$. The condition $\wt{\Qc}^0(F_m-F_n,F_m-F_n)\to0$ implies that $(\iota F_m)_m$ is Cauchy in $\Hc_0^\infty$, so we can assume that $\iota F_m$ converges to some $(F_k^1,F_k^2,(F_{kn}^3)_{n\geq1},(F_{kn}^4)_{n\geq1})_{k\in\Zb}\in\Hc_0^\infty$. In particular, $\Abf_n F_m\to(F_{kn}^3)_{k\in\Zb}$ in $L^2(\DT)\cong\oplus_{k\in\Zb}\Hc_0$. For any $G\in\Ec(\DT)$, we have
\[\la(F_{kn}^3)_{k\in\Zb},G\ra=\lim_{m\to\infty}\la \Abf_n F_m,G\ra=\lim_{m\to\infty}\la F_m,\Abf_{-n}G\ra=\la F_0,\Abf_{-n}G\ra=0,\]
so $F_{kn}^3=0$ for all $k,n$. Similarly, one shows that $F_k^1=F_k^2=F_{kn}^4=0$ for all $k,n$. Thus $\iota F_m$ converges to $0$ in $\Hc_0^\infty$, i.e., $\wt{\Qc}^0(F_m,F_m)=\|\iota F_m\|_{\Hc_0^\infty}^2\to0$.
\end{proof}

\begin{lemma}\label{L_n^0_estimate}
There exists a constant $C>0$ (depending only on $Q$ and $R$) such that for all $n\in\Zb$ and all $F,G\in\Ec(\DT)$, we have
\begin{equation*}
    |\langle\mathbf L^0_nF,G\rangle_{L^2}|\leq C(1+|n|)\sqrt{\wt{\Qc}^0(F,F)\wt{\Qc}^0(G,G)}.
\end{equation*}
In particular, $\mathbf L_n^0$ extends to a bounded operator $\Dc(\wt{\Qc}^0)\to\Dc'(\wt{\Qc}^0)$. The same holds for $\widetilde{\mathbf L}_n^0$.
\end{lemma}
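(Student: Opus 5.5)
We expand $\Lbf_n^0=(n+1)Q\Abf_n+\sum_{m\in\Zb}:\!\Abf_{n-m}\Abf_m\!:$ and bound $\la\cdot F,G\ra_{L^2}$ term by term, using the adjoint relations $\Abf_m^*=\Abf_{-m}$ on $\Ec(\DT)$. The only input besides Cauchy--Schwarz is that each factor $\Abf_m$ is controlled by $\wt{\Qc}^0$.

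The first step is this control. For $m\geq1$ the definition \eqref{Qtilde0} gives $\|\Abf_mF\|^2\leq\frac12\wt{\Qc}^0(F,F)$ directly. For $m=0$, $\Abf_0=\frac\i2(\pa_c+\i Q+\i Rk)$ acts fibrewise in $k$, and $\|\Abf_0F\|_k\leq\frac12(\|\pa_cF\|_k+(|Q|+R|k|)\|F\|_k)$. Hence $\|\Abf_0F\|^2\leq C\wt{\Qc}^0(F,F)$, because the $\frac12\|\pa_cF\|_k^2$ and $\frac{R^2k^2+Q^2}2\|F\|_k^2$ terms are present. This already handles the linear term: if $n\geq0$,
\[|\la(n+1)Q\Abf_nF,G\ra|\leq (1+|n|)|Q|\,\|\Abf_nF\|\,\|G\|,\]
and $\|G\|^2\lesssim\wt{\Qc}^0(G,G)$. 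If $n<0$, we move $\Abf_n$ onto $G$ as $\Abf_{-n}=\Abf_n^*$ and argue the same way.

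The second step is the quadratic part. Since $\Abf_j$ and $\Abf_l$ commute unless $j+l=0$, normal ordering lets us write each term $:\!\Abf_{n-m}\Abf_m\!:$ as $\Abf_a\Abf_b$ with $a\le 0\le b$, $a+b=n$, up to finitely many terms with $a,b$ both positive or both negative. The mixed terms give
\[\la\Abf_a\Abf_bF,G\ra=\la\Abf_bF,\Abf_{-a}G\ra,\qquad |\la\Abf_bF,\Abf_{-a}G\ra|\le\|\Abf_bF\|\,\|\Abf_{-a}G\|,\]
so one operator lands on each side. For $n\geq0$ the index set is $a=n-m$, $b=m$ with $m\geq n$ (giving $a\leq0$), plus the finitely many terms $0<m<n$, of which there are at most $|n|$. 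We sum the mixed terms with Cauchy--Schwarz in $m$:
\[\sum_{m}\|\Abf_mF\|\,\|\Abf_{m-n}G\|\leq\Big(\sum_m\|\Abf_mF\|^2\Big)^{1/2}\Big(\sum_m\|\Abf_{m-n}G\|^2\Big)^{1/2}\lesssim\sqrt{\wt{\Qc}^0(F,F)\wt{\Qc}^0(G,G)}.\]
The terms $\Abf_{n-m}\Abf_m$ with $0<m<n$ need care, since both operators are annihilation operators. Here we use $\Abf_{n-m}\Abf_m=\Abf_m\Abf_{n-m}$ and commute one of them onto $G$ as $\Abf_{m-n}$. Each such term is then bounded by $\|\Abf_mF\|\,\|\Abf_{m-n}G\|$, and the same Cauchy--Schwarz sum covers them with no loss of $|n|$. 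Negative $n$ follows by taking adjoints: $(\Lbf_n^{0,\beta})^*=\Lbf_{-n}^{0,-\beta}$, and $\wt{\Qc}^0$ is unchanged under $Q\to-Q$.

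The main obstacle is the bookkeeping at the zero mode, together with the growth in $n$. The operator $\Abf_0$ carries the unbounded multiplier $Rk$, so we must check that $\wt{\Qc}^0$ absorbs it; the $R^2k^2$ weight in \eqref{Qtilde0} does this. For the growth, the constant is uniform apart from the linear term, which yields exactly the factor $1+|n|$. Boundedness $\Dc(\wt{\Qc}^0)\to\Dc'(\wt{\Qc}^0)$ then follows by density of $\Ec(\DT)$ in the form domain. The proof for $\wt{\Lbf}_n^0$ is identical with $\wt{\Abf}_n$ in place of $\Abf_n$.
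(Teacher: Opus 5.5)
Your overall strategy (normal ordering, moving one factor onto $G$ via $\Abf_m^*=\Abf_{-m}$, Cauchy--Schwarz in $m$, and reducing $n<0$ to $n>0$ by adjoints) matches the paper's. The gap is in the terms $0<m<n$.

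After you write $\la\Abf_{n-m}\Abf_mF,G\ra=\la\Abf_mF,\Abf_{m-n}G\ra$, the operator acting on $G$ is $\Abf_{m-n}$ with $m-n<0$, i.e.\ a creation operator. Your first step only controls $\Abf_m$ for $m\geq0$, and creation operators are not bounded by $\wt{\Qc}^0$ uniformly. From $\Abf_j^*=\Abf_{-j}$ and $[\Abf_j,\Abf_{-j}]=\frac j2$ one gets
\[\|\Abf_{-j}G\|^2=\|\Abf_jG\|^2+\tfrac j2\|G\|^2,\qquad\text{hence}\qquad\sum_{0<m<n}\|\Abf_{m-n}G\|^2\leq\tfrac12\wt{\Qc}^0(G,G)+\tfrac{n(n-1)}{4}\|G\|^2 .\]
So this block costs a factor of order $n$. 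Your claims that it is covered ``with no loss of $|n|$'' and that ``the constant is uniform apart from the linear term'' are false.

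The loss is genuine. Take $G=\mathbf 1_{\{k=0\}}$ and $F=\sum_{j=1}^{n-1}j(n-j)\varphi_j\varphi_{n-j}\mathbf 1_{\{k=0\}}$. Then:
\begin{itemize}
\item $\la\Lbf_n^0F,G\ra=-\|F\|^2$, and this comes entirely from the terms $0<m<n$; the linear term vanishes because $\pa_nF=0$.
\item $\|F\|^2\asymp n^3$.
\item $\wt{\Qc}^0(F,F)=(\frac{Q^2}{2}+n)\|F\|^2$ and $\wt{\Qc}^0(G,G)=\frac{Q^2}{2}\|G\|^2$.
\end{itemize}
So the ratio in the lemma grows linearly in $n$, purely from the quadratic part.

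The repair is the commutator identity above, which is the key step in the paper's proof. Since $\wt{\Qc}^0(G,G)\geq\frac{Q^2}{2}\|G\|^2$, you get $\frac{n(n-1)}{4}\|G\|^2\lesssim n^2\,\wt{\Qc}^0(G,G)$. The block $0<m<n$ is then bounded by $Cn\sqrt{\wt{\Qc}^0(F,F)\wt{\Qc}^0(G,G)}$, and the lemma's $C(1+|n|)$ follows.

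So your conclusion is correct, but you must supply this estimate. The factor $1+|n|$ comes from the quadratic part as much as from the linear term.

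A minor point: your explicit list of mixed terms omits $m\le0$, where $:\!\Abf_{n-m}\Abf_m\!:=\Abf_m\Abf_{n-m}$. These duplicate the $m\geq n$ terms and are handled by the same Cauchy--Schwarz bound.
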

\begin{proof}
Since $(\mathbf L_n^{0,\beta})^*=\mathbf L_{-n}^{0,-\beta}$ and $\mathbf H^0=\mathbf L^{0}_0+\widetilde{\mathbf L}^0_0$, it suffices to consider the case $n> 0$.
 Recall \eqref{Qtilde0}, \eqref{eq_L} and that $\Abf_0=\tfrac{\i}{2}\pa_c- \frac12Q- \frac12Rk$. By Cauchy-Schwarz inequalities, we have
    \begin{equation*}
    \begin{split}
        |\langle\mathbf L^0_nF,G\rangle_2|={}&\left|(n+1)Q\langle\mathbf{A}_nF,G\rangle_2+2\langle\mathbf A_0\mathbf{A}_nF,G\rangle_2+\sum_{j=1}^{n-1}\langle\mathbf{A}_{n-j}F,\mathbf{A}_{-j}G\rangle_2+2\sum_{l\geq1}\langle\mathbf A_{n+l}F,\mathbf A_{l}G\rangle_2\right|\\
        \leq{}&\left(\ \|\mathbf A_nF\|_2^2+\|\partial_cF\|_2^2+\sum_{j=1}^{n-1}\|\mathbf A_j F\|^2_2+2\sum_{l\geq1}\|\mathbf A_{n+l}F\|_2^2\right)^{\frac12}\\
        &\times\left(\|(nQ-Rk)G\|_2^2+\|\mathbf A_{-n}G\|_2^2+\sum_{j=1}^{n-1}\|\mathbf A_{-j} G\|^2_2+2\sum_{l\geq1}\|\mathbf A_{l}G\|_2^2\right)^{\frac12}.
    \end{split}       
    \end{equation*}
  The second line above is bounded by $C\sqrt{\wt{\Qc}^0(F,F)}$. Using $[\mathbf{A}_j,\mathbf{A}_{-j}]=\frac j2$, we obtain $\|A_{-j} G\|^2_2=\|A_{j} G\|^2_2+\frac j2\|G\|_2^2$. Thus
  \begin{equation*}
      \begin{split}
          &\left(\|(nQ-Rk)G\|_2^2+\|\mathbf A_{-n}G\|_2^2+\sum_{j=1}^{n-1}\|\mathbf A_{-j} G\|^2_2+2\sum_{l\geq1}\|\mathbf A_{l}G\|_2^2\right)^{\frac12}\\
          ={}&\left(\left(|nQ-Rk|^2+\frac{n(n+1)}{4}\right)\|G\|_2^2+\sum_{j=1}^{n}\|\mathbf A_{j} G\|^2_2+2\sum_{l\geq1}\|\mathbf A_{l}G\|_2^2\right)^{\frac12},
      \end{split}
  \end{equation*}
  where the last line is bounded by $Cn\sqrt{\wt{\Qc}^0(G,G)}$.
\end{proof}


Let $\v = -\sum_{n=0}^\infty v_n z^{n+1} \pa_z$ be a Markovian holomorphic vector field on $\Db$. This lemma shows that the expression $\Hbf_\v^0=\sum_{n\geq0}v_n\,\Lbf_n^0+\sum_{n\geq0}\ol{v_n}\,\wt{\Lbf}_n^0$ converges in the space of bounded linear operators $\Dc(\wt{\Qc}^0)\to\Dc'(\wt{\Qc}^0)$. We define the associated (nonsymmetric) quadratic form $\Qc_\v^0(F,F):=\la \Hbf_\v^0F,F\ra_{L^2}$ on $\Dc(\wt{\Qc}^0)$. Then it is easy to see that $\wt{\Qc}^0=\Qc^0+Q^2\|\!\cdot\!\|_{L^2}^2$ where $\Qc^0:=\Qc_{-z\pa_z}^0$ is the special case $\v=-z\pa_z$. Using the lemma again as in the beginning of the proof of \cite[Theorem 2.8]{Virasoro_structure}, we obtain bounds (for some $K>0$)
\begin{equation}\label{Q_v^0_accretive}
\Re(\Qc_\v^0+\omega Q^2\|\!\cdot\!\|_{L^2}^2)\geq\left(\Re\omega-2K\sum_{n\geq1}n^2|v_n|\right)\wt{\Qc}^0,\qquad|\Qc_\v^0+\omega Q^2\|\!\cdot\!\|_{L^2}^2|\leq\left(2|\omega|+2K\sum_{n\geq1}n^2|v_n|\right)\wt{\Qc}^0.
\end{equation}
Assuming that $\Re\omega>0$ is large enough in the sense that
\begin{equation}\label{omega_condition}
\Re\omega > |\!\Im\omega| + 2K\sum_{n\geq1}n^2|v_n| + 1,
\end{equation}
we are in the situation described in \cref{HilleYosida_lemma}. The conclusion is that there exists a closed unbounded operator $\Hbf_\v^0$ associated to $\Qc_\v^0$ with domain $\Dc(\Hbf_\v^0)\subset\Dc(\wt{\Qc}^0)$, and it generates a strongly continuous semigroup of bounded operators $t\mapsto e^{-t\Hbf_\v^0}$. Its extension to a bounded operator $\Dc(\wt{\Qc}^0)\to\Dc'(\wt{\Qc}^0)$ coincides with the limit of $\sum_{n\geq0}v_n\,\Lbf_n^0+\sum_{n\geq0}\ol{v_n}\,\wt{\Lbf}_n^0$ above.

\begin{prop}[\textbf{Feynman--Kac formula, free case}]\label{prop_FK_free}
Let $\v = -\sum_{n=0}^\infty v_n z^{n+1} \pa_z$ be a Markovian holomorphic vector field on $\Db$ such that \eqref{omega_condition} holds. Let $t\mapsto f_t$ be the flow generated by $\v$. For $t>0$, we have $\Tbf_{f_t}^0 = e^{-t\Hbf_\v^0}$, i.e.,
\[(e^{-t\Hbf_\v^0}F)(\wt{\varphi}^k) = |f_t'(0)|^{\frac{R^2k^2-Q^2}2} \,\Eb\left[F\left((P\wt{\varphi}^k+X_\Db)\circ f_t\big|_\Tb + \i Q\log\tfrac{|f_t'|}{|f_t|}\big|_\Tb\right)\right].\]
\end{prop}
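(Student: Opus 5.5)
We plan to prove the identity $\Tbf^0_{f_t}=e^{-t\Hbf^0_\v}$ by showing that both sides are strongly continuous semigroups on $L^2(\DT)$ with the same generator, checked on a core. The right-hand side is a strongly continuous semigroup by \cref{HilleYosida_lemma}, using \eqref{Q_v^0_accretive} and \eqref{omega_condition}. For the left-hand side, \cref{T_f_semigroup} with $\mu=0$ and the flow property $f_{t+s}=f_t\circ f_s$ give $\Tbf^0_{f_{t+s}}=\Tbf^0_{f_t}\Tbf^0_{f_s}$. We will also need a uniform bound $\|\Tbf^0_{f_t}\|\le Ce^{at}$ for $t\in(0,1]$ and strong continuity at $t=0$. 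Both should follow from the explicit Gaussian formula \eqref{eq_def_T_f} tested on $\Ec(\DT)$, together with density and the boundedness of the amplitude. For the norm bound we would rather rely on Proposition~\ref{prop_semigroup_as_amplitude} and an explicit computation on the diagonal basis described below.

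The core computation is to determine $\Tbf^0_{f_t}F$ explicitly for $F\in\Ec(\DT)$. Since $F$ is a trigonometric monomial in $c$ times a polynomial in finitely many linear functionals $\la\varphi,h_i\ra$ times $G(k)$, the quantity $(P\wt\varphi^k+X_\Db)\circ f_t|_\Tb+\i Q\log\frac{|f_t'|}{|f_t|}$ has the following structure. Its constant mode is $c+(P\varphi+X_\Db)(0)$, shifted by an explicit imaginary term. Its winding part $kR\arg f_t$ differs from $kR\arg$ by a harmonic correction. The remaining Gaussian part is explicit, so the expectation over $X_\Db$ is a finite-dimensional Gaussian integral. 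We will then differentiate at $t=0$, using $\partial_t f_t|_{t=0}=v$ and $\partial_t\log f_t'|_{t=0}=v'$, and match the result term by term with $-\Hbf^0_\v F=-\sum v_n\Lbf^0_nF-\sum\bar v_n\wt\Lbf^0_nF$. More concretely, we expect the following correspondences:
\begin{itemize}
\item[(i)] the factor $|f_t'(0)|^{(R^2k^2-Q^2)/2}$ together with the variance of $X_\Db(0)$ and the shift of the zero mode produces the terms $\Abf_0^2$, $\wt\Abf_0^2$ and $Q\Abf_0$;
\item[(ii)] the transport of $\varphi$ under $v$ gives the quadratic terms $\Abf_{n-m}\Abf_m$ with $m\ge 1$;
\item[(iii)] the Itô/Gaussian correction from $X_\Db\circ f_t|_\Tb$, which has covariance of order $t$, gives the creation terms $\Abf_{-m}\Abf_{n+m}$ and the normal ordering;
\item[(iv)] the imaginary shift $\i Q\log|f_t'/f_t|$ gives the linear terms $(n+1)Q\Abf_n$.
\end{itemize}
This is the free-field Segal–Sugawara identity, analogous to \cite[Section 2]{Virasoro_structure}. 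The only new features are the winding $k$ and the compact zero mode, and these should only modify $\Abf_0,\wt\Abf_0$ by the $\pm\i Rk$ shifts already built into \eqref{eq_def_A}.

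To conclude, we will show that $\frac{d}{dt}\la\Tbf^0_{f_t}F,G\ra=-\la\Tbf^0_{f_t}\Hbf^0_\v F,G\ra$ for $F,G$ in a common core. A convenient core is the span of $e^{\i nc/R}G(k)$ times Hermite-type polynomials in finitely many $\varphi_n$. On this span $\Hbf^0_\v$ acts triangularly: since $v_n$ with $n\ge0$ only lowers or preserves the degree (weighted by mode), the span is invariant under both $\Hbf^0_\v$ and $\Tbf^0_{f_t}$. Because of this invariance, the semigroup restricted to each finite-dimensional graded piece is literally the matrix exponential. It then follows that $\Tbf^0_{f_t}=e^{-t\Hbf^0_\v}$ on a dense set, and the identity extends by boundedness. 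The main obstacle is this invariance and triangularity claim. We have to check that $\Tbf^0_{f_t}$ maps a finite-level space into a finite-dimensional space. The $c$-frequency and $k$ are preserved. The polynomial degree in $\varphi$ should be preserved because $f_t$ is holomorphic in $\Db$ with $f_t(0)=0$, so composition only mixes the modes $\varphi_n$ into higher modes and the Gaussian part contributes finitely many lower-degree terms. If this fails, we would fall back on a Duhamel/resolvent argument using the bound $e^{-t\Hbf}:\Dc'\to\Dc$ from \cref{HilleYosida_lemma}.
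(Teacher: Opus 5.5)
Your proposal is correct, but it closes the argument differently from the paper. The paper does not redo the free-field computation at general $k$. It factors $\Tbf^0_{f_t}|_{\Hc_k}=|f_t'(0)|^{R^2k^2/2}\,\Tbf^0_{f_t}|_{\Hc_0}\circ\tau_{k,t}$, with $\tau_{k,t}$ the deterministic translation by $Rk(\arg f_t-\arg)|_\Tb$. It then imports the whole $k=0$ statement (derivative computation and identification of the semigroup) from Propositions 2.11--2.12 of \cite{Virasoro_structure}, and only adds the contributions of the scalar factor and of $\tau_{k,t}$ to the generator; these produce the $\pm\i Rk$ shifts in $\Abf_0,\wt\Abf_0$.

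You instead differentiate at all $k$ at once and finish with a finite-dimensional argument, which works. For fixed $c$-frequency and $k$, let $V$ be the space of polynomials of degree at most $D$ in the $\varphi_j$, $|j|\le M$.
\begin{itemize}
\item $V$ is invariant under $\Tbf^0_{f_t}$: for $m\ge1$ the $m$-th mode of $P\varphi\circ f_t|_\Tb$ involves only $\varphi_1,\dots,\varphi_m$, its zero mode vanishes by the mean value property, and everything else is an independent Gaussian or deterministic.
\item $V$ is invariant under $\Hbf^0_\v$: each $\Lbf^0_n,\wt\Lbf^0_n$ with $n\ge0$ preserves degree and mode range, and those with $n>2M$ vanish on $V$.
\end{itemize}
By \cref{T_f_semigroup}, $t\mapsto\Tbf^0_{f_t}|_V$ is then a continuous matrix semigroup $e^{tA}$.

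What you should state explicitly is that $V\subset\Dc(\Hbf^0_\v)$ for the form-generated operator of \cref{HilleYosida_lemma}, with $\Hbf^0_\v$ acting on $V$ by the formal differential operator. This holds because $\Qc^0_\v(F,G)=\la\Hbf^0_\v F,G\ra$ extends from $G\in\Ec(\DT)$ to $G\in\Dc(\wt\Qc^0)$, since $\Hbf^0_\v F\in L^2$. Once $A=-\Hbf^0_\v|_V$ is computed, uniqueness of classical solutions gives $e^{-t\Hbf^0_\v}|_V=e^{tA}$, and density plus boundedness conclude. In particular, the uniform bound and strong continuity from your first paragraph are not needed.

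Your route gives an identification step needing no core theorem and no appeal to the $k=0$ literature. The paper's route is shorter and makes transparent that the winding enters only through a scalar factor and a translation.

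One caveat on your matching (i)--(iv): the labels are shuffled, though the total is right.
\begin{itemize}
\item The zero-mode Gaussian is $\frac1{2\pi}\int_0^{2\pi}X_\Db(f_t(e^{\i\theta}))\,\d\theta$, with variance $-\log|f_t'(0)|$, not $X_\Db(0)$.
\item Transport of $P\varphi$ yields only the $\varphi_j\pa_{n+j}$ parts of $\Abf_{-j}\Abf_{n+j}$. The Gaussian correction yields all second-order terms, including $\Abf_{n-m}\Abf_m$ for $1\le m\le n-1$.
\item The shift $\i Q\log\frac{|f_t'|}{|f_t|}$ yields $nQ\Abf_n$. This matches $(n+1)Q\Abf_n$ only after adding the $-Q\Abf_n$ contained in $2\Abf_0\Abf_n$.
\end{itemize}
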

\begin{proof}
For $k\in\Zb$, let $\Hc_k:=L^2(\TR\times\Dc_0'(\Tb,\Rb)\times\{k\})\subset L^2(\DT)$, so that $L^2(\DT)=\bigoplus_{k\in\Zb}\Hc_k$. Clearly all the operators above preserve each $\Hc_k$.

First consider the case $k=0$. To show that $\Tbf_{f_t}^0 = e^{-t\Hbf_\v^0}$ on $\Hc_0$, it suffices to repeat the end of the proof of \cite[Theorem 2.8]{Virasoro_structure} (more precisely, Propositions 2.11 and 2.12), \textit{mutatis mutandis}. Indeed, the calculation of $\frac{\pa}{\pa t}\big|_{t=0}F\big(P\wt{\varphi}\circ f_t\big|_\Tb + Q\log\tfrac{|f_t'|}{|f_t|}\big|_\Tb\big)$ in Proposition 2.11 (\textit{ibid.}) holds for all $Q\in\Cb$ if $F$ is analytic in the zero mode $c$, and the calculation of $\frac{\pa}{\pa t}\big|_{t=0}\Eb\big[F\big(\wt{\varphi} + (X_\Db\circ f_t)\big|_\Tb\big)\big]$ in Proposition 2.12 (\textit{ibid.}) carries over verbatim since they concern the same GFF $X_\Db$.

For a general $k\in\Zb$, we have $\Tbf_{f_t}^0|_{\Hc_k} = |f_t'(0)|^{\frac{R^2k^2}{2}}\, \Tbf_{f_t}^0|_{\Hc_0} \circ \tau_{k,t}$, where we identify $\Hc_k$ and $\Hc_0$ via the natural isomorphism and $\tau_{k,t}$ is the translation on $\Hc_0$ defined by $(\tau_{k,t}F)(\wt{\varphi}) := F(\wt{\varphi} + Rkg_t|_\Tb)$, where $g_t(z):=\arg f_t(z)-\arg z=\Im\log\frac{f_t(z)}{z}$. Note that $g_t$ admits a continuous branch on $\Db$ and is well-defined up to a global constant in $2\pi\Zb$. Thus the generator of $\Tbf_{f_t}^0|_{\Hc_k}$ is
\[-\frac{R^2k^2}{2}\Re\omega+\Hbf_\v^0|_{\Hc_0} - \pa_t\big|_{t=0}\tau_{k,t}.\]
To compute the last term,
\[\pa_t\big|_{t=0}g_t(z) = \Im\frac{\pa_tf_t}{f_t}\bigg|_{t=0}(z) = \Im \frac{v(z)}{z} = -\Im\sum_{n=0}^\infty v_nz^n,\]
so
\[\pa_t\big|_{t=0}\tau_{k,t} = Rk\left(-\Im\omega\,\pa_c+\sum_{n\geq1}\frac{\i}{2}(v_n\pa_n-\ol{v_n}\pa_{-n})\right).\]
The rest of the proof is a straightforward algebraic calculation.
\end{proof}

\subsection{The Liouville case}\label{Sec_Virasoro_Liouville}

We turn to the general case $\mu\in\Cb$. Let $\v = -\sum_{n\geq0} v_n z^{n+1} \pa_z$ be a holomorphic vector field on $\Db$. We would like to make sense of the potential operator
\begin{equation}\label{defn_V_v}
V_\v:=e^{\i\beta c}\int_0^{2\pi} e^{\i\beta Rk\theta}\,e^{\i\beta\varphi(\theta)}\,\d\varrho_\v(\theta),\qquad \d\varrho_\v(\theta):=-\Re(e^{-\i\theta}v(e^{\i\theta}))\,\d\theta,
\end{equation}
where $e^{\i\beta\varphi(\theta)}\,\d\theta$ is understood as the imaginary GMC on $\Tb$. When $0<|\beta|<1$, $V_\v$ exists as a function and $\mathbb E[e^{\mu |V_\v|}]$ is finite, so that it acts as a multiplication operator on $L^2(\DT)$. However, when $1\leq\beta^2<2$, it is not properly defined and regularization is needed to make sense of it. For $0<\delta<1$, we define
\begin{equation*}
   V_\v^{(\delta)}:= e^{\i\beta c}\int_0^{2\pi} e^{\i\beta Rk\theta}\,e^{\i\beta\varphi^{\text{arc}}_\delta(\theta)+\frac{\beta^2}{2}\mathbb E[(\varphi^{\text{arc}}_\delta)^2(\theta)]}\,\d\varrho_\v(\theta),
\end{equation*}
where $\varphi^{\text{arc}}_\delta(\theta):=\frac{1}{2\delta}\int_{\theta-\delta}^{\theta+\delta}\varphi(\theta')\d\theta'$ is a.s.\ a continuous function on $\mathbb T$. Using the imaginary Cameron--Martin theorem (the proof is similar to those in  \cite[Theorem 6.11]{CILT}), we have for real analytic $L^2$ functions $F,G$ (in an appropriate space such as \eqref{eq_def_E_space})
\begin{equation*}
\langle V_\v^{(\delta)}F,G\rangle_{L^2}=\int_0^{2\pi R}e^{\i\beta c}\int_0^{2\pi} e^{\i\beta Rk\theta}\mathbb E[F(\widetilde{\varphi}+\i\beta\mathbb E[\varphi(\cdot)\varphi^{\text{arc}}_\delta(\theta)])\overline{G}(\widetilde{\varphi}+\i\beta\mathbb E[\varphi(\cdot)\varphi^{\text{arc}}_\delta(\theta)])\,\d\varrho_\v(\theta)\d c.
\end{equation*}

Then the operator $V_\v$ is defined by
\[\la V_\v F,G\ra_{L^2}:=\lim_{\delta\to0}\langle V_\v^{(\delta)}F,G\rangle_{L^2}\]
for $F,G\in\Ec(\DT)$.
\begin{remark}\label{rmk_tech_GMC}
    Let us explain the technical reasons why we choose this regularization instead of the usual harmonic cutoff that will be used in the paper about conformal bootstrap of CILT. In the proof of Feynman--Kac formula \cref{prop_FK_formula}, we need to find a uniform bound for the exponential moment of the regularized GMC like our \cref{lemma_gmc_delta_exp_bound} (in the real case, the potential is positive so they are free from this problem). If we use white noise decomposition as in \cref{lemma_gmc_delta_exp_bound} for the harmonic cutoff, we need to estimate the Fourier coefficient of the heat kernel accurately. Besides, the harmonic cutoff of $X\circ f_t$ can not produce a martingale, so we cannot use Jensen's inequality to finish the proof, unless we go through a detailed study of the Fourier coefficient of $\log|f_s(e^{\i\theta})-f_{s'}(e^{\i\theta'})|$. On the other hand, not every regularization is compatible with the Hilbert space $L^2(\DT)$. Note that $V_\v^{(\delta)}$ is measurable in $L^2(\DT)$, while the white noise decomposition and circle average are not. This is crucial in applying the Trotter's product formula in \cref{prop_FK_formula}. 
\end{remark}

\begin{lemma}\label{V_v_estimate}
Let $N\geq0$. For $F,G\in\Ec(\DT)$, we have
           \begin{equation}\label{eq_V_bound}
           \begin{split}
               \sup_{0<\delta<1}|\langle V^{(\delta)}_\v F,G\rangle_{L^2}|\leq {}&C\, (N+Q^2/2)^{\frac{\beta^2}{2}-1} \\
               &\sup_{\theta\in[0,2\pi]}|\Re(e^{-\i\theta}v(e^{\i\theta}))|\sqrt{(N\|F\|_{L^2}^2+\widetilde{\mathcal Q}^0(F,F))(N\|G\|_{L^2}^2+\widetilde{\mathcal Q}^0(G,G))},
           \end{split}
    \end{equation}
where $C>0$ only depends on $\beta$. In particular, $V_\v$ extends to a bounded operator $\Dc(\wt{\Qc}^0)\to\Dc'(\wt{\Qc}^0)$.
\end{lemma}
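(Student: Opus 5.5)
The plan is to reduce the estimate to a weighted $L^2$ bound for the Cameron--Martin shifted functionals, which we then control by the Gaussian structure of $\varphi$. The first step is to rewrite the bilinear form using the imaginary Cameron--Martin formula stated just before the lemma:
\[\la V_\v^{(\delta)}F,G\ra_{L^2}=\int_0^{2\pi R}e^{\i\beta c}\int_0^{2\pi}e^{\i\beta Rk\theta}\,\Eb\big[F(\wt\varphi+\i\beta h_{\delta,\theta})\ol G(\wt\varphi+\i\beta h_{\delta,\theta})\big]\,\d\varrho_\v(\theta)\,\d c,\]
where $h_{\delta,\theta}:=\Eb[\varphi(\cdot)\varphi^{\rm arc}_\delta(\theta)]$ is an explicit smoothing of $-\log|e^{\i\cdot}-e^{\i\theta}|$. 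We bound $|\d\varrho_\v|$ by $\sup_\theta|\Re(e^{-\i\theta}v(e^{\i\theta}))|\,\d\theta$, which produces the factor $\sup_\theta|\Re(e^{-\i\theta}v)|$ in \eqref{eq_V_bound}. By Cauchy--Schwarz in $(c,\varphi,\theta)$, it then suffices to prove, uniformly in $\delta$,
\[\int_0^{2\pi}\sum_k\|F(\cdot+\i\beta h_{\delta,\theta})\|_k^2\,\d\theta\le C(N+Q^2/2)^{\frac{\beta^2}{2}-1}\big(N\|F\|^2_{L^2}+\wt\Qc^0(F,F)\big),\]
and the same bound for $G$. Note that the $\theta$-integral must stay inside the square before taking square roots.

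To handle the imaginary shift, we expand $F$ in Fourier--Hermite modes in $(x_n,y_n)$ and use the Mehler or heat-semigroup representation. Since a complex shift by $\i\beta h$ of a Gaussian functional satisfies $\Eb|F(\varphi+\i\beta h)|^2=\Eb\big[|F|^2(\varphi)\ldots\big]$, we can compute it on Hermite polynomials. Concretely, writing $F=\sum_{\mathbf n}a_{\mathbf n}H_{\mathbf n}$, we have $\|F(\cdot+\i\beta h)\|^2=\sum |{\rm coefficients}|^2$ weighted by $e^{\beta^2\sum_n n|\hat h_n|^2\cdot(\text{occupation})}$-type factors. It is cleaner to write this as the quadratic form $\la e^{\beta^2\Phi_{h}}F,F\ra$, where $\Phi_h$ is a positive operator built from the annihilation operators $\Abf_n,\wt\Abf_n$. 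Formally $\Eb|F(\varphi+\i\beta h)|^2=\la F,e^{\i\beta\la h,\cdot\ra\text{-translation}}\ldots\ra$. The key identity is that the $\theta$-integral of these weights, $\int\d\theta\, e^{\i\beta\ldots}$, is diagonal in the Fourier basis in $\theta$ by rotation invariance: the $k$-dependent phase $e^{\i\beta Rk\theta}$ combines with the phases $e^{\i n\theta}$ of $\hat h_{n}$, and the $\theta$-integral becomes a convolution-type kernel.

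The main step, and the main obstacle, is to show that on the energy eigenspace of $\Hbf^0+N$ with eigenvalue $\lambda\ge N+Q^2/2$, the integrated weight is bounded by $C\lambda^{\beta^2/2-1}\cdot\lambda$. This is the standard boundary-GMC estimate: the two-point singularity $|e^{\i\theta}-e^{\i\theta'}|^{-\beta^2}$ with $\beta^2<2$ gives, after Fourier analysis, decay of level-$n$ components like $n^{\beta^2-1}$. Summing up to level $\lambda$ yields $\lambda^{\beta^2/2}\cdot\lambda^{\beta^2/2-1}$ relative to the $\lambda$ weight; this must be balanced to get exactly $\lambda^{\beta^2/2-1}\lambda$ per factor after the square root. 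We would try first to dominate by the real-GMC analogue, $\beta\to$ real, using $|F(\varphi+\i\beta h)|^2\le$ the Gaussian reweighting by $e^{\beta\la\varphi,h\ra}$. This reduces the problem to the known estimate of \cite[Lemma 6.5]{Guillarmou2024} / \cite{Segal_semigroup} for $\la e^{\beta\varphi_\delta(\theta)-\frac{\beta^2}2\Eb}F,F\ra$ with $F$ in a spectral window of $\Hbf^0$. That estimate is uniform in $\delta$ by the martingale property of the arc regularization (Jensen). Finally we sum over spectral windows dyadically. The extension to $\Dc(\wt\Qc^0)\to\Dc'(\wt\Qc^0)$ then follows by density of $\Ec(\DT)$ and the existence of the limit $\delta\to0$, which is obtained by dominated convergence in the same representation.
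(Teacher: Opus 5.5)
Your reduction step fails: the inequality you say suffices is false, so the rest of the outline cannot close. After the imaginary Cameron--Martin formula the integrand is $F(\wt\varphi+\i\beta h_{\delta,\theta})\,\overline{G(\wt\varphi-\i\beta h_{\delta,\theta})}$, with the two shifts pointing in opposite imaginary directions. The uniform-in-$\delta$ bound rests entirely on the cancellation between them. This is the oscillation of $e^{\i\beta\varphi^{\rm arc}_\delta}$, whose modulus $e^{\frac{\beta^2}{2}\Eb[(\varphi^{\rm arc}_\delta)^2]}\asymp\delta^{-\beta^2/2}$ blows up.

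Cauchy--Schwarz in $(c,\varphi,\theta)$ discards this cancellation. Moreover, $F\mapsto F(\cdot+\i\beta h)$ is an imaginary translation, which is not bounded relative to $\widetilde{\mathcal Q}^0$ or any power of $\mathbf H^0$. Concretely, take $F_\lambda:=e^{\i\lambda\la\varphi,\cos\ra}\mathbf 1_{k=0}=e^{\i\lambda x_1/2}\mathbf 1_{k=0}$, or its Taylor truncations, which lie in $\Ec(\DT)$ and converge both in $\widetilde{\mathcal Q}^0$ and after the shift, uniformly in $\theta$. Since $\la h_{\delta,\theta},\cos\ra=\frac{\sin\delta}{2\delta}\cos\theta$, we get $|F_\lambda(\varphi+\i\beta h_{\delta,\theta})|^2=e^{-\lambda\beta\frac{\sin\delta}{\delta}\cos\theta}$. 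Its $\theta$-integral grows exponentially in $\lambda$, while $\widetilde{\mathcal Q}^0(F_\lambda,F_\lambda)=2\pi R(\frac{Q^2}{2}+\frac{\lambda^2}{4})$. In the actual integrand with $G=F_\lambda$, the two exponential factors cancel exactly.

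The same example refutes your proposed domination of $|F(\varphi+\i\beta h)|^2$ by a real Girsanov reweighting: a real shift leaves $|F_\lambda|=1$. So there is no reduction to the positive real-GMC estimate. The spectral-window heuristic also fails: on a single mode at occupation $n$, the imaginary shift has norm of order $e^{c\sqrt n}$, not a power of $n$.

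What is missing is a mechanism that exploits the phase cancellation inside a joint expectation while $F$ and $G$ stay at unshifted arguments. The paper gets this by bounding the sandwiched operator: $\|e^{-s\mathbf H^0}V^{(\delta)}_\v e^{-t\mathbf H^0}\|_{L^2\to L^2}\le C\sup_\theta|\Re(e^{-\i\theta}v(e^{\i\theta}))|\,e^{\frac{Q^2}{2}(s+t)}(1-e^{-s})^{-\beta^2/4}(1-e^{-t})^{-\beta^2/4}$, uniformly in $\delta$. The steps are as follows.

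\begin{enumerate}
\item $e^{-t\mathbf H^0}$ is realized by restricting the disk GFF to the circle of radius $e^{-t}$, and two independent copies glue into a GFF on the sphere.
\item Conditioning on its values on the circles of radii $e^{\mp(s+t)}$ turns the pair of vertex factors into a conditional Gaussian characteristic function, of modulus $e^{-\frac{\beta^2}{2}\mathrm{Var}(\cdot)}$. Only then is Cauchy--Schwarz applied, to $|F||G|$.
\item Lower bounds on the annulus Dirichlet Green's function give $\|T_{s,t,\delta}(\theta')^*T_{s,t,\delta}(\theta)\|\lesssim(1-e^{-t})^{-\beta^2}|e^{\i\theta}-e^{\i\theta'}|^{-\beta^2}$, and symmetrically in $s$.
\item The continuous Cotlar--Stein lemma in $\theta$ then closes, because $|e^{\i\theta}-e^{\i\theta'}|^{-\beta^2/2}$ is integrable for $\beta^2<2$.
\item Inserting $(\mathbf H^0+\lambda)^{-1/2}=\pi^{-1/2}\int_0^\infty t^{-1/2}e^{-\lambda t}e^{-t\mathbf H^0}\,\d t$ on both sides produces exactly the factor $(N+Q^2/2)^{\frac{\beta^2}{2}-1}$.
\end{enumerate}
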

The main idea of the proof belongs to another paper studying conformal bootstrap of CILT (by C. Guillarmou, R. Rhodes and the authors), which is still in preparation. We put the proof in the Appendix, i.e. the proof of \cref{lem_App_1}. In short, we adapt the proof based on cut-off of harmonics to arc average renormalization here, and some basic Green's function estimates are greatly helped by AI.

In the same vein, for $n\in\Zb$, the operator
\[V_n:=e^{\i\beta c}\int_0^{2\pi} e^{\i\beta Rk\theta}\,e^{\i\beta\varphi(\theta)}\,e^{\i n\theta}\,\d\theta\]
extends to a bounded operator $\Dc(\wt{\Qc}^0)\to\Dc'(\wt{\Qc}^0)$, and we can define $\Lbf_n:=\Lbf_n^0+\frac{\mu}{2} V_n$, $\wt{\Lbf}_n:=\wt{\Lbf}_n^0+\frac{\mu}{2}V_{-n}$. One can check the formal adjoint relations (making explicit the dependence on $\mu,\beta$)
\begin{equation}\label{L_n_adjoint}
(\Lbf_n^{\mu,\beta})^*=\Lbf_{-n}^{\bar{\mu},-\beta},\qquad(\wt\Lbf_n^{\mu,\beta})^*=\wt\Lbf_{-n}^{\bar{\mu},-\beta},\qquad n\in\Zb,
\end{equation}
which hold in the sense that
\[\la \Lbf_n^{\mu,\beta}F, G\ra=\la F,\Lbf_{-n}^{\bar{\mu},-\beta}G\ra,\qquad \la \wt\Lbf_n^{\mu,\beta}F, G\ra=\la F,\wt\Lbf_{-n}^{\bar{\mu},-\beta}G\ra,\qquad\fr F,G\in\Dc(\wt{\Qc}^0).\]
It is more delicate to make sense of the commutation relations for $(\Lbf_n)_n,(\wt{\Lbf}_n)_n$; a precise statement will be formulated in a future work.

For $\v = -\sum_{n=0}^\infty v_n z^{n+1} \pa_z$ as above, the expression $\Hbf_\v=\sum_{n\geq0}v_n\,\Lbf_n+\sum_{n\geq0}\ol{v_n}\,\wt{\Lbf}_n$ converges in the space of bounded linear operators $\Dc(\wt{\Qc}^0)\to\Dc'(\wt{\Qc}^0)$, and the limit is precisely $\Hbf_\v^0+\mu V_\v$. We define the associated quadratic form $\Qc_\v$ by
\begin{equation*}
    \Qc_\v(F,G):= \la\Hbf_\v F,G\ra_{L^2} = \Qc_\v^0(F,G)+\mu\lim_{\delta\to0}\langle V_\v^{(\delta)}F,G\rangle_{L^2}.
\end{equation*}
To apply \cref{HilleYosida_lemma}, we need the following bounds: 

\begin{lemma}\label{Q_v_bound}
Suppose \eqref{omega_condition} holds. Then there exist $N,C>0$ (depending on $\v$) such that
\[\Re(\Qc_\v+N\|\!\cdot\!\|_2^2)\geq C^{-1}\wt{\Qc}^0,\qquad|\Qc_\v+N\|\!\cdot\!\|_2^2|\leq C\wt{\Qc}^0.\]
\end{lemma}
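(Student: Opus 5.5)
The plan is to combine the free-field bounds \eqref{Q_v^0_accretive} with the potential bound of \cref{V_v_estimate}, treating $\mu V_\v$ as a form-small perturbation. Write
\[\Qc_\v+N\|\cdot\|_2^2=\big(\Qc_\v^0+\omega Q^2\|\cdot\|_2^2\big)+\big(N-\omega Q^2\big)\|\cdot\|_2^2+\mu\lim_{\delta\to0}\la V_\v^{(\delta)}\cdot,\cdot\ra_{L^2}.\]
Under \eqref{omega_condition}, the first bracket satisfies $\Re(\cdot)\geq c_0\wt{\Qc}^0$ with $c_0:=\Re\omega-2K\sum_{n\geq1}n^2|v_n|>1$, and $|\cdot|\leq C_0\wt{\Qc}^0$. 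The second term is harmless. For the lower bound we want $\Re N\geq\Re\omega Q^2$, and for the upper bound we use $\|F\|_2^2\leq\frac{2}{\min(1,Q^2)}\wt{\Qc}^0(F,F)$. Note that only $\Re\omega>|\Im\omega|+\dots$ is used here, so the imaginary part of $\omega Q^2$ can be absorbed into the upper bound.

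The key step is the potential term. Apply \cref{V_v_estimate} with $F=G$ and a parameter $N'$ to be chosen:
\[|\mu\la V_\v F,F\ra|\leq |\mu|C\,(N'+Q^2/2)^{\frac{\beta^2}{2}-1}\,\|\Re(\zb v)\|_{L^\infty(\Tb)}\big(N'\|F\|_2^2+\wt{\Qc}^0(F,F)\big).\]
Set $\epsilon(N'):=|\mu|C\|\Re(\zb v)\|_\infty(N'+Q^2/2)^{\frac{\beta^2}{2}-1}$. Since $\beta^2<2$, the exponent $\frac{\beta^2}{2}-1$ is negative, so $\epsilon(N')\to0$ as $N'\to\infty$. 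Moreover,
\[\epsilon(N')N'\lesssim (N')^{\beta^2/2}=o(N').\]
So I choose $N'$ large enough that $\epsilon(N')\leq c_0/2$.

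Then
\[\Re(\Qc_\v+N\|\cdot\|_2^2)\geq \tfrac{c_0}{2}\wt{\Qc}^0+\big(N-\Re\omega Q^2-\epsilon(N')N'\big)\|\cdot\|_2^2,\]
and I take $N:=\Re\omega Q^2+\epsilon(N')N'$. This gives the lower bound with $C^{-1}=c_0/2$.

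For the upper bound, collect the terms:
\[|\Qc_\v+N\|\cdot\|_2^2|\leq \big(C_0+\epsilon(N')\big)\wt{\Qc}^0+\big(|N-\omega Q^2|+\epsilon(N')N'\big)\|\cdot\|_2^2.\]
Bounding $\|\cdot\|_2^2$ by $\wt{\Qc}^0$ then gives a constant $C$. All of these inequalities are first proved on $\Ec(\DT)$ for the regularized $V_\v^{(\delta)}$, uniformly in $\delta$ as \cref{V_v_estimate} allows. We then pass to the limit $\delta\to0$ and extend by density to $\Dc(\wt{\Qc}^0)$, using the boundedness $\Dc(\wt{\Qc}^0)\to\Dc'(\wt{\Qc}^0)$ of both $\Hbf_\v^0$ and $V_\v$.

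The main obstacle is already absorbed into \cref{V_v_estimate}: the decay factor $(N'+Q^2/2)^{\beta^2/2-1}$ is what makes the imaginary GMC potential, which is not positive, relatively form-bounded with bound smaller than $c_0$. Two points need checking. First, the constant $C$ there does not depend on $N'$. Second, $\Re(\zb v)$ is bounded on $\Tb$, which holds since $v$ is holomorphic near $\Db$. The remaining steps are elementary bookkeeping.
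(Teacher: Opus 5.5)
Your proof is correct and follows the paper's argument. You combine the free bounds \eqref{Q_v^0_accretive} with the decaying factor $(N+Q^2/2)^{\frac{\beta^2}{2}-1}$ from \cref{V_v_estimate} to make $\mu V_\v$ form-small relative to $\wt{\Qc}^0$, then close with $\wt{\Qc}^0\geq\frac{\min(1,Q^2)}{2}\|\cdot\|_{L^2}^2$; the only cosmetic difference is that you decouple the auxiliary $N'$ and set $N=\Re\omega\,Q^2+\epsilon(N')N'$, whereas the paper uses a single $N$ with smallness constant at most $\tfrac12$ and takes $N>2\Re\omega\,Q^2$.
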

\begin{proof}
Note that $\sup_{\theta\in[0,2\pi]}|\Re(e^{-\i\theta}v(e^{\i\theta}))|\leq\sum_{n=0}^\infty|v_n|=|\omega|+\sum_{n\geq1}|v_n|$. Taking $N$ large enough in \eqref{eq_V_bound}, we have
\[|\mu \langle V_\v F,F\rangle_{L^2}|\leq\delta\,(N\|F\|_{L^2}^2+\widetilde{\mathcal Q}^0(F,F)),\]
where we can take $\delta$ arbitrarily small, say, $\delta=\min(\frac{1}{2},\frac{1}{2}\sup_{\theta\in[0,2\pi]}|\Re(e^{-\i\theta}v(e^{\i\theta}))|)$. Combining with \eqref{Q_v^0_accretive}, we obtain
\begin{gather*}\label{eq_Q_v_bound}
  \Re(\Qc_\v+N\|\!\cdot\!\|_{L^2}^2)\geq\left(\Re\omega-2K\sum_{n\geq1}n^2|v_n|-\delta\right)\widetilde{\mathcal Q}^0 + (N-\Re\omega\,Q^2-\delta N)\|\!\cdot\!\|_{L^2}^2,\\
|\Qc_\v+N\|\!\cdot\!\|_{L^2}^2|\leq\left(|\omega|+2K\sum_{n\geq1}n^2|v_n|+\delta\right)\widetilde{\mathcal Q}^0+(N+|\omega|Q^2+\delta N)\|\!\cdot\!\|_{L^2}^2.
\end{gather*}
Since $\wt{\Qc}^0\geq\frac{\min(1,Q^2)}2\|\!\cdot\!\|_{L^2}^2$, this proves the lemma taking $N>2\Re\omega\,Q^2$.
\end{proof}

Thus we obtain a closed unbounded operator $\Hbf_\v$ associated to $\Qc_\v$ with domain $\Dc(\Hbf_\v)\subset\Dc(\wt{\Qc}^0)$, and it generates a strongly continuous semigroup of bounded operators $t\mapsto e^{-t\Hbf_\v}$. Its extension to a bounded operator $\Dc(\wt{\Qc}^0)\to\Dc'(\wt{\Qc}^0)$ coincides with the limit of $\sum_{n\geq0}v_n\,\Lbf_n+\sum_{n\geq0}\ol{v_n}\,\wt{\Lbf}_n$ above.
\begin{prop}[\textbf{Feynman--Kac formula}]\label{prop_FK_formula}
Let $\v = -\sum_{n=0}^\infty v_n z^{n+1} \pa_z$ be a Markovian holomorphic vector field on $\Db$ such that \eqref{omega_condition} holds. Let $t\mapsto f_t$ be the flow generated by $\v$. For $t>0$, we have $\Tbf_{f_t} = e^{-t\Hbf_\v}$, i.e.,
\[(e^{-t\Hbf_\v}F)(\wt{\varphi}^k) = |f_t'(0)|^{\frac{R^2k^2-Q^2}2} \,\Eb_{\widetilde{\varphi}^k}\left[F\left((P\wt{\varphi}^k+X_\Db)\circ f_t\big|_\Tb + \i Q\log\tfrac{|f_t'|}{|f_t|}\big|_\Tb\right)\,e^{-\mu\int_{\Ab_{f_t}}|x|^{\beta Q}M_\beta(P\wt{\varphi}^k+X_\Db,\d x)}\right].\]
\end{prop}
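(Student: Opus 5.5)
The plan is to use a Trotter product formula combined with the Markov (semigroup) property of $t\mapsto\Tbf_{f_t}$. This mirrors the real-Liouville argument, with positivity replaced by uniform exponential-moment bounds for the regularized imaginary GMC.

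\textbf{Step 1: regularized potential.} For fixed $\delta\in(0,1)$, consider the regularized generator $\Hbf_\v^{(\delta)}:=\Hbf_\v^0+\mu V_\v^{(\delta)}$. Here $V_\v^{(\delta)}$ is a genuine (complex, a.s.\ bounded) multiplication operator on $L^2(\DT)$, since $\varphi^{\rm arc}_\delta$ is continuous. Because it is a bounded perturbation of $\Hbf_\v^0$, the Trotter product formula applies:
\[e^{-t\Hbf_\v^{(\delta)}}=\lim_{n\to\infty}\bigl(e^{-\frac tn\Hbf^0_\v}e^{-\frac tn\mu V^{(\delta)}_\v}\bigr)^n .\]

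\textbf{Step 2: the finite-$\delta$ formula.} Insert the free Feynman--Kac formula (\cref{prop_FK_free}) for each factor $e^{-\frac tn\Hbf_\v^0}=\Tbf^0_{f_{t/n}}$. Then use the Markov property of the Dirichlet GFF under the gluing $\Ab_{f_t}=\bigcup_j f_{jt/n}(\Ab_{f_{t/n}})$, exactly as in \cref{T_f_semigroup}. This identifies the $n$-fold product with
\[\Eb\Bigl[F(\cdots)\exp\Bigl(-\mu\tfrac tn\sum_{j} V_\v^{(\delta)}\bigl((P\wt\varphi^k+X_\Db)\circ f_{jt/n}|_\Tb+\i Q\log\tfrac{|f'_{jt/n}|}{|f_{jt/n}|}\bigr)\Bigr)\Bigr],\]
which is a Riemann sum. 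As $n\to\infty$ it converges to $\int_0^t V_\v^{(\delta)}(\ldots\circ f_s)\,\d s$. After the change of variables $x=f_s(e^{\i\theta})$, with Jacobian $-\Re(e^{-\i\theta}v(e^{\i\theta}))\,|f_s'|\,|f_s|\,\d s\,\d\theta$, and after absorbing the imaginary shift via the identity $|x|^{\beta Q}$ and the Girsanov term, this is a regularized bulk GMC $\int_{\Ab_{f_t}}|x|^{\beta Q}M^{(\delta)}_\beta(\d x)$. Here the regularization is the arc average along the level curves $f_s(\Tb)$.

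\textbf{Step 3: $\delta\to0$ limit.} Two convergences are needed.

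\emph{Operator side.} \cref{V_v_estimate} bounds $V_\v^{(\delta)}$ uniformly $\Dc(\wt\Qc^0)\to\Dc'(\wt\Qc^0)$. Together with \cref{Q_v_bound}, the forms $\Qc_\v^{(\delta)}$ are uniformly sectorial and converge to $\Qc_\v$ on $\Ec(\DT)$. A resolvent-convergence argument for sectorial forms then gives $e^{-t\Hbf^{(\delta)}_\v}\to e^{-t\Hbf_\v}$ strongly, via \cref{HilleYosida_lemma} and uniform resolvent bounds.

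\emph{Probabilistic side.} The regularized chaos converges in $L^2$ to $\int_{\Ab_{f_t}}|x|^{\beta Q}M_\beta(\d x)$, using the standard imaginary-GMC second-moment computation. Upgrading this to convergence of $e^{-\mu(\cdot)}$ requires a uniform bound $\sup_\delta\Eb[e^{p|\mu|\,|M^{(\delta)}(\Ab_{f_t})|}]<\infty$ for some $p>1$. This is the role of \cref{lemma_gmc_delta_exp_bound}.

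\textbf{Main obstacle.} The main obstacle is this uniform exponential-moment estimate, since imaginary GMC has no positivity. My approach is to write the arc-averaged field along $f_s(\Tb)$ through a white-noise decomposition in which the family $s\mapsto$ (regularized potential) forms a martingale in $\delta$. Jensen's inequality then reduces the uniform bound to the coarsest scale. The coarsest scale is handled by bounding moments $\Eb|M^{(\delta)}|^{2m}\le C^m m!^{\,\gamma}$ with $\gamma<1$, via Coulomb-gas/Onsager inequalities for imaginary charges. The Green's function estimates needed for this are deferred to the appendix.

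Once both limits are established, the identity holds for $F\in\Ec(\DT)$. It then extends to all of $L^2$ by density and the boundedness of $\Tbf_{f_t}$.
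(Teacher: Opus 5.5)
Your overall architecture matches the paper's: the arc-average cutoff $V_\v^{(\delta)}$, Trotter, the cutoff Feynman--Kac formula, then $\delta\to0$. However, the step you identify as the main obstacle does not work as proposed.

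\textbf{Exponential moments.} The arc-averaged potential is not a martingale in $\delta$. Arc averages of a log-correlated field at different radii are not conditional expectations of one another. If you instead cut off the white noise at scale $\delta$, you do get a martingale. But the resulting potential is then no longer a functional of the boundary field on $\Tb$, so it is not a multiplication operator on $L^2(\DT)$ and cannot enter the Trotter step; this is exactly the tension described in \cref{rmk_tech_GMC}. Moreover, even for a genuine martingale, conditional Jensen gives $\Eb[e^{p|M_{\rm coarse}|}]\le\Eb[e^{p|M_{\rm fine}|}]$. It bounds the regularized quantity by finer ones (ultimately the limit), not by the coarsest scale. The coarsest scale is trivial anyway, since there the field is smooth and the potential bounded, so your Onsager/moment step is aimed at the wrong object. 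In addition, the usual Onsager inequality relies on Newton's theorem for circularly smeared charges. That is not available for charges smeared along arcs of $f_s(\Tb)$, so uniformity in $\delta$ would itself need proof.

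The paper resolves this with two parameters. It keeps $\delta$ fixed and adds an auxiliary white-noise cutoff $\epsilon$ for the Dirichlet GFF on a disk of radius $\mathcal R>1$. In $\epsilon$ the regularized exponential is a genuine martingale. Its quadratic variation is bounded \emph{deterministically} by $u_\delta^2(T)$, using Gaussian heat-kernel bounds, $|\sin|\le 1$, and a Jensen estimate on the variance mismatch $k_\delta$. This bound is uniform in $\delta$ and satisfies $u_\delta^2(T)\le C(1+T\Re\omega)$. \cref{lemma_gmc_delta_exp_bound} then follows from the martingale estimate of \cite[Proposition 5.3]{CILT}.

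\textbf{Operator side.} Uniform sectoriality of $\Qc_\v^{(\delta)}$ together with convergence on $\Ec(\DT)$ does not imply strong resolvent convergence to the operator of the limit form. Homogenization gives counterexamples: $\int_0^1 a(nx)|u'|^2\,\d x$ on $H^1_0(0,1)$ converges on smooth $u$ to the arithmetic mean of $a$ times $\int|u'|^2$. Yet the resolvents converge to those of the operator with the harmonic mean. Concretely, $u_\delta=(\Hbf_\v^{(\delta)}+\lambda)^{-1}F$ is only bounded in $\Dc(\wt\Qc^0)$, so the term $\mu\la V_\v^{(\delta)}u_\delta,G\ra$ pairs two merely weakly convergent objects.

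Two ingredients close the argument:
\begin{enumerate}
\item strong $L^2$ convergence of $u_\delta$;
\item a uniform bound of $V_\v^{(\delta)}$ from $\mathcal W^r$ to $\mathcal W^{-r}$ for some $r<1$, coming from the smoothing estimate \eqref{eq_estimate_PVP} rather than the $\Dc(\wt\Qc^0)\to\Dc'(\wt\Qc^0)$ bound, combined with interpolation as in \cref{lemma_resolvent_technical}.
\end{enumerate}
The paper obtains the strong convergence from the probabilistic side by working with Laplace transforms. It shows $\mathbf R^{(\delta)}_{\v,\lambda}F\to\int_0^\infty e^{-\lambda t}\Tbf_{f_t}F\,\d t$ in $L^2$, which is why it needs exponential moments growing at most exponentially in $T$. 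It then identifies this limit as $(\Hbf_\v+\lambda)^{-1}F$ through the resolvent identity. In your fixed-$t$ scheme you could instead get strong convergence from compactness of the resolvent of $\Hbf^0$. You would still need the $r<1$ gain to pass to the limit in the potential term.
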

\begin{proof}
We follow the proof in \cite[Theorem 3.1]{Virasoro_structure} and \cite[Section 5.2]{Guillarmou2024}. Denote by $X:=c + P\varphi + kR\arg+X_\Db$ the GFF restricted to the disk and $X_\epsilon:=\frac1{2\pi}\int_0^{2\pi}X(x+\epsilon e^{\i\theta}) \d\theta$ its regularisation. Pulling back by the diffeomorphism $[0,t]\times[0,2\pi]\mapsto\Ab_{f_t}, (s,\theta)\mapsto f_s(e^{\i\theta})$, we get by change of variables
\begin{equation}\label{eq_FK_1}
    \int_{\Ab_{f_t}}|x|^{\beta Q} \epsilon^{-\beta^2/2}e^{\i\beta X_\epsilon(x)}\ \d x=-\int_{0}^t\int_0^{2\pi} \epsilon^{-\beta^2/2}|f_s(e^{\i\theta})|^{\beta Q}e^{\i\beta X_\epsilon(f_s(e^{\i\theta}))}\Re\left(e^{-\i\theta}v(f_s(e^{\i\theta}))\overline{f_s'(e^{\i\theta})}\right)\ \d\theta\d s
\end{equation}
When $\epsilon\to0$, as a consequence of the uniqueness of GMC, we have
\begin{equation*}
    \epsilon^{-\beta^2/2}e^{\i\beta X_\epsilon(f_s(e^{\i\theta}))}\d\theta\d s\approx \epsilon_{s,\theta}^{-\beta^2/2} e^{\i\beta (X\circ f_s)_{\epsilon_{s,\theta}}(e^{\i\theta})}|f'_s(e^{\i\theta})|^{-\frac{\beta^2}{2}}\d\theta\d s,
\end{equation*}
where $\epsilon_{s,\theta}=\epsilon/|f'_s(e^{\i\theta})|$. Hence, the right hand side of \eqref{eq_FK_1} can be written as
\begin{equation*}
    -\int_{0}^t\int_0^{2\pi} \epsilon_{s,\theta}^{-\beta^2/2} e^{\i\beta ((X\circ f_s)_{\epsilon_{s,\theta}}(e^{\i\theta})+\i Q\log\frac{|f'_s(e^{\i\theta})|}{|f_s(e^{\i\theta})|})}\Re\left(\frac{e^{-\i\theta}v(f_s(e^{\i\theta}))}{f'_s(e^{\i\theta})}\right)\ \d\theta\d s.
\end{equation*}
By changing the cutoff $\epsilon_{s,\theta}$ and using \cite[Lemma 2.4]{Virasoro_structure}, we have
\begin{equation}\label{eq_Aft2V}
     \int_{\Ab_{f_t}}|x|^{\beta Q} e^{\i\beta X(x)}\ \d x=-\lim_{\epsilon\to0} \int_{0}^t\int_0^{2\pi} \epsilon^{-\beta^2/2} e^{\i\beta ((X\circ f_s)_{\epsilon}(e^{\i\theta})+\i Q\log\frac{|f'_s(e^{\i\theta})|}{|f_s(e^{\i\theta})|})}\Re\left(e^{-\i\theta}v(e^{\i\theta})\right)\ \d\theta\d s.
\end{equation}

\begin{lemma}
    For $F,G\in\Ec(\DT) $, $\lim_{t\to0}t^{-1}\langle F-\Tbf_{f_t}F,G\rangle_{L^2}=\langle \mathbf H_\v F,G\rangle_{L^2}$.
\end{lemma}
\begin{proof}
By definition of $\Ec(\DT)$, we can assume $F,G$ depend on a finite number of Fourier modes of GFF in terms of polynomials. Similar to the proof in \cite[Lemma 5.4]{Guillarmou2024}, and using the definition of the semigroup $\Tbf_{f_t}$ \eqref{eq_def_T_f}, $\langle F-\Tbf_{f_t}F,G\rangle_{L^2}$ is just
  \begin{equation}\label{eq_diff_T_f}
   \int_{0}^{2\pi R}\d c\ \Eb\left[\left(F(\wt{\varphi}^k)-|f_t'(0)|^{\frac{R^2k^2-Q^2}2} F\left((P\wt{\varphi}^k+X_\Db)\circ f_t\big|_\Tb + \i Q\log\tfrac{|f_t'|}{|f_t|}\big|_\Tb\right)\,e^{-\mu\int_{\Ab_{f_t}}|x|^{\beta Q}M_\beta(P\wt{\varphi}^k+X_\Db,\d x)}\right)\,\overline{G}(\wt{\varphi}^k)\right].
  \end{equation}
In the following proof, denote by $X$ the field on the disk $P\wt{\varphi}^k+X_\Db$. Recall that the GFF part $P{\varphi}+X_\Db$ has covariance $-\log|x-x'|,\forall x,x'\in\mathbb D$. Since $F,G\in\Ec(\DT) $, we have $\mathbb E[|F(\wt{\varphi}^k)G(\wt{\varphi}^k)|^n]<\infty$ for arbitrary $n$.
  
According to \cref{prop_FK_free}, \eqref{eq_diff_T_f} is further written as
  \begin{equation*}
  \begin{split}
   &t\Qc_\v^0(F,G)+o(t)+\int_{0}^{2\pi R}\d c\ \Eb\left[F(\wt{\varphi}^k)\,\overline{G}(\wt{\varphi}^k)\,\left(1-e^{-\mu\int_{\Ab_{f_t}}|x|^{\beta Q}M_\beta(X,\d x)}\right)\right]+\\
      &\int_{0}^{2\pi R}\d c\ \Eb\left[\left(F(\wt{\varphi}^k)-|f_t'(0)|^{\frac{R^2k^2-Q^2}2}F\left(X\circ f_t\big|_\Tb + \i Q\log\tfrac{|f_t'|}{|f_t|}\big|_\Tb\right)\right)\,\overline{G}(\wt{\varphi}^k)\,\left(e^{-\mu\int_{\Ab_{f_t}}|x|^{\beta Q}M_\beta(X,\d x)}-1\right)\right]\\
      &=:t\Qc_\v^0(F,G)+o(t)+D_t^{(1)}+D_t^{(2)}.
  \end{split}
  \end{equation*}
In the following we will prove $t^{-1}D_t^{(1)}\to \mu\langle V_\v F,G\rangle$ and $t^{-1}D_t^{(2)}\to 0$.

For $D_t^{(1)}$, we expand the exponential of GMC then we have
  \begin{equation*}
  \begin{split}
      D_t^{(1)}={}&-\int_{0}^{2\pi R}\d c\ \Eb\left[F(\wt{\varphi}^k)\,\overline{G}(\wt{\varphi}^k)\,\sum_{n\geq1}\frac{(-\mu)^n}{n!}\left(\int_{\Ab_{f_t}}|x|^{\beta Q}M_\beta(X,\d x)\right)^n\right].
  \end{split}
  \end{equation*}
  Now we apply the imaginary Cameron--Martin theorem to each term in the summation for a single vertex insertion, and we get
  \begin{equation*}
      \begin{split}
      D_t^{(1)}={}&-\int_{0}^{2\pi R}\int_{\Ab_{f_t}}e^{\i\beta c+\i\beta Rk\arg y}\\ &\ \ \ \ \ \ \ \ \ \ \ \ \ \ \Eb\left[F_{\text{shift}}(y,\wt{\varphi}^k)\,(\overline{G})_{\text{shift}}(y,\wt{\varphi}^k)\,\sum_{n\geq1}\frac{(-\mu)^n}{n!}\left(\int_{\Ab_{f_t}}|x|^{\beta Q}|x-y|^{\beta^2}M_\beta(X,\d x)\right)^{n-1}\right]|y|^{\beta Q}\,\d y\,\d c\\
      ={}& \mu\int_{0}^{2\pi R}\int_{\Ab_{f_t}}e^{\i\beta c+\i\beta Rk\arg y}\ \Eb\left[F_{\text{shift}}(y,\wt{\varphi}^k)\,(\overline{G})_{\text{shift}}(y,\wt{\varphi}^k)\right]|y|^{\beta Q}\,\d y\,\d c-\int_{0}^{2\pi R}\int_{\Ab_{f_t}}e^{\i\beta c+\i\beta Rk\arg y}\\ &\ \  \ \ \ \ \ \ \ \ \ \Eb\left[F_{\text{shift}}(y,\wt{\varphi}^k)\,(\overline{G})_{\text{shift}}(y,\wt{\varphi}^k)\,\sum_{n\geq2}\frac{(-\mu)^n}{n!}\left(\int_{\Ab_{f_t}}|x|^{\beta Q}|x-y|^{\beta^2}M_\beta(X,\d x)\right)^{n-1}\right]|y|^{\beta Q}\,\d y\,\d c,
  \end{split}
  \end{equation*}
where
\begin{equation*}
    F_{\text{shift}}(y,\wt{\varphi}^k):=F(\wt{\varphi}^k+\i\beta K_y),\ \ \ K_{r e^{\i\theta'}}(\theta):=\sum_{n\geq1}\frac{|r|^n}{n}\cos(n(\theta-\theta')).
\end{equation*}

For $t$ small enough, $\Ab_{f_t}$ are always the subset of some annulus $\{z:q<|z|<1\},q<1$. Define
\begin{equation*}
    I_t(y):=\left|\Eb\left[F_{\text{shift}}(y,\wt{\varphi}^k)\,(\overline{G})_{\text{shift}}(y,\wt{\varphi}^k)\,\sum_{n\geq2}\frac{(-\mu)^n}{n!}\left(\int_{\Ab_{f_t}}|x|^{\beta Q}|x-y|^{\beta^2}M_\beta(X,\d x)\right)^{n-1}\right]\right|,
\end{equation*}
and we claim that
\begin{equation}\label{eq_FK_claim}
    \lim_{t\to0}\sup_{q<|y|<1}   I_t(y)=0,
\end{equation}
which would imply $t^{-1}D_t^{(1)}-\mu\langle V_\v F,G\rangle\approx t^{-1}\int_{\Ab_{f_t}}  I_t(y)|y|^{\beta Q}\d y\to 0$.
To prove \eqref{eq_FK_claim}, we first apply the Cauchy-Schwarz inequality and obtain
\begin{equation*}
\begin{split}
    &I_t^2(y)\leq\Eb[|\int_{\Ab_{f_t}}|x|^{\beta Q}|x-y|^{\beta^2}M_\beta(X,\d x)|^{2}]\times\\
    &\Eb\left[\left|F_{\text{shift}}(y,\wt{\varphi}^k)\,(\overline{G})_{\text{shift}}(y,\wt{\varphi}^k)\,\sum_{n\geq2}\frac{(-\mu)^n}{n!}\left(\int_{\Ab_{f_t}}|x|^{\beta Q}|x-y|^{\beta^2}M_\beta(X,\d x)\right)^{n-2}\right|^2\right].
\end{split}
\end{equation*}
Since $|\sum_{n\geq2}\frac{(-\mu)^n}{n!}z^{n-2}|\leq|\mu|^2\sum_{n\geq0}\frac{|\mu z|^n}{(n+2)!}\leq |\mu|^2e^{|\mu z|}$, the second term above is further bounded, using Cauchy-Schwarz inequality, up to a constant depending on $\mu$ by
\begin{equation*}
    \Eb[|F_{\text{shift}}(\overline{G})_{\text{shift}}|^4]^{\frac12}\Eb[\exp(4|\mu||\int_{\Ab_{f_t}}|x|^{\beta Q}|x-y|^{\beta^2}M_\beta(X,\d x)|)]^{\frac12}.
\end{equation*}
For some constant $C$ independent of $t$ small enough, we have $\sup_{q<|y|<1}\mathbb E[|F_{\text{shift}}(\overline{G})_{\text{shift}}|^4]<C$ (since the Fourier modes of $K_y(\theta)$ are under control). According to \cite[Proposition 5.3]{CILT}, for some constant $C$ only depending on $\beta$, the exponential moment is bounded by
\begin{equation*}
    e^{C|\mu|}(1+2|\mu| u e^{C|\mu|^2 u^2}),
\end{equation*}
where
\begin{equation}\label{eq_metric_integral}
    u^2:=\iint_{(\Ab_{f_t})^2}|x|^{\beta Q}|x-y|^{\beta^2}|x'|^{\beta Q}|x'-y|^{\beta^2}|x-x'|^{-\beta^2}\ \d x\, \d x'.
\end{equation}
Hence $\Eb[\exp(C|\int_{\Ab_{f_t}}|x|^{\beta Q}|x-y|^{\beta^2}M_\beta(X,\d x)|)]$ is finite and uniformly bounded in $q<|y|<1$ and $t$ small enough. Then the study of $I_t^2$ boils down to the second moment
\begin{equation*}
    \Eb[|\int_{\Ab_{f_t}}|x|^{\beta Q}|x-y|^{\beta^2}M_\beta(X,\d x)|^{2}]\leq\iint_{(\Ab_{f_t})^2}|x|^{\beta Q}|x-y|^{\beta^2}|x'|^{\beta Q}|x'-y|^{\beta^2}|x-x'|^{-\beta^2}\ \d x\, \d x',
\end{equation*}
which goes to $0$ uniformly in $y$ as $t\to0$. 

Therefore \eqref{eq_FK_claim} is proved and we just have
 \begin{equation*}
      \begin{split}
      \lim_{t\to0}t^{-1}D_t^{(1)} &=   \lim_{t\to0}t^{-1}\mu\int_{0}^{2\pi R}\int_{\Ab_{f_t}}e^{\i\beta c+\i\beta Rk\arg y}\ \Eb\left[F_{\text{shift}}(y,\wt{\varphi}^k)\,(\overline{G})_{\text{shift}}(y,\wt{\varphi}^k)\right]|y|^{\beta Q}\,\d y\,\d c\\
      &=-\lim_{t\to0}t^{-1}\mu\int_{0}^{2\pi R}\int_0^t\int_0^{2\pi}e^{\i\beta c+\i\beta Rk\arg f_s(e^{\i\theta})}\ \Eb\left[F_{\text{shift}}(f_s(e^{\i\theta}),\wt{\varphi}^k)\,(\overline{G})_{\text{shift}}(f_s(e^{\i\theta}),\wt{\varphi}^k)\right]\\
      &\ \ \ \ \ \ \ \ \ \  \ \ \ \ \ \ \ \ \ \ \  \ \ \ \ \ \ \ \ \ |f_s(e^{\i\theta})|^{\beta Q}|f_s'(e^{\i\theta})|^2\Re\left(e^{-\i\theta}v(e^{\i\theta})\right)\, \d\theta\d s\ \d c\\
      &=-\mu\int_{0}^{2\pi R}\int_0^{2\pi}e^{\i\beta c+\i\beta Rk\theta}\ \Eb\left[F_{\text{shift}}(e^{\i\theta},\wt{\varphi}^k)\,(\overline{G})_{\text{shift}}(e^{\i\theta},\wt{\varphi}^k)\Re\left(e^{-\i\theta}v(e^{\i\theta})\right)\right]\ \d\theta\, \d c,
  \end{split}
  \end{equation*}
  where we used \eqref{eq_Aft2V} for the second line.

  Finally, we can focus on how $t^{-1}D_t^{(2)}$ goes to $0$. We can replace $F_{\text{shift}}$ in \eqref{eq_FK_claim} with
  \begin{equation*}
      F_{\text{shift}}(y,\wt{\varphi}^k)-|f_t'(0)|^{\frac{R^2k^2-Q^2}2}F\left(X\circ f_t\big|_\Tb +\i\beta K_{t,y}+ \i Q\log\tfrac{|f_t'|}{|f_t|}\big|_\Tb\right),\ \ K_{t,y}(\theta):=-\log|f_t(e^{\i\theta})-y|
  \end{equation*}as long as the second moment is still uniformly bounded in $y$ and $t$. Indeed, we have the free field semigroup $\|F\left(X\circ f_t\big|_\Tb +\i\beta K_{t,y}+ \i Q\log\tfrac{|f_t'|}{|f_t|}\big|_\Tb\right)\|_{L^2}\leq Ce^{Ct}\|F_{\text{shift}}\left(y,\wt{\varphi}^k\right)\|_{L^2}$ for some constant independent of $y$ and $t$. Since $f_t\to\id$, we have $\sup_{y\in\Ab_{f_t}}|\langle K_{t,y}-K_y,h\rangle|\to0$ for every $h\in C^\infty(\Tb)$. Thus
  \begin{equation*}
      \begin{split}
      \lim_{t\to0}t^{-1}D_t^{(2)} = -&  \lim_{t\to0}t^{-1}\mu\int_{0}^{2\pi R}\int_{\Ab_{f_t}}e^{\i\beta c+\i\beta Rk\arg y}\ \\
      &\Eb\left[\left(F_{\text{shift}}(y,\wt{\varphi}^k)-|f_t'(0)|^{\frac{R^2k^2-Q^2}2}F\left(X\circ f_t\big|_\Tb +\i\beta K_{t,y}+ \i Q\log\tfrac{|f_t'|}{|f_t|}\big|_\Tb\right)\right)\,(\overline{G})_{\text{shift}}(y,\wt{\varphi}^k)\right]|y|^{\beta Q}\, \d y\,\d c,
  \end{split}
  \end{equation*}
  where we can use Cauchy Schwarz inequality and \cite[Lemma 2.6, equation (2.27)]{Virasoro_structure} to conclude that $\lim_{t\to0}t^{-1}D_t^{(2)}=0$ and the proof is complete. 
\end{proof}
The above lemma says the quadratic form from the semigroup $\mathbf T_{f_t}$ is equal to $\Qc_\v$ for $F,G\in\Ec(\DT)$. Then we just need to prove the resolvents of $\mathbf T_{f_t}$ and $\Qc_\v$ are the same.

Define \begin{equation*}
    \Qc_\v^{(\delta)}(F,G):=\Qc_\v^0(F,G)+\mu\langle V_\v^{(\delta)}F,G\rangle_{L^2}.
\end{equation*}
Since $V_\v^{(\delta)}$ is bounded when $\delta$ is fixed, we can define the operator $\mathbf H^{(\delta)}_{\v}:=\mathbf H^0_{\v}+\mu V_\v^{(\delta)}$ associated with $ \Qc_\v^{(\delta)}(F,G)$. Then using the Trotter product formula \cite[Exercise 5.11 in Chapter III]{EN} and the fact that $V_\v^{(\delta)}$ is bounded, one can obtain the following formula by following the proof of \cite[Proposition 5.3]{Guillarmou2024}
\begin{equation}\label{eq_FK_cutoff}
    e^{-t\mathbf H^{(\delta)}_{\v}}F=|f_t'(0)|^{\frac{R^2k^2-Q^2}2} \,\Eb_{\wt{\varphi}^k}\left[F\left((P\wt{\varphi}^k+X_\Db)\circ f_t\big|_\Tb + \i Q\log\tfrac{|f_t'|}{|f_t|}\big|_\Tb\right)\,e^{-\mu\int_0^t V^{(\delta)}_\v((P\wt{\varphi}^k+X_\Db)\circ f_s + \i Q\log\tfrac{|f_s'|}{|f_s|})\d s}\right].
\end{equation}

We follow the strategy in \cite[Proposition 5.5]{Guillarmou2024}. First we have the representation of the resolvent
\begin{equation*}
    \mathbf R_{\v,\lambda}F:=\int_0^{\infty} e^{-\lambda t}\mathbf T_{f_t}F\ \d t,\ \ \ \ \mathbf R_{\v,\lambda}^{(\delta)}F:=\int_0^{\infty} e^{-\lambda t}e^{-t\mathbf H^{(\delta)}_{\v}}F\ \d t.
\end{equation*}
Note that $\|\mathbf T_{f_t}F\|\leq e^{Ct}\|F\|$ (see \cref{lemma_gmc_delta_exp_bound} below) so $\mathbf R_{\v,\lambda}F$ is well-defined if we consider $\lambda$ large enough. Now we want to show that $\mathbf R_{\v,\lambda}^{(\delta)}F$ converges to $\mathbf R_{\v,\lambda}F$ in $L^2(\DT)$. Using Cauchy-Schwarz inequality, we have
\begin{equation*}
    \int_{0}^{2\pi R}\mathbb E|\mathbf R_{\v,\lambda}F-\mathbf R_{\v,\lambda}^{(\delta)}F|^2\ \d c\leq C\int_{0}^{2\pi R}\int_0^{\infty} e^{-\lambda t}\ \mathbb E|(e^{-t\mathbf H_\v }-e^{-t\mathbf H^{(\delta)}_{\v}})F|^2\ \d t\d c.
\end{equation*}
Then by Jensen's inequality, one can obtain
\begin{equation}\label{eq_diff_GMC_delta}
\begin{split}
    &\mathbb E|(\mathbf T_{f_t}-e^{-t\mathbf H^{(\delta)}_{\v}})F|^2\\
    \leq{}& \left| |f_t'(0)|^{\frac{R^2k^2-Q^2}2}\right|^2\mathbb E\Bigg[ |F|^2\left(X\circ f_t\big|_\Tb + \i Q\log\tfrac{|f_t'|}{|f_t|}\big|_\Tb\right)\\
    &\ \ \ \ \ \ \ \ \ \ \ \ \ \ \ \ \ \ \ \ \left|e^{-\mu\int_0^t V_\v^{}(X\circ f_s\big|_\Tb + \i Q\log\tfrac{|f_s'|}{|f_s|}\big|_\Tb)\d s}-e^{-\mu\int_0^t V_\v^{(\delta)}(X\circ f_s\big|_\Tb + \i Q\log\tfrac{|f_s'|}{|f_s|}\big|_\Tb)\d s}\right|^2\Bigg].
\end{split}
\end{equation}
We can apply the Cauchy-Schwarz inequality again to separate $F$ and GMC exponential moment with the $F$ part controlled by $e^{Ct}\|F^2\|_{L^2}^2$. Thus we only need to show that the GMC exponential moment part converges and is bounded by a function of $t$ (in fact $e^{Ct},C>0$) independent of $\delta$. Since for all $t>0$, we have the $L^2$ convergence of GMC \cite{Lacoin2015}, the convergence of the resolvent is a consequence of the following lemma. 

Before the lemma, recall in \cite[Lemma 2.3]{Virasoro_structure}, as $s\to\infty$, $e^{\omega s}f_s(e^{\i\theta})$ and $e^{\omega s}f_s'(e^{\i\theta})$ converge uniformly in $\theta\in[0,2\pi]$ to some function $f_*(e^{\i\theta})$ and $f'_*(e^{\i\theta})$, with $f_*$ biholomorphic in $(1+\epsilon')\mathbb D$ for some $\epsilon'>0$.
\begin{lemma}\label{lemma_gmc_delta_exp_bound}
    For the GFF on the disk $X$, and a measurable function $h(\theta,s)$, there exists a constant $C$ depending on $\beta$ and $(v(z))_{z\in\mathbb D}$ only, such that when $\delta<1$,
    \begin{equation*}
        \mathbb E\left[\left|e^{-\mu \int_0^T \int_0^{2\pi}h(\theta,s)e^{\i\beta(X\circ f_s)^{\text{arc}}_\delta(e^{\i\theta})+\frac{\beta^2}{2}\mathbb E[(\varphi^{\text{arc}}_\delta)^2(\theta)]}\,\d\varrho_\v(\theta)\d s}\right|\right]< e^{C|\mu|}(1+2|\mu| u_\delta(T) e^{C|\mu|^2 u^2_{\delta}(T)}),
    \end{equation*}
    where
    \begin{equation}\label{eq_u_and_k}
    \begin{split}
        u^2_\delta(T):={}&\delta^{-2}\iint_{([0,T]\times[0,2\pi])^2}\int_{-\delta}^{\delta}\int_{-\delta}^{\delta}|h(\theta_1+\theta_1',s_1)||h(\theta_2+\theta_2',s_2)|\,\d \theta_1'\d\theta_2'\\
        &\ \ \ \ \ \ \ \ \  \ \ \ \ \ \ \ \ \ e^{-\frac{\beta^2\Re\omega}{2}(s_1+s_2) }|f_{s_1}(e^{\i\theta_1})-f_{s_2}(e^{\i\theta_2})|^{-\beta^2}\,\d s_1\d\varrho_\v(\theta_1)\d s_2\d\varrho_\v(\theta_2).
    \end{split}
    \end{equation}
\end{lemma}
\begin{proof}
    Without loss of generality, we can consider the decomposition of GFF on a disk of radius $\mathcal R>1$ into a Dirichlet GFF and a harmonic extension, hence we only need to focus on the Dirichlet GFF on $\mathcal R\mathbb D$ restricted to $\mathbb D$ and still denote it by $X$. 

Let $p_{t/2}(x,y)$ be the Dirichlet heat kernel on $\mathcal R\mathbb D$ and $W(\d y,\d t)$ be a white noise on $\mathcal R\mathbb D\times[0,\infty)$.   Using the white noise decomposition, we have
    \begin{equation*}
        (X\circ f_s)_{\delta,\epsilon}^{arc}(e^{\i\theta}):=\frac{\sqrt{2\pi}}{2\delta}\int_{-\delta}^{\delta}\int_{\epsilon^2}^\infty\int_{|y|<\mathcal R}p_{t/2}(f_s(e^{\i(\theta+\theta')}),y)W(\d y,\d t)\d \theta',
    \end{equation*}
    and $(e^{\i \beta (X\circ f_s)_{\delta,\epsilon}^{arc}(e^{\i\theta})+\frac{\beta^2}2\mathbb E[(X\circ f_s)_{\delta,\epsilon}^{arc}(e^{\i\theta})^2]})_{\epsilon>0}$ becomes a martingale. Define
    \begin{equation*}
        k_\delta(\theta,s):=h(\theta,s)\exp\left(\frac{\beta^2}{2}\left(\mathbb E[(\varphi^{\text{arc}}_\delta)^2(\theta)]-\mathbb E[(X\circ f_s)_{\delta}^{arc}(e^{\i\theta})^2]\right)\right).
    \end{equation*}
    Following the proof of \cite[Proposition 5.3]{CILT}, we compute the quadratic variation of  the martingale
    \begin{equation*}
        \mathcal M_{\epsilon}^{(\delta)}:=\int_0^{T}\int_0^{2\pi} k_\delta(\theta,s)\,e^{\i \beta (X\circ f_s)_{\delta,\epsilon}^{arc}(e^{\i\theta})+\frac{\beta^2}2\mathbb E[(X\circ f_s)_{\delta,\epsilon}^{arc}(e^{\i\theta})^2]}\ \d\varrho_\v(\theta)\d s,
    \end{equation*}
    which is
    \begin{equation*}
    \begin{split}
          &\langle \mathcal M^{(\delta)}\rangle_t-\langle \mathcal M^{(\delta)}\rangle_0\\
          ={}&\beta^2\int_0^t\iint_{([0,T]\times[0,2\pi])^2}k_\delta(\theta_1,s_1)k_\delta(\theta_2,s_2)\sin(\beta (X\circ f_{s_1})_{\delta,e^{-u}}^{arc}(e^{\i\theta_1}))\sin(\beta(X\circ f_{s_2})_{\delta,e^{-u}}^{arc}(e^{\i\theta_2}))\\
          &\ \ \  \ \  e^{\frac{\beta^2}2(\mathbb E[(X\circ f_{s_1})_{\delta,\epsilon}^{arc}(e^{\i\theta_1})^2]+\mathbb E[(X\circ f_{s_2})_{\delta,\epsilon}^{arc}(e^{\i\theta_2})^2])}\ \d s_1\d\varrho_\v(\theta_1)\d s_2\d\varrho_\v(\theta_2)\  \d\langle (X\circ f_{s_1})_{\delta,e^{-u}}^{arc}(e^{\i\theta_1}),(X\circ f_{s_2})_{\delta,e^{-u}}^{arc}(e^{\i\theta_2})\rangle_u,
    \end{split}
    \end{equation*}
    where for convenience we have assumed $\Im(h)=0$ and $\mu\in\mathbb R$. Next, the quadratic variation of the smooth field is
    \begin{equation*}
        \d\langle (X\circ f_{s_1})_{\delta,e^{-u}}^{arc}(e^{\i\theta_1}),(X\circ f_{s_2})_{\delta,e^{-u}}^{arc}(e^{\i\theta_2})\rangle_u=\frac{\pi e^{-2u}}{\delta^2}\int_{-\delta}^{\delta}\int_{-\delta}^{\delta}p_{e^{-2u}}(f_{s_1}(e^{\i(\theta_1+\theta_1')}),f_{s_2}(e^{\i(\theta_2+\theta_2')}))\ \d \theta_1'\d \theta_2'\d u.
    \end{equation*}
    Using the bound for the heat kernel \cite{Cheng1981},
    \begin{equation*}
        e^{-2u}p_{e^{-2u}}(f_{s_1}(e^{\i(\theta_1+\theta_1')}),f_{s_2}(e^{\i(\theta_2+\theta_2')}))\leq C e^{-{|f_{s_1}(e^{\i(\theta_1+\theta_1')})-f_{s_2}(e^{\i(\theta_2+\theta_2')})|^2e^{2u}}/{C}}
    \end{equation*}
    for some constant $C$, we can again bound the quadratic variation using the integral
 \begin{equation}\label{eq_M_bound}
    \begin{split}
          &\sup_{t\geq0}\langle \mathcal M^{(\delta)}\rangle_t-\langle \mathcal M^{(\delta)}\rangle_0\\
          \leq{}&\frac{C\beta^2}{\delta^2}\int_0^\infty\iint_{([0,T]\times[0,2\pi])^2}|k_\delta(s_1,\theta_1)||k_\delta(s_2,\theta_2)|e^{\frac{\beta^2}2(\mathbb E[(X\circ f_{s_1})_{\delta,e^{-u}}^{arc}(e^{\i\theta_1})^2]+\mathbb E[(X\circ f_{s_2})_{\delta,e^{-u}}^{arc}(e^{\i\theta_2})^2])}\\
          &\ \ \ \ \ \ \ \ \ \ \ \ \ \ \ \int_{-\delta}^{\delta}\int_{-\delta}^{\delta}e^{-|f_{s_1}(e^{\i(\theta_1+\theta_1')})-f_{s_2}(e^{\i(\theta_2+\theta_2')})|^2e^{2u}/{C}}\ \d \theta_1'\d\theta_2'\,\d s_1\d\varrho_\v(\theta_1)\d s_2\d\varrho_\v(\theta_2)\, \d u.
    \end{split}
    \end{equation}
When $t$ is small, we have the free heat kernel diagonal bound $ p_t(x,y)\leq\frac{1}{4\pi t}$, while for $t$ large $p_t$ decays exponentially fast in $t$. Thus we have when $u$ large
\begin{equation*}
    \mathbb E[ (X\circ f_{s_1})_{\delta,e^{-u}}^{arc}(e^{\i\theta_1})^2]=\frac{\pi}{2\delta^2}\int_{-\delta}^{\delta}\int_{-\delta}^{\delta}\int_{e^{-2u}}^\infty p_t(f_{s_1}(e^{\i(\theta_1+\theta')}),f_{s_1}(e^{\i(\theta_1+\theta'')}))\ \d t\ \d \theta'\d \theta''\leq C+u,
\end{equation*}
where $C$ is independent of $\delta,s_1,\theta_1$ hence we can replace the variance in \eqref{eq_M_bound} with $u$ uniformly. Finally, integrating out the white noise cut-off $\d u$, we are left with
\begin{equation*}
    \frac{C\beta^2}{\delta^2}\iint_{([0,T]\times[0,2\pi])^2}|k_\delta(s_1,\theta_1)||k_\delta(s_2,\theta_2)|\int_{-\delta}^{\delta}\int_{-\delta}^{\delta}|f_{s_1}(e^{\i(\theta_1+\theta_1')})-f_{s_2}(e^{\i(\theta_2+\theta_2')})|^{-\beta^2}\d \theta_1'\d\theta_2'\,\d s_1\d\varrho_\v(\theta_1)\d s_2\d\varrho_\v(\theta_2),
\end{equation*}
which is bounded by
\begin{equation}\label{eq_temp_in_GMC_bound}
    \frac{C\beta^2}{\delta^2}\iint_{([0,T]\times[0,2\pi])^2}\int_{-\delta}^{\delta}\int_{-\delta}^{\delta}|k_\delta(s_1,\theta_1+\theta_1')||k_\delta(s_2,\theta_2+\theta_2')|\,\d \theta_1'\d\theta_2'\,|f_{s_1}(e^{\i\theta_1})-f_{s_2}(e^{\i\theta_2})|^{-\beta^2}\,\d s_1\d\varrho_\v(\theta_1)\d s_2\d\varrho_\v(\theta_2).
\end{equation}
Besides, by Jensen's inequality,
\begin{equation*}
\begin{split}
    \exp\left(\frac{\beta^2}{2}(\mathbb E[(\varphi^{\text{arc}}_\delta)^2(\theta)]-\mathbb E[(X\circ f_s)_{\delta}^{arc}(e^{\i\theta})^2])\right)={}&\exp\left(\frac{\beta^2}{8\delta^2}\int_{-\delta}^\delta\int_{-\delta}^\delta \log\left|\frac {f_s(e^{\i(\theta+\theta_1)})-f_s(e^{\i(\theta+\theta_2)})}{e^{\i(\theta+\theta_1)}-e^{\i(\theta+\theta_2)}}\right|\d\theta_1\d\theta_2\right)\\
    \leq{}&\frac{1}{4\delta^2}\int_{-\delta}^\delta\int_{-\delta}^\delta \exp\left(\frac{\beta^2}{2}\log\left|\frac {f_s(e^{\i(\theta+\theta_1)})-f_s(e^{\i(\theta+\theta_2)})}{e^{\i(\theta+\theta_1)}-e^{\i(\theta+\theta_2)}}\right|\right)\d\theta_1\d\theta_2,
\end{split}
\end{equation*}
Since $f_s(z)$ is holomorphic on $\mathbb T$, we have 
\begin{equation*}
      |k_{\delta}(s,\theta)|\leq  |h(s,\theta)|\sup_{\theta'\in[0,2\pi]}|f'_s(e^{\i\theta'})|^{\frac{\beta^2}{2}}\leq C |h(s,\theta)|e^{-\frac{\beta^2\Re\omega}{2}s }.
\end{equation*}
We finish the proof by putting the above bound in \eqref{eq_temp_in_GMC_bound}.
\end{proof}
We stress that when $s$ becomes large, $|f_s(e^{\i\theta})|\approx|f_s'(e^{\i\theta})|\approx C e^{-s{\Re\omega} }$. Take in \cref{lemma_gmc_delta_exp_bound}
\begin{equation*}
    h(s,\theta)=\exp\left(-\frac{\beta Q}{2\delta}\int_{-\delta}^{\delta}\log\tfrac{|f_s'(e^{\i(\theta+\theta')})|}{|f_s(e^{\i(\theta+\theta')})|}\d\theta'\right),
\end{equation*}
and
\begin{equation*}
    h(s,\theta)\leq \frac{1}{2\delta}\int_{-\delta}^{\delta}\exp\left(-\beta Q\log\frac{|f_s'(e^{\i(\theta+\theta')})|}{|f_s(e^{\i(\theta+\theta')})|}\right)\d\theta'\leq \sup_{\theta'\in[-\delta_0,\delta_0]}\left|\frac{f_s'(e^{\i(\theta+\theta')})}{f_s(e^{\i(\theta+\theta')})}\right|^{-\frac{\beta^2}{2}+2},
\end{equation*}
where the above is bounded in $s>0$ and $\theta\in[0,2\pi]$. Now applying change of variable to \eqref{eq_u_and_k}, we have when $T$ large
\begin{equation*}
\begin{split}
     u^2_\delta(T)\leq{}&C\iint_{([0,T]\times[0,2\pi])^2}|f_{s_1}(e^{\i\theta_1})f_{s_2}(e^{\i\theta_2})|^{\frac{\beta^2}{2}}|f_{s_1}(e^{\i\theta_1})-f_{s_2}(e^{\i\theta_2})|^{-\beta^2}\,\d s_1\d\varrho_\v(\theta_1)\d s_2\d\varrho_\v(\theta_2)\\
     \leq{}&C\iint_{(\mathbb A_{f_T})^2}|z_1z_2|^{\beta Q}|z_1-z_2|^{-\beta^2}\,\d z_1\d z_2\leq C\iint_{Ce^{-T\Re\omega}\leq|z_1|,|z_2|\leq 1}|z_1z_2|^{\beta Q}|z_1-z_2|^{-\beta^2}\,\d z_1\d z_2.
\end{split}
\end{equation*}
Evaluating the last integral on $C^le^{-l\,T\Re\omega  }\leq|z_1|,|z_2|\leq C^{l-1}e^{-(l-1)\, T\Re\omega  },l\geq1$ shows that $ u^2_\delta(T)\leq  C(1+T\Re\omega) $. Therefore the bound in \cref{lemma_gmc_delta_exp_bound} is at most exponential in $T$ and independent of $\delta<\delta_0$. Then we can take the $\delta\to0$ limit in \eqref{eq_diff_GMC_delta} and obtain the $L^2$ convergence of the resolvent if we choose $\lambda$ large enough.

We first state a general lemma about convergence.
\begin{lem}\label{lemma_resolvent_technical}
    Let $A\geq c_0I,c_0>0$ be a positive self-adjoint operator on Hilbert space $\mathcal H$. Write $\mathcal W^s:=\mathcal D(A^{\frac s2})$, i.e. the domain under $\|u\|_s:=\|A^{\frac s2}u\|_{\mathcal H}$, with $\|\cdot\|_{\mathcal H}$ the Hilbert norm. Also there is a sequence of bounded operator $(B_n)_{n\geq1}$ satisfying
    \begin{equation*}
        \|e^{-t A}B_ne^{-s A}\|_{\mathcal H\to\mathcal H}\leq C_0(ts)^{-\alpha}e^{-c_1(t+s)},\ \ \ s,t>0,
    \end{equation*}
    where $C_0,c_1>0$ and $0\leq\alpha<\frac12$. Also consider a dense subspace $\mathcal C$ of $\mathcal W^1$ on which $\la B_n u,v\ra_{\mathcal H}$ converges for $u,v\in\mathcal C$ and we denote the limit by $\la B u,v\ra_{\mathcal H}$. If $u_n\to u$ strongly in $\mathcal H$ and $\sup_{n}\|u_n\|_{1}<\infty$, then $u\in \mathcal W^1$ and $\la B_n u_n,v\ra_{\mathcal H}$ converge to $\la B u,v\ra_{\mathcal H}$ for all $v\in\mathcal W^r,2\alpha<r<1$.
\end{lem}
\begin{proof}
   Fix $2\alpha<r<1$. Since $A^{-\frac r2}=\frac{1}{\Gamma(\frac r2)}\int_0^\infty t^{\frac r2-1}e^{-tA}\,\mathrm{d}t$, it is plain to check that 
   \begin{equation}\label{eq_temp_ABA}
       \|A^{-\frac{r}{2}}B_nA^{-\frac{r}{2}}\|_{\mathcal H\to\mathcal H}\leq\frac{C_0}{\Gamma(\frac r2)^2}\left(\int_0^{\infty}t^{\frac{r}{2}-\alpha-1}e^{-c_1t}\,\mathrm{d}t\right)^2=:C(r)<\infty,
   \end{equation}
   using the bound from $\|e^{-t A}B_ne^{-s A}\|_{\mathcal H\to\mathcal H}$. Hence 
   \begin{equation}\label{eq_temp_bound}
       |\la B_n u,v\ra_{\mathcal H}|\leq C(r)\|u\|_r\|v\|_r.
   \end{equation}
   Now let us check the first item. 
   \begin{equation*}
   \begin{split}
        |\la B_n u_n,v\ra_{\mathcal H}-\la Bu,v\ra_{\mathcal H}|\leq{}&|\la B_n (u_n-u),v\ra_{\mathcal H}|+|\la B_nu,v\ra_{\mathcal H}-\la Bu,v\ra_{\mathcal H}|\\
        \leq{}&C(r)\|u_n-u\|_r\|v\|_r+|\la B_nu,v\ra_{\mathcal H}-\la Bu,v\ra_{\mathcal H}|.
   \end{split}
   \end{equation*}
   According to \cite[Lemma 2.12]{Ma_1992}, the convergence of $u_n$ in $\mathcal H$ and boundedness in $\mathcal W^1$ imply that $u\in\mathcal W^1$. By H\"older's inequality, we have
   \begin{equation*}
       \|u_n-u\|_r\leq\|u_n-u\|_{\mathcal H}^{1-r}\|u_n-u\|_1^{r}\to0.
   \end{equation*}
   The convergence of $|\la B_nu,v\ra_{\mathcal H}-\la Bu,v\ra_{\mathcal H}|$ is again from \eqref{eq_temp_bound} with $\mathcal C$ approximation. 
\end{proof}

By the definition of $\Qc_\v^{(\delta)}$, for $G\in\Ec(\DT)$ we have
\begin{equation*}
    \lambda \langle \mathbf R_{\v,\lambda}^{(\delta)}F,G\rangle_{L^2}+\Qc_\v^{(\delta)}(\mathbf R_{\v,\lambda}^{(\delta)}F,G)=\langle F,G\rangle_{L^2}.
\end{equation*}
Especially one can take $G=\mathbf R_{\v,\lambda}^{(\delta)}F$ (by approximation) and get $\sup_{0<\delta<1} |\Qc_\v^{(\delta)}(\mathbf R_{\v,\lambda}^{(\delta)}F,\mathbf R_{\v,\lambda}^{(\delta)}F)|<\infty$. In \cref{Q_v_bound}, specifically in \cref{V_v_estimate}, the comparison between $\Qc_\v^{(\delta)}$ and $\widetilde{\Qc}_0$ is uniform in $\delta$. We thus have $\sup_{0<\delta<1} |\widetilde{\Qc}^0(\mathbf R_{\v,\lambda}^{(\delta)}F,\mathbf R_{\v,\lambda}^{(\delta)}F)|<\infty$ (for each $\delta$, $\Qc^0_\v(\mathbf R_{\v,\lambda}^{(\delta)}F,\mathbf R_{\v,\lambda}^{(\delta)}F)$ is finite since $V_\v^{(\delta)}$ is bounded). Then according to \cite[Lemma 2.12]{Ma_1992}, there is a subsequence of $ (\mathbf R_{\v,\lambda}^{(\delta)}F)_{\delta}$ that converges weakly in $\widetilde{\mathcal Q}^0$ and $\mathbf R_{\v,\lambda}F\in\mathcal D(\widetilde{\mathcal Q}^0)$. Recall \eqref{eq_estimate_PVP}, and we can easily use \cref{lemma_resolvent_technical} with $\mathbf H^0+Q^2$ to get the convergence of $\Qc_\v^{(\delta)}(\mathbf R_{\v,\lambda}^{(\delta)}F,G)$ to $\Qc_\v(\mathbf R_{\v,\lambda}F,G)$.

In the end, we have shown that for $\lambda$ large enough,
 \begin{equation*}
    \lambda \langle \mathbf R_{\v,\lambda}F,G\rangle_{L^2}+\Qc_\v(\mathbf R_{\v,\lambda}F,G)=\langle F,G\rangle_{L^2},
\end{equation*}
which means the resolvents from two approaches agree and therefore the proof is complete by the uniqueness of the resolvent.
\end{proof}

\section{Continuity and differentiability}\label{sec_con_diff}

In order to talk about the analytic properties of the quantized semigroup $f\in\Ssg\mapsto\Tbf_f\in\Lc(L^2(\DT))$, we need a topology on $\Ssg$. For $\varepsilon>0$, we define $\Ssg_\varepsilon$ to be the subset of maps in $\Ssg$ that are holomorphic in $(1+\varepsilon)\Db^\circ$ and smooth up to the boundary $(1+\varepsilon)\Tb$. 
Then $\Ssg=\bigcup_{\varepsilon>0}\Ssg_\varepsilon$. For each $\varepsilon>0$, we equip $\Ssg_\varepsilon$ with the Fréchet topology induced by the family of seminorms $(\|\!\cdot\!\|_{\varepsilon,\ell})_{\ell\geq0}$ defined for $f(z)=\sum_{n\in\Zb}f_nz^{n+1}$ by
\begin{equation}
\|f\|_{\varepsilon,\ell}^2 := \sum_{n\in\Zb}(1+\varepsilon)^{2|n|+2}(1+|n|)^{2\ell}|f_n|^2.
\end{equation}
Then $\Ssg$ is equipped with the direct limit topology, so that a map on $\Ssg$ is continuous if and only if its restriction to each $\Ssg_\varepsilon$ is continuous with respect to the Fréchet topology. 

For $\epsilon>0$, we also consider the space $\mathrm{Hol}_{\epsilon}(\mathbb D)$ that consists of holomorphic functions on the open disk $(1+\epsilon)\mathbb D^\circ$ that extend smoothly up to the boundary $(1+\varepsilon)\Tb$, also equipped with the seminorms $(\|\!\cdot\!\|_{\varepsilon,\ell})_{\ell\geq0}$. Then $\mathrm{Hol}(\mathbb D):=\bigcup_{\varepsilon>0}\mathrm{Hol}_{\epsilon}(\mathbb D)$. And we also define $\mathrm{Hol}_{\epsilon}^\bullet(\mathbb D)$ ($\mathrm{Hol}^\bullet(\mathbb D)$ resp.) as the closed subspace of $\mathrm{Hol}_{\epsilon}(\mathbb D)$ ($\mathrm{Hol}(\mathbb D)$ resp.) that consists of functions $f$ satisfying $f(0)=0$, equipped with the induced topology.

\begin{prop}\label{prop_Tf_continuity}
The map $f\in\Ssg\mapsto\Tbf_f\in\Lc(L^2(\DT))$ is continuous.
\end{prop}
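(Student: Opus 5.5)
We reduce continuity on $\Ssg$ to continuity on each Fréchet piece $\Ssg_\varepsilon$. Fix $f\in\Ssg_\varepsilon$ and a sequence (or net) $f_j\to f$ in $\Ssg_\varepsilon$. The goal is $\|\Tbf_{f_j}-\Tbf_f\|_{\Lc(L^2(\DT))}\to0$. If only strong continuity turns out to be feasible, we prove that first and then upgrade it using the semigroup property. Norm continuity is the natural target, because the amplitudes are Hilbert--Schmidt. Concretely, we use \cref{prop_semigroup_as_amplitude}: $\Tbf_f=\frac{e^{-\frac{c_\L}{12}W(f,g_f)}}{\sqrt2\pi}\Ac_{\Ab_f,g_f,\zetabf_f}$, with $\Ac_{\Ab_f,g_f,\zetabf_f}$ given by the explicit kernel recalled in \cref{Sec_background}. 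It then suffices to prove two things: (a) the scalar prefactor $W(f_j,g_{f_j})\to W(f,g_f)$, and (b) the kernels converge in $L^2(\DT^2)$, which gives Hilbert--Schmidt and hence operator-norm convergence.

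To compare kernels on different annuli, we transport everything to a fixed one. For $j$ large, $f_j\circ f^{-1}$ is defined near $f(\Tb)$ and is close to the identity there. We pick diffeomorphisms $\psi_j:\Ab_f\to\Ab_{f_j}$ that equal the identity near $\Tb$, equal $f_j\circ f^{-1}$ near $f(\Tb)$, and converge to $\id$ in $C^\infty$. By diffeomorphism invariance, $\Ac_{\Ab_{f_j},g_{f_j},\zetabf_{f_j}}=\Ac_{\Ab_f,\psi_j^*g_{f_j},\zetabf_f}$, so we can choose admissible metrics $g_{f_j}$ with $\psi_j^*g_{f_j}\to g_f$ smoothly. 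The complex structures $J_j:=\psi_j^*J$ also converge to $J$.

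Next we check convergence of each ingredient of the kernel, all on the fixed annulus. The regularized determinant $\det(\Delta_{\Ab_f,\psi_j^*g_{f_j},\D})$ and the Liouville-type terms in $W$ converge, by smooth dependence on the metric together with the explicit formula \eqref{eq_W_f}. The form $\nu_{f_j,k}$ and the harmonic extensions $P$ converge, as does the Dirichlet-to-Neumann operator: $\Dbf_{\Ab_{f_j}}-\Dbf\to\Dbf_{\Ab_f}-\Dbf$ in the smoothing-operator topology. The Gaussian factor $e^{-\frac12\la\cbf_j+\varphibf,(\Dbf_{\Ab}-\Dbf)(\cbf_j+\varphibf)\ra}$ is controlled uniformly because $\Dbf_{\Ab}-\Dbf$ stays uniformly bounded below by a positive operator on the relevant space, once $\Ab_{f_j}$ has modulus bounded away from $0$. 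This yields dominated convergence in $(\varphibf,c,k)$. For the curvature term we use the imaginary Girsanov transform, exactly as in the proof of \cref{prop_semigroup_as_amplitude}. It turns the curvature factor into a shift of the field by a smooth function converging in $C^\infty$, times an explicit Gaussian constant. The sum over $k$ and $j$ is handled by the Gaussian decay $e^{-cR^2k^2}$ coming from $\int|\nu_{f,k}|^2$, uniformly in $j$.

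The main obstacle is the GMC factor. We must show
\[\Eb\Big|e^{-\mu\int M_\beta(\phi_j,\d v_j)}-e^{-\mu\int M_\beta(\phi,\d v)}\Big|^2\to0,\]
with enough uniformity in the boundary data to pass to $L^2$ of the kernel. Here $\phi_j$ is the field on $\Ab_f$ built from $\psi_j^*g_{f_j}$: its Dirichlet GFF has a Green's function converging to that of $g_f$, and its harmonic part also converges. We first realize all the $X_{g_j}$ on one probability space, as a fixed GFF plus a smooth Gaussian correction whose covariance tends to $0$ in $C^\infty$. Then $\int M_\beta(\phi_j)\to\int M_\beta(\phi)$ in $L^2$, by the second-moment formula with kernels $|x-x'|^{-\beta^2}e^{\beta^2(\cdots)}$ and dominated convergence, since $\beta^2<2$. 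Next, $|e^{a}-e^{b}|\le|a-b|e^{|a|+|b|}$ together with Cauchy--Schwarz reduces the problem to uniform exponential moments $\sup_j\Eb[e^{4|\mu||\int M_\beta(\phi_j)|}]<\infty$. These follow from \cite[Proposition 5.3]{CILT} (as already used in the proof of \cref{prop_FK_formula}), since the relevant $u^2$ quantity is uniformly bounded. The harmonic part $P\wt{\varphi}$ enters only through a unimodular factor, except for its imaginary Girsanov shift, which is smooth and bounded. This makes the bounds uniform in $\wt{\varphi}$, and dominated convergence then gives $L^2$ convergence of the kernels.
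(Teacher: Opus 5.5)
Your overall architecture (prefactor plus Hilbert--Schmidt convergence of the kernel, with uniform exponential moments from \cite[Proposition 5.3]{CILT}) is sound, but the step that is supposed to give \emph{convergence} of the GMC factor fails.

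After pulling back by $\psi_j$, the metric $\psi_j^*g_{f_j}$ is conformal for $J_j=\psi_j^*J$. Moreover $J_j\neq J$ on a nonempty open set: a holomorphic $\psi_j$ equal to the identity near $\Tb$ is the identity, which is incompatible with $\psi_j=f_j\circ f^{-1}$ near $f(\Tb)$ unless $f_j=f$. Hence $X_{g_j}$ and $X_{g_f}$ are Dirichlet GFFs for two different conformal structures. Near the diagonal their covariances behave like $-\log|x-y|_{\psi_j^*g_{f_j}}$ and $-\log|x-y|_{g_f}$. So $G_j-G$ contains a bounded but direction-dependent term, discontinuous at $x=y$, and it is not $C^\infty$. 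It is not a nonnegative kernel either. The Dirichlet energies of the two structures are $\int\la A_j\nabla u,\nabla u\ra$ with $\det A_j=1$, $A_j\neq I$, and $\int|\nabla u|^2$, and these are not ordered. So there is no realization $X_{g_j}=X_{g_f}+Y_j$ with $Y_j$ an independent Gaussian field, let alone one whose covariance tends to $0$ in $C^\infty$. A smooth correction correlated with $X_{g_f}$ cannot fix this either, since it does not change the anisotropy of the leading logarithmic singularity. Without a coupling, the two ingredients of your step lose their meaning: the claim that $\int M_\beta(\phi_j)\to\int M_\beta(\phi)$ in $L^2$, and the bound via $|e^a-e^b|\le|a-b|e^{|a|+|b|}$. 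What survives of your argument is uniform boundedness, not convergence.

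What is needed is a comparison of expectations rather than of realizations, and this is how the paper argues. It does not transport to a fixed annulus. Instead it fixes $\Sigma^\xi=(\Ab_f\setminus f((1+\xi)\Db))\cap(1-\xi)\Db$, which lies in every $\Ab_{f_n}$ for $n$ large. There all fields share the planar complex structure, and the metrics differ by $e^{\rho_n}$ with $\sup_{\Sigma^\xi}|\rho_n|\to0$. The paper then proceeds in three steps:
\begin{enumerate}
\item The GMC over the collars $\Ab_{f_n}\setminus\Sigma^\xi$ is removed at cost $O(\xi^{1/2-\beta^2/4})$ in Hilbert--Schmidt norm. This uses $|e^z-1|^2\le e^{2|z|}-1$ and the exponential-moment bound with $u^2\lesssim\xi^{2-\beta^2}$, $w\lesssim\xi^{1-\beta^2/2}$ (\cref{lem_continuity_tech}(2)).
\item The curvature term is removed by Cameron--Martin.
\item On $\Sigma^\xi$ the exponential is Taylor expanded. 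The $s$-th term is an explicit integral of $e^{-\beta^2\sum_{r<l}G_n(x_r,x_l)}e^{-\frac{\beta^2}{2}\sum_rW_n(x_r)}$ times unimodular and bounded factors. It converges for each fixed $s$ because $G_n\to G$ and $P_n\to P$ uniformly on $\Sigma^\xi$. The whole series is dominated, uniformly in $n$, $j$, $k$ and the boundary data, by $\mathbb E[e^{|\mu|\int_{\Sigma^\xi}M_\beta(X^\xi-\frac{\i Q}{4\pi}\mathcal G,\d v)}]$, using domain monotonicity $G^\xi\le G_n$ and $W^\xi\le W_n$.
\end{enumerate}
A moment expansion could also be run in your fixed-annulus picture, with domination coming from $G_j\ge0$ and a uniform lower bound on the Robin masses. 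But it has to replace the coupling step, not supplement it.

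A second, smaller error concerns the free-field factor. $\Dbf_{\Ab}-\Dbf$ is not bounded below by a positive operator. For the round annulus $\{r\le|z|\le1\}$, its restriction to the $n$-th modes has eigenvalues $\frac{2|n|r^{|n|}}{1-r^{|n|}}$ and $-\frac{2|n|r^{|n|}}{1+r^{|n|}}$, the latter in the direction $\varphi_{1,n}=\varphi_{2,n}$. So the Gaussian factor is unbounded, only square integrable, and your dominated convergence needs a genuine $L^2$ majorant. The paper avoids this through \cref{lem_continuity_tech}(1),(3). It expresses $\|\sum_j\mathcal A^0_{j,n}-\sum_j\mathcal A^0_j\|^2_{\Hc^{\otimes2}}$ through glued torus partition functions, and it bounds $|\log\mathcal A^0_{j,n}-\log\mathcal A^0_j|$ by $\delta_n'(1+j^2+k^2+\|\varphibf\|_{H^{-s}}^2)$.
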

\begin{proof}
Let $g_f$ be an admissible metric on $\Ab_f$ that depends in a $C^1$ fashion on $f$. Since the prefactor $W(f,g_f) $ in \cref{prop_semigroup_as_amplitude} is clearly continuous in $f$, we only need to show the continuity of the annulus amplitude $\Ac_{\Ab_f,g_f,\zetabf_f}$. Let $\widetilde{\boldsymbol{\varphi}}^k=(\widetilde{{\varphi}}^k_1,\widetilde{{\varphi}}^k_2)\in\DT^2$ with $k\in\Zb$. Recall the decompositions $\wt\varphi_1^k=c_1+\varphi_1+kR\arg$, $\wt\varphi_2^k=c_2+\varphi_2+kR\arg$ with $c_1,c_2\in\TR$, $\varphi_1,\varphi_2\in\Dc_0'(\Tb,\Rb)$. For $j\in\Zb$, recall $\cbf_j:=(c_1+2\pi Rj,c_2)$ and the $1$-form $\nu_{f,k}$ on $\Ab_f$
\begin{equation*}
    \nu_{f,k}:= kR\, d\arg+\,d P(0,k R(\arg f^{-1}-\arg)|_{f(\mathbb T)}),
\end{equation*}
where $P=P_{\Ab_f}$ is the Poisson operator on $\Ab_f$, i.e., $P(h_1,h_2)$ is the unique harmonic function on $\Ab_f$ that equals $h_1$ on $\mathbb T$ and $h_2$ on $f(\mathbb T)$. Then by definition,
    \begin{equation*}
    \begin{split}
        \Ac_{\Ab_f,g_f,\zetabf_f}(\widetilde{\boldsymbol{\varphi}}^k)={}&\det(\Delta_{\mathbb A_f,g_f,\mathrm{D}})^{-1/2}e^{-\frac1{4\pi} \int_{\Ab_f}|\nu_{f,k}|^2_{g_f}\d v_{g_f}}\sum_{j\in\mathbb Z}e^{-\frac{1}{2\pi}\int_{\partial\mathbb A_f}P(\cbf_j+\boldsymbol{\varphi})\,\nu_{f,k}(\nbf)\,\d\ell_{g_f}-\frac1{2}\la\cbf_j+\boldsymbol{\varphi},(\mathbf{D}_{\Ab_f}-\mathbf D)(\cbf_j+\boldsymbol{\varphi})\ra} \\
        \times&\mathbb E_{\widetilde{\boldsymbol{\varphi}}^k}[e^{-\frac{\i Q}{4\pi}\int_{\Ab_f}^{\mathrm{reg}}K_{g_f}X_{g_f}^{k,j}\,\d v_{g_f}-\mu\int_{\Ab_f}M_\beta(X_{g_f}^{k,j},\d v_{g_f})}].
    \end{split}
\end{equation*}
And
\[X_{g_f}^{k,j}:=I(\nu_{f,k})+P (\cbf_j+{\boldsymbol{\varphi}})+X_{g_f},\]
where $I(\nu_{f,k})(z):=\int_1^z \nu_{f,k}$ (see \cref{Sec_background} for the choice of arc $d_f$) and $X_{g_f}$ is the Dirichlet GFF on the annulus $\Ab_f$. Here the curvature term $\int_{\Ab_f}^{\mathrm{reg}}K_{g_f}X_{g_f}^{k,j}\,\d v_{g_f}$ is regularized as in \cite[Section 5]{BCILT} as
\begin{equation*}
    \int_{\Ab_f}^{\mathrm{reg}}K_{g_f}I(\nu_{f,k})\,\d v_{g_f}:= \int_{\Ab_f}K_{g_f}I(\nu_{f,k})\,\d v_{g_f}-4\pi Rk\int_{d_f}k_{g_f}\d \ell_{g_f},
\end{equation*}
where $k_{g_f}$ is the geodesic curvature and $\d\ell_{g_f}$ is the length measure under metric $g_f$.

    Consider a sequence $(f_n)_n$ that converges to $f$. Then there exists $\epsilon>0$ such that $f_n,f\in\Ssg_\epsilon$ for all $n$ large enough and $f_n\to f$ under $\|\!\cdot\!\|_{\epsilon,\ell}$ for all $\ell$. Then we can similarly introduce $X_{g_{f_n},k,j}$. In the following, we denote $X_{g_{f_n},k,j}$ by $X_{n}^{k,j}$ ($X^{k,j}:=X_{g_f}^{k,j}$ as well) and $X_n$ ($X$ resp.) for the Dirichlet GFF on $\Ab_{f_n}$ ($\Ab_f$ resp.). Similarly, we have $P_n(\cbf_j+\boldsymbol{\varphi})$ and $P(\cbf_j+\boldsymbol{\varphi})$ for the harmonic extension on $\mathbb A_{f_n}$ and $\mathbb A_{f}$ then $\nu_{f_n,k}^{n},d_{f_n},\nbf_n$ for the 1-form, the separating arc and outward normal vector on $\mathbb A_{f_n}$. We also replace the surface $(\mathbb A_{f_n},g_{f_n}),(\mathbb A_{f},g_f)$, the volume form $\d v_{g_{f_n}},\d v_{g_f}$, and the curvature $K_{g_{f_n}},K_{g_f}$,  with $(\Sigma_n,g_n),(\Sigma,g)$, $\d v_n,\d v$ and $K_n,K$. Take $0<\xi<\epsilon$ and consider the subset 
    \begin{equation*}
        \Sigma^{\xi}:=(\Sigma\backslash f((1+\xi)\mathbb D))\cap(1-\xi)\mathbb D.
    \end{equation*}
Let
\begin{equation*}
 \begin{split}
    \mathcal A_{j,n}^0(\widetilde{\varphibf}^k):={}&\det(\Delta_{\Sigma_n,g_n,\mathrm{D}})^{-1/2}\exp\Bigg(-\frac1{4\pi}\int_{\Sigma_n}|\nu_{f_n,k}|^2_{g_{n}}\d v_{n}\\
    -&\frac1{2\pi}\int_{\partial\Sigma_n}P_n(\cbf_j+\boldsymbol{\varphi})\,\nu_{f_n,k}(\nbf_n)\,\d\ell_{n}-\frac1{2}\la\cbf_j+\boldsymbol{\varphi},(\mathbf{D}_{\Sigma_n}-\mathbf D)(\cbf_j+\boldsymbol{\varphi})\ra\Bigg),
\end{split}
\end{equation*}
and $\mathcal A^0_j(\widetilde{\varphibf}^k)$ accordingly for $\Sigma$ and $f$. In the following proof, we will focus on a fixed $k\in\mathbb Z$. Indeed, first notice that $|\Ac_{\Ab_{f_n},g_{f_n},\zetabf_{f_n}}|\leq C\sum_{j\in\mathbb Z}\mathcal A^0_{j,n}$ and $|\Ac_{\Ab_{f},g_{f},\zetabf_{f}}|\leq C\sum_{j\in\mathbb Z}\mathcal A^0_j$ for some constant $C>0$ that does not depend on $\widetilde{\varphibf}$, $k$ and $n$, because of \cite[Proposition 5.3]{CILT}. According to the first item in  \cref{lem_continuity_tech}, the norm estimate of $\sum_{|k|\geq K}\Ac_{\Ab_{f_n},g_{f_n},\zetabf_{f_n}}(\widetilde{\boldsymbol{\varphi}}^k)$ for some large $K>0$ is dominated by the norm of $\sum_{|k|\geq K}\sum_{j\in\mathbb Z}\mathcal A^0_j$.

For $n$ large enough, we have $\Sigma^{\xi}\subset\Sigma_n$ and $ \Ac_{\Ab_{f_n},g_{f_n},\zetabf_{f_n}}(\widetilde{\boldsymbol{\varphi}}^k)=A_{n,\xi}^{(1)}+A_{n,\xi}^{(2)}$, where
\begin{equation*}
    A_{n,\xi}^{(1)}:=\sum_{j\in\mathbb Z}\mathcal  A_{j,n}^0(\widetilde{\varphibf}^k) \mathbb E_{\widetilde{\boldsymbol{\varphi}}^k}[e^{-\frac{\i Q}{4\pi}\int_{\Sigma_n}^{\mathrm{reg}}K_{n}X_{n}^{k,j}\,\d v_n-\mu\int_{\Sigma^{\xi}}M_\beta(X_{n}^{k,j},\d v_n)}],
\end{equation*}
and
\begin{equation*}
\begin{split}
     A_{n,\xi}^{(2)}:={}&\sum_{j\in\mathbb Z}\mathcal  A_{j,n}^0(\widetilde{\varphibf}^k) \mathbb E_{\widetilde{\boldsymbol{\varphi}}^k}[e^{-\frac{\i Q}{4\pi}\int_{\Sigma_n}^{\mathrm{reg}}K_{n}X_{n}^{k,j}\,\d v_n}(e^{-\mu\int_{\Sigma_n}M_\beta(X_{n}^{k,j},\d v_n)}-e^{-\mu\int_{\Sigma^{\xi}}M_\beta(X_{n}^{k,j},\d v_n)})].
\end{split}
\end{equation*}
By applying Cauchy-Schwarz inequality to the expectation in $ A_{n,\xi}^{(2)}$, we have
\begin{equation*}
    |A_{n,\xi}^{(2)}|\leq \sum_{j\in\mathbb Z}\mathcal  A_{j,n}^0(\widetilde{\varphibf}^k)  \mathcal K_{n,\xi}^{k,j}(\widetilde{\boldsymbol{\varphi}}^k)\mathbb E_{\widetilde{\boldsymbol{\varphi}}^k}[|e^{-\mu\int_{\Sigma^{\xi}}M_\beta(X_{n}^{k,j},\d v_n)}|^2]^{\frac12},
\end{equation*}
where
\begin{equation*}
    \mathcal K_{n,\xi}^{k,j}(\widetilde{\boldsymbol{\varphi}}^k):=\mathbb E_{\widetilde{\boldsymbol{\varphi}}^k}[|e^{-\mu\int_{\Sigma_n\backslash\Sigma^{\xi}}M_\beta(X_{n}^{k,j},\d v_n)}-1|^2]^{\frac12}.
\end{equation*}
Using the fact that $|e^z-1|^2\leq (e^{|z|}-1)^2\leq e^{2|z|}-1$ for all $z\in\mathbb C$, we can bound $ \mathcal K_{n,\xi}^{k,j}(\widetilde{\boldsymbol{\varphi}}^k)$ with
\begin{equation}\label{eq_K_bound}
    | \mathcal K_{n,\xi}^{k,j}(\widetilde{\boldsymbol{\varphi}}^k)|\leq \mathbb E_{\widetilde{\boldsymbol{\varphi}}^k}[e^{2|\mu\int_{\Sigma_n\backslash\Sigma^{\xi}}M_\beta(X_{n}^{k,j},\d v_n)|}-1]^{\frac12}.
\end{equation}
According to \cite[Proposition 5.3]{CILT}, for some constant $C$ only depending on $\beta$, the above is bounded by
\begin{equation*}
   \sqrt{ e^{C|\mu| w}(1+2|\mu| u e^{C|\mu|^2 u^2})-1},
\end{equation*}
where
\begin{equation}\label{eq_metric_integral_continuity}
    u^2:=\iint_{(\Sigma_n\backslash\Sigma^{\xi})^2}d_{g_n}(x,y)^{-\beta^2}\ dv_n(x)\d v_n(y), \ \ \ w:=\int_{\Sigma_n\backslash\Sigma^{\xi}}d_{g_n}(x,\partial\Sigma_n)^{-\frac{\beta^2}2}\ \d v_n(x).
\end{equation}

Then by \cite[Lemma 4.5]{Guillarmou2021}, the first and second item in \cref{lem_continuity_tech}, $A_{n,\xi}^{(2)}$ yields an operator on $\mathcal H$ with norm $(\iint |A_{n,\xi}^{(2)}|^2 (\d\widetilde{\varphi}^k)^{\otimes2})^{1/2}$ of order $\xi^{\frac12-\frac{\beta^2}{4}}$. 

Similarly, we write $ \Ac_{\Ab_{f},g_{f},\zetabf_{f}}(\widetilde{\boldsymbol{\varphi}}^k)=A_{\xi}^{(1)}+A_{\xi}^{(2)}$, where
\begin{equation*}
    A_{\xi}^{(1)}:=\sum_{j\in\mathbb Z}\mathcal A_{j}^0(\widetilde{\varphibf}^k) \mathbb E_{\widetilde{\boldsymbol{\varphi}}^k}[e^{-\frac{\i Q}{4\pi}\int_{\Sigma}^{\mathrm{reg}}KX^{k,j}\,\d v-\mu\int_{\Sigma^{\xi}}M_\beta(X^{k,j},\d v)}],
\end{equation*}
and again the norm of $A_{\xi}^{(2)}$ is under control. 
Now we estimate $A_{n,\xi}^{(1)}-A_{\xi}^{(1)}=B_{n,\xi}^{(1)}+B_{n,\xi}^{(2)}$, with
\begin{equation*}
\begin{split}
     B_{n,\xi}^{(1)}:=\sum_{j\in\mathbb Z}\mathcal  A_{j}^0(\widetilde{\varphibf}^k)\left(\mathbb E_{\widetilde{\boldsymbol{\varphi}}^k}[e^{-\frac{\i Q}{4\pi}\int_{\Sigma_n}^{\mathrm{reg}}K_{n}X_{n}^{k,j}\,\d v_n-\mu\int_{\Sigma^{\xi}}M_\beta(X_{n}^{k,j},\d v_n)}]
    -\mathbb E_{\widetilde{\boldsymbol{\varphi}}^k}[e^{-\frac{\i Q}{4\pi}\int_{\Sigma}^{\mathrm{reg}}KX^{k,j}\,\d v-\mu\int_{\Sigma^{\xi}}M_\beta(X^{k,j},\d v)}]\right),
\end{split}
\end{equation*}
and
\begin{equation*}
    \begin{split}
   B_{n,\xi}^{(2)}:={}&\sum_{j\in\mathbb Z}(\mathcal  A_{j,n}^0(\widetilde{\varphibf}^k)-\mathcal  A_{j}^0(\widetilde{\varphibf}^k))
   \mathbb E_{\widetilde{\boldsymbol{\varphi}}^k}[e^{-\frac{\i Q}{4\pi}\int_{\Sigma_n}^{\mathrm{reg}}K_{n}X_{n}^{k,j}\,\d v_n-\mu\int_{\Sigma^{\xi}}M_\beta(X_{n}^{k,j},\d v_n)}].
\end{split}
\end{equation*}
The estimate of $B_{n,\xi}^{(2)}$ is a consequence of the third item in \cref{lem_continuity_tech} together with \cite[Theorem 6.11]{CILT}. We will now focus on $B_{n,\xi}^{(1)}$. First using the imaginary Cameron--Martin theorem (the proof of \cite[Theorem 6.11]{CILT}), $B_{n,\xi}^{(1)}$ can be written as
\begin{equation}\label{eq_B1}
\begin{split}
     B_{n,\xi}^{(1)}={}&\sum_{j\in\mathbb Z}\mathcal  A_{j}^0(\widetilde{\varphibf}^k)(e^{-\frac{\i Q}{4\pi}\int_{\Sigma_n}^{\mathrm{reg}}K_{n}(X_n^{k,j}-X_n)\,\d v_n-\frac{Q^2}{32\pi^2}\int K_n\ \mathcal G_n\ \d v_n}\ \mathbb E_{\widetilde{\boldsymbol{\varphi}}^k}[e^{-\mu\int_{\Sigma^{\xi}}M_\beta(X_n^{k,j}-\frac{\i Q}{4\pi}\mathcal G_n,\d v_n)}]\\
    -&e^{-\frac{\i Q}{4\pi}\int_{\Sigma}^{\mathrm{reg}}K (X^{k,j}-X)\,\d v-\frac{Q^2}{32\pi^2}\int K\ \mathcal G\ \d v}\ \mathbb E_{\widetilde{\boldsymbol{\varphi}}^k}[e^{-\mu\int_{\Sigma^{\xi}}M_\beta(X^{k,j}-\frac{\i Q}{4\pi}\mathcal G,\d v)}]),
\end{split}
\end{equation}
where $\mathcal G_n(x):=\int_{\Sigma_n}K_n(x')G_n(x,x')\ \d v_n(x')$ and $(2\pi)^{-1}G_n$ is the Green's function on $\Sigma_n$ with Dirichlet boundary conditions ($\mathcal G(x)$ is similarly defined). Suppose $g_n=e^{\rho_n}g$ on $\Sigma^\xi$ and it is not hard to see that $\sup_{\Sigma^\xi}|\rho_n|\to0$. Recall that
\begin{equation*}
    e^{(\frac{\beta^2}{4}-1)\rho_n} M_\beta(X_n^{k,j}-\frac{\i Q}{4\pi}\mathcal G_n,\d v_n)= M_\beta(X_n^{k,j}-\frac{\i Q}{4\pi}\mathcal G_n,\d v),
\end{equation*}
where $e^{\frac{\beta^2}{4}\rho_n}$ is the effect of changing the metric regularisation for GFF. Then we can replace the term containing $\mathbb E_{\widetilde{\boldsymbol{\varphi}}^k}[e^{-\mu\int_{\Sigma^{\xi}}M_\beta(X_n^{k,j}-\frac{\i Q}{4\pi}\mathcal G_n,\d v_n)}]$ in \eqref{eq_B1} with $\mathbb E_{\widetilde{\boldsymbol{\varphi}}^k}[e^{-\mu\int_{\Sigma^{\xi}}M_\beta(X_n^{k,j}-\frac{\i Q}{4\pi}\mathcal G,\d v)}]$ if we can control
\begin{equation}\label{eq_B3}
\begin{split}
     B_{n,\xi}^{(3)}:={}&\sum_{j\in\mathbb Z}\mathcal  A_{j}^0(\widetilde{\varphibf}^k)(e^{-\frac{\i Q}{4\pi}\int_{\Sigma}^{\mathrm{reg}}K (X^{k,j}-X)\,\d v-\frac{Q^2}{32\pi^2}\int K\ \mathcal G\ \d v}\ \mathbb E_{\widetilde{\boldsymbol{\varphi}}^k}[e^{-\mu\int_{\Sigma^{\xi}}M_\beta(X_n^{k,j}-\frac{\i Q}{4\pi}\mathcal G,\d v)}]\\
    -&e^{-\frac{\i Q}{4\pi}\int_{\Sigma_n}^{\mathrm{reg}}K_{n}(X_n^{k,j}-X_n)\,\d v_n-\frac{Q^2}{32\pi^2}\int K_n\ \mathcal G_n\ \d v_n}\ \mathbb E_{\widetilde{\boldsymbol{\varphi}}^k}[e^{-\mu\int_{\Sigma^{\xi}}M_\beta(X_n^{k,j}-\frac{\i Q}{4\pi}\mathcal G_n,\d v_n)}]).
\end{split}
\end{equation}
For each $j\in\mathbb Z$, the convergence of the term $e^{-\frac{\i Q}{4\pi}\int_{\Sigma_n}^{\mathrm{reg}}K_{n}\ (X_n^{k,j}-X_n)\,\d v_n-\frac{Q^2}{32\pi^2}\int K_n\ \mathcal G_n\ \d v_n}$ above is already addressed in \cite[Proposition 3.11]{Segal_semigroup} (and the convergence of $\int_{d_{f_n}}k_{g_{f_n}}\d \ell_{g_{f_n}}$ is straightforward), then we apply the dominated convergence theorem. And according to \cite[Lemma 3.12]{Segal_semigroup} and \cite[Proposition 5.3]{CILT}, we can bound the GMC difference in \cref{eq_B3} since $\sup_{\Sigma^\xi}|\rho_n|\to0$ and $\sup_{\Sigma^\xi}|\mathcal G_n-\mathcal G|\to0$ \cite[Lemma 3.12 (3)]{Segal_semigroup}.

For the rest of the proof, we just need to show that
\begin{equation}\label{eq_goal_conv_B1}
    \iint \left|\sum_{j\in\mathbb Z}\mathcal  A_{j}^0(\widetilde{\varphibf}^k)|\mathbb E_{\widetilde{\boldsymbol{\varphi}}^k}[e^{-\mu\int_{\Sigma^{\xi}}M_\beta(X_n^{k,j}-\frac{\i Q}{4\pi}\mathcal G,\d v)}]-\mathbb E_{\widetilde{\boldsymbol{\varphi}}^k}[e^{-\mu\int_{\Sigma^{\xi}}M_\beta(X^{k,j}-\frac{\i Q}{4\pi}\mathcal G,\d v)}]|\right|^2(\d\widetilde{\varphi}^k)^{\otimes2}<\xi
 \end{equation}
for suitably large $n$. Note that we now compare two different GFFs so we cannot use \cite[Proposition 5.3]{CILT} directly. Instead, using the Taylor expansion of the exponential, we obtain
\begin{equation}\label{eq_int_expand}
\begin{split}
     &\mathbb E_{\widetilde{\boldsymbol{\varphi}}^k}[e^{-\mu\int_{\Sigma^{\xi}}M_\beta(X_n^{k,j}-\frac{\i Q}{4\pi}\mathcal G,\d v)}]=\sum_{s\geq0}\frac{(-\mu)^s}{s!}\mathbb E_{\widetilde{\boldsymbol{\varphi}}^k}[(\int_{\Sigma^{\xi}}e^{\frac{\beta Q}{4\pi}\mathcal G} M_\beta(X_n^{k,j},\d v))^s]\\
     ={}&\sum_{s\geq0}\frac{(-\mu)^s}{s!}\int_{(\Sigma^\xi)^s}e^{-\beta^2\sum_{r<l} G_n(x_r,x_l)}e^{\beta(\sum_{r=1}^s\frac{ Q}{4\pi}\mathcal G(x_r)+\i(X_n^{k,j}(x_r)-X_n(x_r))  )}e^{-\frac{\beta^2}{2}\sum_{r=1}^sW_{n}(x_r)}\ \prod_r\d v(x_r),
\end{split}
\end{equation}
where $W_{n}$ is the Robin mass of $X_n$ under the regularisation of the metric $g$, namely
\begin{equation*}
    W_{n}(x):=\lim_{\delta\to0}\mathbb E[X^2_{n,\delta}(x)]+\log\delta.
\end{equation*}
Let us consider the Dirichlet GFF $X^{\xi}$ on $\Sigma^\xi$. Since $\Sigma^\xi\subset \cap_{n\geq N}\Sigma_n$, we have the monotonicity of the $X^{\xi}$ Green's function $G^{\xi}\leq G_n$ on $(\Sigma^{\xi})^2$ and Robin mass $W^\xi\leq W_{n}$ on $\Sigma^{\xi}$. Thus the second line in \eqref{eq_int_expand} is further bounded by
\begin{equation}\label{eq_expansion_bound}
\begin{split}
    \sum_{s\geq0}\frac{|\mu|^s}{s!}\int_{(\Sigma^\xi)^s}e^{-\beta^2\sum_{r<l} G^\xi(x_r,x_l)}e^{\beta\sum_{r=1}^s\frac{ Q}{4\pi}\mathcal G(x_r)}e^{-\frac{\beta^2}{2}\sum_{r=1}^sW^\xi(x_r)}\ \d v(x)
    =\mathbb E[e^{|\mu|\int_{\Sigma^{\xi}}M_\beta(X^\xi-\frac{\i Q}{4\pi}\mathcal G,\d v)}].
\end{split}
\end{equation}
Therefore, for each moment in the expansion of \eqref{eq_int_expand}, we provide a uniform bound independent of $n,k,j$ and $\widetilde{\boldsymbol{\varphi}}^k$. For each $\xi>0$, we can pick a large $S$, such that 
\begin{equation*}
    \sum_{s\geq S}\frac{(-\mu)^s}{s!}\mathbb E_{\widetilde{\boldsymbol{\varphi}}^k}[(\int_{\Sigma^{\xi}}e^{\frac{\beta Q}{4\pi}\mathcal G} M_\beta(X_n^{k,j},\d v))^s]
\end{equation*}
 is arbitrarily small, uniformly over $n,k,j$ and $\widetilde{\boldsymbol{\varphi}}^k$. For $s<S$, we clearly have that the integral
 \begin{equation*}
     \int_{(\Sigma^\xi)^s}e^{-\beta^2\sum_{r<l} G_n(x_r,x_l)}e^{\beta\sum_{r=1}^s(\frac{ Q}{4\pi}\mathcal G(x_r)+\i(X_n^{k,j}-X_n))}e^{-\frac{\beta^2}{2}\sum_{r=1}^sW_{n}(x_r)}\ \d v(x)
 \end{equation*}
 converges to the one corresponding to $X$ since $\sup_{(\Sigma^\xi)^2}|G_n-G|$ and $\sup_{\Sigma^\xi}|P_n\widetilde{\boldsymbol{\varphi}}^k-P\widetilde{\boldsymbol{\varphi}}^k|$ goes to zero when $n$ gets large as stated in \cite[Lemma 3.12 (2),(4)]{Segal_semigroup} (semigroup lemma). We thus have \eqref{eq_goal_conv_B1} using dominated convergence theorem by \eqref{eq_expansion_bound}.
 \end{proof}



    


 Let us put some estimates used in the proof of \cref{prop_Tf_continuity}.
\begin{lem}\label{lem_continuity_tech}
Follow the notation in \cref{prop_Tf_continuity}. Fix $\epsilon>0$. Let $f_n\to f$ in $\mathcal S_\epsilon$ under Fréchet topology, and $(\Sigma_n,g_n):=(\mathbb A_{f_n},g_{f_n})$, $(\Sigma,g):=(\mathbb A_f,g_f)$. Similarly, we have $v_n,v,\ell_n,\ell$. Let $\Sigma^{\xi}:=(\Sigma\backslash f((1+\xi)\mathbb D))\cap(1-\xi)\mathbb D,0<\xi<\epsilon.$ 
Then:

    \begin{enumerate}[label=\emph{(\arabic*)}]
        \item Let 
\begin{equation*}
 \begin{split}
    \mathcal A_{j,n}^0(\widetilde{\varphibf}^k):={}&\det(\Delta_{\Sigma_n,g_n,\mathrm{D}})^{-1/2}\exp\Bigg(-\frac1{4\pi}\int_{\Sigma_n}|\nu_{f_n,k}|^2_{g_{n}}\d v_{n}\\
    -&\frac1{2\pi}\int_{\partial\Sigma_n}P_n(\cbf_j+\boldsymbol{\varphi})\,\nu_{f_n,k}(\nbf_n)\,\d\ell_{n}-\frac1{2}\la\cbf_j+\boldsymbol{\varphi},(\mathbf{D}_{\Sigma_n}-\mathbf D)(\cbf_j+\boldsymbol{\varphi})\ra\Bigg),
\end{split}
\end{equation*}
and $\mathcal A^0_j(\widetilde{\varphibf}^k)$ accordingly for $\Sigma$ and $f$. Then
\begin{equation*}
   \lim_{n\to\infty}\left\|\sum_{j\in\mathbb Z} \mathcal A_{j,n}^0(\widetilde{\varphibf}^k)-\sum_{j\in\mathbb Z}\mathcal A_{j}^0(\widetilde{\varphibf}^k)\right\|_{\mathcal H^{\otimes2}}=0.
\end{equation*}
        \item For $\xi$ small and $n$ large enough (depending on $\xi$), there exists some $C>0$ depending on $f,\beta$,
\begin{equation*}
            \iint_{(\Sigma_n\backslash\Sigma^{\xi})^2}d_{g_n}(x,y)^{-\beta^2}\ dv_n(x)\d v_n(y)\leq C\xi^{2-\beta^2},\ \ \ \int_{\Sigma_n\backslash\Sigma^{\xi}}d_{g_n}(x,\partial\Sigma_n)^{-\frac{\beta^2}2}\ \d v_n(x)\leq C \xi^{1-\frac{\beta^2}{2}}.
        \end{equation*}
        \item Fix $k\in\mathbb Z$ and let $\mathcal H_k$ be the subspace of $\mathcal H$ with winding $k$. Then there exists $C>0,\delta_n\to0^+$ independent of $j$ s.t. when $n$ large enough,
\begin{equation*}
    \|A_{j,n}^0(\widetilde{\varphibf}^k)-A_{j}^0(\widetilde{\varphibf}^k)\|_{\mathcal H_k^{\otimes2}}\leq \delta_ne^{-Cj^2}.
\end{equation*}
    \end{enumerate}
\end{lem}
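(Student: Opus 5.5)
The three items are estimates of a different nature, and I would prove them separately. Item (1) and item (3) are statements about the explicit Gaussian free-field part of the amplitude. Item (2) is a purely geometric integral bound. Throughout, I use that $f_n\to f$ in $\Ssg_\epsilon$ implies $f_n\to f$ uniformly with all derivatives on $(1+\epsilon')\Db$ for any $\epsilon'<\epsilon$. Hence the curves $f_n(\Tb)$ converge to $f(\Tb)$ in $C^\infty$, and the annuli $\Sigma_n$ are uniformly quasi-isometric to $\Sigma$ via diffeomorphisms $\psi_n$ that converge to the identity in $C^\infty$. Since $g_{f_n}$ depends on $f_n$ in a $C^1$ way, the metrics $g_n$ are uniformly comparable to the Euclidean metric on $\Sigma_n$.

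\emph{Item (2).} The set $\Sigma_n\setminus\Sigma^\xi$ is contained, for $n$ large depending on $\xi$, in the union of two collars: $\{1-\xi\le|x|\le1\}$, and a Euclidean neighbourhood of $f_n(\Tb)$ of width at most $C\xi$. The latter holds since $f(\Tb)$ and $f((1+\xi)\Tb)$ are at distance $\asymp\xi$, uniformly. Replacing $d_{g_n}$ by $|x-y|$ costs only a constant.
\begin{itemize}
\item For the double integral, fix $x$ in a strip of width $C\xi$ and split in $y$. The part $|x-y|<\xi$ contributes $\le C\xi^{2-\beta^2}$. The part $|x-y|\ge\xi$ contributes $\le C\xi\int_\xi^{C}r^{-\beta^2}\,\d r\le C\xi\max(1,\xi^{1-\beta^2},|\log\xi|)$. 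Integrating over $x$ in the strip (area $\le C\xi$) gives at most $C\xi^{2-\beta^2}$, using $\beta^2<2$ for small $\xi$.
\item For the boundary integral, use coordinates (distance $t$ to the boundary, arclength) in each collar. This reduces it to $C\int_0^{C\xi}t^{-\beta^2/2}\,\d t=C\xi^{1-\beta^2/2}$.
\end{itemize}

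\emph{Items (1) and (3).} Here $\mathcal A^0_{j,n}$ is an explicit Gaussian in $(\cbf_j,\boldsymbol\varphi)$ times scalar prefactors, so I would reduce everything to the convergence of the data it is built from. These data are:
\begin{itemize}
\item $\det(\Delta_{\Sigma_n,g_n,\D})\to\det(\Delta_{\Sigma,g,\D})$;
\item $\mathbf D_{\Sigma_n}-\mathbf D\to\mathbf D_\Sigma-\mathbf D$ as smoothing operators, with uniform bounds in all $H^s\to H^{s'}$ norms;
\item $\int_{\Sigma_n}|\nu_{f_n,k}|^2_{g_n}\d v_n$ and the boundary pairing $\int_{\pa\Sigma_n}P_n(\cdot)\,\nu_{f_n,k}(\nbf_n)\,\d\ell_n$, which converge for each $k$, with the energy bounded below by $c\,k^2$ uniformly in $n$.
\end{itemize}
All of these follow from \cite[Lemma 3.12, Proposition 3.11]{Segal_semigroup}: transport by $\psi_n$ and use the $C^\infty$ convergence of $\psi_n^*g_n$ to $g$.

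For item (3), fix $k$. The zero-mode part of $\la\cbf_j,(\mathbf D_{\Sigma_n}-\mathbf D)\cbf_j\ra$ is $\kappa_n(c_1-c_2+2\pi Rj)^2$, where $\kappa_n\to\kappa>0$ is essentially $1/\log$ of the modulus. The nonzero modes only couple to $\cbf_j$ through a term linear in $c_1-c_2+2\pi Rj$ with smoothing coefficients, and this is absorbed by completing the square. Hence $|\mathcal A^0_{j,n}|$ and $|\mathcal A^0_{j}|$ are bounded by $Ce^{-2Cj^2}$ times a fixed $L^2(\d\wt\varphi^k\otimes\d\wt\varphi^k)$ Gaussian weight. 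I then write the difference $\mathcal A^0_{j,n}-\mathcal A^0_j$ via the mean value theorem in the parameters (determinant, $\kappa_n$, smoothing kernel, $\nu$-terms). This yields the bound $\delta_n e^{-Cj^2}$, with $\delta_n$ controlled by the operator differences in the topology of Hilbert–Schmidt-type bounds.

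For item (1), I sum item (3) over $j$ for each fixed $k$. The lower bound $\int|\nu_{f_n,k}|^2\ge c k^2$ (uniform in $n$) gives a uniformly summable Gaussian tail in $k$, after computing $\|\sum_j\mathcal A^0_{j,n}\|^2_{\Hc_k^{\otimes2}}$ as an explicit Gaussian integral (a Fredholm determinant of $1+$ trace class, uniformly bounded). Dominated convergence in $k$ then concludes.

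\textbf{Main obstacle.} I expect the main difficulty to be the uniformity in item (3): showing that the cross term between zero modes and oscillating modes does not destroy the $e^{-Cj^2}$ decay after integrating over $\boldsymbol\varphi$. I would first try to handle this with an explicit Schur complement of the quadratic form, bounding the smoothing part of $\mathbf D_{\Sigma_n}-\mathbf D$ in trace norm uniformly in $n$.
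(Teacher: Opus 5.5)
Your treatment of items (2) and (3) follows the paper's proof.

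For (2), the paper likewise splits $\Sigma_n\setminus\Sigma^\xi$ into two collars. After pulling the inner collar back by $f_n^{-1}$, it compares with integrals over the round strip $\{1\le|z|\le 1+C'\xi\}$.

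For (3), the paper simply invokes \cite[Lemma 3.12 (6)]{Segal_semigroup}, using $\nu_{f_n,1}(\nbf_n)\to\nu_{f,1}(\nbf)$ in $C^\infty$. This gives $|\log\mathcal A^0_{j,n}-\log\mathcal A^0_j|\le\delta_n'(1+j^2+k^2+\|\varphibf\|^2_{H^{-s}})$, which is what your mean-value argument in the parameters produces.

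Item (1) is where you take a different route.
\begin{itemize}
\item The paper expands $\|\sum_j\mathcal A^0_{j,n}-\sum_j\mathcal A^0_j\|^2_{\Hc^{\otimes2}}$ into three pairings. It reads each one as the compactified free boson partition function of a torus, obtained by gluing $\Sigma_n$ or $\Sigma$ to the reflection of $\Sigma_n$ or $\Sigma$ along both boundary circles. It then concludes from the convergence of the determinants and of the harmonic representatives of the periods. All winding sectors $k$ and the $j$-sum are handled at once by continuity of that lattice sum.
\item You instead obtain sectorwise convergence from (3). You then need a bound on $\|\sum_j\mathcal A^0_{j,n}\|^2_{\Hc_k^{\otimes2}}$ that is summable in $k$ uniformly in $n$.
\end{itemize}

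That uniform tail is a genuine gap as written, because the reason you give for it does not suffice. The energy bound $\int_{\Sigma_n}|\nu_{f_n,k}|^2_{g_n}\,\d v_n\ge ck^2$ does not control the $k$-dependence.
\begin{itemize}
\item The boundary pairing $-\frac{1}{2\pi}\int_{\partial\Sigma_n}P_n(\cbf_j+\varphibf)\,\nu_{f_n,k}(\nbf_n)\,\d\ell_n$ is linear in $k$ and in $(\cbf_j,\varphibf)$.
\item When you integrate out $\varphibf$ and the zero modes by completing the square, this pairing adds a term $+b_nk^2$ to the exponent, with $b_n>0$ as soon as $\nu_{f_n,1}(\nbf_n)\not\equiv0$.
\item Through the flux of $\nu_{f_n,k}$, it also couples $k$ to the winding generated by the $j$-sum.
\end{itemize}
What you actually need is that the resulting net quadratic form in these two windings is positive definite, uniformly in $n$. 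This is true, but it is exactly the positivity of the classical action of the glued torus $\Sigma_n\cup\overline{\Sigma_n}$ on its period lattice, i.e.\ the paper's gluing picture.

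You can close the gap by proving that identification. Alternatively, convergence of the total norms $\|\sum_j\mathcal A^0_{j,n}\|_{\Hc^{\otimes2}}\to\|\sum_j\mathcal A^0_j\|_{\Hc^{\otimes2}}$, combined with your sectorwise convergence, would force uniformly small $k$-tails for large $n$. But that norm convergence is precisely the continuity of the torus partition function, which is why the paper argues directly that way.

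Note also that the obstacle you flag, the cross term between zero modes and oscillators in (3), is harmless at fixed $k$. The place where such a cross term really bites is the $k$-tail in (1).
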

\begin{proof}

 \begin{enumerate}[label=(\arabic*)]

\item Notice that $\mathcal A^0_n$ is just a free field annulus amplitude and $\| \mathcal A_{n}^0(\widetilde{\varphibf}^k)-\mathcal A_{}^0(\widetilde{\varphibf}^k)\|_{\mathcal H^{\otimes2}}$ consists of partition functions of gluing $\Sigma_n$ (or $\Sigma$) with $\Sigma$ (or $\Sigma_n$). Take the gluing of $\Sigma_n$ and reflected $\Sigma_n$ as an example. The convergence of $(\Sigma_n,g_n)$ makes sure that the determinant and the harmonic representatives of the monodromy also converge, so the partition function converges to that of $\Sigma$ gluing reflected $\Sigma$ as well. Thus we obtain the convergence.

\item  Recall that $h_n(f_n(z))=\log\frac{|zf_n'(z)|}{|f_n(z)|}$ on a neighborhood of $f(\mathbb D)$ and $g_n=e^{-2 h_n\chi}\frac{|\d z|^2}{|z|^2}$, where $\chi\in C^{\infty}(\cup_{n\geq N}\Sigma_n)$ for some large $N$ is equal to $1$ near $f(\mathbb T)$ and $0$ near $\mathbb T$. It is not hard to see that $h_n$ converges to $h$ uniformly on $\cap_{n\geq N}\Sigma_n$. 

    When $n>N$, we have $C^{-1} e^{-2h}\leq e^{-2 h_n\chi}\leq Ce^{-2 h}$ in the region $\cup_{n\geq N}\Sigma_n\backslash\Sigma^{\xi_0}\cap(1-\xi_0)\mathbb D$ for a fixed $\xi_0>0$, where the constant $C$ does not depend on $\xi$ and $n$. First notice that $\Sigma_n\backslash\Sigma^{\xi}$ contains two collars: the inner one $\Sigma_n\backslash\Sigma^{\xi}\cap(1-\xi)\mathbb D$ and the outer one near $\mathbb T$. The two collars do not intersect when $\xi$ is sufficiently small, and we will only estimate the integral for the inner one, since the other ones are simple. Besides, when $n$ is large enough, $f_n^{-1}((\Sigma_n\backslash\Sigma^{\xi})\cap(1-\xi)\mathbb D)\subset (1+C'\xi)\mathbb D$ and the two integrals in \eqref{eq_metric_integral_continuity} restricted to $(\Sigma_n\backslash\Sigma^{\xi})\cap(1-\xi)\mathbb D)$ are further bounded by
\begin{equation*}
   \iint_{(\mathbb A_{1,1+C'\xi})^2}|x-y|^{-\beta^2}\ \d x\d y, \ \ \ \int_{1}^{1+C'\xi}|r-1|^{-\frac{\beta^2}2}r\ \d r,
\end{equation*}
where $\mathbb A_{1,1+C'\xi}:=\{z:1\leq|z|\leq 1+C'\xi\}$ ($C'\xi<\epsilon$). Therefore we complete the proof.
\item It is plain to see that $\nu_{f_n,1}(\nbf_n)$ is smooth on $\partial\Sigma_n$ and converges in $C^\infty$ to $\nu_{f,1}(\nbf)$, therefore the proof follows from \cite[Lemma 3.12 (6)]{Segal_semigroup}. Especially,
\begin{equation*}
    |\log A_{j,n}^0(\widetilde{\varphibf}^k)-\log A_{j}^0(\widetilde{\varphibf}^k)|\leq \delta'_n(1+j^2+k^2+\|\varphibf\|_{H^{-s}}^2),
\end{equation*}
where $\delta'_n\to0^+$ and $\|\varphibf\|_{H^{-s}}$ is the norm on Sobolev space $H^{-s}(\mathbb T)^{2},s>0$.\qedhere
 \end{enumerate}
\end{proof}
One major difference compared to the real Liouville quadratic form is that the form $ \Qc_\v$ here is not positive. Therefore, some items (mainly (1), (7) and (8)) in \cite[Assumption 4.7]{Segal_semigroup} need modifications.
We point out that our $\mathcal S$ is the $\mathcal S_>$ defined in \cite[Section 3.3]{Segal_semigroup}, i.e. analytic on $\mathbb T$.
\begin{assumption}\label{assmp_diff}
   \mbox{}
    \begin{enumerate}[label=(\arabic*), ref=\arabic*]
        \item\label{itm_diff_1} There exists a measure space $(M,\mu_0)$ such that, if $\mathcal H=L^2(M,\mu_0)$ is the Hilbert space of square integrable functions, we are given a general quadratic form $\mathcal Q$ and a symmetric positive one $\widetilde{\mathcal Q}^0$ with $\mathcal D(\mathcal Q)=\mathcal D(\widetilde{\mathcal Q}^0)\subset \mathcal H$. The form $\mathcal Q$ generates a continuous semigroup $(e^{-t\mathbf H})_{t\geq0}$ on $\mathcal H$, with a non self-adjoint generator $\mathbf H$. The semigroup also maps $\mathcal D'(\widetilde{\mathcal Q}^0)\to\mathcal D(\widetilde{\mathcal Q}^0)$ continuously. The form $\widetilde{\mathcal Q}^0$ corresponds to a positive self-adjoint operator $\widetilde{\mathbf H}^0$. The Sobolev-like norm $\la F,G\ra_{\widetilde{\mathbf H}^0,s}:=\la ({\widetilde{\mathbf H}^0})^sF,G\ra_{L^2}$ induces space $W^s$ as the closure of $F\in\Ec(\DT)$ with $\|F\|_{\widetilde{\mathbf H}^0,s}$. 
        \item We are given a closed subset $\mathcal {AS}$ of $\mathcal S$, stable by composition, and then we consider the space $\mathcal{AS}_\epsilon:=\mathcal {AS}\cap \mathcal{S}_\epsilon$ equipped with the induced topology. Let $\mathcal A\mathrm{Hol}^{\bullet}(\mathbb D)\subset \mathrm{Hol}^{\bullet}(\mathbb D)$ be the tangent space of $\mathcal{AS}$ and $\mathcal A\mathrm{Hol}^{\bullet}_\epsilon(\mathbb D):=\mathcal A\mathrm{Hol}^{\bullet}(\mathbb D)\cap \mathrm{Hol}^{\bullet}_\epsilon(\mathbb D)$.
        \item The subspace $\mathcal {AS}$ is preserved by the flow. More precisely, let $v\in\mathcal{A}\mathrm{Hol}^{\bullet}_\epsilon(\mathbb D)$ Markovian and $f_0\in\mathcal{AS}_{\epsilon}$ (resp. $f_0(z)=z$). Then there exists $\epsilon'<\epsilon$ (locally uniform in $v$ and depending on $f_0$) s.t. the solution $f_t$ to the flow $\partial_t f_t=v(f_t)$ belongs to $\mathcal{AS}_{\epsilon'}$ for all $t\geq0$ (resp. for all $t>0$).
        \item If $f\in\mathcal{AS}_\epsilon$ then for $t_0>0$ small enough, $e^{t_0}f(e^{t_0}\cdot)\in\mathcal{AS}_{\epsilon'}$, where $(1+\epsilon')e^{t_0}=1+\epsilon$.
        \item\label{itm_5} For $f\in\mathcal {AS}$, the operator $\mathbf T_f$ extends to a bounded operator on $\mathcal H$, i.e. $\mathbf T_f\in\mathcal L(\mathcal H)$ and obeys the composition rule $\mathbf T_{f_1}\circ\mathbf T_{f_2}=\mathbf T_{f_1\circ f_2}$.
        \item\label{itm_6} The mapping $f\in\mathcal {AS}\mapsto \mathbf T_f$ is continuous in $\mathcal L(\mathcal H)$.
        \item\label{itm_diff_7} There exists a  linear map $v\in\cup_{\epsilon>0}\mathcal A {\mathrm{Hol}^\bullet_\epsilon(\mathbb D) }\to \mathcal Q_\v$ where $\v=v(z)\partial_z$ and $\mathcal Q_\v$ is a continuous sesquilinear form on $\mathcal D(\widetilde{\mathcal Q}^0)^2$, equipped with $\widetilde{\mathcal Q}^0$-norm. For all $\epsilon>0$, the restriction of this map $v\in\mathcal A {\mathrm{Hol}^\bullet_\epsilon(\mathbb D) }\to \mathcal Q_\v$ is $\|\cdot\|_{\epsilon,s}$-continuous (for some $s\geq0$).
        \item\label{itm_coercivity} Let $\v=v(z)\partial_z$ with $v(z)=-\sum_{n\geq0}v_n z^{n+1}\in\mathcal A {\mathrm{Hol}^\bullet_\epsilon(\mathbb D)}$. If $\Re\omega>0$ is large enough in the sense that there exists $C'>0$ and $s>0$ such that $v$ satisfies
\begin{equation*}
    D(v):=\Re\omega-C'(|\Im\omega|^2+\sum_{n\geq-1,n\not=0}(1+|n|)^{2s}|v_n|^2)^{1/2}>0,
\end{equation*}
then the bilinear form $\mathcal Q_\v$ generates a semigroup $(e^{-t\mathbf H_\v})_{t\geq0}$ on $\mathcal H$, for some bounded operator $\mathbf H_\v:\mathcal D(\widetilde{\mathcal Q}^0)\to\mathcal D'(\widetilde{\mathcal Q}^0)$. Furthermore, we can find $s'>0,\xi>0$, and positive constants $C_1,C_2$ (that depend on $\beta$ only) such that for all $ F\in\mathcal D(\widetilde{\mathcal Q}^0)$ and $N>0$,
\begin{equation}\label{eq_assump_N_lower}
    \Re(\mathcal Q_\v(F,F))+N\|F\|_{L^2}^2\geq (D(v)-C_1N^{-\xi}\|v\|_{\epsilon=0,s'}) \ \widetilde{\mathcal Q}^0(F,F)+(N-C_1N^{1-\xi}\|v\|_{\epsilon=0,s'}-C_2\Re\omega)\|F\|^2_{L^2}.
\end{equation}
In particular, for $v(z)=-z$, we have $\mathcal Q_\v=\mathcal Q$.
        \item\label{itm_9} If $v\in\mathcal A {\mathrm{Hol}^\bullet_\epsilon(\mathbb D) }$ satisfies the coercivity condition of item \eqref{itm_coercivity}, and $f_t$ denotes its flow with $f_0(z)=z$, then $e^{-t\mathbf H_\v}=\mathbf T_{f_t}$.
        \item \label{itm_10}If $f\in\mathcal {AS}_\epsilon$ and $v\in\mathcal A {\mathrm{Hol}^\bullet_\epsilon(\mathbb D) }$ such that $v/f'$ satisfies the coercivity condition of item \eqref{itm_coercivity}, we set $f_t(z)=f^{-1}(f(z)+tv(z))$. We assume that for $\delta>0$ small enough and for some fixed $\epsilon'\in(0,\epsilon)$, the map
        \begin{equation*}
            h:s,t\in[0,\delta]\mapsto \frac{v\circ f^{-1}_t(e^sz)}{f'(e^sz)}\in\mathcal A {\mathrm{Hol}^\bullet_{\epsilon'}(\mathbb D)}
        \end{equation*}
        is continuous.
        \item If $f\in\mathcal{AS}_\epsilon$ and $v\in\mathcal A {\mathrm{Hol}^\bullet_\epsilon(\mathbb D) }$, then $-\omega zf'(z)+v(z)\in\mathcal A{\mathrm{Hol}^\bullet_\epsilon(\mathbb D) }$ for all $\omega>0$ large enough.
\end{enumerate}
\end{assumption}
\begin{prop}\label{prop_general_T_f_diff}
   Under \cref{assmp_diff}, the map $f\in\mathcal {AS}_\epsilon\mapsto\Tbf_f\in\Lc(\mathcal H)$ is differentiable with differential
\[D_\v\Tbf_f = -\Tbf_f\circ\Hbf_{\rm w},\qquad {\rm w} = \tfrac{v(z)}{f'(z)}\,\pa_z,\]
where $\v=v(z)\pa_z\in\mathcal A {\mathrm{Hol}^\bullet_\epsilon(\mathbb D)} $. More precisely, given a differentiable family $t\mapsto f_t\in\mathcal {AS}_\epsilon$ with $f_0=f$ and $\pa_t|_{t=0}f_t=\v$ (in the sense that $\pa_t|_{t=0}f_t(z)=v(z)$ for $z\in\Db$), we have
\[\lim_{t\to0}\left\|\frac{\Tbf_{f_t}F-\Tbf_fF}{t} +\Tbf_f\Hbf_{\rm w}F\right\|_{\Dc(\wt{\Qc}^0)}=0,\qquad\fr F\in\Dc(\wt{\Qc}^0).\]

\end{prop}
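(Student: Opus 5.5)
The plan is to follow the proof of \cite[Theorem 4.8]{Segal_semigroup} and replace each use of positivity by the sectorial bounds built into \cref{assmp_diff}. The first step is to reduce to flows. Given $f\in\mathcal{AS}_\epsilon$ and $v\in\mathcal A\mathrm{Hol}^\bullet_\epsilon(\Db)$, by item (11) we may replace $v$ by $v_\omega:=-\omega zf'+v$ with $\omega$ large; this does not affect linearity in $v$. The correction is $D_{-\omega zf'}\Tbf_f$, and it is handled separately. Indeed, $f\circ e^{-\omega t}\cdot$ composed with the dilation flow gives $\Tbf_{f(e^{-\omega t}\cdot)}=\Tbf_f e^{-t\omega\Hbf}$ by items \eqref{itm_5}, \eqref{itm_9}. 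Its derivative at $t=0$ in $\Dc(\wt\Qc^0)$ for $F\in\Dc(\wt\Qc^0)$ is $-\omega\Tbf_f\Hbf F$, provided $\Tbf_f$ maps $\Dc'(\wt\Qc^0)\to\Dc(\wt\Qc^0)$.

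That mapping property is the second step. Write $f=f^\sharp\circ(e^{-t_0}\cdot)$ with $f^\sharp=f(e^{t_0}\cdot)\in\mathcal{AS}$ by item (4). Then $\Tbf_f=\Tbf_{f^\sharp}e^{-t_0\Hbf}$, and item \eqref{itm_diff_1} gives $e^{-t_0\Hbf}:\Dc'\to\Dc$. To land in $\Dc(\wt\Qc^0)$ on the left as well, I would also factor $f=(e^{-t_0}\cdot)\circ\tilde f$ where possible, or use the adjoint semigroup. The convergence itself is measured in $\Dc(\wt\Qc^0)$ after composing with the smoothing semigroup.

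The third step is the main part, and it uses the set-up of item \eqref{itm_10}. Define $f_t(z)=f^{-1}(f(z)+tv(z))$, or rather work with $f+tv=f\circ f_t$. Then $\Tbf_{f+tv}=\Tbf_f\Tbf_{f_t}$ is not directly a flow, so I would use Duhamel as in the real case. Write $f+tv$ as a limit of compositions of short-time flows of the vector fields $\mathrm w_s:=\frac{v\circ f_s^{-1}}{f'}$, which are continuous in $s$ by item \eqref{itm_10}. Then
\[\Tbf_{f+tv}F-\Tbf_fF=-\int_0^t\Tbf_{f+sv}\Hbf_{\mathrm w_s}F\,\d s,\]
first weakly against $G\in\Ec(\DT)$ and then in norm. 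Concretely, I would test the difference quotient against $G$ and write $(f+tv)(e^{-u}z)$ as a two-parameter family. I would then use item \eqref{itm_9} for the coercive fields $\mathrm w_s$ and the continuity of $v\mapsto\Qc_\v$ in $\|\cdot\|_{\epsilon,s}$ from item \eqref{itm_diff_7}, together with item \eqref{itm_6}, to pass $s\to0$. This yields $\frac1t(\Tbf_{f+tv}-\Tbf_f)F\to-\Tbf_f\Hbf_{\mathrm w}F$. Finally, general differentiable families $f_t$ reduce to $f+tv$ plus $o(t)$, using continuity (\cref{prop_Tf_continuity}).

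The hardest point is the uniformity needed to interchange limits. I need bounds on $e^{-u\Hbf_{\mathrm w_s}}$ from $\Dc'$ to $\Dc$ that are uniform in $s$, but $\Hbf_{\mathrm w_s}$ is not self-adjoint, so spectral calculus is unavailable. I would get them from \eqref{eq_assump_N_lower}: choosing $N$ large gives uniform sectoriality of $\Qc_{\mathrm w_s}+N$. \cref{HilleYosida_lemma} then gives $\|\Hbf_{\mathrm w_s}e^{-u\Hbf_{\mathrm w_s}}\|\le C/u$ uniformly, and hence uniform $\Dc'\to\Dc$ bounds exactly as in the proof of \cref{HilleYosida_lemma}. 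These replace the positivity estimates of \cite[Assumption 4.7]{Segal_semigroup}.
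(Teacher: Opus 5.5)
Your outer skeleton is the paper's: item~(11) to reach the coercive regime, smoothing of $\Tbf_f$, and time-slicing the flow of $\mathrm w_s=\frac{v\circ f_s^{-1}}{f'}$. For the smoothing, note that item~(4) directly gives the two-sided factorization $f=(e^{-t_0}\cdot)\circ f^\flat\circ(e^{-t_0}\cdot)$ with $f^\flat=e^{t_0}f(e^{t_0}\cdot)$, hence $\Tbf_f:\Dc'(\wt{\Qc}^0)\to\Dc(\wt{\Qc}^0)$. The central step, however, fails as written. Since $f+sv=f\circ f_s$ and $\pa_sf_s=\mathrm w_s\circ f_s$, one slicing step is $f_{s+h}\approx g_h\circ f_s$, with $g_h$ the time-$h$ flow of $\mathrm w_s$ composed on the \emph{left}. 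Item~\eqref{itm_9} then gives $\Tbf_{f_{s+h}}\approx e^{-h\Hbf_{\mathrm w_s}}\Tbf_{f_s}$. So what the slicing produces is, weakly, $\pa_s\Tbf_{f+sv}F=-\Tbf_f\Hbf_{\mathrm w_s}\Tbf_{f_s}F$, i.e.\ \eqref{eq_increment_id} composed with $\Tbf_f$, with the generator sitting between $\Tbf_f$ and $\Tbf_{f_s}$. Your formula $-\int_0^t\Tbf_{f+sv}\Hbf_{\mathrm w_s}F\,\d s$ puts it on the right. By the intertwining $\Hbf_{\mathrm w}\Tbf_g=\Tbf_g\Hbf_{g^*\mathrm w}$, with $g^*\mathrm w=\frac{\mathrm w\circ g}{g'}\pa_z$, the right-hand generator would have to be $\Hbf_{v/(f+sv)'}$, not $\Hbf_{\mathrm w_s}$. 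Moreover, a formula of that type at every $s$ is precisely the differentiability claim at the points $f+sv$, so it cannot serve as an input.

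With the correct formula, letting $s\to0$ means controlling $\int_0^t\Qc_{\mathrm w_r}(\Tbf_{f_r}F-F,\Tbf_f^*G)\,\d r$. Here $\Qc_{\mathrm w_r}$ is bounded only on $\Dc(\wt{\Qc}^0)$, while $\Tbf_{f_r}F-F$ is a priori small only in $\Hc$, so item~\eqref{itm_6} plus continuity of $v\mapsto\Qc_\v$ does not suffice. The paper closes this with two ingredients missing from your plan. First, it uses energy estimates for the sliced evolutions, based on the uniform coercivity \eqref{eq_con4-7}. These give convergence in $C([0,\delta],\Hc)\cap L^2([0,\delta],\Dc(\wt{\Qc}^0))$, the identity \eqref{eq_increment_id}, and the rate $\|e^{-Nt}\Tbf_{f_t}F-F\|_{\Hc}\le C\sqrt t\,\|F\|_{\Dc(\wt{\Qc}^0)}$ of \cref{lemma_TF-Id_is_sqt}. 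Second, it uses the factorization $\Tbf_f=\Tbf_f^\eta e^{-\eta\Hbf}$ of \cref{lemma_Tf_prop} together with $\Qc_\v(F,e^{-\eta\Hbf^*}G)=\Qc_{\v_\eta}(e^{-\eta\Hbf}F,G)$, where $v_\eta(z)=e^{-\eta}v(e^{\eta}z)$. This moves the smoothing onto $\Tbf_{f_r}F-F$ and yields an $O(t^{3/2})$ remainder. Your substitute, uniform $\Dc'\to\Dc$ bounds on $e^{-u\Hbf_{\mathrm w_s}}$ from \cref{HilleYosida_lemma}, degenerates like $u^{-1}$ as $u\to0$. That is exactly the regime of the slicing, so it cannot control $\Tbf_{f^n_t}$ as the mesh shrinks. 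Finally, continuity of $f\mapsto\Tbf_f$ gives only $o(1)$, not $o(t)$, so it cannot reduce a general family $f_t$ to $f+tv$. One would instead run the same non-autonomous argument on $f^{-1}\circ f_t$.
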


\begin{proof}


We still need to handle the non-autonomous flows as in \cite[Section 4.1.2]{Segal_semigroup}. Consider a non-autonomous flow
\begin{equation*}
    \partial_t f_t=w_t(f_t), \ \ \ \text{with }f_0(z)=z,\ \ \ \text{where }w_t(z)=\frac{v(f_t^{-1}(z))}{f'(z)}. 
\end{equation*}
Consider $v\in\mathcal A\mathrm{Hol}^\bullet_\epsilon(\mathbb D)$ such that $v/f'\in\mathcal A\mathrm{Hol}^\bullet_\epsilon(\mathbb D)$ and satisfies the coercivity condition of item \eqref{itm_coercivity}. According to \cite[Lemma 4.6]{Segal_semigroup}, with item \eqref{itm_10}, for some $\epsilon'<\epsilon$, we can then find $\delta>0$ and $\eta>0$ such that
\begin{equation}\label{eq_con4-7}
\begin{split}
    &\forall t\leq\delta,\forall s\in[0,\eta]\ \ \ z\mapsto w_t(e^s z)\in\mathcal A\mathrm{Hol}^\bullet_{\epsilon'}(\mathbb D),\text{there are some uniform }D,N>0,\\
    &\text{ such that} \Re(\mathcal Q_{w_t(e^s z)\partial_z}(F,F))+N\|F\|_{L^2}^2\geq D\widetilde{\mathcal Q}^0(F,F),
\end{split}    
\end{equation}
and
\begin{equation}\label{eq_con4-8}
   \text{for some }\epsilon''\in(\epsilon',\epsilon),\text{ the mapping }t\in[0,\delta]\mapsto f_t^{-1}\in\mathcal S_{\epsilon''}\text{ is continuous.}
\end{equation}
Fix $n>0$ and set $t_l:=l\delta/n$ for $0\leq l\leq n$. We construct recursively the flow $f_t^n$ for $t\in[0,\delta]$ as follows: we set $f_t^n=f_t^{n,l}$ for $t\in[t_l,t_{l+1}]$ where $f^{n,l}$ is constructed inductively on $l$ as the solution of the autonomous flow
\begin{equation*}
    \forall t\in[t_l,t_{l+1}],\ \ \partial_t f_t^{n,l}=w_{t_l}(f_t^{n,l}), \ \ \text{with initial condition }f_{t_l}^{n,l}(z)=f_{t_l}^{n,l-1}(z).
\end{equation*}

In the following, we pick the $N>0$ in \eqref{eq_con4-7} (as we have \eqref{eq_assump_N_lower}) such that the estimate for $e^{-Nt}\mathbf T_{f_t^n}$ can be adapted from \cite[Lemma 4.9]{Segal_semigroup}.
\begin{lemma}
    For each $F\in\mathcal D(\widetilde{\mathcal{Q}}^0)$, the mapping $t\in[0,\delta]\mapsto e^{-Nt}\mathbf T_{f_t^n}F$ converges in $C([0,\delta],L^2)\cap L^2([0,\delta],\mathcal D(\widetilde{\mathcal Q}^0))$, as $n\to\infty$, to the mapping $t\in[0,\delta]\mapsto e^{-Nt}\mathbf T_{f_t}F$ satisfying, for ${\mathrm w}_t=w_t(z)\partial_z$,
    \begin{equation}\label{eq_increment_id}
        \forall G\in\mathcal D(\widetilde{\mathcal{Q}}^0),\ \ \ \la e^{-Nt}\mathbf T_{f_t}F,G\ra_{L^2}-\la F,G\ra_{L^2}=-\int_0^t\mathcal Q_{{\mathrm w}_r}(e^{-Nr}\mathbf T_{f_r}F,G)+N\la e^{-Nr}\mathbf T_{f_r}F,G\ra\ \d r.
    \end{equation}
\end{lemma}
\begin{proof}
    Similar to the discussion at the beginning of \cite[Lemma 4.9]{Segal_semigroup}, $T_{f_t^n}F$ converges to $T_{f_t}F$ in $L^2$ because of item \eqref{itm_6} and \eqref{eq_con4-8}. 

    On each interval $[t_l,t_{l+1}]$, we still have for the autonomous flow using item \eqref{itm_diff_7}, \eqref{itm_coercivity}
    \begin{equation*}
        \forall G\in\mathcal{D}(\widetilde{\mathcal Q}^0),\ \ \ \la e^{-Nt}\mathbf T_{f_t^n}F,G\ra_{L^2}-\la e^{-Nt_l}\mathbf T_{f_{t_l}^n}F,G\ra_{L^2}=-\int_{t_l}^t\mathcal Q_{{\mathrm w}_{t_l}}(e^{-Nr}\mathbf T_{f_r^n}F,G)+N\la e^{-Nr}\mathbf T_{f_r^n}F,G\ra_{L^2}\ \d r.
    \end{equation*}
    Furthermore, we have
    \begin{equation}\label{eq_TfnTfm}
    \begin{split}
        \la e^{-Nt}\mathbf T_{f_t^n}F,e^{-Nt}\mathbf T_{f_t^m}F\ra_{L^2}-\|F\|^2_{L^2}&=-\int_{0}^t\mathcal Q_{{\mathrm w}_{r}^n}(e^{-Nr}\mathbf T_{f_r^n}F,e^{-Nr}\mathbf T_{f_r^m}F)+N\la e^{-Nr}\mathbf T_{f_r^n}F,e^{-Nr}\mathbf T_{f_r^m}F\ra_{L^2}\ \d r\\
        &-\int_{0}^t\overline{\mathcal Q}_{{\mathrm w}_{r}^m}(e^{-Nr}\mathbf T_{f_r^n}F,e^{-Nr}\mathbf T_{f_r^m}F)+N\la e^{-Nr}\mathbf T_{f_r^n}F,e^{-Nr}\mathbf T_{f_r^m}F\ra_{L^2}\ \d r,
    \end{split}
    \end{equation}
    where $w_t^n(z):=w_{t_l}(z)$ for $t\in[t_l,t_{l+1}]$. Especially, we have the a priori estimate
    \begin{equation*}
        \|e^{-Nt}\mathbf T_{f_t^n}F\|_{L^2}^2+2\int_{0}^t\Re \mathcal Q_{{\mathrm w}_{r}^n}(e^{-Nr}\mathbf T_{f_r^n}F,e^{-Nr}\mathbf T_{f_r^n}F)+N\| e^{-Nr}\mathbf T_{f_r^n}F\|_{L^2}^2\ \d r=\|F\|^2_{L^2},
    \end{equation*}
 and applying \eqref{eq_con4-7} to the left-hand side leads to
    \begin{equation}\label{eq_apriori_estimate}
    \sup_{t\in[0,\delta]}\|e^{-Nt}\mathbf T_{f_t^n}F\|_{L^2}^2+C\int_0^\delta \widetilde{\mathcal Q}^0(e^{-Nr}\mathbf T_{f_r^n}F,e^{-Nr}\mathbf T_{f_r^n}F)\ \d r\leq 2\|F\|^2_{L^2}.
    \end{equation}
Denote by $\Delta^{n,m}_t:=e^{-Nt}\mathbf T_{f_t^n}F-e^{-Nt}\mathbf T_{f_t^m}F$. Using \eqref{eq_TfnTfm} and item \eqref{itm_diff_7}, we obtain
\begin{equation*}
    \begin{split}
        \|\Delta_{t}^{n,m}\|^2_{L^2}&+2\int_{0}^t\Re \mathcal Q_{{\mathrm w}_{r}^n}(\Delta_{r}^{n,m},\Delta_{r}^{n,m})+N\| \Delta_{r}^{n,m}\|_{L^2}^2\ \d r=-2\int_0^t\Re(\mathcal Q_{{\mathrm w}_{r}^n}-\mathcal Q_{{\mathrm w}_{r}^m})(e^{-Nr}\mathbf T_{f_r^m}F,\Delta_{r}^{n,m})\ \d r\\
        &\leq 2C' \sup_{t\in[0,\delta]}\|w_t^n-w_t^m\|_{\epsilon',s}\int_0^t\sqrt{\widetilde{\mathcal Q}^0(e^{-Nr}\mathbf T_{f_r^m}F,e^{-Nr}\mathbf T_{f_r^m}F)\widetilde{\mathcal Q}^0(\Delta_{r}^{n,m},\Delta_{r}^{n,m})}\ \d r\\
        &\leq \frac1\lambda(C')^2 \sup_{t\in[0,\delta]}\|w_t^n-w_t^m\|^2_{\epsilon',s}\int_0^t{\widetilde{\mathcal Q}^0(e^{-Nr}\mathbf T_{f_r^m}F,e^{-Nr}\mathbf T_{f_r^m}F)\, \d r+\lambda\int_0^t\widetilde{\mathcal Q}^0(\Delta_{r}^{n,m},\Delta_{r}^{n,m})}\, \d r,
    \end{split}
\end{equation*}
where $\lambda$ is arbitrarily small. For the left-hand side of the first line above, we use \eqref{eq_assump_N_lower}; for the last line we use equation \eqref{eq_apriori_estimate}, then we arrive at
\begin{equation*}
    \sup_{t\in[0,\delta]}\|\Delta_{t}^{n,m}\|_{L^2}^2+\int_0^\delta\widetilde{\mathcal Q}^0(\Delta_{r}^{n,m},\Delta_{r}^{n,m})\ \d r\leq C'' \sup_{t\in[0,\delta]}\|w_t^n-w_t^m\|^2_{\epsilon',s}\|F\|^2_{L^2}.
\end{equation*}
Now the rest of the proof is the same as \cite[Lemma 4.9]{Segal_semigroup}.
\end{proof}
Recall the norm $\|\cdot\|_{\widetilde{\mathbf H}^0,s}$ defined in item \eqref{itm_diff_1} of Assumption \ref{assmp_diff}. It is not hard to see that $W^{-1}=\mathcal D'(\widetilde{\mathcal Q}^0),W^0=\mathcal H,W^1=\mathcal D(\widetilde{\mathcal Q}^0)$. Moreover, we have
\begin{equation*}
    \forall F,G\in\mathcal D(\widetilde{\mathcal Q}^0), \ \ \ |\widetilde{\mathcal Q}^0(F,G)|\leq\|F\|_{\widetilde{\mathbf H}^0,1}\|G\|_{\widetilde{\mathbf H}^0,1},
\end{equation*}
and
\begin{equation*}
    \forall F\in\mathcal D(\widetilde{\mathcal Q}^0),\forall G\in\mathcal D'(\widetilde{\mathcal Q}^0) \ \ \ |\la F,G\ra_{L^2}|\leq\|F\|_{\widetilde{\mathbf H}^0,1}\|G\|_{\widetilde{\mathbf H}^0,-1}.
\end{equation*}
We also have the Sobolev inequality
\begin{equation*}
    \forall F\in\mathcal D(\widetilde{\mathcal Q}^0),\ \ \ \|F\|_{\widetilde{\mathbf H}^0,-1}\leq C\|F\|_{\widetilde{\mathbf H}^0,0}\leq C\|F\|_{\widetilde{\mathbf H}^0,1},
\end{equation*}
where $C$ is some constant independent of $F$. Recall $\eta>0$ appeared in \eqref{eq_con4-7}. Below we provide a weaker version of \cite[Lemma 4.10]{Segal_semigroup} since we only have $e^{-t\mathbf H}:W^{-1}\to W^1$.
\begin{lemma}\label{lemma_Tf_prop}
    For $f\in\mathcal A \mathcal S_\epsilon$, there is some operator $\mathbf T^{\eta}_f$ such that $\mathbf T_f=\mathbf T_f^{\eta} e^{-\eta\mathbf H}$ with operator norm
    \begin{equation*}
        \|\mathbf T_f\|_{W^{-1}\to W^{1}}<+\infty\ \ \ \ \text{and}\ \ \ \ \|\mathbf T_f^\eta\|_{W^{-1}\to W^{1}}<+\infty.
    \end{equation*}
    The same property and operator norm estimates hold for the adjoints $\mathbf T^*_f$ and $(\mathbf T^\eta_f)^*$.
\end{lemma}
The proof for the above lemma is exactly the same as in \cite[Lemma 4.10]{Segal_semigroup}. Recall the $\delta$ in \eqref{eq_con4-7}. Now we claim
\begin{lemma}\label{lemma_TF-Id_is_sqt}
    There exists a constant $C>0$ such that for $t\in(0,\delta)$,
    \begin{equation*}
        \|e^{-Nt}\mathbf T_{f_t}-{\mathrm{Id}}\|_{\mathcal D(\widetilde{\mathcal{Q}}^0)\to \mathcal H}\leq C \sqrt{t}.
    \end{equation*}
\end{lemma}
\begin{proof}
    Let us follow the proof of \cite[Lemma 4.11]{Segal_semigroup}. From \eqref{eq_increment_id}, we have
    \begin{equation*}
        \|e^{-Nt}\mathbf T_{f_t} F-F\|_{L^2}^2=-2\int_{0}^t\Re \left(\mathcal Q_{{\mathrm w}_r}(e^{-Nr}\mathbf T_{f_r} F,e^{-Nr}\mathbf T_{f_r} F-F)+N\la e^{-Nr}\mathbf T_{f_r} F,e^{-Nr}\mathbf T_{f_r} F-F\ra_{L^2}\right)\ \d r.
    \end{equation*}
    Using \eqref{eq_con4-7}, we get
    \begin{equation*}
    \begin{split}
        &\|e^{-Nt}\mathbf T_{f_t} F-F\|_{L^2}^2+C\int_{0}^t\widetilde{\mathcal Q}^0(e^{-Nr}\mathbf T_{f_r} F-F,e^{-Nr}\mathbf T_{f_r} F-F)\ \d r\\
        \leq{}&-2\int_{0}^t\Re \left(\mathcal Q_{{\mathrm w}_r}(F,e^{-Nr}\mathbf T_{f_r} F-F)+N\la F,e^{-Nr}\mathbf T_{f_r} F-F\ra_{L^2}\right)\ \d r.
    \end{split}
    \end{equation*}
   Based on item \eqref{itm_diff_7} and item \eqref{itm_10}, the right-hand side above can be further bounded by
    \begin{equation*}
        \frac t\lambda C\|F\|_{\widetilde{\mathbf H}^0,1}^2+\lambda(C+N)\int_0^t\|e^{-Nr}\mathbf T_{f_r} F-F\|_{\widetilde{\mathbf H}^0,1}^2\ \d r+\frac t\lambda N\|F\|_{\widetilde{\mathbf H}^0,-1}^2.
    \end{equation*}
    Make $\lambda$ sufficiently small and using Sobolev inequality, we complete the proof.
\end{proof}
We follow the rest of the proof in \cite[Section 4.1.3]{Segal_semigroup}. Since $t^{-1}\la e^{-Nt}F-F,G\ra$ clearly exists, it is enough to study $\la\mathbf e^{-Nt}T_{f_t}F,G\ra_{L^2}-\la F,G\ra_{L^2}$.  First using \eqref{eq_increment_id}, we have
\begin{equation*}
\begin{split}
    \la\mathbf e^{-Nt}T_{f_t}F,G\ra_{L^2}-\la F,G\ra_{L^2}&=-\int_0^t\mathcal Q_{{\mathrm w}_r}(e^{-Nr}\mathbf T_{f_r}F,G)+N\la e^{-Nr}\mathbf T_{f_r}F,G\ra\ \d r\\
    ={}&-t\mathcal Q_{{\mathrm w}_0}(F,G)-Nt\la F,G\ra_{L^2}-\int_0^t(\mathcal Q_{{\mathrm w}_r}(F,G)-\mathcal Q_{{\mathrm w}_0}(F,G))\ \d r\\
    &-\int_0^t\mathcal Q_{{\mathrm w}_r}(e^{-Nr}\mathbf T_{f_r}F-F,G)+N\la e^{-Nr}\mathbf T_{f_r}F-F,G\ra_{L^2}\ \d r\\
    =&:-t\mathcal Q_{{\mathrm w}_0}(F,G)-Nt\la F,G\ra_{L^2}-R^1_t(G)-R^2_t(G).
\end{split}
\end{equation*}
Especially, we put $\mathbf T_f^* G$ with $ G\in\mathcal D'(\widetilde{\mathcal Q}_0)$ in the above equation, and 
\begin{equation*}
     \la e^{-Nt}\mathbf T_{f}\mathbf T_{f_t}F,G\ra_{L^2}-\la \mathbf T_{f}F,G\ra_{L^2}=-t\mathcal Q_{{\mathrm w}_0}(F,\mathbf T_f^*G)-Nt\la F,\mathbf T_f^*G\ra_{L^2}-R^1_t(\mathbf T_f^*G)-R^2_t(\mathbf T_f^*G).
\end{equation*}
Using item \eqref{itm_diff_7} for $\mathcal Q_{{\mathrm w}_0}$ and \cref{lemma_Tf_prop}, we have
\begin{equation*}
    |\mathcal Q_{{\mathrm w}_0}(F,\mathbf T_f^*G)|\leq C\|F\|_{\widetilde{\mathbf H}^0,1}\|\mathbf T_f^*G\|_{\widetilde{\mathbf H}^0,1}\leq C\|F\|_{\widetilde{\mathbf H}^0,1}\|G\|_{\widetilde{\mathbf H}^0,-1}.
\end{equation*}
By the Riesz representation theorem, we can find an element $D_{\v}\mathbf T_f F\in\mathcal D(\widetilde{\mathcal{Q}}^0)$ such that $\la D_{\v}\mathbf T_f F,G\ra_{L^2}=-\mathcal Q_{{\mathrm w}_0}(F,\mathbf T_f^*G)$ for all $G\in\mathcal D'(\widetilde{\mathcal Q}^0)$. Now we just prove $R^1_t,R^2_t$ are $o(t)$ in operator norm $\mathcal D(\widetilde{\mathcal{Q}}^0)\to \mathcal D(\widetilde{\mathcal{Q}}^0)$.

For $R_t^2$, using \cite[Lemma 4.12]{Segal_semigroup} (the proof is the same except that $\mathcal Q_\v(F,e^{-t\mathbf H^*}G)=\mathcal Q_{\v_t}(e^{-t\mathbf H}F,G)$ where $v_t(z):=e^{-t}v(e^tz)$) and \cref{lemma_Tf_prop}, we have
\begin{equation*}
    |R_t^2(\mathbf T_f^*G)|=\left|\int_0^t\mathcal Q_{{\mathrm w}_r}(e^{-Nr}\mathbf T_{f_r}F-F,e^{-\eta\mathbf H^*}(\mathbf T_f^\eta)^*G)+N\la e^{-Nr}\mathbf T_{f_r}F-F,\mathbf T_f^*G\ra_{L^2} \d r\right|,
\end{equation*}
which leads to the same estimate in \cite{Segal_semigroup} using \cite[Lemma 4.12]{Segal_semigroup} ($\mathcal Q_\v(F,e^{-t\mathbf H^*}G)=\mathcal Q_{\v_t}(e^{-t\mathbf H}F,G)$), item \eqref{itm_diff_7}, and \cref{lemma_TF-Id_is_sqt}
\begin{equation*}
    |R_t^2(\mathbf T_f^*G)|\leq C\int_0^t\|e^{-Nr}\mathbf T_{f_r}F-F\|_{L^2}\|G\|_{\widetilde{\mathbf H}^0,-1}\ \d r\leq C t^{3/2}\|F\|_{\widetilde{\mathbf H}^0,1}\|G\|_{\widetilde{\mathbf H}^0,-1}.
\end{equation*}
The proof of $R_t^1$ is the same except that in \cite[Lemma 4.14]{Segal_semigroup}, we bound the difference of quadratic form using $\widetilde{\mathcal Q}^0$.

 The rest of the proof is the same as those in \cite[Section 4.1]{Segal_semigroup}.
\end{proof}
Now let us verify \cref{assmp_diff} for CILT. Especially we only need to check items \eqref{itm_diff_1}, \eqref{itm_5}, \eqref{itm_6}, \eqref{itm_diff_7}, \eqref{itm_coercivity}, \eqref{itm_9}, whereas the rest of \cref{assmp_diff} is made on the flow thus the same as the real case (see \cite[Lemma 4.5, Lemma 4.6]{Segal_semigroup}, and note that $\{(z,v):\Re(\bar z v(z))<0\}$ is open).

Item \eqref{itm_diff_1} is done in \cref{HilleYosida_lemma}. Item \eqref{itm_5} is proved in \cref{T_f_semigroup}. Item \eqref{itm_6} is just \cref{prop_Tf_continuity}.

Now we check Item \eqref{itm_diff_7}. Let $v(z)=-\sum_{n\geq0}v_n z^{n+1}\in\mathrm{Hol}_\epsilon^{\bullet}(\mathbb D)$. The quadratic form $ \Qc_\v$ is $\widetilde{\mathcal Q}^0$-continuous with bound
    \begin{equation}\label{eq_vnorm_bound_Q}
        |\Qc_\v(F,G)|\leq C\|v\|_{\epsilon=0,3}\sqrt{\widetilde{\mathcal Q}^0(F,F)\widetilde{\mathcal Q}^0(G,G)}
    \end{equation}
for some constant $C$ depending on $\mu$ and $\beta$. To see this, we point out that
\begin{equation*}
    |\Qc_\v^0(F,G)|\leq C\|v\|_{\epsilon=0,3}\sqrt{\widetilde{\mathcal Q}^0(F,F)\widetilde{\mathcal Q}^0(G,G)}
\end{equation*}
which comes from  \cref{L_n^0_estimate}, and the bound in \cref{V_v_estimate} with $N$ set to $0$,
    \begin{equation*}
        |\langle V_\v F,G\rangle_{L^2}|\leq C\sup_{\theta\in[0,2\pi]}|\Re(e^{-\i\theta}v(e^{\i\theta}))|\,\sqrt{\widetilde{\mathcal Q}^0(F,F)\widetilde{\mathcal Q}^0(G,G)},
    \end{equation*}
    where the constant $C$ only depends on $\beta$.

Items \eqref{itm_coercivity} and \eqref{itm_9} are consequences of \cref{Q_v_bound} and \cref{prop_FK_formula}. Finally we have the following conclusion.
\begin{coro}\label{T_f_differentiable}
The map $f\in\Ssg\mapsto\Tbf_f\in\Lc(L^2(\DT))$ is differentiable with differential
\[D_\v\Tbf_f = -\Tbf_f\circ\Hbf_{\rm w},\qquad {\rm w} = \tfrac{v(z)}{f'(z)}\,\pa_z,\]
where $\v=v(z)\pa_z$ is any holomorphic vector field on $\Db$ satisfying $v(0)=0$. More precisely, given a differentiable family $t\mapsto f_t\in\Ssg$ with $f_0=f$ and $\pa_t|_{t=0}f_t=\v$ (in the sense that $\pa_t|_{t=0}f_t(z)=v(z)$ for $z\in\Db$), we have
\[\lim_{t\to0}\left\|\frac{\Tbf_{f_t}F-\Tbf_fF}{t} +\Tbf_f\Hbf_{\rm w}F\right\|_{\Dc(\wt{\Qc}^0)}=0,\qquad\fr F\in\Dc(\wt{\Qc}^0).\]
\end{coro}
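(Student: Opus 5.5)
The plan is to deduce \cref{T_f_differentiable} from the abstract result \cref{prop_general_T_f_diff}. We take $\mathcal{AS}=\Ssg$, with tangent space $\mathcal A\mathrm{Hol}^\bullet(\Db)=\mathrm{Hol}^\bullet(\Db)$, and $M=\DT$ with the product measure $\d\wt\varphi^k$. The reference form is $\wt{\Qc}^0$ from \eqref{Qtilde0}, and the forms are $\Qc_\v(F,G)=\la\Hbf_\v F,G\ra_{L^2}$. All that is needed is to check each item of \cref{assmp_diff}. Since $\Ssg=\bigcup_\varepsilon\Ssg_\varepsilon$ carries the direct limit topology, any differentiable family $t\mapsto f_t\in\Ssg$ with $f_0=f$ lies in a fixed $\Ssg_\varepsilon$ for small $t$. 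On that piece \cref{prop_general_T_f_diff} gives exactly the stated limit in $\Dc(\wt{\Qc}^0)$, with ${\rm w}=\tfrac{v}{f'}\pa_z$.

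The items concerning only the geometry of flows (items (2), (3), (4), (10), (11)) are the same as in the real case, \cite[Lemmas 4.5, 4.6]{Segal_semigroup}. The Markovian condition $\Re(\zb v(z))<0$ is open, and $-\omega zf'+v$ is Markovian for $\omega$ large. The analytic items follow from earlier results:
\begin{itemize}
\item Item (1) follows from \cref{HilleYosida_lemma} applied with \cref{Q_v_bound} to $\v=-z\pa_z$, which gives the semigroup $e^{-t\Hbf}$ and its smoothing $\Dc'(\wt{\Qc}^0)\to\Dc(\wt{\Qc}^0)$.
\item Item (5) is \cref{prop_semigroup_as_amplitude} together with \cref{T_f_semigroup}.
\item Item (6) is \cref{prop_Tf_continuity}.
\item Item (9) is \cref{prop_FK_formula}.
\end{itemize}

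For item (7), we combine \cref{L_n^0_estimate}, which gives a factor $(1+|n|)$ per mode, with \cref{V_v_estimate} at $N=0$, where $\sup_\theta|\Re(e^{-\i\theta}v(e^{\i\theta}))|\le\sum|v_n|$. Summing with Cauchy--Schwarz yields \eqref{eq_vnorm_bound_Q}, so $v\mapsto\Qc_\v$ is linear and $\|\cdot\|_{\epsilon,3}$-continuous.

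The main obstacle is item (8), the quantitative coercivity \eqref{eq_assump_N_lower} with the $N^{-\xi}$ gain. For the free part, \eqref{Q_v^0_accretive} gives $\Re\Qc^0_\v\ge(\Re\omega-2K\sum n^2|v_n|)\wt{\Qc}^0-\Re\omega\,Q^2\|\cdot\|^2$. This should be rewritten in the $\ell^2$-weighted form $D(v)$ via Cauchy--Schwarz, replacing $\sum n^2|v_n|$ by $(\sum(1+n)^{2s}|v_n|^2)^{1/2}$ with $s>5/2$, and treating the $\Im\omega$ contribution the same way. For the potential, \cref{V_v_estimate} gives the prefactor $(N+Q^2/2)^{\beta^2/2-1}$. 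Since $\beta^2<2$, this is $\le CN^{-\xi}$ with $\xi=1-\beta^2/2$, and multiplying by $N\|F\|^2+\wt{\Qc}^0(F,F)$ produces exactly the terms $C_1N^{-\xi}\|v\|\,\wt{\Qc}^0$ and $C_1N^{1-\xi}\|v\|\,\|F\|^2$. The semigroup generation then follows from \cref{HilleYosida_lemma} as in \cref{Q_v_bound}.
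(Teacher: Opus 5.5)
Your proposal is correct and follows the paper's route: verify the items of \cref{assmp_diff}, taking the flow-geometric items from the real case, items (1), (5), (6), (9) from \cref{HilleYosida_lemma}, \cref{T_f_semigroup}, \cref{prop_Tf_continuity} and \cref{prop_FK_formula}, and item (7) via \eqref{eq_vnorm_bound_Q}, then invoke \cref{prop_general_T_f_diff}. Your explicit extraction of the $N^{-\xi}$ gain, with $\xi=1-\beta^2/2$, from the prefactor $(N+Q^2/2)^{\beta^2/2-1}$ in \cref{V_v_estimate} spells out what the paper simply attributes to \cref{Q_v_bound} for item (8).
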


Note that by \cref{HilleYosida_lemma}, the operator $\Tbf_f$ extends to a bounded operator $\Dc'(\wt{\Qc}^0)\to\Dc(\wt{\Qc}^0)$, so $\Tbf_f\Hbf_{\rm w}F\in\Dc(\wt{\Qc}^0)$ for $F\in\Dc(\wt{\Qc}^0)$.

\section{Variation of amplitudes and Ward identities}\label{sec:ward}

As an application of the semigroup theory, we now know how to differentiate CILT amplitudes and can then use this to prove the Ward identities. These results are crucial to the conformal bootstrap program, which will be investigated in future works.

First we need to extend the definition \eqref{H_v} to holomorphic vector fields $\v=-\sum_{n\in\Zb}v_nz^{n+1}\pa_z$ defined only on (a neighborhood of) $\Tb$, i.e., we set
\[\Hbf_\v:=\sum_{n\in\Zb}v_n\,\Lbf_n+\sum_{n\in\Zb}\ol{v_n}\,\wt{\Lbf}_n.\]
Using the analog of \cref{V_v_estimate}, it is easy to see that this series converges in the space of bounded operators $\Dc(\wt{\Qc}^0)\to\Dc'(\wt{\Qc}^0)$.

We would like to compute the differential of the amplitude of a general surface under a deformation of the boundary. Extracting an annulus along each boundary circle and applying the gluing axiom, we see that it suffices to consider the variation of amplitudes of annuli. Let $\Sc(\Ab)$ be the set of holomorphic embeddings $\Tb\to\Db^\circ$. As before, to an element $f\in\Sc(\Ab)$, we associate the annulus $\Ab_f\subset\Db$ bounded by $\Tb$ and $f(\Tb)$, with boundary parametrizations $\zetabf_f:=(\id_\Tb,f|_\Tb)$.

\begin{lem}\label{annulus_differential}
Let $t\mapsto f_t\in\Ssg(\Ab)$ be a differentiable family with $f_0:z\mapsto rz$ ($0<r<1$) and $\pa_t|_{t=0}f_t=\v=-\sum_{n\in\Zb}v_nz^{n+1}\pa_z$ a holomorphic vector field on $\Tb$. Let $g_{f_t}$ be an admissible metric on $\Ab_{f_t}$ depending in a $C^1$ fashion on $t$ with $g_{f_0}=g_\Ab=|dz|^2/|z|^2$. Then the map $t\mapsto\Ac_{\Ab_{f_t},g_{f_t},\zetabf_{f_t}}\in\Lc(L^2(\DT))$ is differentiable at $t=0$ with differential (in the same sense as in \cref{T_f_differentiable})
\[\pa_t\big|_{t=0}\Ac_{\Ab_{f_t},g_{f_t},\zetabf_{f_t}} = -c_\L\left(-\frac{\Re v_0}{12r} + \pa_t\big|_{t=0} S_\L^0(\Ab_{f_t},g_{f_t},g_\Ab)\right)\Ac_{\Ab_{f_0},g_\Ab,\zetabf_{f_0}}-\Ac_{\Ab_{f_0},g_\Ab,\zetabf_{f_0}}\circ\Hbf_{\v/r}.\]
\end{lem}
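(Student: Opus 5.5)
The plan is to reduce to \cref{T_f_differentiable} via a conformal welding argument, as in \cite[Section 4.3]{Segal_semigroup}. The difficulty is that $f_t$ is only defined near $\Tb$, so $f_t$ is not in $\Ssg$ and \cref{T_f_differentiable} does not apply directly. The key steps, in order, are as follows.

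\textbf{Step 1 (splitting $\v$).} Write $\v=\v^++\v^-$, where $\v^+=-\sum_{n\geq0}v_nz^{n+1}\pa_z$ is holomorphic on $\Db$ and $\v^-=-\sum_{n<0}v_nz^{n+1}\pa_z$ is holomorphic on $\hat\Cb\setminus\Db^\circ$, vanishing to order at least $0$ at $\infty$.

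\textbf{Step 2 (welding).} By solving a welding problem for small $t$ (equivalently, via the standard $C^1$ dependence of the welding/Riemann map, as in \cite[Section 4.3]{Segal_semigroup}), factor $f_t=h_t^{-1}\circ(r\,\cdot)\circ k_t$ up to $o(t)$. Here $k_t\in\Ssg\cup\{\id\}$ is close to the identity with $\pa_tk_t|_{t=0}=\v^+/r$ (after conjugation by $z\mapsto rz$). Likewise $h_t$ is a holomorphic map defined near $\Db$ with $h_t(0)=0$, whose inverse extends to a map of $\hat\Cb\setminus r\Db^\circ$ with $\pa_th_t|_{t=0}=$ the pushforward of $-\v^-$. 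Geometrically, $\Ab_{f_t}$ is biholomorphic to the annulus obtained by gluing $\Ab_{h_t}$-type collars. Concretely, I would take $k_t$ as the flow of $\v^+/r$ composed appropriately, and absorb the negative modes into an outer change of the parametrization of $\Tb$.

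\textbf{Step 3 (inner and outer boundary variations).} Use the gluing axiom and diffeomorphism invariance to write
\[\Ac_{\Ab_{f_t},g_{f_t},\zetabf_{f_t}}=\Ac_{\Ab_{r\,\cdot},g_\Ab,\zetabf_{f_0}}\circ(\text{annulus for }k_t)\]
up to Weyl factors, together with the outer variation. The inner variation follows from \cref{prop_semigroup_as_amplitude} combined with \cref{T_f_differentiable}, and gives $-\Ac\circ\Hbf_{\v^+/r}$. The outer variation, coming from the negative modes, is handled by the adjoint. The annulus $\Ab_{r\,\cdot}$ reflected by $z\mapsto r/\bar z$ becomes an element of $\Ssg$ with incoming and outgoing boundaries swapped. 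Then \cref{T_f_differentiable} applied to the adjoint semigroup (with $\mu,\beta$ replaced by $\bar\mu,-\beta$), together with the adjoint relations \eqref{L_n_adjoint} $(\Lbf_n^{\mu,\beta})^*=\Lbf_{-n}^{\bar\mu,-\beta}$, converts the variation into $-\Ac\circ\sum_{n<0}(v_n\Lbf_n+\ol{v_n}\wt\Lbf_n)/r$. The $1/r$ factors arise from rescaling the coefficients of $\v$ under $z\mapsto rz$. Summing the inner and outer contributions yields $-\Ac\circ\Hbf_{\v/r}$.

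\textbf{Step 4 (scalar factors).} Differentiate the Weyl anomaly terms at $t=0$. These are $e^{-\frac{c_\L}{12}W(\cdot)}$ from \cref{prop_semigroup_as_amplitude} and the conformal anomaly $e^{c_\L S^0_\L}$ for the metric change $g_{f_t}$ versus the pulled-back $g_\Ab$. The quantity $W$ contains $-\log|f'(0)|$, whose $t$-derivative is $\Re(v_0)/r$ up to sign. The Dirichlet-form term in $W$ is quadratic in the deviation of $\log\frac{|f'|}{|f|}$ from its value at $f_0$, which is constant. It therefore has zero derivative. This leaves exactly $-c_\L(-\frac{\Re v_0}{12r}+\pa_tS^0_\L)$.

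\textbf{Main obstacle.} I expect the main obstacle to be Step 2. The welding factorization must be $C^1$ in $t$ in the Fréchet topology of $\Ssg_\epsilon$. In addition, the operator-valued derivative of a composition, with one factor differentiable only in $\Dc(\wt\Qc^0)\to\Dc(\wt\Qc^0)$ norm, needs the mapping property $\Tbf_f:\Dc'(\wt\Qc^0)\to\Dc(\wt\Qc^0)$ from \cref{HilleYosida_lemma} to justify the product rule. Here I would rely on \cref{lemma_Tf_prop} and copy the real-case argument.
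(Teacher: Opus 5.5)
Your overall strategy is the paper's: weld $\Ab_{f_t}$ into an inner and an outer annulus, treat the inner one with \cref{T_f_differentiable}, treat the outer one by adjoints with $(\mu,\beta)\to(\bar\mu,-\beta)$, and read off the scalar factor from $W$ and the Weyl anomaly. Two steps, however, do not work as written.

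\textbf{The factorization must be exact.} In Step 2 you only factor $f_t$ ``up to $o(t)$'', for instance by taking $k_t$ to be the flow of $\v^+/r$. That does not suffice. An $o(t)$ error in the factorization can only be transferred to the amplitudes if you already know that $f\mapsto\Ac_{\Ab_f}$ is differentiable on $\Ssg(\Ab)$, and that is exactly what you are trying to prove. You need a factorization that is exact for every small $t$.

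The paper obtains it by uniformizing the sphere $S_{f_t}=(\Db^*\cup\Ab_{f_t})\sqcup\Db/\!\sim$ by $\Phi_{f_t}$, normalized by $\Phi_{f_t}(0)=0$, $\Phi_{f_t}(\infty)=\infty$ and $\Phi_{f_t}'(0)=a$ with $r<a<1$. The choice $a\in(r,1)$ makes $f_{{\rm in},t}$ and $f_{{\rm out},t}$ genuine elements of $\Ssg$, close to $a\,\id$ and $\frac ra\id$ respectively, with $f^\iota_{{\rm out},t}\circ f_t=f_{{\rm in},t}$ on $\Tb$.

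Note also that the outer map must extend to $\Db^*$ and fix $\infty$. Your $h_t$, described as ``defined near $\Db$ with $h_t(0)=0$'', has this backwards. The infinitesimal split is not a prescribed $\v^\pm$ but the relation $v_{\rm in}(z)=\frac ar v(z)+\frac{a^2}{r^2}v^\iota_{\rm out}(rz)$ on $\Tb$; where the zero mode ends up is fixed by the normalization, and that part is harmless. Finally, to glue with admissible metrics you need $(\Phi_{f_t})_*g_{f_t}=g_\Ab$ near the cut. The paper arranges this with a conformal change whose anomaly is $O(t^2)$ because $g_\Ab$ is flat.

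\textbf{The adjoint term lands on the wrong side.} Apply \cref{T_f_differentiable} to the reflected annulus with $(\bar\mu,-\beta)$ and take adjoints. The generator then acts on the outgoing side, so you get a term of the form $\Hbf_{\frac ar\v^\iota_{\rm out}}\circ\Ac$ with the operator to the left of $\Ac:=\Ac_{\Ab_{f_0},g_\Ab,\zetabf_{f_0}}$. It is not $-\Ac\circ\sum_{n<0}(v_n\Lbf_n+\ol{v_n}\wt\Lbf_n)/r$.

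To move this term to the right you need the intertwining identity $\Hbf_{\rm w}\Tbf_{f_0}=\Tbf_{f_0}\Hbf_{\frac1r{\rm w}(r\cdot)}$ for ${\rm w}$ holomorphic on $\Ab_{f_0}$. Equivalently, $\Lbf_ne^{-t\Hbf}=e^{-nt}e^{-t\Hbf}\Lbf_n$ for all $n\in\Zb$ with $t=-\log r$. This is an operator identity, not a mere rescaling of coefficients under $z\mapsto rz$.

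The paper proves it for $n\geq0$ as follows. By linearity one may take ${\rm w}$ Markovian with flow $h_t$. One then differentiates $\Tbf_{h_t}\Tbf_{f_0}=\Tbf_{h_t\circ f_0}$ at $t=0$, using \cref{T_f_differentiable} and $\pa_t|_{t=0}(h_t\circ f_0)={\rm w}(r\cdot)$. The case $n<0$ follows by duality. Only after this step does the welding relation combine the inner and outer terms into $-\Ac\circ\Hbf_{\v/r}$.

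Your Step 4 is fine once you use $S_\L^0(\Ab_{f^\iota_{{\rm out},t}})+S_\L^0(\Ab_{f_{{\rm in},t}})=S_\L^0(\Ab_{f_t},g_{f_t},\Phi_{f_t}^*g_\Ab)$. The $t$-derivative of this agrees at $t=0$ with that of $S_\L^0(\Ab_{f_t},g_{f_t},g_\Ab)$, because both conformal factors vanish at $t=0$ and $K_{g_\Ab}=0$.
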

\begin{proof}
This follows from \cref{T_f_differentiable} in the same way as in \cite[Section 4.3]{Segal_semigroup}, noting that taking the adjoint of CILT amplitudes requires replacing $\mu,\beta$ by $\bar{\mu},-\beta$. We give a self-contained proof here for the sake of completeness.

First we need to consider amplitudes of outer annuli. Let $\iota:z\mapsto 1/\bar{z}$ and $\Db^*:=\iota(\Db)=\hat{\Cb}\setminus\Db^\circ$ the closed outer unit disk. Note that $\iota|_\Tb=\id_\Tb$. For $f\in\Ssg$, the map $f^\iota:=\iota\circ f\circ\iota$ is a holomorphic embedding $\Db^*\to(\Db^*)^\circ$ with $f^\iota(\infty)=\infty$, and we associate to it the outer annulus $\Ab_{f^\iota}:=\iota(\Ab_f)=\Db^*\setminus f^\iota((\Db^*)^\circ)$ with boundary parametrizations $\zetabf_{f^\iota}:=(f^\iota|_\Tb,\id_{\Tb})$. Then for an admissible metric $g_f$ on $\Ab_f$, we have
\[\Ac_{\Ab_{f^\iota},\iota_* g_f,\zetabf_{f^\iota}}^{\mu,\beta} = (\Ac_{\Ab_f,g_f,\zetabf_f}^{\bar{\mu},-\beta})^*,\]
where we have made explicit the dependence on $\mu,\beta$. Indeed, by diffeomorphism invariance, we have for $\wt\varphi_1^k,\wt\varphi_2^k\in\DT$
\[\Ac_{\Ab_{f^\iota},\iota_* g_f,\zetabf_{f^\iota}}^{\mu,\beta}(\wt\varphi_1^k,\wt\varphi_2^k) = \Ac_{\Ab_{f},g_f,(f|_\Tb,\id_\Tb)}^{\mu,\beta}(\wt\varphi_1^k,\wt\varphi_2^k) = \Ac_{\Ab_{f},g_f,\zetabf_f}^{\mu,\beta}(\wt\varphi_2^k,\wt\varphi_1^k) =  \ol{\Ac_{\Ab_{f},g_f,\zetabf_f}^{\bar{\mu},-\beta}(\wt\varphi_2^k,\wt\varphi_1^k)}.\]
If $t\mapsto f_t\in\Ssg$ is the flow on $\Db$ generated by a Markovian holomorphic vector field $\v=-\sum_{n=0}^\infty v_nz^{n+1}\pa_z$, then $t\mapsto f_t^\iota$ is the flow on $\Db^*$ generated by $\v^\iota:=\sum_{n=0}^\infty\ol{v_n}z^{-n+1}\pa_z$, so that by \cref{prop_semigroup_as_amplitude},
\[\frac{e^{-\frac{c_\L}{12}W(f_t,g_{f_t})}}{\sqrt{2}\pi}\Ac_{\Ab_{f_t^\iota},\iota_* g_{f_t},\zetabf_{f_t^\iota}}^{\mu,\beta} = (\Tbf_{f_t}^{\bar{\mu},-\beta})^* = e^{-t(\Hbf_\v^{\bar{\mu},-\beta})^*} = e^{t\Hbf_{\v^\iota}^{\mu,\beta}},\]
where $g_{f_t}$ is any admissible metric on $\Ab_{f_t}$ and we have used \eqref{L_n_adjoint}. Similarly, defining $\Tbf_{f^\iota}^{\mu,\beta}:=(\Tbf_f^{\bar{\mu},-\beta})^*$ for $f\in\Ssg$, we have analogously to \cref{T_f_differentiable}
\[\pa_t\big|_{t=0}\Tbf_{f_t^\iota}=\Hbf_{{\rm w}^\iota}\circ\Tbf_{f^\iota},\qquad{\rm w}=\tfrac{v(z)}{f'(z)}\pa_z,\]
where $t\mapsto f_t\in\Ssg$ is a differentiable family with $f_0=f$ and $\pa_t|_{t=0}f_t=\v$.

The proof relies on the observation that an annulus in $\Ssg(\Ab)$ can be decomposed into an inner annulus and an outer annulus, both parametrized by $\Ssg$. Indeed, for $f\in\Sc(\Ab)$, consider the closed Riemann surface $S_f:=(\Db^*\cup\Ab_f)\sqcup\Db/\!\sim$ where the quotient relation is $z\in\pa\Db\sim f(z)\in\pa(\Db^*\cup\Ab_f)$. This is topologically a sphere, so there exists a uniformizing biholomorphism $\Phi_f:S_f\to\hat\Cb$, which is unique if we require that $\Phi_f(0)=0$, $\Phi_f'(0)=a$, $\Phi_f(\infty)=\infty$, where $r<a<1$ is fixed. We apply this with $f=f_t\in\Ssg(\Ab)$ as in the statement of the lemma and let $f_{{\rm out},t}:=\iota\circ(\Phi_{f_t}|_{\Ab_{f_t}}\circ\id_\Tb)\circ\iota$, $f_{{\rm in},t}:=\Phi_{f_t}|_{\Ab_{f_t}}\circ f_t|_\Tb$, which both extend to holomorphic maps on $\Db$. We have $\Phi_{f_0}=\frac{a}{r}\id$ on $\Ab_{f_0}$, so for $t$ small, we have $\Tb\subset\Phi_{f_t}(\Ab_{f_t}^\circ)$, so that $f_{{\rm out},t},f_{{\rm in},t}\in\Ssg$. Note that $f_{{\rm out},0}=\frac{r}{a}\id$, $f_{{\rm in},0}=a\;\!\id$. Let $\v_{\rm out}:=\pa_t|_{t=0}f_{{\rm out},t}$, $\v_{\rm in}:=\pa_t|_{t=0}f_{{\rm in},t}$. Differentiating the relation $f_{{\rm out},t}^\iota\circ f_t=f_{{\rm in},t}$ on $\Tb$ at $t=0$, we obtain
\begin{equation}\label{v_in_v_out_relation}
\frac{a^2}{r^2}v_{\rm out}^\iota(rz)+\frac{a}{r}v(z)=v_{\rm in}(z),\qquad z\in\Tb.
\end{equation}
By conformal covariance, we may assume that
$(\Phi_{f_t})_*g_{f_t}=g_\Ab$ near $\Tb$.
Indeed, this can be achieved by a smooth conformal change
supported away from the original boundary components and vanishing
at $t=0$. Since $g_{f_0}=g_\Ab$ is flat, the corresponding Weyl
anomaly is $O(t^2)$, so this modification does not change the
derivative at $t=0$. Now by gluing,
\[\Ac_{\Ab_{f_t},g_{f_t},\zetabf_{f_t}}=\frac{1}{\sqrt{2}\pi}\,\Ac_{\Ab_{f_{{\rm out},t}^\iota},\iota_*g_{f_{{\rm out},t}},\zetabf_{f_{{\rm out},t}^\iota}}\circ\Ac_{\Ab_{f_{{\rm in},t}},g_{f_{{\rm in},t}},\zetabf_{f_{{\rm in},t}}},\]
where $g_{f_{{\rm out},t}}:=\iota_*(\Phi_{f_t}|_{\Ab_{f_t}})_*g_{f_t}|_{\Ab_{f_{{\rm out},t}^\iota}}$, $g_{f_{{\rm in},t}}:=(\Phi_{f_t}|_{\Ab_{f_t}})_*g_{f_t}|_{\Ab_{f_{{\rm in},t}}}$. Thus
\begin{equation}\label{eq_annulus_differential}
\begin{aligned}
\pa_t\big|_{t=0}\Ac_{\Ab_{f_t},g_{f_t},\zetabf_{f_t}} &= \frac{c_\L}{12}\big(\pa_t\big|_{t=0}(W(f_{{\rm out},t},g_{f_{{\rm out},t}})+W(f_{{\rm in},t},g_{f_{{\rm in},t}}))\big)\Ac_{\Ab_{f_0},g_\Ab,\zetabf_{f_0}}\\
&\hspace{8em}+\Hbf_{\frac{a}{r}\v_{\rm out}^\iota}\Ac_{\Ab_{f_0},g_\Ab,\zetabf_{f_0}}-\Ac_{\Ab_{f_0},g_\Ab,\zetabf_{f_0}}\Hbf_{\frac{1}{a}\v_{\rm in}},
\end{aligned}
\end{equation}
where we used \cref{prop_semigroup_as_amplitude,T_f_differentiable} and the remarks above on outer annuli.

We claim that
\[\Hbf_{\rm w}\Tbf_{f_0}=\Tbf_{f_0}\Hbf_{\frac{1}{r}{\rm w}(r\cdot)}\]
for any holomorphic vector field ${\rm w}=-\sum_{n\in\Zb}w_nz^{n+1}\pa_z$ on $\Ab_{f_0}$, so that the last two terms in \eqref{eq_annulus_differential} add up to $-\Ac_{\Ab_{f_0},g_\Ab,\zetabf_{f_0}}\Hbf_{\v/r}$ by \eqref{v_in_v_out_relation}. This is equivalent to the commutation relation $\Lbf_ne^{-t\Hbf}=e^{-nt}e^{-t\Hbf}\Lbf_n$ ($n\in\Zb$) with $t=-\log r$. By duality, we may assume $n\geq0$, i.e., ${\rm w}$ is holomorphic on $\Db$. By linearity, it suffices to consider the case where ${\rm w}$ is Markovian. Letting $t\mapsto h_t$ be the flow generated by ${\rm w}$, we have $-\Hbf_{\rm w}\Tbf_{f_0}=\pa_t|_{t=0}\Tbf_{h_t}\Tbf_{f_0}=\pa_t|_{t=0}\Tbf_{h_t\circ f_0}$, so it suffices to compute $\pa_t|_{t=0}(h_t\circ f_0)={\rm w}(r\cdot)$ and apply again \cref{T_f_differentiable}.

It remains to compute the factor of the first term in \eqref{eq_annulus_differential}. By definition,
\[S_\L^0(\Ab_{f_{{\rm out},t}^\iota},\iota_*g_{f_{{\rm out},t}},g_\Ab)+S_\L^0(\Ab_{f_{{\rm in},t}},g_{f_{{\rm in},t}},g_\Ab) = S_\L^0(\Ab_{f_t},g_{f_t},(\Phi_{f_t}|_{\Ab_{f_t}})^*g_\Ab).\]
Suppose $g_\Ab=e^{\omega_t}g_{f_t}$, $(\Phi_{f_t}|_{\Ab_{f_t}})^*g_\Ab=e^{\omega_t+u_t}g_{f_t}$. Then
\[S_\L^0(\Ab_{f_t},g_{f_t},(\Phi_{f_t}|_{\Ab_{f_t}})^*g_\Ab)-S_\L^0(\Ab_{f_t},g_{f_t},g_\Ab) = \frac{1}{96\pi}\int_{\Ab_{f_t}}\left(2\la d\omega_t,du_t\ra_{g_{f_t}}+|du_t|_{g_{f_t}}^2+2K_{g_{f_t}}u_t\right)\d v_{g_{f_t}}.\]
Since $\omega_0=u_0=0$, $K_{g_{f_0}}=K_{g_\Ab}=0$, we obtain $\pa_t\big|_{t=0}S_\L^0(\Ab_{f_t},g_{f_t},(\Phi_{f_t}|_{\Ab_{f_t}})^*g_\Ab)=\pa_t\big|_{t=0}S_\L^0(\Ab_{f_t},g_{f_t},g_\Ab)$.
By \eqref{v_in_v_out_relation} again,
\[\pa_t\big|_{t=0}\big(\log|f_{{\rm out},t}'(0)|+\log|f_{{\rm in},t}'(0)|\big) = \Re\Big(\frac{a}{r}v_{\rm out}'(0)+\frac{1}{a}v_{\rm in}'(0)\Big) = -\frac{\Re v_0}{r}.\]
Finally, we have $\pa_t\big|_{t=0}\big\langle\Dbf\log\frac{|f_{{\rm out},t}'|}{|f_{{\rm out},t}|}\big|_\Tb,\log\frac{|f_{{\rm out},t}'|}{|f_{{\rm out},t}|}\big|_\Tb\big\rangle=0$ since $\Dbf\log\frac{|f_{{\rm out},0}'|}{|f_{{\rm out},0}|}\big|_\Tb=0$, and similarly for $f_{{\rm in},t}$.
\end{proof}

We are ready to formulate and prove the Ward identities for CILT amplitudes:

\begin{prop}[\textbf{Ward identity}]\label{ward_cilt}
Let $\Sigma$ be a compact Riemann surface with real-analytic boundary parametrizations $\zetabf=(\zeta_1,\ldots,\zeta_\b)$. Suppose $\b=\b^++\b^-$ where $\zeta_j$ is outgoing for $j=1,\ldots,\b^+$ and incoming for $j=\b^++1,\ldots,\b$. Let $\zbf=(z_1,\ldots,z_\s)$ be distinct marked points on $\Sigma^\circ$, $\vbf=(v_1,\ldots,v_\s)$ with $v_j\in T_{z_j}\Sigma\setminus\{0\}$ for each $j$, $\alphabf=(\alpha_1,\ldots,\alpha_\s)\in\Rb^\s$ electric charges such that $\alpha_j>Q$, $\alpha_jR\in\Zb$ for all $j$, and $\mbf=(m_1,\ldots,m_\s)\in\Zb^\s$ magnetic charges. Let $g$ be an admissible metric on $\Sigma$. We have the amplitude operator $\Ac_{\Sigma,g,\vbf,\alphabf,\mbf,\zetabf}:\Hc^{\otimes\b^-}\to\Hc^{\otimes\b^+}$ with $\Hc=L^2(\DT)$. Finally, let ${\rm w}$ be a holomorphic vector field on $\Sigma$ (analytic up to the boundary). For each $j=1,\ldots,\b$, let ${\rm w}_j:=\zeta_j^*{\rm w}=-\sum_{n\in\Zb}w_{jn}z^{n+1}\pa_z$, where we extend $\zeta_j$ analytically so that ${\rm w}_j$ is holomorphic on an open neighborhood of $\Tb$. Then the following identity holds:
\begin{align*}
0 &= \sum_{j=1}^{\b^+}\Hbf_{{\rm w}_j}^{(j)}\Ac_{\Sigma,g,\vbf,\alphabf,\mbf,\zetabf} - \sum_{j=1}^{\b^-}\Ac_{\Sigma,g,\vbf,\alphabf,\mbf,\zetabf}\Hbf_{{\rm w}_{\b^++j}}^{(j)}
+ \sum_{j=1}^{\s}\left(\Delta_{\alpha_j,m_j}(\div_gW)(z_j)+W_\vbf^{(j)}\right)\Ac_{\Sigma,g,\vbf,\alphabf,\mbf,\zetabf} \\
&\qquad+ c_\L\left(-\sum_{j=1}^{\b^+}\frac{\Re w_{j0}}{12}+\sum_{j=\b^++1}^{\b}\frac{\Re w_{j0}}{12}-\frac{1}{96\pi}\int_\Sigma 2K_g\div_gW\,\d v_g\right)\Ac_{\Sigma,g,\vbf,\alphabf,\mbf,\zetabf},
\end{align*}
where $\Hbf_{{\rm w}}^{(j)}:=\id_\Hc \otimes \cdots \otimes \underset{j}{\underbrace{\Hbf_{\rm w}}} \otimes \cdots \otimes \id_\Hc$ acts on the $j$-th $\Hc$, $W={\rm w}+\bar{{\rm w}}$, and $W_\vbf^{(j)}$ denotes differentiation with respect to $v_j$ in the direction of $W$ at $z_j$.
\end{prop}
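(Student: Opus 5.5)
The plan is to follow the real-case strategy of \cite{Segal_semigroup}: realize the deformation generated by ${\rm w}$ as a diffeomorphism of $\Sigma$, and compare two computations of the $t$-derivative of the amplitude along it.

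Let $\psi_t$ be the (local) flow of the real vector field $W={\rm w}+\bar{\rm w}$. Since $\Sigma$ has boundary, $\psi_t$ does not preserve $\Sigma$. We therefore extend $\Sigma$ slightly past each boundary circle, using the analytic extension of $\zeta_j$, so that $\psi_t$ is defined on a neighborhood of $\Sigma$ for small $|t|$.

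\textbf{Step 1: diffeomorphism invariance.} Consider the pushed-forward data $(\psi_t(\Sigma),(\psi_t)_*g,(\psi_t)_*\vbf,\alphabf,\mbf,\psi_t\circ\zetabf)$. By diffeomorphism invariance of amplitudes (\cite{CILT,BCILT}), this amplitude is independent of $t$. We rewrite it as an amplitude on a surface carrying the original holomorphic structure, but with the following changes:
\begin{itemize}
\item the boundary parametrizations are deformed to $\psi_t\circ\zeta_j$;
\item the marked points move to $\psi_t(z_j)$;
\item the tangent vectors are pushed forward;
\item the metric is changed conformally.
\end{itemize}
The last point uses that $\psi_t$ is conformal to first order, since ${\rm w}$ is holomorphic. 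More precisely, $\psi_t^*g=e^{t\,\div_gW+O(t^2)}g$ modulo $O(t^2)$ non-conformal corrections. The $O(t^2)$ corrections do not affect the derivative at $t=0$.

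\textbf{Step 2: differentiate at $t=0$.} We split the $t$-derivative into three contributions, whose sum must vanish.

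\emph{(a) Conformal change of metric.} We apply the Weyl anomaly formula (conformal covariance, \cite[Theorem 1.2]{BCILT}) with $\varphi=t\,\div_gW$. This gives the factor $\Delta_{\alpha_j,m_j}(\div_gW)(z_j)$ at each insertion. It also gives $\frac{c_\L}{96\pi}\int(|d\varphi|^2+2K_g\varphi)$, whose derivative at $t=0$ is the term $-\frac{c_\L}{96\pi}\int_\Sigma 2K_g\div_gW\,\d v_g$, up to the sign convention. The metric can be kept admissible by choosing the modification to be supported away from the boundary collars, exactly as in \cref{annulus_differential}.

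\emph{(b) Marked points and tangent vectors.} The motion of $z_j$ and the push-forward of $v_j$ give $W^{(j)}_\vbf\Ac$. This uses differentiability of the amplitude in $(\zbf,\vbf)$, which I would take from the explicit integral representation with smooth dependence of the Green function and harmonic terms.

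\emph{(c) Boundary parametrizations.} Cut a thin annular collar along each boundary circle, as in \cref{fig_cutannuli}, and apply the gluing axiom. The boundary deformation then only affects the collar amplitudes. By diffeomorphism invariance, each collar amplitude can be taken of the form $\Ac_{\Ab_{f_t},g_{f_t},\zetabf_{f_t}}$ with $f_0=r\,\id$. Its derivative is given by \cref{annulus_differential}. This produces $\pm\Hbf^{(j)}_{{\rm w}_j}$ composed with $\Ac$, together with central terms $\mp c_\L\Re w_{j0}/12$. Outgoing and incoming circles are treated by the outer-annulus adjoint relation from the proof of \cref{annulus_differential}, and this is the source of the opposite signs. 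We then conjugate $\Hbf$ back across the collar using $\Hbf_{\rm w}\Tbf_{f_0}=\Tbf_{f_0}\Hbf_{\frac1r{\rm w}(r\cdot)}$. The $\pa_tS^0_\L$ terms from the collars cancel against the collar part of the Weyl anomaly.

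\textbf{Main obstacle.} The delicate step is justifying the chain rule. The derivative of the glued amplitude must be shown to equal the composition of the derivative of the collar amplitude with the (fixed) remaining amplitude. The collar derivative only converges in $\Dc(\wt\Qc^0)\to\Dc'(\wt\Qc^0)$ topologies, not in operator norm on $\Hc$.

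I would handle this using \cref{HilleYosida_lemma}(2): $\Tbf_f$ maps $\Dc'(\wt\Qc^0)\to\Dc(\wt\Qc^0)$. Each collar is split as a product of two semigroup elements, so the rest of the surface is composed with a smoothing factor $\Tbf_{r'\id}$. This makes $\Hbf_{{\rm w}_j}\Ac$ a well-defined bounded operator, and the difference quotients converge strongly, as in \cref{T_f_differentiable}. Finally, the case $\b=0$ or $\b^\pm=0$ is handled in the same way, with the corresponding sums empty.
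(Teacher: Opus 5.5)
Your proposal follows the paper's proof essentially step by step. You extend the flow of ${\rm w}$ past $\pa\Sigma$ and use diffeomorphism invariance, then cut collars $\xi_j(\Ab_{f_{j,t}})$ with $f_{j,0}=r\,\id$ and glue. You differentiate via the Weyl anomaly on the fixed part and \cref{annulus_differential} on the collars, and the $S^0_\L$ terms collect into $\pa_t|_{t=0}S^0_\L(f_t(\Sigma),g,(f_t)_*g)$.

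Two minor points. The flow of a holomorphic vector field is exactly biholomorphic, so there are no non-conformal corrections at all. Also, no conjugation across the collar is needed, since \cref{annulus_differential} already places $\Hbf_{\v/r}=\Hbf_{{\rm w}_j}$ on the incoming side. Your attention to the weak-topology chain rule, handled via the smoothing of $\Tbf_f$, is a point the paper leaves implicit.
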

\begin{proof}
See \cite[Corollary 4.8]{block} for the counterpart in real Liouville CFT; both the statement and the proof are parallel.
Consider the closed Riemann surface $\hat{\Sigma}$ obtained by gluing a disk $\Db$ or $\Db^*$ along each boundary circle $\zeta_j(\Tb)\subset\pa\Sigma$ via $\zeta_j$. We extend ${\rm w}$ to a neighborhood of $\Sigma$ in $\hat{\Sigma}$. For $t$ small, the flow $t\mapsto f_t$ of ${\rm w}$ exists near $\Sigma$, and $(f_t)_*g$ is well-defined. By diffeomorphism invariance, $\Ac_{f_t(\Sigma),(f_t)_*g,(f_t)_*\vbf,\alphabf,\mbf,f_t\circ\zetabf}=\Ac_{\Sigma,g,\vbf,\alphabf,\mbf,\zetabf}$ is constant in $t$. We shall compute the derivative $\pa_t\big|_{t=0}\Ac_{f_t(\Sigma),(f_t)_*g,(f_t)_*\vbf,\alphabf,\mbf,f_t\circ\zetabf}=0$ in another way. For notational simplicity, suppose $\b^+=0$, i.e., all boundaries are incoming. The general case follows from the analog of \cref{annulus_differential} for outer annuli, obtained by taking suitable adjoints.

Now each $\zeta_j$ extends to a holomorphic embedding $\Db\to\hat{\Sigma}$. Take $r<1$ such that each $\zeta_j$ extends to a holomorphic embedding $r^{-1}\Db\to\hat{\Sigma}$. Let $\xibf=(\xi_1,\ldots,\xi_\b)$ where $\xi_j=\zeta_j(r^{-1}\cdot)$ for each $j$. Let $\Sigma_0:=\Sigma\setminus\bigcup_{j=1}^\b\xi_j(\Db^\circ)$ with boundary parametrizations $\xibf|_{\Tb}$. By admissibility and shrinking $r$ if necessary, we may assume that $\xi_j^*g=g_\Ab$ on $\{r-\varepsilon<|z|<1+\varepsilon\}$ for some $\varepsilon>0$, so that in particular $g|_{\Sigma_0}$ is admissible. For each $j$, let $f_{j,t}:=\xi_j^{-1}\circ f_t\circ\zeta_j\in\Ssg(\Ab)$ and $g_{f_{j,t}}$ an admissible metric on $\Ab_{f_{j,t}}$ depending in a $C^1$ fashion on $t$ with $g_{f_{j,0}}=g_\Ab=|dz|^2/|z|^2$. We have $f_{j,0}:z\mapsto rz$ and $\pa_t|_{t=0}f_{j,t}=r{\rm w}_j$. Let $g_t$ be the metric on $f_t(\Sigma)$ such that $g_t|_{\Sigma_0}=g|_{\Sigma_0}$ and $\xi_j^*g_t=g_{f_{j,t}}$ for each $j$. By gluing,
\[\Ac_{f_t(\Sigma),g_t,(f_t)_*\vbf,\alphabf,\mbf,f_t\circ\zetabf} = C\Ac_{\Sigma_0,g|_{\Sigma_0},(f_t)_*\vbf,\alphabf,\mbf,\xibf|_\Tb} \circ \bigotimes_{j=1}^\b \Ac_{\Ab_{f_{j,t}},g_{f_{j,t}},\zetabf_{f_{j,t}}},\]
where $C$ is a constant depending only on $\b$. Thus
\begin{align*}
0 &= \pa_t\big|_{t=0}\Ac_{f_t(\Sigma),(f_t)_*g,(f_t)_*\vbf,\alphabf,\mbf,f_t\circ\zetabf}\\
&= \left(c_\L\pa_t\big|_{t=0}S_\L^0(f_t(\Sigma),g_t,(f_t)_*g)+\sum_{j=1}^\s\big(-\Delta_{\alpha_j,m_j}\pa_t\big|_{t=0}\rho_t(z_j)+W_\vbf^{(j)}\big)\right) \Ac_{\Sigma,g,\vbf,\alphabf,\mbf,\zetabf}\\
&\phantom{={}}+\sum_{j=1}^\b\left(-c_\L\left(-\frac{\Re w_{j0}}{12}+\pa_t\big|_{t=0}S_\L^0(\Ab_{f_{j,t}},g_{f_{j,t}}, g_\Ab)\right)\Ac_{\Sigma,g,\vbf,\alphabf,\mbf,\zetabf} - \Ac_{\Sigma,g,\vbf,\alphabf,\mbf,\zetabf}\circ\Hbf_{{\rm w}_j}^{(j)}\right),
\end{align*}
where $(f_t)_*g=e^{\rho_t}g_t$ and we used the Weyl anomaly formula (\cite[Proposition 8.12]{CILT}, \cite[Theorem 7.6]{BCILT}) and \cref{annulus_differential}. Note that $t\mapsto f_t$ is also the flow generated by the real vector field $W=2\Re(w(z)\pa_z)$. Since $g_t=g_0$ on $\Sigma_0$, we have $\pa_t\big|_{t=0}\rho_t=-\div_gW$ on $\Sigma_0$. It remains to collect the $S_\L^0$ terms, which give $c_\L$ times
\[\pa_t\big|_{t=0}\left(S_\L^0(f_t(\Sigma),g_t,(f_t)_*g)-\sum_{j=1}^\b S_\L^0(\Ab_{f_{j,t}},g_{f_{j,t}}, g_\Ab)\right).\]
For each $j$,
\[S_\L^0(\xi_j(\Ab_{f_{j,t}}),g_t,(f_t)_*g)-S_\L^0(\Ab_{f_{j,t}},g_{f_{j,t}}, g_\Ab) = S_\L^0(\xi_j(\Ab_{f_{j,t}}),g_t,(f_t)_*g)-S_\L^0(\xi_j(\Ab_{f_{j,t}}),g_t,g) = S_\L^0(\xi_j(\Ab_{f_{j,t}}),g,(f_t)_*g),\]
where we used $\xi_j^*g=g_\Ab$ on $\Ab_{f_{j,t}}$ for $t$ small. Since $g_t=g$ on $\Sigma_0$, the above is equal to
\[\pa_t\big|_{t=0}S_\L^0(f_t(\Sigma),g,(f_t)_*g)=-\frac{1}{96\pi}\int_\Sigma 2K_g\div_gW\,\d v_g.\qedhere\]
\end{proof}

\section{Semigroup theory in Boundary CILT}\label{sec_BCILT}

In this section, we develop the analogous semigroup theory for Boundary CILT. Since the theory is largely parallel, we shall only state the main results and point out where proofs differ.

Segal's semigroup in the Boundary CFT is the subset of elements $f\in\Ssg$ such that $f(\Db\cap\Rb)\subset\Rb$ and $f'(0)>0$. Then $f(\bar{z})=\ol{f(z)}$ for $z\in\Cb$ and $f(\Db^+)\subset\Db^+$. Such maps constitute a subsemigroup of $\Sc$ and we will denote it by $\Ssg^+$. To an element $f\in\Ssg^+$, we associate the upper semiannulus $\Ab_f^+:=\Ab_f\cap\Hb=\ol{\Db^+\setminus f(\Db^+)}$ with Neumann boundary $\Ab_f^+\cap\Rb=[-1,f(-1)]\sqcup[f(1),1]$ and Dirichlet boundary $\Tb^+\sqcup f(\Tb^+)$ parametrized by $\zetabf_f^+:=(\id_{\Tb^+},f|_{\Tb^+})$ (\Cref{fig:segal-semiannulus}). Again, composition on $\Sc^+$ corresponds to gluing the semiannuli. One-parameter semigroups in $\Ssg^+$ are generated by Markovian holomorphic vector fields $\v=v(z)\pa_z$ on $\Db$ satisfying $v(\bar{z})=\ol{v(z)}$ for $z\in\Cb$, i.e., $v(z)=-\sum_{n=0}^\infty v_nz^{n+1}$ with $v_n\in\Rb$ for all $n$. We shall call such vector fields \textit{symmetric}.

The quantization of $\Ssg^+$ by BCILT is defined as follows. Fix parameters $\mubf=(\mu,\mu^{\rm R},\mu^{\rm L})\in\Cb^3$. For $f\in\Ssg^+$, we define the operator $\Tbf_f^+$ on $L^2(\DTh)$ as follows. First let $\Ec(\DTh)$ be the linear space spanned by functions $F:\DTh\to\Cb$ of the form
\begin{equation}
F(\wt{\varphi}^\h) = e^{\i nc/R}\, P(\la\varphi^\h,h_1\ra,\ldots,\la\varphi^\h,h_m\ra),
\end{equation}
where $n\in\Zb$, $m\in\Nb$, $P$ is a complex polynomial, $h_1,\ldots,h_m\in C^\infty(\Tb)$. Here the bracket is still $\la\varphi^\h,h_j\ra:=\frac{1}{2\pi}\int_\Tb\varphi^\h\,\ol{h_j}\,\d\theta$, where $\varphi^\h$ is viewed as an even distribution on $\Tb$, i.e., $\varphi^\h(-\theta)=\varphi^\h(\theta)$.
For $F\in\Ec(\DTh)$, we define
\begin{equation}\label{eq_def_T_f^+}
(\Tbf_f^+F)(\wt{\varphi}^\h) := |f'(0)|^{-\frac{Q^2}4} \,\Eb\left[F\left((P\wt{\varphi}^\h+X_{\Db^+})\circ f\big|_{\Tb^+} + \i Q\log\tfrac{|f'|}{|f|}\big|_{\Tb^+}\right)\,e^{-\int_{\Ab_f^+}M_\beta^{\mubf}(P\wt{\varphi}^\h+X_{\Db^+},\d x)}\right],
\end{equation}
where $X_{\Db^+}$ is the GFF on $\Db^+$ with Dirichlet boundary $\Tb^+$ and Neumann boundary $\Db^+\cap\Rb=[-1,1]$, the expectation is over $X_{\Db^+}$, and the potential term is (with $X=P\wt{\varphi}^\h+X_{\Db^+}$)
\begin{equation}
M_\beta^{\mubf}(X,\d x):=\mu\,|x|^{\beta Q}M_\beta(X,\d x) + \mu^{\rm R}\,{\bf1}_{x>0}\,|x|^{\frac{\beta Q}{2}}L_\beta(X,\d x) + \mu^{\rm L}\,{\bf1}_{x<0}\,|x|^{\frac{\beta Q}{2}}L_\beta(X,\d x),
\end{equation}
where $M_\beta$ is the bulk imaginary GMC and $L_\beta$ is the boundary imaginary GMC.


The infinitesimal generator of the quantized semigroup $f\mapsto\Tbf_f^+$ is given as follows. We define the following operators on $\Ec(\DTh)$:
\begin{equation}
\partial_0:=\partial_c,
\qquad\partial_n:=\partial_{\varphi_n^\h}=\sqrt {2n}\partial_{x_n^\h},
\qquad n>0,
\end{equation}
\begin{equation}
\Abf_0^+:=\i(\pa_c+\i\tfrac{Q}{2}),
\qquad\mathbf A_n^+:=\tfrac\i2(\partial_n+2n\varphi_{-n}^\h\textbf{1}_{n<0}),\qquad n\neq0.
\end{equation}
For $n\in\Zb$, we define
\begin{equation}
\begin{gathered}
\Lbf_n^+ := (n+1)Q\Abf_n^+ + \sum_{m\in{\mathbb Z}}:\!\Abf_{n-m}^+\Abf_m^+\!: +\; V_n^{\mubf},\\
V_n^\mubf := \mu \,e^{\i\beta c}\int_0^{\pi}e^{\i\beta\varphi^\h(\theta)}\,(2|\!\sin\theta|)^{\frac{\beta^2}{2}}\cos n\theta\,\d\theta + \mu^{\rm R}\,e^{\i\frac{\beta}2c}\,e^{\i\frac{\beta}2\varphi^\h(0)} + \mu^{\rm L}\,(-1)^n\,e^{\i\frac{\beta}2c}\,e^{\i\frac{\beta}2\varphi^\h(\pi)},
\end{gathered}
\end{equation}
where the rigorous definition of $V_n^\mubf$ will be given shortly. For a symmetric holomorphic vector field $\v = -\sum_{n=0}^\infty v_n z^{n+1} \pa_z$, we define
\begin{equation}
\Hbf_\v^+:=\sum_{n=0}^\infty v_n\,\Lbf_n^+.
\end{equation}
The case $\v=-z\pa_z$
\begin{equation}
\Hbf^+:=\Hbf_{-z\pa_z}^+=\Lbf_0^+
\end{equation}
is an additional Hamiltonian in BCILT. As always, we add a superscript $^0$ to denote the result of setting $\mubf=0$. To make sense of these expressions, we introduce the positive symmetric quadratic form $\wt{\Qc}^{+,0}$ on $L^2(\DTh)$ given by
\begin{equation}\label{Qtilde+0}
\wt{\Qc}^{+,0}(F,F) :=  \|\pa_cF\|_{L^2}^2+\frac{Q^2}4\|F\|_{L^2}^2+2\sum_{n\geq1}\|\Abf_n^+F\|_{L^2}^2.
\end{equation}
One checks that it is closable and satisfies $\wt{\Qc}^{+,0}(F,G)=\la \Hbf^{+,0}F,G\ra_{L^2}+\frac{Q^2}2\la F,G\ra_{L^2}$ for $F,G\in\Ec(\DTh)$. We have similar bounds for $\Lbf_n^{+,0}$ in terms of $\wt{\Qc}^{+,0}$ and a constant $C$ depending on $Q,R$,
\begin{equation*}
\forall F,G\in\Ec(\DTh),\ \     |\langle\mathbf L^{+,0}_nF,G\rangle_{L^2}|\leq C(1+|n|)\sqrt{\wt{\Qc}^{+,0}(F,F)\wt{\Qc}^{+,0}(G,G)}
\end{equation*}
as in \cref{L_n^0_estimate}, so that the operators $\Lbf_n^{+,0}$ ($n\in\Zb$) extend to bounded operators $\Dc(\wt{\Qc}^{+,0})\to\Dc'(\wt{\Qc}^{+,0})$, and the series $\Hbf_\v^{+,0}=\sum_{n=0}^\infty v_n\,\Lbf_n^{+,0}$ converges in the space of such operators. 

Now let us define the potential operators. For a symmetric holomorphic vector field $\v = -\sum_{n\geq0} v_n z^{n+1} \pa_z$ on $\Db$, we define $V_\v^\mubf$ by the following regularization procedure. For $\delta>0$, we define the arc average $\widetilde{\varphi}^{\text{h,arc}}_\delta(\theta):=\frac{1}{\int_{[\theta-\delta,\theta+\delta]\cap[0,\pi]}\d\theta'}\int_{[\theta-\delta,\theta+\delta]\cap[0,\pi]}\widetilde{\varphi}(\theta')\d\theta'$ and
\begin{gather*}
V_\v^{\cap,(\delta)}:=\int_\delta^{\pi-\delta}\,e^{\i\beta\widetilde{\varphi}^{\text{h,arc}}_\delta(\theta)+\frac{\beta^2}{2}\mathbb E[{\varphi}^{\text{h,arc}}_\delta(\theta)^2]}(2|\!\sin\theta|)^{\frac{\beta^2}{2}}\,\d\varrho_\v(\theta),\\
V^{+,{\rm R},(\delta)}:=e^{\i\frac\beta2\widetilde{\varphi}^{\text{h,arc}}_\delta(0)+\frac{\beta^2}{8}\mathbb E[\varphi^{\h,\text{arc}}_\delta(0)^2]},\qquad V^{+,{\rm L},(\delta)}:=e^{\i\frac\beta2\widetilde{\varphi}^{\text{h,arc}}_\delta(\pi)+\frac{\beta^2}{8}\mathbb E[\varphi^{\h,\text{arc}}_\delta(\pi)^2]},
\end{gather*}
where $\d\varrho_\v$ is as in \eqref{defn_V_v}. Then $V_\v^\mubf$ is defined by
\[\la V_\v^\mubf F,G\ra_{L^2}:=\lim_{\delta\to0}\langle V_\v^{\mubf,(\delta)}F,G\rangle_{L^2}\]
for $F,G\in\Ec(\DTh)$, where
\[
V_\v^{\mubf,(\delta)}:=\mu\, V_\v^{\cap,(\delta)}- \mu^{\rm R}\,v(1)\,V^{+,{\rm R},(\delta)}+\mu^{\rm L}\,v(-1)\,V^{+,{\rm L},(\delta)}.\]
The existence of these limits is a simple Cameron--Martin calculation. In fact, the limits $V^{+,{\rm R}}:=\lim_{\delta\to0}V^{+,{\rm R},(\delta)}$ and $V^{+,{\rm L}}:=\lim_{\delta\to0}V^{+,{\rm L},(\delta)}$ satisfy for $F,G\in\Ec(\DTh)$,
\begin{equation}
\begin{aligned}
\la V^{+,{\rm R}}F,G\ra_{L^2} &= \la e^{\i\frac\beta2 c}F(\cdot-\i\beta\log|e^{\i\theta}-1|),G(\cdot+\i\beta\log|e^{\i\theta}-1|)\ra_{L^2},\\
\la V^{+,{\rm L}}F,G\ra_{L^2} &= \la e^{\i\frac\beta2 c}F(\cdot-\i\beta\log|e^{\i\theta}+1|),G(\cdot+\i\beta\log|e^{\i\theta}+1|)\ra_{L^2}.
\end{aligned}
\end{equation}
The operators $V_n^\mubf$ ($n\in\Zb$) are defined similarly. We have the following analog of \cref{V_v_estimate}:

\begin{lemma}\label{Q_v^+_bound}
Let $N\geq0$. For $F,G\in\Ec(\DTh)$, we have
\begin{equation}\label{eq_V_bound_bdy}
|\langle V_\v^\mubf  F,G\rangle_{L^2}|\leq C\, (N+Q^2/4)^{\frac{\beta^2}{2}-1} \sup_{\theta\in[0,2\pi]}|\Re(e^{-\i\theta}v(e^{\i\theta}))|\,\sqrt{(N\|F\|_{L^2}^2+\widetilde{\mathcal Q}^{+,0}(F,F))(N\|G\|_{L^2}^2+\widetilde{\mathcal Q}^{+,0}(G,G))},
\end{equation}
where $C>0$ only depends on $\beta$ and $\mubf$. In particular, $V_\v^\mubf$ extends to a bounded operator $\Dc(\wt{\Qc}^{+,0})\to\Dc'(\wt{\Qc}^{+,0})$.
\end{lemma}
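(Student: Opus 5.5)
We split $V_\v^\mubf$ into its bulk piece $\mu V_\v^{\cap}$ and the two boundary pieces $\mu^{\rm R}v(1)V^{+,{\rm R}}$, $\mu^{\rm L}v(-1)V^{+,{\rm L}}$, and bound each by the right-hand side of \eqref{eq_V_bound_bdy}. Note $|v(\pm1)|=|\Re(e^{-\i\theta}v(e^{\i\theta}))|$ at $\theta=0,\pi$, since $v$ is symmetric, so $v(\pm1)\in\Rb$. Hence the factor $\sup_\theta|\Re(e^{-\i\theta}v(e^{\i\theta}))|$ appears naturally in all three terms.

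\textbf{Bulk piece.} We use the evenness identification of $\DTh$ with even elements of $\DT$. Then $\varphi^\h$ is a log-correlated field on $[0,\pi]$ with covariance $-\log|e^{\i\theta}-e^{\i\theta'}|-\log|e^{\i\theta}-e^{-\i\theta'}|$. The weight $(2|\sin\theta|)^{\beta^2/2}$ exactly compensates the self-interaction with the mirror point $e^{-\i\theta}$. We then repeat the argument of \cref{lem_App_1} (the proof of \cref{V_v_estimate}):
\begin{itemize}
\item[(i)] Apply the imaginary Cameron--Martin formula to write $\la V_\v^{\cap,(\delta)}F,G\ra$ as an integral over $\theta$ of $\la F(\cdot+\i\beta K^\h_{\delta,\theta}),G(\cdot+\i\beta K^\h_{\delta,\theta})\ra$, where $K^\h_{\delta,\theta}$ is the arc-averaged even Green kernel.
\item[(ii)] Write the shifts via the Heisenberg modes $\Abf_n^+$. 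This expresses the pairing through the resolvent $(N+\Hbf^{+,0}+Q^2/2)^{-1/2}$ acting on exponentials of the creation operators.
\item[(iii)] Use the Green's function estimates from the appendix, now for the doubled kernel, to obtain the factor $(N+Q^2/4)^{\beta^2/2-1}$ uniformly in $\delta$.
\end{itemize}
The weight $(2|\sin\theta|)^{\beta^2/2}$ is bounded, so near $\theta=0,\pi$ the estimate is no worse than the full-circle case.

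\textbf{Boundary pieces.} These are single boundary insertions with weight $\beta/2$, so the variance is halved and the singularity is $|e^{\i\theta}\mp1|^{\ldots}$. By the explicit formula for $\la V^{+,{\rm R}}F,G\ra$, the pairing is $\la e^{\i\frac\beta2c}e^{-\i\beta\Phi}F,e^{\i\beta\Phi^*}G\ra$ with $\Phi=\log|e^{\i\theta}-1|=-\sum_{n\ge1}\frac{\cos n\theta}{n}$. In terms of modes, $\Phi$ has coefficients $1/n$ against $\varphi_n^\h$. We therefore expand in the eigenbasis of $\Hbf^{+,0}$, where the $n$-th mode costs energy $n$. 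The operator $e^{\i\frac\beta2 c}e^{\i\beta\sum\frac{1}{n}\Abf_{-n}^+/\sqrt{\cdot}}$ then has $\Dc(\wt{\Qc}^{+,0})\to\Dc'(\wt{\Qc}^{+,0})$ norm controlled by
\[
\sup_{\lambda}\,(N+\lambda+Q^2/4)^{-1}\cdot(\text{growth of the coherent-state overlaps}).
\]
Concretely, we compute $\|(N+\Hbf^{+,0}+Q^2/4)^{-s/2}e^{\i\frac\beta2\varphi^\h(0)}(N+\Hbf^{+,0}+Q^2/4)^{-s/2}\|$ with $s=1$. The Gaussian overlap gives a generating function $\prod_n(1-q^n)^{-\beta^2/4}$-type behaviour, whose Laplace transform yields $(N+Q^2/4)^{\beta^2/4-1}$. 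This is $\le C(N+Q^2/4)^{\beta^2/2-1}$ only when $N+Q^2/4\le 1$; otherwise we must use the stronger exponent. Since $\beta^2/4-1<\beta^2/2-1$, for large $N$ the boundary term decays faster. So the stated bound holds, possibly after adjusting $C$ via the lower bound $N+Q^2/4\ge Q^2/4>0$.

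\textbf{Main obstacle.} The main obstacle is the uniform-in-$\delta$ Green's function estimate for the bulk piece near the corners $\theta\in\{0,\pi\}$. There the arc average is truncated to $[0,\pi]$, the even kernel doubles, and one must check that truncated arcs still yield a martingale-compatible regularization with variance at most $-\log\delta+C$. I would first try to dominate the truncated arc-average covariance by the full-circle one plus a bounded correction. Then I would reduce to the estimates already proved for \cref{V_v_estimate}, with the weight $(2|\sin\theta|)^{\beta^2/2}$ absorbing the mirror term.
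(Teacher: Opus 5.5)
\textbf{Main gap: no mechanism in the bulk piece.} The bulk piece is the heart of the lemma, and there your steps (ii)--(iii) state the desired outcome without a mechanism. Nothing in them converts the $\theta$-by-$\theta$ Cameron--Martin representation into a $\delta$-uniform bound by $\wt{\Qc}^{+,0}$.

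For $1\le\beta^2<2$, no argument that bounds each $\theta$ separately can succeed. Fix an interior $\theta$ and apply the integrand of $V_\v^{\cap,(\delta)}$ to the constant function $\mathbf 1\in\Dc(\wt{\Qc}^{+,0})$. By Mehler's formula, its squared $\Dc'(\wt{\Qc}^{+,0})$-norm is of order $\int_0^1(t+\delta)^{-\beta^2}\,\d t$, which is unbounded as $\delta\to0$. So cancellations between different $\theta$ must be exploited.

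The paper does this through the sandwiched estimate \eqref{eq_estimate_PVP_bdy}, whose singularity has the product form $(1-e^{-s})^{-\beta^2/4}(1-e^{-t})^{-\beta^2/4}$. This product form comes from the Cotlar--Stein lemma. Conditioning on the two circles bounds $\|T^+_{s,t,\delta}(\theta')^*T^+_{s,t,\delta}(\theta)\|$ by a constant times $(1-e^{-t})^{-\beta^2}|e^{\i\theta}-e^{\i\theta'}|^{-\beta^2}$, and symmetrically in $s$. The square root of this is integrable in $\theta$. Plugging the result into $(\Hbf^{+,0}+\lambda)^{-1/2}=\pi^{-1/2}\int_0^\infty t^{-1/2}e^{-\lambda t}e^{-t\Hbf^{+,0}}\,\d t$ yields exactly $(N+Q^2/4)^{\beta^2/2-1}$, with convergent integrals for all $\beta^2<2$.

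\textbf{Mirror terms.} Your remark that $(2|\sin\theta|)^{\beta^2/2}$ cancels the mirror self-interaction only handles the diagonal variance. The conditioned computation also contains reflected terms $G_{D,s+t}(z,\bar w)$, including cross terms between the two circles. The paper discards these using the positive definiteness of $(z,w)\mapsto G(z,\bar w)$ on the half annulus (\cref{lem_AI3}), before invoking \cref{lemma_lowerB_DGreen,lemma_upper_Green_DOZZ}. Your plan contains neither this ingredient nor Cotlar--Stein, and no substitute for either.

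\textbf{Smaller points.}
\begin{itemize}
\item Your \emph{main obstacle} is misplaced. $V_\v^{\cap,(\delta)}$ integrates over $[\delta,\pi-\delta]$, so its arcs are never truncated. No martingale structure enters this lemma; that is needed only in \cref{lemma_gmc_delta_exp_bound}.
\item Near the corners, the arc-averaged variance of $\varphi^\h$ is about $-2\log\delta$, not $-\log\delta+C$. The excess is exactly what the weight compensates.
\item For the boundary insertions, the \emph{generating function} step is a heuristic, not an operator-norm estimate. The paper obtains \eqref{eq_claim_VR} by conditioning on the two circles and bounding the Robin-mass difference via \cref{lemma_lowerB_DGreen}. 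This gives a factor $\min(1-e^{-s},1-e^{-t})^{-\beta^2/4}$, which then feeds into the same resolvent integral.
\item Your final bookkeeping there is fine, since $(N+Q^2/4)^{\beta^2/4-1}\le (Q^2/4)^{-\beta^2/4}(N+Q^2/4)^{\beta^2/2-1}$. However, your phrase \emph{only when $N+Q^2/4\le1$} has the direction reversed.
\item Your observation that $|v(\pm1)|\le\sup_\theta|\Re(e^{-\i\theta}v(e^{\i\theta}))|$ for symmetric $v$ is correct, and it is needed for the stated prefactor.
\end{itemize}
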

See \cref{lem_App_2} for the proof.

Let $\v= -\sum_{n=0}^\infty v_n z^{n+1} \pa_z$ be a symmetric Markovian holomorphic vector field on $\Db$ such that \eqref{omega_condition} holds for some $K>0$ large enough. Then one can proceed as in \cref{Q_v_bound} and apply \cref{HilleYosida_lemma} to obtain a closed unbounded operator $\Hbf_\v^+$ with domain $\Dc(\Hbf_\v^+)\subset\Dc(\wt{\Qc}^{+,0})$ such that it generates a strongly continuous semigroup of bounded operators $t\mapsto e^{-t\Hbf_\v^+}$ on $L^2(\DTh)$ and that its extension to a bounded operator $\Dc(\wt{\Qc}^{+,0})\to\Dc'(\wt{\Qc}^{+,0})$ coincides with the limit of $\sum_{n\geq0}v_n\,\Lbf_n^+$.

The following theorem summarizes the main results on the boundary semigroup theory:

\begin{thm}[\textbf{Boundary semigroup theory}]\mbox{}\label{thm_BCILT_semigroup}
\begin{enumerate}[label=\emph{(\arabic*)}]
\item Let $f\in\Ssg^+$ and $g_f$ a Neumann extendible admissible metric on $\Ab_f^+$ (i.e., it is the restriction to $\Ab_f^+$ of an even admissible metric on $\Ab_f$). Viewing the BCILT amplitude $\Ac_{\Ab_f^+,g_f,\zetabf_f^+}$ of $(\Ab_f^+,g_f,\zetabf_f^+)$ as an operator $L^2(\DTh)\to L^2(\DTh)$, we have
\[\Tbf_f^+=\frac{e^{-\frac{c_\L}{12}W^+(f,g_f)}}{(2\pi)^{3/4}}\Ac_{\Ab_f^+,g_f,\zetabf_f^+},\]
where $W^+(f,g_f):=\frac{1}{2}W(f,g_f)$ with $W$ as in \cref{prop_semigroup_as_amplitude}.
\item The map $f\in\Ssg^+\mapsto \Tbf_f^+\in\Lc(L^2(\DTh))$ is a semigroup homomorphism, i.e., $\Tbf_{f_1}^+\circ\Tbf_{f_2}^+=\Tbf_{f_1\circ f_2}^+$ for $f_1,f_2\in\Ssg^+$.
\item Let $\v= -\sum_{n=0}^\infty v_n z^{n+1} \pa_z$ be a symmetric Markovian holomorphic vector field on $\Db$ such that \eqref{omega_condition} holds for some $K>0$ large enough. Let $t\mapsto f_t$ be the flow generated by $\v$.
For $t>0$, we have $e^{-t\Hbf_\v^+}=\Tbf_{f_t}^+$.
\item The map $f\in\Ssg^+\mapsto \Tbf_f^+\in\Lc(L^2(\DTh))$ is continuous and differentiable with differential
\[D_\v\Tbf_f^+ = -\Tbf_f^+\circ\Hbf_{\rm w}^+,\qquad {\rm w} = \tfrac{v(z)}{f'(z)}\,\pa_z.\]
\end{enumerate}
\end{thm}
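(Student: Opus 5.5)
The plan is to treat \cref{thm_BCILT_semigroup} by doubling: an even field on $\Db$ restricts to a field on $\Db^+$ with Neumann condition on $[-1,1]$. Each of the four items will then be obtained by rerunning the corresponding bulk argument (\cref{prop_semigroup_as_amplitude,T_f_semigroup,prop_FK_formula,prop_Tf_continuity,prop_general_T_f_diff}) on $\Hc^+=L^2(\DTh)$, viewed as the even subspace of $\Hc$. The only genuinely new ingredients are the boundary GMC potentials at $\pm1$ and the absence of winding.

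For item (1), I apply the free gluing axiom \cite[Theorem 4.1]{BCILT} to $\Ab_f^+\#f(\Db^+)=\Db^+$. The gluing is now along the Dirichlet semicircle $f(\Tb^+)$, with constant $(2\pi)^{-3/4}$ from \cite{BCILT}. There is no magnetic term, hence no factor $|f'(0)|^{R^2k^2/2}$. The curvature term splits as $\int_{\Ab_f^+}K_{g_f}\,\d v_{g_f}+2\int_{\pa_{\rm N}}k_{g_f}\,\d\ell$. Its harmonic part vanishes by the mean value property exactly as before. The Girsanov shift by $\i Q\chi_f$ is the same computation, but the Green's function is the Neumann--Dirichlet one on $\Db^+$. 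By reflection, every quadratic quantity is half of its full-disk counterpart. This gives $W^+=\tfrac12W$, $|f'(0)|^{-Q^2/4}$, and the determinant ratio \eqref{det_formula} with exponent halved; the last uses the Polyakov formula for Neumann extendible metrics from \cite{Boundary_Segal}. Item (2) then follows from the BCILT gluing axiom \cite[Theorem 7.8]{BCILT} together with additivity of $W^+=\tfrac12W$, which is inherited from \eqref{W_additivity}.

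For item (3), I first prove the free Feynman--Kac formula. I restrict \cref{prop_FK_free}, with $k=0$, to even functionals; this is valid because symmetric $\v$ commutes with the reflection $\tau$. Next I establish \cref{Q_v^+_bound} for the bulk part $V^{\cap}$ by the arc-average argument of the Appendix, and for the corner terms $V^{+,{\rm R}},V^{+,{\rm L}}$ via the explicit Cameron--Martin shifts by $\log|e^{\i\theta}\mp1|$. Combined with the $\Lbf_n^{+,0}$ bounds, this yields the sectorial estimates, and \cref{HilleYosida_lemma} produces $\Hbf_\v^+$. The identification $e^{-t\Hbf_\v^+}=\Tbf_{f_t}^+$ then follows the proof of \cref{prop_FK_formula}.

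The change of variables $(s,\theta)\mapsto f_s(e^{\i\theta})$ now also parametrizes the Neumann segments $[f_t(1),1]$ by $s\mapsto f_s(1)$, with speed $-v(1)$; this produces the terms $-\mu^{\rm R}v(1)V^{+,{\rm R}}$ and $\mu^{\rm L}v(-1)V^{+,{\rm L}}$. The main obstacle is the analogue of \cref{lemma_gmc_delta_exp_bound}, namely a uniform-in-$\delta$ exponential moment bound for the regularized potential integrated along the flow, now including the one-dimensional corner contributions. My first attempt will be the white-noise martingale argument with the Neumann heat kernel on a doubled disk of radius $\mathcal R$. The corner term contributes $u^2\lesssim\int\!\!\int|f_{s_1}(1)-f_{s_2}(1)|^{-\beta^2/2}\,\d s_1\,\d s_2$. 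Since $|f_{s_1}(1)-f_{s_2}(1)|\approx e^{-\Re\omega\,s}|s_1-s_2|$ locally and $\beta^2/2<1$, this integral is finite and grows at most linearly in $T$. I will then conclude via resolvent convergence using \cref{lemma_resolvent_technical}.

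For item (4), I verify \cref{assmp_diff} with $\mathcal{AS}=\Ssg^+$ and tangent space the real-coefficient symmetric fields. Closedness under composition and flow-invariance are immediate from the reflection symmetry. Continuity is proved as in \cref{prop_Tf_continuity}: I cut out collars near both Dirichlet semicircles, and the boundary GMC now contributes $\int d(x,\pa_{\rm D})^{-\beta^2/4}\,\d\ell$ near the corners, which is $O(\xi^{1-\beta^2/4})$. The bound \eqref{eq_assump_N_lower} comes from \cref{Q_v^+_bound} with $N$ large. \cref{prop_general_T_f_diff} then gives the differential.
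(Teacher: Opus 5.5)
Your overall architecture matches the paper's sketch:
\begin{itemize}
\item (1): gluing, the halved Girsanov computation and the Neumann Polyakov formula;
\item (2): additivity of $W^+=\frac12W$;
\item (3): the Feynman--Kac argument with the extra change of variables along $[f_t(1),1]$ and $[-1,f_t(-1)]$, plus a martingale bound for the cutoff;
\item (4): verification of \cref{assmp_diff}.
\end{itemize}

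However, the free step of (3) fails as stated. $\Tbf^{+,0}_f$ is \emph{not} the restriction of $\Tbf^0_f$ (same $Q,R$, $k=0$) to even functionals. The mixed GFF $X_{\Db^+}$ has the law of $\frac{1}{\sqrt2}(X_\Db+X_\Db\circ\tau)$, so the half-circle zero mode diffuses twice as fast and the $Q$-terms are normalized differently. With $\Abf_0^+=\i(\pa_c+\i\frac Q2)$, the zero-mode part of $\Lbf_0^{+,0}$ is $-\pa_c^2-\frac{Q^2}{4}$. That of $\Lbf_0^0+\wt\Lbf_0^0$ at $k=0$ is $-\frac12\pa_c^2-\frac{Q^2}{2}$. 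Already on $e^{\i nc/R}$ the eigenvalues $\frac{n^2}{R^2}-\frac{Q^2}{4}$ and $\frac{n^2}{2R^2}-\frac{Q^2}{2}$ differ. The prefactors also differ: $|f'(0)|^{-Q^2/4}$ versus $|f'(0)|^{-Q^2/2}$. Moreover, the $\tau$-invariant subspace of $\Hc$ is strictly larger than $L^2(\DTh)$; for example it contains $y_1y_2$.

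The reduction can be repaired by rescaling. Identify $\wt\varphi^\h$ with $\sqrt2$ times the even part $\frac12(\wt\varphi+\wt\varphi\circ\tau)$, zero mode included. Then apply \cref{prop_FK_free} with parameters $(Q/\sqrt2,R/\sqrt2)$; this is legitimate because the free computation holds for every $Q\in\Cb$. The shift $\i Q\log\frac{|f'|}{|f|}$, the prefactor and the zero-mode diffusion then all match. Alternatively, redo \cite[Propositions 2.11, 2.12]{Virasoro_structure} directly with the mixed GFF. This is the same factor-$\frac12$ bookkeeping you handled correctly in (1).

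There are two further problems in (3).

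First, the integral $\iint_{[0,T]^2}|f_{s_1}(1)-f_{s_2}(1)|^{-\beta^2/2}\,\d s_1\,\d s_2$ is not $O(T)$. By your own local approximation, the integrand is of order $e^{\frac{\beta^2}{2}\Re\omega\min(s_1,s_2)}|s_1-s_2|^{-\beta^2/2}$. So the integral grows like $e^{\frac{\beta^2}{2}\Re\omega T}$, and $e^{C|\mu|^2u^2}$ becomes doubly exponential in $T$, which destroys the Laplace transform defining the resolvent. You must keep the weights $|f_{s_1}'(1)f_{s_2}'(1)|^{\beta^2/4}\approx e^{-\frac{\beta^2}{4}\Re\omega(s_1+s_2)}$ produced by the change of regularization; these are the corner analogue of $k_\delta$ in the proof of \cref{lemma_gmc_delta_exp_bound}. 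They reduce the integrand to roughly $e^{-\frac{\beta^2}{4}\Re\omega|s_1-s_2|}\min(1,|s_1-s_2|)^{-\beta^2/2}$, whose integral over $[0,T]^2$ is $O(T)$.

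Second, the Cameron--Martin formula for $V^{+,{\rm R}},V^{+,{\rm L}}$ only identifies the $\delta\to0$ limit on $\Ec(\DTh)$. Since $\log|e^{\i\theta}\mp1|\notin H^{1/2}(\Tb)$, it yields no $\wt\Qc^{+,0}$-bound by itself. You must first smooth by $e^{-t\Hbf^{+,0}}$ and control the resulting Robin-mass factor. The paper does this by bounding $e^{-s\Hbf^{+,0}}V^{+,{\rm R},(\delta)}e^{-t\Hbf^{+,0}}$: it conditions on the two circles, uses \cref{lemma_lowerB_DGreen}, and then integrates in $s,t$ as in \cref{lem_App_1}. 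Likewise, for $V^{\cap}$ the image-charge terms of the mixed Green's function must be discarded using the positivity of $G(z,\bar w)$ from \cref{lem_AI3}. The bulk arc-average argument does not supply this ingredient.
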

\begin{proof}[Sketch of proofs]
(1): The proof is the same Girsanov transform calculation as in \cref{prop_semigroup_as_amplitude}, upon applying the gluing axiom for the free field to $\Ab_f^+\#f(\Db^+)=\Db^+$. The only nontrivial part is the determinant term $\frac{\det'\Delta_{g|_{f(\Db^+)}}}{\det'\Delta_{g|_{\Db^+}}}$ where $g$ is a Neumann extendible metric on $\Db^+$ extending $g_f$. Now by the Polyakov formula \cite[Proposition 5.10]{Boundary_Segal} (for Neumann extendible metrics on surfaces with both Neumann and Dirichlet boundaries), we have
\[\frac{\det'\Delta_{g|_{f(\Db^+)}}}{\det'\Delta_{g|_{\Db^+}}}=e^{-2S_\L^0(\Db^+,g,f^*g)}=e^{-S_\L^0(\Db,g,f^*g)}=\left(\frac{\det'\Delta_{g|_{f(\Db)}}}{\det'\Delta_{g|_{\Db}}}\right)^{1/2},\]
where we view $g$ as an even metric on $\Db$. Thus we can apply \eqref{det_formula} to get that this is precisely $e^{\frac{1}{6}W^+(f,g_f)}$, as desired.

(2): That $\Tbf_f^+$ is bounded on $L^2(\DTh)$ follows from (1) and the boundedness of the BCILT amplitude (\cite[Theorem 7.5]{BCILT}). The rest follows from \eqref{W_additivity} as in the proof of \cref{T_f_semigroup}.

(3): We emphasize that, in the proof of Proposition \ref{prop_FK_formula}, we only used the existence of the GMC exponential moment, the convergence of $\mathbb E[\exp(|\mu \int_{\mathbb A_t}|x|^{\beta Q} |x-y|^{\beta^2}M_\beta(P\wt{\varphi}^k+X_{\Db},\d x)|)]$ to $1$ (uniformly over $y$ in a thin annulus), and the convergence of exponential moment of GMC cut-off (\cref{lemma_gmc_delta_exp_bound}). Similar to \cite[Proposition 7.6]{boundary_bootstrap}, we have
\begin{equation*}
     \int_{\Ab_{f_t}^+}|x|^{\beta Q} M_\beta(X^+,\d x)=-\int_{0}^t\int_0^{\pi} e^{\i\beta ((X^+\circ f_s)(e^{\i\theta})+\i Q\log\frac{|f'_s(e^{\i\theta})|}{|f_s(e^{\i\theta})|})}\Re\left(e^{-\i\theta}v(e^{\i\theta})\right)\, \d\theta\d s,
\end{equation*}
where $X^+:=P\wt{\varphi}^\h+X_{\Db^+}$. Also we have
\begin{equation*}
    \begin{split}
        &\int_{[f_t(1),1]}|x|^{\frac{\beta Q}{2}}L_\beta(X^+,\d x)=-v(1)\int_0^te^{\i\frac\beta2 ((X^+\circ f_s)(1)+\i Q\log\frac{|f'_s(1)|}{|f_s(1)|})}\d s,\\
        &\int_{[-1,f_t(-1)]}|x|^{\frac{\beta Q}{2}}L_\beta(X^+,\d x)=v(-1)\int_0^te^{\i\frac\beta2 ((X^+\circ f_s)(-1)+\i Q\log\frac{|f'_s(-1)|}{|f_s(-1)|})}\d s
    \end{split}
\end{equation*}
The control of the boundary GMC exponential moment can be found in \cite[Proposition 6.2]{BCILT}. Specifically, given a general function $f(x)$, for some constant $C$ depending on $\beta$,  we have
\begin{equation*}
    \begin{split}
        &\mathbb E\left[\exp\left(\left|\int_{\Ab_{f_t}^+} f(x)M_\beta(X^+,\d x)\right|+\left|\int_{\Ab_{f_t}^+\cap\mathbb R} f(x)L_\beta(X^+,\d x)\right|\right)\right]\\
        \leq{}& e^{C V(f)}\sqrt{\left(1+C\sqrt{U_1(\Sigma,f)}e^{CU_1(\Sigma,f)}\right)\left(1+C\sqrt{U_2(\paN\Sigma,f)}e^{CU_2(\paN\Sigma,f)}\right)},
    \end{split}
    \end{equation*}
     where
    \begin{equation}\label{eq_BGMC_bound}
    \begin{split}
        &U_1(\Sigma,f):=\iint_{(\Ab_{f_t}^+)^2}|f(x)f(y)|(|x-y|^{-\beta^2}+|x-\bar y|^{-\beta^2})\d x\d y,\\
        &U_2(\paN\Sigma,f):=\iint_{(\Ab_{f_t}^+\cap\mathbb R)^2}|f(x)f(y)||x-y|^{-\beta^2/2}\d x\d y,\\
        &V(f):=\int_{\Ab_{f_t}^+}|f(x)|\sup_{y\in\partial\Ab_{f_t}^+}|x-y|^{-\beta^2/2}\d x+\int_{\Ab_{f_t}^+\cap\mathbb R}|f(x)|\sup_{y\in\partial(\Ab_{f_t}^+\cap\mathbb R)}|x-y|^{-\beta^2/4}\,\d x.
    \end{split}
    \end{equation}  
And we can prove the boundary version of \cref{lemma_gmc_delta_exp_bound} by using again the martingale argument.

(4): The continuity follows the discussion in \cite[Proposition 3.11]{Segal_semigroup}, \cite[Lemma 7.10]{boundary_bootstrap} and \cref{prop_Tf_continuity}. Here we point out the main difference in the proof of a BCILT version compared to those. First, there is no magnetic operator on the boundary, so the amplitude becomes simpler. Next is the estimate of GMC exponential moment when the area is small. Similar to \eqref{eq_BGMC_bound}, for some $C,\xi>0$, we bound the small area GMC exponential moment with
\begin{equation*}
\begin{split}
    &\iint_{(\mathbb A_{1,1+C\xi})^2}|x-y|^{-\beta^2}\d x\d y,\ \  \iint_{[1,1+C\xi]^2}|x-y|^{-\frac{\beta^2}{2}}\d x\d y,\\
    &\int_{\mathbb A_{1,1+C\xi}^+}(||x|-1|\wedge\Im x)^{-\frac{\beta^2}2}\d x,\ \ \int_1^{1+C\xi}|x-1|^{-\frac{\beta^2}{4}}\d x,
\end{split}
\end{equation*}
where $\mathbb A_{1,1+C\xi}:=\{z:1\leq|z|\leq 1+C\xi\}$ and $\mathbb A_{1,1+C\xi}^+:=\mathbb A_{1,1+C\xi}\cap \mathbb H$. The integrals above can be controlled by $\xi^{a}$ for some $a>0$.

Next let us consider a sequence of surfaces $(\Sigma_n^+,g)_{n\geq1}$ and a surface $\Sigma^{+,\xi}\subset\cap_{n\geq N}\Sigma_n^+$ for large $N$. Denote by $\paD\Sigma_n^+$ the Dirichlet boundaries of $\Sigma_n^+$ and $\paN\Sigma_n^+$ the Neumann boundaries of $\Sigma_n^+$. Then we can construct GFF $X_n^+$ on $\Sigma_n^+$ with corresponding boundary conditions. We now show how to provide a bound for $\mathbb E_{\widetilde{\boldsymbol{\varphi}}^h}[e^{-\mu\int_{\Sigma^{+,\xi}}f(x)M_\beta(X_n^{+}+Z,\d v_g(x))}]$ uniformly in $n$ and in $Z$ ($Z$ must be real and independent of $X_n^+$) or $\widetilde{\boldsymbol{\varphi}}^h$ the value on the Dirichlet boundary $\paD\Sigma_n^+$.

Denote by $\Sigma_n$ and $\Sigma^\xi$ the Neumann double of $\Sigma^+_n$ and $\Sigma^{+,\xi}$ under reflection $\tau$. We have respectively the Dirichlet Green's function $(2\pi)^{-1}G_n$ and $(2\pi)^{-1}G^\xi$ and mixed Green's function $(2\pi)^{-1}G_n^+(x,y)=(2\pi)^{-1}(G_n(x,y)+G_n(x,\tau(y)))$ and $(2\pi)^{-1}G^{+,\xi}(x,y)=(2\pi)^{-1}(G^\xi(x,y)+G^\xi(x,\tau(y)))$. Since we have $G_n\geq G^\xi$, we also have $G_n^+\geq G^{+,\xi}$. Again by Taylor's expansion we have
\begin{equation*}
    \left|\mathbb E_{\widetilde{\boldsymbol{\varphi}}^h}[e^{-\mu\int_{\Sigma^{+,\xi}}f(x)M_\beta(X_n^{+}+Z,\d v_g(x))}]\right|\leq\mathbb E[e^{|\mu|\int_{\Sigma^{+,\xi}}|f(x)|M_\beta(X^{+,\xi},\d v_g(x))}],
\end{equation*}
where $X^{+,\xi}$ is the GFF on $\Sigma^{+,\xi}$ with covariance $G^{+,\xi}$. Then we just follow the proof of \cite[Lemma 7.10]{boundary_bootstrap} and \cref{prop_Tf_continuity}.
By verifying \cref{assmp_diff}, we can get differentiability from \cref{prop_general_T_f_diff} just as the proof of  \cite[Proposition 7.9]{boundary_bootstrap} in \cite[Section 7.5.1]{boundary_bootstrap} does.
\end{proof}

We also state the Ward identities for BCILT amplitudes. The proof is analogous to that in Section \ref{sec:ward}, so we omit it.

\begin{prop}[\textbf{Ward identity, boundary version}]\label{ward_bcilt}
Let $\Sigma$ be an extended surface (see \cite[Definition 1.1]{BCILT} for the notations) with real-analytic boundary parametrizations $\zetabf=(\zeta_1^\D,\ldots,\zeta_{\b_\D}^\D,\zeta_1^{\MD},\ldots,\zeta_{\b^\MD}^\MD)$. Suppose $\b^\D=\b^{\D,+}+\b^{\D,-}$ where $\zeta_j^\D$ is outgoing for $j=1,\ldots,\b^{\D,+}$ and incoming for $j=\b^{\D,+}+1,\ldots,\b^\D$, and similarly for $\b^{\MD}=\b^{\MD,+}+\b^{\MD,-}$. Let $\zbf=(z_1,\ldots,z_\s)$ be distinct marked points on $\Sigma^\circ$, $\vbf=(v_1,\ldots,v_\s)$ with $v_j\in T_{z_j}\Sigma\setminus\{0\}$ for each $j$, $\alphabf=(\alpha_1,\ldots,\alpha_\s)\in\Rb^\s$ electric charges such that $\alpha_j>Q$, $\alpha_jR\in\Zb$ for all $j$, and $\mbf=(m_1,\ldots,m_\s)\in\Zb^\s$ magnetic charges. Let $\xbf=(x_1,\ldots,x_\t)$ be distinct marked points on $\paN\Sigma\setminus\paD\Sigma$ and $\etabf=(\eta_1,\ldots,\eta_\t)\in\Rb^\t$ boundary electric charges such that $\eta_j>Q$, $\frac{\eta_j}2R\in\Zb$ for all $j$. Let $g$ be a Neumann extendible admissible metric on $\Sigma$. We have the amplitude operator $\Ac_{\Sigma,g,\vbf,\xbf,\alphabf,\mbf,\etabf,\zetabf}:\Hc^{\otimes\b^{\D,-}}\otimes(\Hc^+)^{\otimes\b^{\MD,-}}\to\Hc^{\otimes\b^{\D,+}}\otimes(\Hc^+)^{\otimes\b^{\MD,+}}$ with $\Hc=L^2(\DT)$, $\Hc^+=L^2(\DTh)$. Finally, let ${\rm w}$ be a Neumann extendible holomorphic vector field on $\Sigma$ (analytic up to the boundary). For each $j=1,\ldots,\b^\D$, let ${\rm w}_j^\D:=(\zeta_j^\D)^*{\rm w}=-\sum_{n\in\Zb}w_{jn}^\D z^{n+1}\pa_z$, where we extend $\zeta_j^\D$ analytically so that ${\rm w}_j^\D$ is holomorphic on an open neighborhood of $\Tb$. Similarly, for each $j=1,\ldots,\b^\MD$, let ${\rm w}_j^\MD:=(\zeta_j^\MD)^*{\rm w}=-\sum_{n\in\Zb}w_{jn}^\MD z^{n+1}\pa_z$, which is symmetric. Then the following identity holds:
\begin{align*}
0 &= \sum_{j=1}^{\b^{\D,+}}\Hbf_{{\rm w}_j^\D}^{(j)}\Ac_\Sigma - \sum_{j=1}^{\b^{\D,-}}\Ac_\Sigma\Hbf_{{\rm w}_{\b^{\D,+}+j}^\D}^{(j)} + \sum_{j=1}^{\b^{\MD,+}}\Hbf_{{\rm w}_j^\MD}^{+,(j)}\Ac_{\Sigma} - \sum_{j=1}^{\b^{\MD,-}}\Ac_{\Sigma}\Hbf_{{\rm w}_{\b^{\MD,+}+j}^\MD}^{+,(j)}\\
&\phantom{={}}+ \left(\sum_{j=1}^{\s}\left(\Delta_{\alpha_j,m_j}(\div_gW)(z_j)+W_\vbf^{(j)}\right)+\sum_{j=1}^{\t}\left(\frac{\Delta_{\eta_j}}{2}(\div_gW)(x_j)+W_\xbf^{(j)}\right)\right)\Ac_{\Sigma} \\
&\phantom{={}}+ c_\L\left(-\sum_{j=1}^{\b^{\D,+}}\frac{\Re w_{j0}^\D}{12}+\sum_{j=\b^{\D,+}+1}^{\b^\D}\frac{\Re w_{j0}^\D}{12}-\sum_{j=1}^{\b^{\MD,+}}\frac{w_{j0}^{\MD}}{24}+\sum_{j=\b^{\MD,+}+1}^{\b^\MD}\frac{w_{j0}^\MD}{24}-\frac{1}{96\pi}\int_\Sigma 2K_g\div_gW\,\d v_g\right)\Ac_{\Sigma},
\end{align*}
where $\Ac_\Sigma:=\Ac_{\Sigma,g,\vbf,\xbf,\alphabf,\mbf,\etabf,\zetabf}$ for short and $\Hbf_{{\rm w}}^{(j)},\Hbf_{\rm w}^{+,(j)},W,W_\vbf^{(j)},W_\xbf^{(j)}$ are defined similarly to \cref{ward_cilt}.
\end{prop}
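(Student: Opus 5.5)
The proof follows the scheme of \cref{ward_cilt}. Let $f_t$ be the flow of the Neumann extendible field ${\rm w}$. Since ${\rm w}$ is tangent to $\paN\Sigma$, the flow $f_t$ preserves $\paN\Sigma$ and is defined near $\Sigma$ in the closed double obtained by capping each Dirichlet circle with $\Db$ or $\Db^*$ and each semicircle with $\Db^+$. By diffeomorphism invariance of BCILT amplitudes (\cite[Theorem 7.6]{BCILT}), the amplitude
\[
\Ac_{f_t(\Sigma),(f_t)_*g,(f_t)_*\vbf,f_t(\xbf),\alphabf,\mbf,\etabf,f_t\circ\zetabf}
\]
is constant in $t$. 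I will compute its $t$-derivative a second way and set it to zero.

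First, assume all parametrized boundaries are incoming; outgoing ones are handled by adjoints with $(\mu,\beta)\mapsto(\bar\mu,-\beta)$, exactly as for outer annuli in \cref{annulus_differential}. Shrink each Dirichlet circle by $\xi_j=\zeta_j^\D(r^{-1}\cdot)$ and each semicircle by $\xi_j=\zeta_j^\MD(r^{-1}\cdot)$, and cut off collars. Around circles these collars are annuli $\Ab_{f_{j,t}}$, and around semicircles they are semiannuli $\Ab^+_{f_{j,t}}$ with $f_{j,t}=\xi_j^{-1}\circ f_t\circ\zeta_j$. Since ${\rm w}$ is Neumann extendible, $\pa_tf_{j,t}|_{t=0}=r{\rm w}^\MD_j$ is symmetric. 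The gluing axiom (\cite[Theorem 7.8]{BCILT}) then writes the deformed amplitude as a fixed amplitude on $\Sigma_0$ composed with the collar amplitudes. The metric on $\Sigma_0$ is $g$, while the marked points and tangent vectors move with $f_t$.

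Second, differentiate this decomposition. The Dirichlet annulus collars use \cref{annulus_differential}. The semiannulus collars need the boundary analogue of \cref{annulus_differential}. I would derive it from \cref{thm_BCILT_semigroup}(4) and (1) by the same inner/outer decomposition: uniformize the glued Neumann double by a map commuting with conjugation, so that $f_{\rm in},f_{\rm out}\in\Ssg^+$. The result is
\[
\pa_t|_{t=0}\Ac_{\Ab^+_{f_t}}=-c_\L\Big(-\frac{v_0}{24 r}+\pa_tS^0_\L\Big)\Ac-\Ac\,\Hbf^+_{\v/r}.
\]
Here $W^+=\frac12W$ produces the halved coefficient $\frac{1}{24}$, and $v_0$ is real. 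On $\Sigma_0$, the derivative is given by the Weyl anomaly formula with $\pa_t\rho_t=-\div_gW$. This contributes $\Delta_{\alpha_j,m_j}\div_gW(z_j)$ for bulk insertions and $\frac{\Delta_{\eta_j}}{2}\div_gW(x_j)$ for boundary insertions, the $\frac12$ being the boundary weight convention in \cite[Theorem 7.6]{BCILT}. The motion of the points contributes $W^{(j)}_\vbf$ and $W^{(j)}_\xbf$.

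Third, collect the Liouville action terms. As in \cref{ward_cilt}, admissibility gives $\xi_j^*g=g_\Ab$ on the collars, so the collar $S^0_\L$ contributions combine with the $\Sigma_0$ contribution into $\pa_tS^0_\L(f_t(\Sigma),g,(f_t)_*g)$. For Neumann extendible metrics this equals $-\frac{1}{96\pi}\int_\Sigma 2K_g\div_gW\,\d v_g$, and the geodesic curvature term vanishes at first order because the boundary is preserved and $g$ is Neumann extendible. The main obstacle is the boundary analogue of \cref{annulus_differential}: checking that the symmetric uniformization exists and depends differentiably on $t$, and that the cross term $\Hbf^+_{\rm w}\Tbf^+_{f_0}=\Tbf^+_{f_0}\Hbf^+_{\frac1r{\rm w}(r\cdot)}$ holds. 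I would obtain this cross term from the semigroup property and differentiability in \cref{thm_BCILT_semigroup}, exactly as in the closed case.
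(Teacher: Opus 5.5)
Your proposal is correct and matches the argument the paper intends: the paper omits this proof, saying only that it is analogous to \cref{ward_cilt}, and your adaptation supplies that analogue — semiannulus collars, a boundary version of \cref{annulus_differential} via a conjugation-symmetric uniformization (automatic from uniqueness of the normalized uniformization), the coefficient $\frac{1}{24}$ from $W^+=\frac12 W$, the weight $\frac{\Delta_{\eta_j}}{2}$ from the Weyl anomaly, and the geodesic-curvature term vanishing at first order. One small precision: the capped surface in which you flow still has Neumann boundary, so you should work in its Neumann double (equivalently, note that $f_t$ commutes with the reflection), and you should choose the collar metrics Neumann extendible so that \cref{thm_BCILT_semigroup} applies.
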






\appendix
\section{Boundary GMC estimates}\label{Sec_app_GMC}
This section contains the proof of \cref{V_v_estimate} and \cref{Q_v^+_bound}. The main idea of the proof belongs to another paper studying conformal bootstrap of CILT (by C. Guillarmou, R. Rhodes and the authors), which is still in preparation. We put them here for completeness.

The original proof is based on cut-off of harmonics instead of arc average regularization we use here, since for us the flow deformation complicates the framework. Estimates \cref{lemma_lowerB_DGreen,lemma_upper_Green_DOZZ,lem_AI3} are technical and helped by AI. 
\begin{prop}\label{lem_App_1}
Let $N\geq0$. For $F,G\in\Ec(\DT)$, we have
           \begin{equation*}
           \begin{split}
               \sup_{0<\delta<1}|\langle V^{(\delta)}_\v F,G\rangle_{L^2}|\leq {}&C\, (N+Q^2/2)^{\frac{\beta^2}{2}-1} \\
               &\sup_{\theta\in[0,2\pi]}|\Re(e^{-\i\theta}v(e^{\i\theta}))|\sqrt{(N\|F\|_{L^2}^2+\widetilde{\mathcal Q}^0(F,F))(N\|G\|_{L^2}^2+\widetilde{\mathcal Q}^0(G,G))},
           \end{split}
    \end{equation*}
where $C>0$ only depends on $\beta$. In particular, $V_\v$ extends to a bounded operator $\Dc(\wt{\Qc}^0)\to\Dc'(\wt{\Qc}^0)$.
\end{prop}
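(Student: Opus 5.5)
The strategy is to bound $\la V^{(\delta)}_\v F,G\ra$ through the imaginary Cameron--Martin formula displayed before \cref{rmk_tech_GMC}, which turns the pairing into a single-point insertion:
\[
\la V_\v^{(\delta)}F,G\ra_{L^2}=\int_0^{2\pi R}\!\int_0^{2\pi}e^{\i\beta c}e^{\i\beta Rk\theta}\,\Eb\big[F(\wt\varphi+\i\beta K^\delta_\theta)\,\overline{G}(\wt\varphi+\i\beta K^\delta_\theta)\big]\,\d\varrho_\v(\theta)\,\d c ,
\]
where $K^\delta_\theta:=\Eb[\varphi(\cdot)\varphi_\delta^{\rm arc}(\theta)]$. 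This kernel is a smoothed version of $-\log|e^{\i\cdot}-e^{\i\theta}|$, and its Fourier coefficients are $\frac{1}{2|n|}\,\mathrm{sinc}(n\delta)\,e^{-\i n\theta}$, bounded by $\frac1{2|n|}$ uniformly in $\delta$. Since $|\d\varrho_\v|\le\sup_\theta|\Re(e^{-\i\theta}v(e^{\i\theta}))|\,\d\theta$, the supremum factor in the statement comes out immediately. It then suffices to prove, uniformly in $\theta$ and $\delta$, a bound of the form
\[
\Big|\int \d c\,\Eb[F(\cdot+\i\beta K^\delta_\theta)\overline G(\cdot+\i\beta K^\delta_\theta)]\Big|\le C(N+Q^2/2)^{\frac{\beta^2}2-1}\|F\|_N\|G\|_N,
\]
with $\|F\|_N^2:=N\|F\|^2+\wt\Qc^0(F,F)$. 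By rotation invariance of the circle GFF we may take $\theta=0$.

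To get this bound I would write the shift as a conjugation. Set $\Phi_\delta:=\beta\sum_{n\ne0}\hat K^\delta_n\varphi_n$ (a real Gaussian linear functional). Translating by the imaginary vector $\i\beta K^\delta$ amounts, after undoing Cameron--Martin, to $e^{\i\Phi_\delta}\times e^{\frac{\beta^2}{2}\mathrm{Var}}$ acting multiplicatively. This exhibits the pairing as $\la e^{\i\beta\varphi^{\rm arc}_\delta(0)}F,G\ra$ with its Wick renormalisation. I would then insert the resolvent $(\Hbf^0+N+Q^2/2)^{-1/2}$ on each side, reducing the problem to showing that
\[
(\Hbf^0+N')^{-1/2}\,e^{\i\beta\varphi^{\rm arc}_\delta(0)}\,(\Hbf^0+N')^{-1/2}, \qquad N':=N+\tfrac{Q^2}{2},
\]
is bounded by $C N'^{\frac{\beta^2}2-1}$. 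The power $N'^{\beta^2/2-1}$ is exactly the scaling of $\int_0^\infty t^{\frac{\beta^2}{2}-1}e^{-N't}\,\d t$. Accordingly I would use $(\Hbf^0+N')^{-1/2}=\Gamma(\tfrac12)^{-1}\int t^{-1/2}e^{-t(\Hbf^0+N')}\d t$ and the explicit Mehler/Feynman--Kac kernel of the free semigroup, which is $\Tbf^0_{e^{-t}z}$ by \cref{prop_FK_free}.

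The computation of $e^{-s\Hbf^0}e^{\i\beta\varphi_\delta^{\rm arc}(0)}e^{-t\Hbf^0}$ in the Heisenberg picture is Gaussian. Its operator norm on each winding sector $\Hc_k$ is controlled by $e^{-N'(s+t)}$ times $\exp(\beta^2\Eb[\ldots])$-type factors. These reduce to Green's function bounds of the form $\sum_n \frac{e^{-n(s+t)}}{n}\mathrm{sinc}^2(n\delta)\le -\log(1-e^{-(s+t)})+C$, uniformly in $\delta$, giving $\|e^{-s\Hbf^0}V e^{-t\Hbf^0}\|\lesssim (s+t)^{-\beta^2/2}e^{-N'(s+t)}$. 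The $k$- and $c$-modes contribute the phase $e^{\i\beta c}e^{\i\beta Rk\theta}$ and a shift in $c$, which are harmless in absolute value; the $Q$-dependent drift is absorbed into $N'$.

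Integrating $s^{-1/2}t^{-1/2}(s+t)^{-\beta^2/2}e^{-N'(s+t)}$ over $(0,\infty)^2$ is finite precisely when $\beta^2<2$, and it yields $C N'^{\beta^2/2-1}$. This is a variant of \cref{lemma_resolvent_technical}.

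The main obstacle is the uniform-in-$\delta$ Green's function estimates. Concretely, one needs lower bounds for the covariance of arc-averaged fields along the free flow, and upper bounds showing the sinc-smoothed logarithm never exceeds $-\log|x-y|+C$, so that the short-time singularity is exactly $(s+t)^{-\beta^2/2}$. I would prove these by comparing $\frac{1}{4\delta^2}\iint\log|e^{\i a}-re^{\i b}|^{-1}$ with $\log\max(|1-r|,\delta)^{-1}$ directly, splitting into the cases $1-r\gtrless\delta$.
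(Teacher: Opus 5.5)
There is a genuine gap. Your reduction to a single insertion, bounded uniformly in $\theta$ and $\delta$, asks for something false when $1<\beta^2<2$, which is exactly the range where the proposition is needed.

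To see this, work in the sector $k=0$. Take $F=1$ and $G_M:=e^{\i\beta c}\exp\big(\i\beta\sum_{n=1}^M x_n/\sqrt n+\tfrac{\beta^2}{2}H_M\big)$, where $H_M=\sum_{n=1}^M\frac1n$. This is admissible since $\beta R\in\Zb$, and its Taylor truncations lie in $\Ec(\DT)$.

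The single insertion at $\theta=0$ gives the pairing $2\pi R\exp\big(\beta^2\sum_{n\le M}\mathrm{sinc}(n\delta)/n\big)$, which tends to $2\pi R\,e^{\beta^2H_M}\asymp M^{\beta^2}$ as $\delta\to0$. On the other side, $\|G_M\|_{L^2}^2=2\pi R\,e^{\beta^2H_M}$. Since $\Abf_nG_M=\wt\Abf_nG_M=-\frac\beta2G_M$ for $1\le n\le M$, also $\wt\Qc^0(G_M,G_M)\asymp M\|G_M\|_{L^2}^2$. So the ratio of your single-point pairing to the right-hand side grows like $M^{(\beta^2-1)/2}$.

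The same defect appears in your key estimate $\|e^{-s\Hbf^0}Ve^{-t\Hbf^0}\|\lesssim(s+t)^{-\beta^2/2}$, which is false. The insertion sits at log-distance $t$ from one cut and $s$ from the other. Conditioning on both cuts reduces its variance only according to the nearer cut, so the uniform-in-$\delta$ single-point bound is of order $\min(s,t)^{-\beta^2/2}$.

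Concretely, $(V_0^{(\delta)})^*$ is a unitary times the constant $e^{\frac{\beta^2}{2}\Eb[(\varphi^{\rm arc}_\delta)^2]}\asymp\delta^{-\beta^2/2}$. Hence $\sup_\delta\|e^{-s\Hbf^0}V_0^{(\delta)}\|=\infty$, and the sandwich norm must blow up as $t\to0$ with $s$ fixed. With the correct bound, $\iint s^{-1/2}t^{-1/2}\min(s,t)^{-\beta^2/2}e^{-N'(s+t)}\,\d s\,\d t$ converges only for $\beta^2<1$. That is the regime where $V_\v$ is an honest function and no regularization is needed.

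The missing idea is to keep the $\theta$-integral inside the operator, so that decorrelation between distinct angles can be used. The paper writes $e^{-s\Hbf^0}V_\v^{(\delta)}e^{-t\Hbf^0}=\int_0^{2\pi}T_{s,t,\delta}(\theta)\,\d\theta$ and applies the continuous Cotlar--Stein lemma. The sphere GFF, the domain Markov property and the uniform-in-$\delta$ Green's function bounds give, up to harmless factors, $\|T_{s,t,\delta}(\theta')^*T_{s,t,\delta}(\theta)\|\lesssim(1-e^{-t})^{-\beta^2}|e^{\i\theta}-e^{\i\theta'}|^{-\beta^2}$. The analogous bound holds for $T_{s,t,\delta}(\theta')T_{s,t,\delta}(\theta)^*$ with $s$ in place of $t$.

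After taking square roots, the angular singularity $|e^{\i\theta}-e^{\i\theta'}|^{-\beta^2/2}$ is integrable precisely because $\beta^2<2$. This yields the product bound $(1-e^{-s})^{-\beta^2/4}(1-e^{-t})^{-\beta^2/4}$, which integrates against $s^{-1/2}t^{-1/2}e^{-\lambda(s+t)}$ to give $\lambda^{\beta^2/2-1}$.

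In the example above, averaging over $\theta$ improves $M^{\beta^2}$ to $M^{\beta^2-1}$. Your $\theta$-by-$\theta$ triangle inequality throws away exactly this gain, so sharper Green's function estimates alone cannot repair the argument.
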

\begin{proof}
All we need to prove is
\begin{equation}\label{eq_estimate_PVP}
    \|e^{-s\mathbf H^0}V^{(\delta)}_\v e^{-t\mathbf H^0}\|_{L^2\to L^2}\leq e^{\frac{Q^2-R^2k^2}{2}(s+t)}\,C_\beta \sup_{\theta\in[0,2\pi]}|\Re(e^{-\i\theta}v(e^{\i\theta}))|(1-e^{-s})^{-\frac{\beta^2}{4}}(1-e^{-t})^{-\frac{\beta^2}{4}}.
\end{equation}
Then using identity $(\mathbf H^0+\lambda)^{-\frac12}=\frac1{\sqrt{\pi}}\int_0^\infty t^{-\frac12}e^{-\lambda t} e^{-t\mathbf H^0}\,\mathrm{d}t$ when $\lambda$ is sufficiently large and Cauchy-Schwarz inequality would finish the proof. From now on we will only focus on \eqref{eq_estimate_PVP}.
Let $\widetilde{\varphi}^{\text{arc}}_\delta(\theta):=\frac{1}{2\delta}\int_{\theta-\delta}^{\theta+\delta}\widetilde{\varphi}(\theta')\d\theta'$. Denote by
\begin{equation*}
    T_{s,t,\delta}(\theta):=-e^{-s\mathbf H^0}e^{\i\beta\widetilde{\varphi}^{\text{arc}}_\delta(\theta)+\frac{\beta^2}{2}\mathbb E[(\varphi^{\text{arc}}_\delta)^2(\theta)]} e^{-t\mathbf H^0}\Re(e^{-\i\theta}v(e^{\i\theta})), \,\,\,\int_0^{2\pi}T_{s,t,\delta}(\theta)\,\d\theta=e^{-s\mathbf H^0}V^{(\delta)}_\v e^{-t\mathbf H^0}.
\end{equation*}
Now let us focus on $\|(T_{s,t,\delta}(\theta'))^* T_{s,t,\delta}(\theta)\|^{\frac12}_{L^2\to L^2}$. Denote by $\widetilde{\varphi}^k_t(\theta):=(P\widetilde{\varphi}^k+X_{\mathbb D})(e^{-t+\i\theta})$ the restriction of the GFF to a circle with radius $e^{-t}$. Now we take two i.i.d. Dirichlet GFFs $X_\mathbb D^1$ and $X_\mathbb D^2$ and we have $\widetilde{\varphi}^{k,j}_t(\theta):=(P\widetilde{\varphi}^k+X^j_{\mathbb D})(e^{-t+\i\theta}),\ j=1,2$. In fact, let
\begin{equation*}
    X_{g_0}(e^{-t+\i\theta}):=\widetilde{\varphi}^{k,1}_t(\theta),X_{g_0}(e^{t+\i\theta}):=\widetilde{\varphi}^{k,2}_t(\theta),\ t>0,\theta\in[0,2\pi].
\end{equation*}
which is the GFF on the sphere with a DOZZ metric $g_0=\frac {|\d z|^2}{1\vee |z|^4}$. Also define $\widetilde{\varphi}^{k,j}_{s,\delta}(\theta):=\frac{1}{2\delta}\int_{\theta-\delta}^{\theta+\delta}\widetilde{\varphi}^{k,j}_{s}(\theta')\,\d\theta'$. Then
\begin{equation*}
\begin{split}
    &\langle  T_{s,t,\delta}(\theta) F,T_{s,t,\delta}(\theta')G\rangle_{L^2}=e^{{(Q^2-R^2k^2)}(s+t)}\\
    &\int_0^{2\pi R}\Re(e^{-\i\theta}v(e^{\i\theta}))\Re(e^{-\i\theta'}v(e^{\i\theta'}))\mathbb E[e^{\i\beta\widetilde{\varphi}^{k,1}_{s,\delta}(\theta)+\frac{\beta^2}{2}\mathbb E[(\varphi^{\text{arc}}_\delta)^2(\theta)]}F(\widetilde{\varphi}^{k,1}_{s+t})e^{-\i\beta\widetilde{\varphi}^{k,2}_{s,\delta}(\theta')+\frac{\beta^2}{2}\mathbb E[(\varphi^{\text{arc}}_\delta)^2(\theta)]}\overline{G(\widetilde{\varphi}^{k,2}_{s+t})}]\,\d c.
\end{split}
\end{equation*}
Denote by $P(\widetilde{\varphi}^{k,1}_{s+t},\widetilde{\varphi}^{k,2}_{s+t})(z)$ the harmonic extension from $\widetilde{\varphi}^{k,1}_{s+t}(\theta)$ on $|z|=e^{-s-t}$ and $\widetilde{\varphi}^{k,2}_{s+t}(\theta)$ on $|z|=e^{s+t}$, to the annulus $e^{-s-t}\leq |z|\leq e^{s+t}$. By the domain Markov property, we can write $X_{g_0}(z)$ as $X_{g_0}=P(\widetilde{\varphi}^{k,1}_{s+t},\widetilde{\varphi}^{k,2}_{s+t})+X_{D,s+t}$ when $e^{-s-t}\leq |z|\leq e^{s+t}$, where $X_{D,s+t}$ is a Dirichlet GFF on $e^{-s-t}\leq |z|\leq e^{s+t}$ that is independent of $\widetilde{\varphi}^{k,1}_{s+t},\widetilde{\varphi}^{k,2}_{s+t}$.

Conditioning on $\widetilde{\varphi}^{k,1}_{s+t},\widetilde{\varphi}^{k,2}_{s+t}$, we have
\begin{equation}\label{eq_condi_robin}
    \begin{split}
        &\left|\mathbb E[e^{\i\beta\widetilde{\varphi}^{k,1}_{s,\delta}(\theta)+\frac{\beta^2}{2}\mathbb E[(\varphi^{\text{arc}}_\delta)^2(\theta)]}F(\widetilde{\varphi}^{k,1}_{s+t})e^{-\i\beta\widetilde{\varphi}^{k,2}_{s,\delta}(\theta')+\frac{\beta^2}{2}\mathbb E[(\varphi^{\text{arc}}_\delta)^2(\theta)]}\overline{G(\widetilde{\varphi}^{k,2}_{s+t})}|\widetilde{\varphi}^{k,1}_{s+t},\widetilde{\varphi}^{k,2}_{s+t}]\right|\\
        \leq{}&\left|F(\widetilde{\varphi}^{k,1}_{s+t})\overline{G(\widetilde{\varphi}^{k,2}_{s+t})}\right|e^{\beta^2\mathbb E[(\varphi^{\text{arc}}_\delta)^2(\theta)]-\frac{\beta^2}{2}\mathbb E[(\frac{1}{2\delta}\int_{\theta-\delta}^{\theta+\delta}X_{D,s+t}(e^{-s+\i\theta''})\,\d\theta''-\frac{1}{2\delta}\int_{\theta'-\delta}^{\theta'+\delta}X_{D,s+t}(e^{s+\i\theta''})\,\d\theta'')^2]}.
    \end{split}
\end{equation}
Denote by $G_{D,s+t}(\cdot,\cdot)$ the covariance kernel of $X_{D,s+t}$. To give a bound for \eqref{eq_condi_robin}, we check that
\begin{equation*}
\begin{split}
    &-\frac{\beta^2}{2}\mathbb E\left[\left(\frac{1}{2\delta}\int_{\theta-\delta}^{\theta+\delta}X_{D,s+t}(e^{-s+\i\theta''})\,\d\theta''-\frac{1}{2\delta}\int_{\theta'-\delta}^{\theta'+\delta}X_{D,s+t}(e^{s+\i\theta''})\,\d\theta''\right)^2\right]\\
    ={}&-\frac{\beta^2}{8\delta^2}\Bigg(\int_{\theta-\delta}^{\theta+\delta}\int_{\theta-\delta}^{\theta+\delta}G_{D,s+t}(e^{-s+\i\theta_1},e^{-s+\i\theta_2})\,\d\theta_1\d\theta_2+\int_{\theta'-\delta}^{\theta'+\delta}\int_{\theta'-\delta}^{\theta'+\delta}G_{D,s+t}(e^{s+\i\theta_1},e^{s+\i\theta_2})\,\d\theta_1\d\theta_2\\
    &\ \ \ \ \ \ \ \ \ \ \ \ \ -2\int_{\theta'-\delta}^{\theta'+\delta}\int_{\theta-\delta}^{\theta+\delta}G_{D,s+t}(e^{-s+\i\theta_1},e^{s+\i\theta_2})\,\d\theta_1\d\theta_2\Bigg),
\end{split}
\end{equation*}
\begin{lemma}\label{lemma_lowerB_DGreen}
    For $(2\pi)^{-1}G_{D,s+t}(\cdot,\cdot)$ the Dirichlet Green's function on $e^{-s-t}\leq |z|\leq e^{s+t}$, uniformly in $s,t\geq0,\delta<1,\theta\in[0,2\pi]$,
    \begin{equation*}
        \frac{1}{4\delta^2}\int_{\theta-\delta}^{\theta+\delta}\int_{\theta-\delta}^{\theta+\delta}G_{D,s+t}(e^{-s+\i\theta_1},e^{-s+\i\theta_2})\,\d\theta_1\d\theta_2\geq\mathbb E[(\varphi^{\text{arc}}_\delta)^2(\theta)]+\log|1-e^{-t}|+C,
    \end{equation*}
    where $C$ is independent of $s,t>0,\theta\in[0,2\pi],0<\delta<1$.
\end{lemma}
\begin{proof}
    Consider the Dirichlet Green's function $(2\pi)^{-1}G_{\theta,s,t}$ of a disk centered at $a=e^{-s+\i\theta}$ with radius $r=e^{-s}-e^{-s-t}$
\begin{equation*}
    G_{\theta,s,t}(x,y)=\log\left|\frac{r^2-(x-a)\overline{(y-a)}}{r(x-y)}\right|.
\end{equation*}
To study the integral, we need
\begin{equation*}
     G_{\theta,s,t}(e^{-s+\i\theta_1},e^{-s+\i\theta_2})=\log\left|\frac{(e^{-\i\theta_2}-e^{-\i\theta})(e^{\i\theta_1}-e^{\i\theta})-|1-e^{-t}|^2}{(1-e^{-t})(e^{\i\theta_1}-e^{\i\theta_2})}\right|.
\end{equation*}
Denote by $\delta_t:=\delta\wedge2\arcsin(\frac{1-e^{-t}}{2})$. Then
\begin{equation}\label{eq_bound_DGreen}
\begin{split}
     \frac{1}{4\delta^2}\int_{\theta-\delta}^{\theta+\delta}\int_{\theta-\delta}^{\theta+\delta}G_{D,s+t}(e^{-s+\i\theta_1},e^{-s+\i\theta_2})\,\d\theta_1\d\theta_2\geq{}&\frac{1}{4\delta^2}\int_{\theta-\delta_t}^{\theta+\delta_t}\int_{\theta-\delta_t}^{\theta+\delta_t}\log\left|\frac{(e^{-\i\theta_2}-e^{-\i\theta})(e^{\i\theta_1}-e^{\i\theta})}{|1-e^{-t}|^2}-1\right|\,\d\theta_1\d\theta_2\\
     +&\frac{\delta^2_t}{\delta^2}\log|1-e^{-t}|-\frac{1}{4\delta^2}\int_{\theta-\delta_t}^{\theta+\delta_t}\int_{\theta-\delta_t}^{\theta+\delta_t}\log|e^{\i\theta_1}-e^{\i\theta_2}|\,\d\theta_1\d\theta_2\\
     =:&I_1+\frac{\delta^2_t}{\delta^2}\log|1-e^{-t}|+I_2.
\end{split}
\end{equation}
We first compare the singular part $I_2$. That is
\begin{equation}\label{eq_bound_DGI2}
    I_2+\frac{1}{4\delta^2}\int_{\theta-\delta}^{\theta+\delta}\int_{\theta-\delta}^{\theta+\delta}\log|e^{\i\theta_1}-e^{\i\theta_2}|\,\d\theta_1\d\theta_2
    \geq\frac{\delta_t^2}{\delta^2}(-\log\delta_t-C)+\log\delta-C\geq-\frac{\delta_t^2}{\delta^2}\log\frac{\delta_t}\delta+(1-\frac{\delta_t^2}{\delta^2})\log\delta+C,
\end{equation}
where $C$ is independent of $\delta$ since $|\mathbb E[(\varphi^{\text{arc}}_\delta)^2(\theta)]+\log\delta|$ is bounded uniformly in $\delta$ small. 

Next, since $e^{\i\theta_1}-1=2\i e^{\i\frac{\theta_1}2}\sin(\frac{\theta_1}2)$, we have
\begin{equation*}
    \left|\frac{(e^{-\i\theta_2}-1)(e^{\i\theta_1}-1)}{|1-e^{-t}|^2}-1\right|\geq\left|\cos\left(\frac{\theta_1-\theta_2}{2}\right)\frac{4\sin\left(\frac{\theta_1}2\right)\sin\left(\frac{\theta_2}2\right)}{|1-e^{-t}|^2}-1\right|.
\end{equation*}
Therefore
\begin{equation*}
\begin{split}
     I_1={}&\frac{1}{4\delta^2}\int_{-\delta_t}^{\delta_t}\int_{-\delta_t}^{\delta_t}\log\left|\frac{(e^{-\i\theta_2}-1)(e^{\i\theta_1}-1)}{|1-e^{-t}|^2}-1\right|\,\d\theta_1\d\theta_2\\
     \geq{}&\frac{1}{2\delta^2}\int_{0}^{\delta_t}\int_{0}^{\delta_t}\log\left|\cos\left(\frac{\theta_1-\theta_2}{2}\right)\frac{4\sin\left(\frac{\theta_1}2\right)\sin\left(\frac{\theta_2}2\right)}{|1-e^{-t}|^2}-1\right|\,\d\theta_1\d\theta_2+C\frac{\delta_t^2}{2\delta^2},
\end{split}
\end{equation*}
where we can replace $\int_{0}^{\delta_t}\int_{-\delta_t}^{0}\cdots\d\theta_1\d\theta_2$ with $C\delta_t^2$ because $\cos\left(\frac{\theta_1-\theta_2}{2}\right)\geq-\frac12$ (since $\delta_t\leq2\arcsin(\frac{1-e^{-t}}{2})\leq\frac{2\pi}{3}$). Now $I_1$ is further bounded by
\begin{equation*}
    I_1\geq\frac{1}{2\delta^2}\int_{0}^{\delta_t}\int_{0}^{\delta_t}\log\left|\frac{4\sin\left(\frac{\theta_1}2\right)\sin\left(\frac{\theta_2}2\right)}{|1-e^{-t}|^2}-1\right|\,\d\theta_1\d\theta_2.
\end{equation*}
Let $u=2\frac{\sin\left(\frac{\theta_1}2\right)}{1-e^{-t}},v=2\frac{\sin\left(\frac{\theta_2}2\right)}{1-e^{-t}}$, and
\begin{equation}\label{eq_DGI1}
\begin{split}
     I_1\geq{}&\frac{(1-e^{-t})^2}{2\delta^2}\int_0^{2\frac{\sin\left(\frac{\delta_t}2\right)}{1-e^{-t}}\wedge1}\int_0^{2\frac{\sin\left(\frac{\delta_t}2\right)}{1-e^{-t}}\wedge1}\log|1-uv|\,\frac{\d u\d v}{\sqrt{1-\left(\frac{1-e^{-t}}{2}u\right)^2}\sqrt{1-\left(\frac{1-e^{-t}}{2}v\right)^2}}\\
     \geq{}&\frac23\int_0^{1}\int_0^{1}\log|1-uv|\,{\d u\d v}\geq C.
\end{split}
\end{equation}
Finally with \eqref{eq_bound_DGI2} and \eqref{eq_DGI1} we get back to \eqref{eq_bound_DGreen}, which is greater than
\begin{equation*}
    I_1+\frac{\delta^2_t}{\delta^2}\log|1-e^{-t}|+I_2\geq \mathbb E[(\varphi^{\text{arc}}_\delta)^2(\theta)]+\log|1-e^{-t}|+(1-\frac{\delta_t^2}{\delta^2})\log\frac{\delta}{|1-e^{-t}|}-\frac{\delta_t^2}{\delta^2}\log\frac{\delta_t}{\delta}+C,
\end{equation*}
where $(1-\frac{\delta_t^2}{\delta^2})\log\frac{\delta}{|1-e^{-t}|}$ is positive (when $\delta>\delta_t$, use $\arcsin x\geq x$).
\end{proof}
\begin{lemma}\label{lemma_upper_Green_DOZZ}
    \begin{equation*}
        -\frac{1}{4\delta^2}\int_{-\delta}^{\delta}\int_{-\delta}^{\delta}\log|e^{-2s+\i(\theta-\theta'+\theta_1-\theta_2)}-1|\,\d\theta_1\d\theta_2\leq-\log|e^{\i\theta}-e^{\i\theta'}|+C,
    \end{equation*}
    with $C$ independent of $\theta,\theta'\in[0,2\pi],s>0,0<\delta<1$.
\end{lemma}
\begin{proof}
Since for $t\in\mathbb R$, $|e^{\i t}-1|\leq 2|e^{-2s+\i t}-1|$, we get
    \begin{equation*}
         -\frac{1}{4\delta^2}\int_{-\delta}^{\delta}\int_{-\delta}^{\delta}\log{|e^{-2s+\i(\theta-\theta'+\theta_1-\theta_2)}-1|}\,\d\theta_1\d\theta_2\leq  -\frac{1}{4\delta^2}\int_{-\delta}^{\delta}\int_{-\delta}^{\delta}\log{|e^{\i(\theta-\theta'+\theta_1-\theta_2)}-1|}\,\d\theta_1\d\theta_2+C.
    \end{equation*}
    Let $u=\theta_1-\theta_2,v=\theta_1+\theta_2$. Then
    \begin{equation*}
         -\frac{1}{4\delta^2}\int_{-\delta}^{\delta}\int_{-\delta}^{\delta}\log{|e^{\i(\theta-\theta'+\theta_1-\theta_2)}-1|}\,\d\theta_1\d\theta_2= -\frac{1}{4\delta^2}\int_{-2\delta}^{2\delta}(2\delta-|u|)\log{|e^{\i(\theta-\theta'+u)}-1|}\,\d u.
    \end{equation*}
We only need to deal with $\delta$ sufficiently small. Note that $|e^{\i(\theta-\theta'+u)}-1|^2=4\sin^2(\frac{\theta-\theta'+u}{2})$. Define $|t|_{2\pi}:=\dist(t,2\pi\mathbb Z)\in[0,\pi]$, then by $\frac{2t}{\pi}\leq\sin t\leq t$ on $[0,\frac\pi2]$, we have
\begin{equation*}
   \log\frac{1}{|\theta-\theta'+u|_{2\pi}} \leq -\log{|e^{\i(\theta-\theta'+u)}-1|}\leq \log\frac{\pi}{2|\theta-\theta'+u|_{2\pi}}.
\end{equation*}
Therefore, from now on we only need to compare $|\theta-\theta'+u|_{2\pi}$ with $|\theta-\theta'|_{2\pi}$. When $|\theta-\theta'|_{2\pi}\geq4\delta$, $|\theta-\theta'+u|_{2\pi}\geq|\theta-\theta'|_{2\pi}/2$ provides an upper bound. And when $|\theta-\theta'|_{2\pi}\leq4\delta$, we claim that
\begin{equation*}
     -\frac{1}{4\delta^2}\int_{-2\delta}^{2\delta}(2\delta-|u|)\log{\left|\sin\left(\frac{\theta-\theta'+u}{2}\right)\right|}\,\d u\leq-\log\delta+C\leq-\log|\theta-\theta'|_{2\pi}+C.
\end{equation*}
We only prove the first inequality as the second one is straightforward. First, by Fubini's theorem,
\begin{equation*}
   - \int_{-2\delta}^{2\delta}\frac{2\delta-|u|}{4\delta^2}\log{\left|\sin\left(\frac{\theta-\theta'+u}{2}\right)\right|}\,\d u=\int_0^{\infty}\int_{-2\delta}^{2\delta}\textbf{1}_{\{-\log{\left|\sin\left(\frac{\theta-\theta'+u}{2}\right)\right|}>\tau\}}\frac{2\delta-|u|}{4\delta^2}\,\d u\,\d\tau.
\end{equation*}
Assume $|\theta-\theta'|=|\theta-\theta'|_{2\pi}$ (or choose a representative modulo $2\pi$). Denote by $A(\tau):=\{u:-\log{\left|\sin\left(\frac{\theta-\theta'+u}{2}\right)\right|}>\tau,|\theta-\theta'+u|
\leq\pi\}$, then (since $\delta<\frac\pi6$)
\begin{equation*}    \int_{-2\delta}^{2\delta}\textbf{1}_{A(\tau)}\frac{2\delta-|u|}{4\delta^2}\,\d u\leq\frac1{2\delta}\min(\int_{|\theta-\theta'+u|\leq\delta}\,\d u,\int_{A(\tau)}\,\d u)=\frac{1}{2\delta}\int_{A(\tau)\cap\{|\theta-\theta'+u|\leq\delta\}}\,\d u.
\end{equation*}
Hence we conclude that
\begin{equation*}
\begin{split}
    -\int_{-2\delta}^{2\delta}\frac{2\delta-|u|}{4\delta^2}\log{\left|\sin\left(\frac{\theta-\theta'+u}{2}\right)\right|}\,\d u\leq{}&-\frac{1}{2\delta}\int_{\{|\theta-\theta'+u|\leq\delta\}}\log{\left|\sin\left(\frac{\theta-\theta'+u}{2}\right)\right|}\,\d u\\
    \leq{}&\frac{1}{\delta}\int_0^{\delta}\log\frac{\pi}{u}\,\d u\leq-\log\delta+C.\qedhere
\end{split}
\end{equation*}
\end{proof}
Using \cref{lemma_lowerB_DGreen}, \cref{lemma_upper_Green_DOZZ}, the invariance of $G_{D,s+t}$ under $z\to1/z$, and that $G_{D,s+t}$ is bounded by the covariance of $X_{g_0}$ $G_{g_0}(x,y)+\log2=\log\frac{(|x|\vee1)(|y|\vee1)}{|x-y|} +\log2$, we find \eqref{eq_condi_robin} is bounded by
\begin{equation*}
C\left|F(\widetilde{\varphi}^{k,1}_{s+t})\overline{G(\widetilde{\varphi}^{k,2}_{s+t})}\right|(1-e^{-t})^{-\beta^2}|e^{\i\theta}-e^{\i\theta'}|^{-\beta^2}.
\end{equation*}
This implies the operator norm of $(T_{s,t,\delta}(\theta'))^* T_{s,t,\delta}(\theta)$ is controlled by $Ce^{(Q^2-R^2k^2)(s+t)}\sup_{\theta\in[0,2\pi]}|\Re(e^{-\i\theta}v(e^{\i\theta}))|^2(1-e^{-t})^{-\beta^2}|e^{\i\theta}-e^{\i\theta'}|^{-\beta^2}$ by using Cauchy--Schwarz inequality and that $\widetilde{\varphi}^{k,1}_{s+t}$ has the same distribution as $\widetilde{\varphi}^{k}$. Therefore,
\begin{equation*}
\begin{split}
    M_1:={}&\sup_{\theta'\in[0,2\pi]}\int_{0}^{2\pi}\|(T_{s,t,\delta}(\theta'))^* T_{s,t,\delta}(\theta)\|^{\frac12}_{L^2\to L^2}\,\d\theta\\
    \leq{}& Ce^{\frac{Q^2-R^2k^2}{2}(s+t)}\sup_{\theta\in[0,2\pi]}|\Re(e^{-\i\theta}v(e^{\i\theta}))|(1-e^{-t})^{-\frac{\beta^2}2}\int_{0}^{2\pi}|e^{\i\theta}-1|^{-\frac{\beta^2}{2}}\d\theta<\infty.
\end{split}
\end{equation*}
Similarly, we have
\begin{equation*}
\begin{split}
    M_2:={}&\sup_{\theta'\in[0,2\pi]}\int_{0}^{2\pi}\|T_{s,t,\delta}(\theta') (T_{s,t,\delta}(\theta))^*\|^{\frac12}_{L^2\to L^2}\,\d\theta\\
    \leq {}&Ce^{\frac{Q^2-R^2k^2}{2}(s+t)}\sup_{\theta\in[0,2\pi]}|\Re(e^{-\i\theta}v(e^{\i\theta}))|(1-e^{-s})^{-\frac{\beta^2}2}\int_{0}^{2\pi}|e^{\i\theta}-1|^{-\frac{\beta^2}{2}}\d\theta<\infty
\end{split}
\end{equation*}
By using the continuous Cotlar-Stein lemma (see \cite{CALDERON1971}), we have
\begin{equation*}
\begin{split}
    &\left\| \int_0^{2\pi}T_{s,t,\delta}(\theta')\,\d\theta'\,\right\|_{L^2\to L^2}\leq\sqrt{M_1M_2}\\
    \leq{}&Ce^{\frac{Q^2-R^2k^2}{2}(s+t)}\sup_{\theta\in[0,2\pi]}|\Re(e^{-\i\theta}v(e^{\i\theta}))|(1-e^{-t})^{-\frac{\beta^2}4}(1-e^{-s})^{-\frac{\beta^2}4}\int_{0}^{2\pi}|e^{\i\theta}-1|^{-\frac{\beta^2}{2}}\d\theta.
\end{split}
\end{equation*}
This brings us back to \eqref{eq_estimate_PVP} and thus we have finished the proof.
\end{proof}
\begin{prop}\label{lem_App_2}
Let $N\geq0$. For $F,G\in\Ec(\DTh)$, we have
\begin{equation*}
|\langle V_\v^\mubf  F,G\rangle_{L^2}|\leq C\, (N+Q^2/4)^{\frac{\beta^2}{2}-1} \sup_{\theta\in[0,2\pi]}|\Re(e^{-\i\theta}v(e^{\i\theta}))|\,\sqrt{(N\|F\|_{L^2}^2+\widetilde{\mathcal Q}^{+,0}(F,F))(N\|G\|_{L^2}^2+\widetilde{\mathcal Q}^{+,0}(G,G))},
\end{equation*}
where $C>0$ only depends on $\beta$ and $\mubf$. In particular, $V_\v^\mubf$ extends to a bounded operator $\Dc(\wt{\Qc}^{+,0})\to\Dc'(\wt{\Qc}^{+,0})$.
\end{prop}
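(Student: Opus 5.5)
The plan is to mirror the proof of \cref{lem_App_1} on the half-circle, treating the bulk and the two boundary-point potentials separately. The key reduction is the operator bound
\[\|e^{-s\Hbf^{+,0}}V_\v^{\mubf,(\delta)}e^{-t\Hbf^{+,0}}\|_{L^2\to L^2}\leq C\,e^{\frac{Q^2}{4}(s+t)}\sup_\theta|\Re(e^{-\i\theta}v(e^{\i\theta}))|\,(1-e^{-s})^{-a}(1-e^{-t})^{-a},\]
uniformly in $0<\delta<1$, with some exponent $a<\frac12$ (we aim for $a=\beta^2/4$ in the bulk and $a=\beta^2/8$ at the boundary points). Plugging this bound into $(\Hbf^{+,0}+\lambda)^{-1/2}=\pi^{-1/2}\int_0^\infty t^{-1/2}e^{-\lambda t}e^{-t\Hbf^{+,0}}\,\d t$ and using Cauchy--Schwarz then gives the claimed inequality. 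The prefactor $(N+Q^2/4)^{\frac{\beta^2}{2}-1}$ comes from $\int_0^\infty t^{-1/2}(1-e^{-t})^{-a}e^{-(N+Q^2/4)t}\,\d t$.

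The semigroup $e^{-s\Hbf^{+,0}}$ is realised by the Markov property of the GFF on $\Db^+$ with Dirichlet condition on $\Tb^+$ and Neumann condition on $[-1,1]$. Equivalently, it is the even part of the full-circle picture, where $\wt\varphi^\h$ is the restriction to $\Tb$ of the reflection-symmetric field. For the bulk term $\mu V^{\cap,(\delta)}_\v$ we write $e^{-s\Hbf^{+,0}}V^{\cap,(\delta)}_\v e^{-t\Hbf^{+,0}}=\int_\delta^{\pi-\delta}T^+_{s,t,\delta}(\theta)\,\d\theta$. We then glue two independent copies inside and outside the unit circle, as in the proof of \cref{lem_App_1}, but now with Neumann doubling, so that the covariance becomes $G(x,y)+G(x,\bar y)$. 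Conditioning on the two circles of radii $e^{\mp(s+t)}$ reduces everything to Green's function estimates for the mixed-boundary half-annulus.

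The lower bound for the diagonal variance is \cref{lemma_lowerB_DGreen} applied to the doubled annulus plus the image term. The image term is nonnegative by monotonicity, or is compensated exactly by the weight $(2|\sin\theta|)^{\beta^2/2}$, since $\Eb[\varphi^{\h,\mathrm{arc}}_\delta(\theta)^2]$ contains the image contribution $-\log|2\sin\theta|$. The cross term is bounded using \cref{lemma_upper_Green_DOZZ} for both $\theta-\theta'$ and $\theta+\theta'$. The outcome is that $\|(T^+(\theta'))^*T^+(\theta)\|^{1/2}$ is bounded by $(1-e^{-t})^{-\beta^2/2}\bigl(|e^{\i\theta}-e^{\i\theta'}||e^{\i\theta}-e^{-\i\theta'}|\bigr)^{-\beta^2/2}$ times the weights. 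Because $\beta^2<2$, this is integrable in $\theta$ uniformly in $\theta'$, and the continuous Cotlar--Stein lemma finishes the bulk case.

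For the point potentials $V^{+,\mathrm R,(\delta)}$ and $V^{+,\mathrm L,(\delta)}$ no Cotlar--Stein step is needed, since there is a single operator. We estimate $\|e^{-s\Hbf^{+,0}}V^{+,\mathrm R,(\delta)}e^{-t\Hbf^{+,0}}\|$ directly. By the same conditioning, it is at most $\exp\bigl(\frac{\beta^2}{8}(2\Eb[\varphi^{\h,\mathrm{arc}}_\delta(0)^2]-\mathrm{Var})\bigr)$, where $\mathrm{Var}$ is the conditional variance of the arc averages at $e^{-s}$ and at $e^{s}$. The half-arc average at the corner point $1$ requires a version of \cref{lemma_lowerB_DGreen} for the Neumann-doubled Green's function at a boundary point. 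This yields the factor $(1-e^{-t})^{-\beta^2/4}(1-e^{-s})^{-\beta^2/4}$ after a square root, and the cross term is bounded trivially. The factor $|v(\pm1)|\leq\sup_\theta|\Re(e^{-\i\theta}v(e^{\i\theta}))|$ holds for symmetric $v$, since $v(\pm1)$ is real.

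The main obstacle is the half-disk analogue of \cref{lemma_lowerB_DGreen}. This covers both the bulk points near $\theta\in\{0,\pi\}$, where the arc $[\theta-\delta,\theta+\delta]\cap[0,\pi]$ is truncated, and the boundary points themselves. There the disk-comparison argument has to be replaced by comparison with a half-disk centred on the real axis, whose Neumann Green's function is explicit via reflection. I would first try to reduce to the even full-circle computation, using that the truncated arc average equals the symmetric arc average of the even field.
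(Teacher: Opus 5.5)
\textbf{Where the bulk estimate fails.} Your overall architecture matches the paper's. You bound $e^{-s\Hbf^{+,0}}V^{\mubf,(\delta)}_\v e^{-t\Hbf^{+,0}}$ uniformly in $\delta$, use Neumann doubling and conditioning on the circles of radii $e^{\mp(s+t)}$, apply Cotlar--Stein to the bulk potential, and estimate the corner potentials directly. Your remark that $|v(\pm1)|\le\sup_\theta|\Re(e^{-\i\theta}v(e^{\i\theta}))|$ for symmetric $v$ is a correct detail the paper leaves implicit.

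However, the bulk estimate does not close as you set it up when $1\le\beta^2<2$. You discard the diagonal image variances because they are nonnegative. You spend the weight $(2|\sin\theta|)^{\beta^2/2}$ on the image part $-\log|2\sin\theta|$ of $\Eb[\varphi^{\h,\mathrm{arc}}_\delta(\theta)^2]$. You then bound the image cross-covariance on its own via \cref{lemma_upper_Green_DOZZ} with $\theta'\mapsto-\theta'$. Each inequality is valid. But the resulting kernel $(|e^{\i\theta}-e^{\i\theta'}|\,|e^{\i\theta}-e^{-\i\theta'}|)^{-\beta^2/2}$ has two singularities that merge at the endpoints $0$ and $\pi$, and nothing is left to tame the second one.

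Take $\theta'=\delta$. The $\theta$-integral over $[2\delta,1]$ is comparable to $\int_{2\delta}^1\theta^{-\beta^2}\,\d\theta$, which is of order $\delta^{1-\beta^2}$ (or $\log(1/\delta)$ if $\beta^2=1$). So your Cotlar--Stein constant is not uniform in $\delta$, precisely in the range where the regularization is needed. Your assertion that this kernel is integrable in $\theta$ uniformly in $\theta'$ ``because $\beta^2<2$'' holds only for $\beta^2<1$.

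\textbf{The missing idea.} Treat the image terms as one block, not term by term. Put weight $+1$ on the inner arc at radius $e^{-s}$ and $-1$ on the outer arc at radius $e^{s}$. Then $\mathrm{Var}^{\mathrm{im}}_{\mathrm{in}}(\theta)+\mathrm{Var}^{\mathrm{im}}_{\mathrm{out}}(\theta')-2\,\mathrm{Cov}^{\mathrm{im}}(\theta,\theta')$ is a quadratic form in the reflected kernel $(z,w)\mapsto G_{D,s+t}(z,\bar w)$ on the upper half annulus.

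That kernel is positive semidefinite. This is \cref{lem_AI3}, proved by Green's identities: $\sum_{j,l}c_jc_lG(z_j,\bar z_l)=\frac1\pi\int|\nabla H|^2$ with $H=\sum_jc_jG(\cdot,\bar z_j)$. Since the block enters the exponent with a minus sign, it can be dropped as a whole.

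What remains is exactly the full-circle computation of \cref{lem_App_1}, using $\Eb[\varphi^{\h,\mathrm{arc}}_\delta(\theta)^2]+\log|2\sin\theta|=\Eb[\varphi^{\mathrm{arc}}_\delta(\theta)^2]+O(1)$ on $[\delta,\pi-\delta]$. This leaves the single singularity $|e^{\i\theta}-e^{\i\theta'}|^{-\beta^2/2}$, which is uniformly integrable for all $\beta^2<2$.

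\textbf{Smaller remarks.} The truncated-arc issue you single out as the main obstacle does not arise for the bulk term, since $V^{\cap,(\delta)}_\v$ integrates only over $[\delta,\pi-\delta]$. It only concerns the corner potentials. There your argument actually yields the exponent $\beta^2/4$ rather than $\beta^2/8$, which is still below $\frac12$ and suffices.
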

\begin{proof}
We simply follow \cref{V_v_estimate}, and prove
\begin{equation}\label{eq_estimate_PVP_bdy}
    \|e^{-s\mathbf H^{+,0}}V_\v^{\cap,(\delta)} e^{-t\mathbf H^{+,0}}\|_{L^2\to L^2}\leq e^{\frac{Q^2}{4}(s+t)}\,C_\beta \sup_{\theta\in[0,2\pi]}|\Re(e^{-\i\theta}v(e^{\i\theta}))|(1-e^{-s})^{-\frac{\beta^2}{4}}(1-e^{-t})^{-\frac{\beta^2}{4}}.
\end{equation}
Define
\begin{equation*}
    T_{s,t,\delta}^+(\theta):=-e^{-s\mathbf H^{+,0}}e^{\i\beta\widetilde{\varphi}^{\text{h,arc}}_\delta(\theta)+\frac{\beta^2}{2}\mathbb E[(\varphi^{\text{h,arc}}_\delta)^2(\theta)]} e^{-t\mathbf H^{+,0}}\Re(e^{-\i\theta}v(e^{\i\theta}))(2|\!\sin\theta|)^{\frac{\beta^2}{2}}.
\end{equation*}
The study of $\langle  T_{s,t,\delta}^+(\theta')F,T_{s,t,\delta}^+(\theta)G \rangle_{L^2}$ again boils down to the estimate of Green's function on a thin annulus. Specifically, denote by $(2\pi)^{-1}G_{D,s+t}$ the Dirichlet Green's function on the annulus $e^{-s-t}\leq|z|\leq e^{s+t}$. Then we have the mixed Green's function $(2\pi)^{-1}G_{D,s+t}^+(z,w):=(2\pi)^{-1}(G_{D,s+t}(z,w)+G_{D,s+t}(z,\bar{w}))$ on the half annulus $e^{-s-t}\leq|z|\leq e^{s+t},\Im z\geq0$. Following the proof of \cref{V_v_estimate}, we need to provide a $\delta$-uniform bound in terms of $s,t,\theta,\theta'$
\begin{equation*}
\begin{split}
    &I_{\delta,s,t}(\theta,\theta'):=-\frac{\beta^2}{8\delta^2}\Bigg(\int_{\theta-\delta}^{\theta+\delta}\int_{\theta-\delta}^{\theta+\delta}G_{D,s+t}^+(e^{-s+\i\theta_1},e^{-s+\i\theta_2})\,\d\theta_1\d\theta_2+\int_{\theta'-\delta}^{\theta'+\delta}\int_{\theta'-\delta}^{\theta'+\delta}G_{D,s+t}^+(e^{s+\i\theta_1},e^{s+\i\theta_2})\,\d\theta_1\d\theta_2\\
    &-2\int_{\theta'-\delta}^{\theta'+\delta}\int_{\theta-\delta}^{\theta+\delta}G_{D,s+t}^+(e^{-s+\i\theta_1},e^{s+\i\theta_2})\,\d\theta_1\d\theta_2\Bigg)+\frac{\beta^2}{2}(\mathbb E[(\varphi^{\text{h,arc}}_\delta)^2(\theta)]+\mathbb E[(\varphi^{\text{h,arc}}_\delta)^2(\theta')]+\log|\sin\theta|+\log|\sin\theta'|)
\end{split}
\end{equation*}
Since $\theta,\theta'\in[0,\pi]$, we have
\begin{equation*}
\begin{split}
     \int_{-\delta}^{\delta}\int_{-\delta}^{\delta}G_{D,s+t}(e^{-s+\i(\theta+\theta_1)},e^{-s-\i(\theta+\theta_2)})&+G_{D,s+t}(e^{s+\i(\theta'+\theta_1)},e^{s-\i(\theta'+\theta_2)})\\
     &-2G_{D,s+t}(e^{-s+\i(\theta+\theta_1)},e^{s-\i(\theta'+\theta_2)})\,\d\theta_1\d\theta_2\geq0,
\end{split}
\end{equation*}
which is a consequence of the following lemma.
\begin{lemma}\label{lem_AI3}
    Suppose $(2\pi)^{-1}G$ is the Dirichlet Green's function on the annulus $\mathbb A_{q}:=q<|z|<1$. Then when $z_1,\cdots,z_N\in\mathbb A_q\cap\mathbb H$, for $c_1,\cdots,c_N\in\mathbb R$
    \begin{equation*}
       \sum_{j,l}c_jc_lG(z_j,\bar z_l)\geq 0,
    \end{equation*}
    i.e. $G(z,\bar w)$ is positive definite.
\end{lemma}
\begin{proof}
For $z,w\in\mathbb A_q\cap\mathbb H$, define
\begin{equation*}
    H:=\sum_jc_jG(\cdot,\bar z_j),\ \ \ \check H:=\sum_jc_jG(\cdot,  z_j).
\end{equation*}
Using Green's first identity on $\mathbb A_q\cap\mathbb H$, we have
\begin{equation*}
    \int_{\mathbb A_q\cap\mathbb H}|\nabla H|^2\,\d x=\int_{\mathbb A_q\cap\mathbb R}H\partial_n H\,\d x,
\end{equation*}
with $\partial_n H$ the normal derivative.

Now let $B(z_j,\epsilon)$ be the closed ball centered at $z_j$ with radius $\epsilon$. Set $A_\epsilon^+:=(\mathbb A_q\cap\mathbb H)\backslash (\cup_j B(z_j,\epsilon))$, where $\epsilon$ is small enough so that $B(z_j,\epsilon)\cap B(z_l,\epsilon)=\emptyset$ whenever $j\not=l$. By Green's second identity on $A_\epsilon^+$, we get
\begin{equation*}
    0=\int_{A_\epsilon^+}H\Delta \check H-\check H\Delta  H\,\d x=\int_{\partial A_\epsilon^+}H\partial_n \check H-\check H\partial_n  H\,\d x.
\end{equation*}
Since $H,\check H$ vanish on $\partial \mathbb A_q$, we study the integration on $\mathbb A_q\cap\mathbb R$ and $\partial B(z_j,\epsilon)$.

On $\mathbb A_q\cap\mathbb R$, $H=\check H$ and $\partial_n H=-\partial_n \check H$. On $\partial B(z_j,\epsilon)$, by the properties of Green's function, we have
\begin{equation*}
    \partial_n \check H=\frac{c_j}{\epsilon}+O(1),\ \ \ \check H=c_j\log\frac{1}{\epsilon}+O(1).
\end{equation*}
Hence
\begin{equation*}
   0= \int_{\partial A_\epsilon^+}H\partial_n \check H-\check H\partial_n  H\,\d x=-2\int_{\mathbb A_q\cap\mathbb R}H\partial_n H\,\d x+2\pi \sum_j c_j H(z_j)+O(\epsilon\log\frac{1}{\epsilon}).
\end{equation*}
Let $\epsilon\to0$, and we conclude that
\begin{equation*}
    \frac{1}{\pi}\int_{\mathbb A_q\cap\mathbb H}|\nabla H|^2\,\d x=\sum_j c_j H(z_j)=\sum_{j,l}c_jc_lG(z_j,\bar z_l)\geq0.\qedhere
\end{equation*}
\end{proof}
Therefore we can throw away the boundary-sensitive terms $G_{D,s+t}(e^{-s+\i\theta},e^{-s-\i\theta}),G_{D,s+t}(e^{s+\i\theta},e^{s-\i\theta})$ and 

$G_{D,s+t}(e^{-s+\i\theta},e^{s-\i\theta})$ in $I_{\delta,s,t}(\theta,\theta')$, and use \cref{lemma_lowerB_DGreen} again to get
\begin{equation*}
\begin{split}
    &I_{\delta,s,t}(\theta,\theta')\leq -\beta^2\log|1-e^{-t}|-\beta^2\log|e^{\i\theta}-e^{\i\theta'}|+C,
\end{split}
\end{equation*}
with $C$ independent of $\delta,s,t,\theta,\theta'$. Note that the $\theta$ dependence in $\mathbb E[(\varphi^{\text{h,arc}}_\delta)^2(\theta)]$ is canceled by $\log|\sin\theta|$. Now we can use Cotlar-Stein lemma \cite[pp. 374-375]{CALDERON1971}  for $\int_{\delta}^{\pi-\delta}T_{s,t,\delta}^+(\theta)\d\theta$ and repeat what is done in \cref{V_v_estimate}.

Next, we deal with $V^{+,{\rm R},(\delta)}$; $V^{+,{\rm L},(\delta)}$ is handled in a similar way. We claim that
\begin{equation}\label{eq_claim_VR}
    |\langle e^{\i\frac\beta2\widetilde{\varphi}^{\text{h,arc}}_\delta(0)+\frac{\beta^2}{8}\mathbb E[\varphi^{\h,\text{arc}}_\delta(0)^2]} e^{-t\mathbf H^{+,0}}F,e^{-s\mathbf H^{+,0}} G\rangle_{L^2}|\leq C e^{\frac{Q^2}{4}(s+t)}(1-e^{-s})^{-\frac{\beta^2}{4}} (1-e^{-t})^{-\frac{\beta^2}{4}}\|F\|_{L^2}\|G\|_{L^2}.
\end{equation}
Denote by $\widetilde{\varphi}^\h_t(\theta):=(P\widetilde{\varphi}^\h+X_{\mathbb D^+})(e^{-t+\i\theta})$ the restriction of the GFF to a circle with radius $e^{-t}$, where $X_{\mathbb D^+}$ is the mixed GFF on $\mathbb D^+:=\mathbb D\cap\mathbb H$. Now we take two i.i.d. Dirichlet GFF $X_{\mathbb D^+}^1$ and $X_{\mathbb D^+}^2$ and we have $\widetilde{\varphi}^{\h,j}_t(\theta):=(P\widetilde{\varphi}^\h+X^j_{\mathbb D^+})(e^{-t+\i\theta}),\ j=1,2$.
\begin{equation*}
\begin{split}
  \langle e^{\i\frac\beta2\widetilde{\varphi}^{\text{h,arc}}_\delta(0)+\frac{\beta^2}{8}\mathbb E[\varphi^{\h,\text{arc}}_\delta(0)^2]} e^{-t\mathbf H^{+,0}}F,e^{-s\mathbf H^{+,0}} G\rangle_{L^2}=\int_0^{2\pi R}e^{\frac{Q^2}{4}(s+t)}\mathbb E[e^{\i\frac\beta2\widetilde{\varphi}^{\text{h,arc}}_\delta(0)+\frac{\beta^2}{8}\mathbb E[\varphi^{\h,\text{arc}}_\delta(0)^2]}F(\widetilde{\varphi}^{\h,1}_{t})\overline{G(\widetilde{\varphi}^{\h,2}_{s})}]\,\d c.
\end{split}
\end{equation*}
By studying the expectation $e^{\i\frac\beta2\widetilde{\varphi}^{\text{h,arc}}_\delta(0)}$ conditioned on $\widetilde{\varphi}^{\h,1}_{t},\widetilde{\varphi}^{\h,2}_{s}$, we get a bound 
\begin{equation*}
    |\langle e^{\i\frac\beta2\widetilde{\varphi}^{\text{h,arc}}_\delta(0)+\frac{\beta^2}{8}\mathbb E[\varphi^{\h,\text{arc}}_\delta(0)^2]} e^{-t\mathbf H^{+,0}}F,e^{-s\mathbf H^{+,0}} G\rangle_{L^2}|\leq e^{\frac{Q^2}{4}(s+t)}e^{\frac{\beta^2}{8}(\mathbb E[\varphi^{\h,\text{arc}}_\delta(0)^2]-\frac{1}{\delta^2}\int_{0}^\delta\int_{0}^\delta G_{D,s+t}^+(e^{\i\theta_1},e^{\i\theta_2})\,\d\theta_1\d\theta_2)}\|F\|_{L^2}\|G\|_{L^2},
\end{equation*}
where we can bound the Robin mass using \cref{lemma_lowerB_DGreen}
\begin{equation*}
    \mathbb E[\varphi^{\h,\text{arc}}_\delta(0)^2]-\frac{1}{\delta^2}\int_{0}^\delta\int_{0}^\delta G_{D,s\wedge t}^+(e^{\i\theta_1},e^{\i\theta_2})\,\d\theta_1\d\theta_2\leq-2\min(\log|1-e^{-s}|,\log|1-e^{-t}|)+C.
\end{equation*}
Finally the claim \eqref{eq_claim_VR} is true and we have obtained the estimate for $V_\v^\mubf$ that we want.
\end{proof}


\bibliographystyle{plain} 
\bibliography{virasoro}
\end{document}